\renewcommand\Im{{\operatorname{Im}}}
\newcommand\rank{{\operatorname{rank}}}
\newcommand\R{{\mathbb{R}}}
\newcommand\C{{\mathbb{C}}}
\renewcommand\P{{\mathbf{P}}}
\newcommand\E{{\mathbf{E}}}
\newcommand\diag{{\operatorname{diag}}}
\newcommand\Z{{\mathbb{Z}}}
\newcommand\F{{\mathbb{F}}}
\newcommand\al{\alpha}
\newcommand\la{\lambda}
\newcommand\Bv{{\mathbf v}}
\renewcommand\Pr{{\mathbf P }}
\newcommand\CA{{\mathcal A}}
\newcommand\CC{{\mathcal C}}
\newcommand\CE{{\mathcal E}}
\newcommand\CF{{\mathcal F}}
\newcommand\CG{{\mathcal G}}
\newcommand\BBQ {{\mathbb Q}}
\newcommand\BBR {{\mathbb R}}
\newcommand\BBZ {{\mathbb Z}}
\newcommand\BBY{{\mathbb Y}}
\newcommand\eps{\varepsilon}
\newcommand\lang{\langle}
\newcommand\rang{\rangle}
\newcommand\cok{\mathbf{Cok}}
\newcommand\Aut{\operatorname{Aut}}
\newcommand{\ra}{\rightarrow}
\newcommand\isom{\simeq}
\newcommand\Hom{\operatorname{Hom}}
\newcommand\Sur{\operatorname{Sur}}
\newcommand\GL{\operatorname{GL}}
\newcommand{\cC}{\mathcal{C}}
\renewcommand{\S}{\mathcal{S}}
\newcommand{\tP}{\widetilde{\P}_{p}^{(k)}}
\newcommand{\tPP}{\widetilde{\P}_{P}^{(k)}}
\newcommand{\KK}{\mathcal{K}}
\newcommand{\tI}{\tilde{I}}
\newcommand\bq{\begin{equation}}
\newcommand\eq{\end{equation}}
\newcommand\Inj{\operatorname{Inj}}
\newcommand{\PP}[1]{P_{#1}(t,\ldots)}
\newcommand{\QQ}[1]{Q_{#1}(1,\ldots)}
\newcommand{\inj}{\hookrightarrow}
\newcommand{\op}{\oplus}
\newcommand{\len}{\operatorname{len}}
\newcommand{\bx}{\mathbf{x}}
\renewcommand{\L}{\Lambda}
\newcommand{\BBC}{\mathbb{C}}
\newcommand{\by}{\mathbf{y}}
\newcommand{\ot}{\otimes}
\newcommand{\balpha}{\mathbf{\alpha}}
\newcommand{\Mat}{\operatorname{Mat}}
\newcommand{\bver}{\big{\vert}}
\newcommand{\surj}{\twoheadrightarrow}
\newcommand{\xsurj}{\xtwoheadrightarrow}
\theoremstyle{plain}
\newtheorem{prop}{Proposition}[section]
\newtheorem{proposition}[prop]{Proposition}
\newtheorem{theorem}[prop]{Theorem}
\newtheorem{corollary}[prop]{Corollary}
\newtheorem{example}[prop]{Example}
\newtheorem{lemma}[prop]{Lemma}
 \newtheorem{claim}[prop]{Claim}
\theoremstyle{definition}
  \newtheorem{definition}{Definition}
  \newtheorem{remark}{Remark}
\begin{document}

 \title{Universality for cokernels of random matrix products}

\author{Hoi H. Nguyen}
 \address{Department of Mathematics, Ohio State University}
 \email{nguyen.1261@osu.edu}
 
 \author{Roger Van Peski}
\address{Department of Mathematics, Massachusetts Institute of Technology}  
\email{rvp@mit.edu}


\begin{abstract} 
For random integer matrices $M_1,\ldots,M_k \in \Mat_n(\Z)$ with independent entries, we study the distribution of the cokernel $\cok(M_1 \cdots M_k)$ of their product. We show that this distribution converges to a universal one as $n \to \infty$ for a general class of matrix entry distributions, and more generally show universal limits for the joint distribution of $\cok(M_1),\cok(M_1M_2),\ldots,\cok(M_1 \cdots M_k)$. Furthermore, we characterize the universal distributions arising as marginals of a natural generalization of the Cohen-Lenstra measure to sequences of abelian groups with maps between them, which weights sequences inversely proportionally to their number of automorphisms. The proofs develop an extension of the moment method of Wood to joint moments of multiple groups, and rely also on the connection to Hall-Littlewood polynomials and symmetric function identities. As a corollary we obtain an explicit universal distribution for coranks of random matrix products over $\F_p$ as the matrix size tends to infinity.

\end{abstract}

\maketitle

\tableofcontents

\section{Introduction}

Products of random matrices have been studied as far back as the works of Bellman \cite{Bellman} and Furstenberg-Kesten \cite{FursKes} around 1960, and many works since then have connected them to other problems in pure and applied mathematics and in physics, see e.g. \cite{ABK,CPV,GS,HN}. Such products have two natural parameters to vary, namely the size $n$ of the matrices and the number $k$ of matrices in the product. Different limit regimes of $n,k$ yield different behaviors; at one extreme, \cite{FursKes} and later works consider the singular values of a product $M_1 \cdots M_k$ of $n \times n$ matrices over $\R$ or $\C$, for fixed $n$, as the number of matrices $k$ in the product goes to infinity. At the other, works such as \cite{GTprod} consider the singular values of such a product in the limit as $n \to \infty$, $k$ fixed. 

Another direction of random matrix theory, at first sight orthogonal, concerns asymptotics of random matrices over finite fields $\F_p$. Assume that $M_u=(m_{ij})_{1\le i,j\le n}$ is a random matrix of size $n$ whose entries are iid uniform over $\F_p$. Then for each given $n,d$ it is elementary to compute the probability that $M_u$ has corank $d$, from which one can show
\begin{equation}\label{eq:1-matrix_rank}
\lim_{n\to \infty} \P(\rank(M_u) = n-d) =\frac{1}{p^{d^2}} \frac{\prod_{i=d+1}^\infty (1-1/p^i)}{\prod_{i=1}^d (1-1/p^i)}.
\end{equation}
Quite interestingly, it turns out that the above statistics are universal: it has been shown in \cite{M1, NgP, NgW-iid, W1} that if the entries of $M$ are iid from some nonconstant distribution which is independent of $n$, then the rank (or corank) statistics of $M$ over $\F_p$ matches with that of the uniform model above\footnote{Results from \cite{M1,NgP,NgW-iid} also allow $d$ to vary with $n$, and with explicit rate of convergence.}. 

Returning to matrix products, our first goal is to understand the corank statistics; although these problems are basic, we could not find any references in the literature. Assume that $M_{u,1},M_{u,2}$ are two independent random matrices of size $n$ whose entries are uniform over $\F_p$. It is clear that the product matrix $M_{u,1}M_{u,2}$ has higher probability to be degenerate. More precisely, one can use \eqref{eq:1-matrix_rank} to show 
\begin{align*}
\lim_{n\to \infty} \P(\rank(M_{u,1}M_{u,2})=n) &= \left(\prod_{i=1}^\infty (1-1/p^i)\right)^2  \\
\lim_{n\to \infty} \P(\rank(M_{u,1}M_{u,2})=n-1)  &=  \frac{2p^2-p}{(p-1)^3} \left(\prod_{i=1}^\infty (1-1/p^i)\right)^2.
\end{align*}
In particular, the second probability is actually greater than the probability that $M_{u,1},M_{u,2}$ have coranks $0,1$ or $1,0$, which may be computed by \eqref{eq:1-matrix_rank}. This is because both have corank $1$ and still have $\rank(M_{u,1}M_{u,2})=n-1$ with non-negligible probability, which is computed in the proof of Theorem \ref{thm:prod:rank}. Even in this simple example, the complexity of matrix products begins to manifest.

This complexity increases further in the more general setting of random matrices over $\Z$. Now the key object is not only the rank but the cokernel, an abelian group
$$\cok(M) := \BBZ^n/M \BBZ^n,$$
viewing $M \in \Mat_n(\BBZ)$ as a linear map $\BBZ^n \to \BBZ^n$. By reducing modulo $p$, results on the cokernel naturally yield results on coranks of matrices over $\F_p$. For matrices with iid entries from the large class of ``$\alpha$-balanced" distributions described shortly, the distribution of the $p$-Sylow subgroup $\cok(M)[p^\infty]$----often referred to equivalently as the $p^\infty$-torsion----is universal. Namely, it was shown in \cite[Corollary 3.4]{W1} that it has the so-called \emph{Cohen-Lenstra distribution}, 
\begin{equation}\label{eq:cl}
\lim_{n \to \infty} \P\left(\cok(M)[p^\infty] \simeq G\right) = \frac{(p^{-1};p^{-1})_\infty}{\#\Aut(G)}
\end{equation}
for any finite abelian $p$-group $G$, where here and later we use the $q$-Pochhammer notation 
$$(a;q)_\infty := \prod_{i \geq 0} (1-aq^{i}).$$
This universality result was motivated by the Cohen-Lenstra heuristics for the distribution of class groups of quadratic imaginary number fields \cite{CL,FW}. 

In light of the above it is natural to ask about cokernels of random matrix products over $\BBZ$, but very little work has been done. To our knowledge cokernels of matrix products were first considered in \cite{VP21limits}, which studied the related setting of random matrices over the $p$-adic integers $\BBZ_p$ in the regime of fixed matrix size $n$ and growing number of products $k$. The present work considers random matrix products over $\BBZ$ in the opposite regime where $n$ grows and $k$ is fixed\footnote{While we state our results over $\BBZ$ in this section, we simultaneously obtain results on matrices over $\BBZ_p$, see Theorems \ref{theorem:main:prod_Zp} and \ref{theorem:main_joint_Zp} in the body of the paper.}. We seek to answer the following natural questions for cokernels, together with their analogues for coranks over $\F_p$:
\begin{enumerate}
  \item[(Q1)] What is the $n \to \infty$ limiting distribution of $\cok(M_1 \cdots M_k)$, where $M_i \in \Mat_n(\BBZ)$ are iid? Are the results universal, insensitive to the distribution of the entries of $M_i$?
  \vskip .1in
  \item[(Q2)] More generally, what is the joint distribution of $\cok(M_1),\cok(M_1M_2),\ldots,\cok(M_1 \cdots M_k)$?
\end{enumerate}

\subsection{Main results.} 

The existing single-matrix universality results of \cite{W1} are proven for matrices with iid entries satisfying the following condition, which makes it a natural candidate to probe universality for products as well.

\begin{definition}\label{def:alpha_balanced}
Given a real number $\alpha \in (0,1/2]$, we say a random integer $\xi$ is \emph{$\al$-balanced} if for every prime $p$ we have
\begin{equation}\label{eqn:alpha}
\max_{r \in \Z/p\BBZ} \P(\xi\equiv r \pmod{p}) \le 1-\al.
\end{equation}
\end{definition}

Our main results generalize \cite[Corollary 3.4]{W1} to products of matrices, answering (Q1) and (Q2) above. We begin with the simpler (Q1). In what follows, for a finite set $P$ of primes we write $G[P] :=\bigoplus_{p \in P} G[p^\infty]$, where we recall that $G[p^\infty]$ is the $p$-Sylow subgroup of $G$. 

\begin{theorem}\label{theorem:main:prod} Let $M_1,\dots, M_k$ be $k$ independent random integral matrices with entries 
 iid copies of an {\it $\alpha$-balanced} random integer $\xi$. Let $B$ be any finite abelian group, and let $P$ be a finite set of primes including all those that divide $|B|$. Then
\begin{equation}\label{eq:limit_prod_intro}
\lim_{n\to \infty}\P\left((\cok(M_1\cdots M_k ))[P] \simeq  B\right) =\left(\prod_{p\in P} (p^{-1};p^{-1})_\infty^k\right)  \frac{\#\{0 = G_0 \leq G_1 \leq \ldots \le G_k = B\}}{\#\Aut(B)}.
\end{equation}
\end{theorem}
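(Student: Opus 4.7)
The plan is to adapt Wood's moment method \cite{W1} from the single-matrix setting to products: first compute the surjection moments $\E[\#\Sur(\cok(M_1\cdots M_k), A)]$ for each finite abelian group $A$ supported on the primes in $P$, identify their $n \to \infty$ limits, and then invoke a moment-determination theorem for probability measures on finite abelian $p$-groups.

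\textbf{Moment expansion.} By Fubini,
$$
\E\bigl[\#\Sur(\cok(M_1\cdots M_k), A)\bigr] = \sum_{\phi_0: \Z^n \twoheadrightarrow A} \P(\phi_0 M_1 \cdots M_k = 0).
$$
For each realization of $(\phi_0, M_1, \ldots, M_k)$, set $\phi_i := \phi_0 M_1\cdots M_i$ and $H_i := \im(\phi_i)\le A$; then $A = H_0 \supseteq H_1 \supseteq \cdots \supseteq H_k$, and the event in question is $H_k = 0$. Factoring $\phi_i = \iota_i\tilde\phi_i$ with $\iota_i:H_i\hookrightarrow A$ and $\tilde\phi_i:\Z^n\twoheadrightarrow H_i$, and using independence of the $M_i$, the moment becomes
$$
\sum_{\phi_0}\ \sum_{A = H_0\supseteq \cdots \supseteq H_k = 0}\ \sum_{\tilde\phi_i:\Z^n\twoheadrightarrow H_i}\ \prod_{i=1}^k \P_{M_i}\bigl(\iota_{i-1}\tilde\phi_{i-1} M_i = \iota_i\tilde\phi_i\bigr),
$$
with the conventions $\tilde\phi_0 = \phi_0$ and $\tilde\phi_k = 0$.

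\textbf{Anti-concentration and chain counting.} Next, I would apply a character-sum argument for $\alpha$-balanced vectors: for a ``non-degenerate'' surjection $\tilde\phi:\Z^n\twoheadrightarrow H$ (with $|H|$ supported on primes in $P$) and a random column $w\in\Z^n$ of iid $\alpha$-balanced entries, the bound $|\E[\chi(\tilde\phi(w))]|\le e^{-cn}$ for every nontrivial character $\chi$ of $H$ yields $\P(\tilde\phi(w)=h) = |H|^{-1}(1+O(e^{-cn}))$ uniformly in $h\in H$. Multiplying over the $n$ columns of $M_i$, $\P_{M_i}(\iota_{i-1}\tilde\phi_{i-1}M_i = \iota_i\tilde\phi_i) = |H_{i-1}|^{-n}(1+o(1))$, uniformly in $\tilde\phi_i$. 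Combined with $|\Sur(\Z^n,H)| = |H|^n(1-o(1))$, the telescoping product
$$
|\Sur(\Z^n,A)|\cdot\prod_{i=1}^{k-1}|\Sur(\Z^n,H_i)|\cdot\prod_{i=1}^k|H_{i-1}|^{-n}\ \longrightarrow\ 1
$$
contributes $1$ per chain, so the moment tends to $\#\{A=H_0\supseteq\cdots\supseteq H_k=0\}$, which equals $\#\{0=G_0\le G_1\le\cdots\le G_k=A\}$ by reversing the chain.

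\textbf{Moment matching and the main difficulty.} To conclude, I would verify, via a Hall--Littlewood / symmetric-function identity applied one prime at a time (in the spirit of the abstract), that the measure appearing on the right-hand side of \eqref{eq:limit_prod_intro} has precisely these surjection moments, and then apply a moment-determination result for probability measures on finite abelian $p$-groups. The main technical obstacle is exactly what I swept under ``non-degenerate'': the contribution of surjections $\tilde\phi_i$ whose character sums fail to decay must be shown negligible, simultaneously at all $k$ levels of the chain. Wood's single-matrix treatment handles this by partitioning $\Sur(\Z^n,H)$ according to a ``depth code'' and invoking a Hal\'asz-type inequality; for the product, the same bookkeeping must be propagated through the $k$ factors, so that the small but nonzero errors accumulated at earlier levels do not spoil later ones.
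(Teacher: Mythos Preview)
Your plan is correct and follows essentially the same route as the paper: compute $\lim_n \E[\#\Sur(\cok(M_1\cdots M_k),A)]=n_k(A)$ by tracking the chain of images $H_i=\im(\phi_0 M_1\cdots M_i)$ and splitting into code/non-code contributions, verify via Hall--Littlewood identities that the target measure has these moments, and conclude by a moment-determination theorem. The only organizational difference is cosmetic---you expand simultaneously over all intermediate surjections $\tilde\phi_1,\ldots,\tilde\phi_{k-1}$, whereas the paper (Proposition~\ref{prop:single:k}) runs an induction on $k$, peeling off $M_k$ and conditioning on whether $M_kF$ is a code in $H_{k-1}$; the ``main technical obstacle'' you flag is precisely what that induction handles, and your identification of it is accurate.
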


For (Q2), we note first that as $M_1 \cdots M_{j+1}\Z^n \subset M_1 \cdots M_j \Z^n$, there is a natural surjection $\cok(M_1 \cdots M_{j+1}) \surj \cok(M_1 \cdots M_j)$. We also define the notation $\Sur(G,H) := \{\phi: G \to H \text{ surjective}\}$ and similarly for $\Inj(G,H)$. 

\begin{theorem}\label{theorem:main:joint} For matrices under the same assumptions as in Theorem \ref{theorem:main:prod}, finite abelian groups $B_1, \ldots, B_k$, and $P$ a finite set of primes including all those which divide every $|B_i|,1\leq i \leq k$, we have
\begin{equation}\label{eq:limit_joint_intro}
\lim_{n\to \infty}\P\left((\cok(M_1 \cdots M_j)[P] \simeq  B_j, 1\le j\le k\right)=\left(\prod_{p\in P}(p^{-1};p^{-1})_\infty^k\right) \prod_{i=1}^k \frac{\#\Sur(B_i, B_{i-1})}{\#\Aut(B_i)},
\end{equation}
where we take $B_0 = 0$.
\end{theorem}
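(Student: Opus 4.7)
The natural approach is the method of moments, generalizing Wood's single-matrix strategy \cite{W1}. I would compute the joint surjection moments
\begin{equation*}
m_n(A_1,\dots,A_k) \;:=\; \E\!\left[\prod_{j=1}^k \#\Sur\bigl((\cok(M_1\cdots M_j))_P,\,A_j\bigr)\right]
\end{equation*}
for every tuple of finite abelian groups $A_1,\dots,A_k$ supported at the primes in $P$, and match the $n\to\infty$ limits with the corresponding moments of the claimed law. Converting moment convergence to joint convergence in distribution requires a moment-determinacy statement for sequences of abelian $p$-groups; establishing this, as a multi-group extension of the theorem of Wood, is a first task in its own right and is what makes (Q2) genuinely different from (Q1).

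Expanding the products of surjection counts gives
\begin{equation*}
m_n(A_1,\dots,A_k) \;=\; \sum_{F_1,\dots,F_k} \P\bigl(F_j\, M_1\cdots M_j = 0 \text{ for all } 1\le j\le k\bigr),
\end{equation*}
with $F_j$ ranging over surjections $\Z^n \twoheadrightarrow A_j$. Since $F_j M_1\cdots M_j = G_j\, M_j$ for $G_j := F_j M_1\cdots M_{j-1}$, and the columns of $M_j$ are i.i.d., conditioning on $M_1,\dots,M_{j-1}$ gives $\P(G_j M_j = 0 \mid M_1,\dots,M_{j-1}) = \P_c(G_j c = 0)^{\,n}$, where $c\in\Z^n$ is one i.i.d. column. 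I would then integrate out $M_k, M_{k-1},\dots,M_1$ in turn, turning the joint probability into an iterated expectation of columnwise probabilities.

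The central analytic input is an extension of Wood's Hal\'asz/Fourier estimate for $\alpha$-balanced vectors: for any homomorphism $G:\Z^n\to A$ with $|A|$ bounded,
\begin{equation*}
\P_c(Gc = 0) \;=\; |G(\Z^n)|^{-1} \bigl(1 + O(e^{-\Omega(n)})\bigr).
\end{equation*}
Applied to $G_j = F_j M_1\cdots M_{j-1}$, the image lattices $G_j(\Z^n)\subseteq A_j$ govern the main term, and the dominant contribution to $m_n$ is $\prod_j |G_j(\Z^n)|^{-n}$ per tuple, with other tuples absorbed into a lower-order error. Grouping the surviving $(F_1,\dots,F_k)$ according to the induced chain of subgroups and surjections $\cok(M_1)\twoheadleftarrow \cok(M_1M_2)\twoheadleftarrow\cdots$ together with the compositions $F_j$ recasts $\lim_{n\to\infty} m_n$ as a combinatorial sum over compatible chains of abelian $p$-groups. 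Matching this with the moments of the explicit Cohen-Lenstra-type measure $\prod_p (p^{-1};p^{-1})_\infty^k\prod_i \#\Sur(B_i,B_{i-1})/\#\Aut(B_i)$ should come down to a Hall-Littlewood / symmetric-function identity, of the kind highlighted in the abstract.

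The principal obstacles I expect are: (i) the joint moment determinacy, where the $k$-group moments grow rapidly with the $|A_j|$ and so a multi-group Carleman-type condition tailored to the partition/Hall-Littlewood setting must be verified; and (ii) the $k$-matrix extension of Wood's ``bad homomorphism'' error analysis, where the maps $G_j = F_j M_1\cdots M_{j-1}$ carry entanglement with the previously drawn matrices rather than being fresh uniform draws. I would handle (ii) by induction on $k$, combining sequential conditioning with a union bound over the (finitely many up to isomorphism) possible image profiles $G_j(\Z^n)\leq A_j$, and keeping the error terms uniform in the conditioning history. Once (i) and (ii) are in place, the symmetric-function identification in the last step is expected to be routine given the combinatorial framework the authors develop.
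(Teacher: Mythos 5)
Your overall strategy matches the paper's: compute asymptotic joint surjection moments, compare them to the moments of the candidate ``$1/\#\Aut$'' law, and close with a multi-group moment-determinacy statement plus Hall--Littlewood identities. The two genuine departures are in the conditioning order and in the stated analytic input.

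On the conditioning order: you propose to integrate out $M_k,M_{k-1},\dots,M_1$, tracking for each fixed $(F_1,\dots,F_k)$ the shrinking images $F_k(M_1\cdots M_{j}\Z^n)\subseteq A_k$. The paper instead integrates out $M_1$ first (Proposition~\ref{prop:joint:k}), viewing the tuple $(F_1,\,M_2F_2,\,\dots,\,M_2\cdots M_kF_k)$ as a single random homomorphism $\Z^n\to G_1\oplus\cdots\oplus G_k$, conditioning on its span $G'\leq G_1\oplus\cdots\oplus G_k$ and its code/non-code status, and then recursing on $k-1$ matrices. That direction lets the single-matrix code/depth machinery from \cite{W1} apply verbatim with $G=G_1\oplus\cdots\oplus G_k$ and gives a clean induction on $k$ with the error contribution controlled by depth. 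Your reversed order seems workable in principle (I checked a $k=2$ instance and the combinatorics appear to land on $m_2(G_1,G_2)$), but it forces you to carry along the accumulated image-size factors $|F_k(M_1\cdots M_{j}\Z^n)|^{-n}$ through each subsequent integration, so the ``iterated expectation of columnwise probabilities'' description understates what the recursion really computes. After integrating out $M_k$, the integrand is no longer a product of indicators, and the next step over $M_{k-1}$ must be done jointly for the event $F_{k-1}M_1\cdots M_{k-1}=0$ and the random size $|F_k(M_1\cdots M_{k-1}\Z^n)|$; you would in effect be recreating, in mirror image, the paper's partition over subgroups $G''\leq G_{i}\oplus\cdots\oplus G_k$ with surjective first projection.

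One concrete error to flag: the estimate
$\P_c(Gc=0)=|G(\Z^n)|^{-1}\bigl(1+O(e^{-\Omega(n)})\bigr)$ does \emph{not} hold ``for any homomorphism $G$ with $|A|$ bounded.'' It fails whenever $G$ has nontrivial depth in the sense of Definition~\ref{def:depth}; e.g.\ for $A=\Z/p^2\Z$, $G(\Bv_1)=1$, $G(\Bv_i)=p$ for $i>1$, one has $G(\Z^n)=A$ yet $\P(Gc=0)\asymp 1/p\gg 1/p^2$. You do flag the ``bad homomorphism'' issue later, but since this estimate is presented as the central input, it needs to be stated as conditional on $G$ being a code (Lemma~\ref{lemma:code:single:1}), with the non-code mass bounded separately (Lemmas~\ref{lemma:non-code:count:1}, \ref{lemma:non-code:single:1}, \ref{lemma:non-code:1:matrix}) exactly as in the paper's depth analysis; otherwise the error term cannot be made uniform over the conditioning history. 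Finally, the moment-determinacy step is not automatic in the multi-group setting: the paper extends \cite[Theorems 8.2, 8.3]{W0} to joint Hom-moments (Proposition~\ref{prop:w8.2_adapted}, Theorem~\ref{t:distribution:joint'}) using an explicit growth bound $n_k(G_\la)\le F_k^{\la_1}p^{\frac{1-c_k}{2}\sum_i(\la_i')^2}$ with $c_k>0$ shrinking with $k$, so your obstacle (i) is real and does require an actual Carleman-type computation, not just a citation to Wood.
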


\begin{remark}
Theorem \ref{theorem:main:joint} reduces to Theorem \ref{theorem:main:prod} by a simple computation. We also note that, since $\cok(M_1 \cdots M_j) \cong \cok(M_j^T \cdots M_1^T)$ and $M_j=M_j^T$ in distribution for all $j$, it is immediate that Theorem \ref{theorem:main:joint} holds with $\cok(M_1 \cdots M_j)[P]$ replaced by $\cok(M_j \cdots M_1)[P]$. 
\end{remark}

The $k=1$ case of either above result yields \cite[Corollary 3.4]{W1}, and our results can be seen as a dynamical analog of this one. Within one matrix, the evolution of the cokernel after exposing each new row and column of the matrix was previously studied by the first author and Wood \cite{NgW-iid, NgW-lap}, while in this current model we study the cokernel evolution by multiplying the matrices one by one. An interesting related body of work \cite{Bore,CK,CLS,Lee} studies the joint cokernel distributions of matrices obtained from different polynomials of a single random matrix, and it is natural in light of the above to consider joint cokernel distributions of more complicated multivariate polynomials in several random matrices.



Recall that for cokernels of a single matrix, the distribution \eqref{eq:cl} features weights inversely proportional to the number of automorphisms. It is a general heuristic that distributions on algebraic objects occuring in these contexts should feature probabilities inversely proportional to the number of automorphisms, for the appropriate notion of automorphism, the reason essentially being the orbit-stabilizer theorem---see for instance \cite[Section 5]{woodexpos}. Our next result gives such an interpretation for the distributions appearing above.

As mentioned, the groups $\cok(M_1 \cdots M_j)$ come with additional structure of a sequence of maps 
\begin{equation}\label{eq:coks_with_maps_intro}
\cok(M_1 \cdots M_k) \surj \cok(M_1 \cdots M_{k-1}) \surj \ldots \surj \cok(M_1).
\end{equation}
There is a natural notion of automorphism of such a sequence $G_k \surj \ldots \surj G_1$ of abelian groups with maps between them, namely an element of $\prod_{i=1}^n \Aut(G_k)$ for which the appropriate diagram commutes---see Section \ref{sec:automorphisms}. This provides the right setting to interpret the distribution of Theorem \ref{theorem:main:joint}, and hence Theorem \ref{theorem:main:prod} as well, as the following result shows.

\begin{theorem}\label{theorem:seq_aut_intro}
Let $k \in \BBZ_{\geq 1}$ and $P$ be a finite set of primes. Then there is a well-defined probability measure $\tPP$ on the set
$$\left\{\text{sequences }G_k \xsurj{\phi_{k-1}} G_{k-1} \xsurj{\phi_{k-2}} \ldots \xsurj{\phi_1} G_1 \text{ up to automorphism}: |G_k| \text{ only divisible by primes in }P\right\}$$
defined by assigning to each automorphism class $[G_k \surj \ldots \surj G_1]$ of sequences the probability
$$\tPP([G_k \surj \ldots \surj G_1]) = \frac{\prod_{p \in P} (p^{-1};p^{-1})_\infty^k}{\#\Aut\left(G_k \surj \ldots \surj G_1\right)}.$$
Furthermore, under the above distribution, the marginal joint distribution of the isomorphism types of $G_1,\ldots,G_k$ (after forgetting the data of the maps between them) is the limit distribution of Theorem \ref{theorem:main:joint}.
\end{theorem}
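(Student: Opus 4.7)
The plan is to prove both assertions in the theorem via a single orbit-stabilizer computation, supplemented by the classical Cohen-Lenstra mass/moment identity to check normalization. Fix finite abelian groups $B_1, \ldots, B_k$ supported on primes in $P$, set $B_0 := 0$, and consider the finite set
\[
X(B_\bullet) := \{(f_{k-1}, \ldots, f_1) : f_j \in \Sur(B_{j+1}, B_j)\}
\]
of chains of surjections between these fixed groups. The product $\prod_{i=1}^{k} \Aut(B_i)$ acts on $X(B_\bullet)$ by $(\phi_i)_i \cdot (f_j)_j = (\phi_j \circ f_j \circ \phi_{j+1}^{-1})_j$. By construction, the stabilizer of a point $(f_j)$ is precisely $\Aut(G_k \surj \cdots \surj G_1)$ with $G_j = B_j$ and connecting maps $f_j$, while the orbits are in natural bijection with isomorphism classes of sequences having $G_j \cong B_j$. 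Orbit-stabilizer therefore yields
\[
\sum_{[G_k \surj \cdots \surj G_1]:\, G_j \cong B_j} \frac{1}{\#\Aut(G_k \surj \cdots \surj G_1)} = \frac{|X(B_\bullet)|}{\prod_{i=1}^{k} \#\Aut(B_i)} = \prod_{i=1}^{k} \frac{\#\Sur(B_i, B_{i-1})}{\#\Aut(B_i)},
\]
and multiplying by the prefactor $\prod_{p \in P} (p^{-1};p^{-1})_\infty^k$ reproduces exactly the expression of Theorem \ref{theorem:main:joint}, establishing the marginal claim.

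For well-definedness, I would verify that the proposed weights sum to $1$. Since $\Aut$ and $\Sur$ factor over $p$-Sylow components for distinct primes $p \in P$, the total sum factors over $P$, so it suffices to show for each prime $p$ that
\[
\sum_{B_1, \ldots, B_k} \prod_{i=1}^{k} \frac{\#\Sur(B_i, B_{i-1})}{\#\Aut(B_i)} = (p^{-1};p^{-1})_\infty^{-k},
\]
the sum running over finite abelian $p$-groups. This I would prove by induction on $k$, invoking the classical Cohen-Lenstra moment identity $\sum_{G} \#\Sur(G, A)/\#\Aut(G) = (p^{-1};p^{-1})_\infty^{-1}$, valid for any finite abelian $p$-group $A$. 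The base case $k=1$ (with $A=0$) is the usual Cohen-Lenstra mass formula, and each inductive step peels off the innermost sum over $B_k$ using the identity with $A = B_{k-1}$.

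The main obstacle, such as it is, is the normalization step; the orbit-stabilizer computation of the marginal is purely formal. In fact one can bypass the direct summation entirely: granted Theorem \ref{theorem:main:joint}, the joint limiting distribution arises as a weak limit of genuine probability measures for finite $n$, and tightness is ensured by Theorem \ref{theorem:main:prod} (which provides a probability measure on the marginal $B = \cok(M_1\cdots M_k)_P$ dominating each joint marginal). Thus the numbers $\prod_{p \in P}(p^{-1};p^{-1})_\infty^k \prod_{i} \#\Sur(B_i,B_{i-1})/\#\Aut(B_i)$ already sum to $1$ over all tuples $(B_1,\ldots,B_k)$, which by the marginal computation is equivalent to the required normalization of $\tPP$.
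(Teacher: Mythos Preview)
Your proof is correct and the orbit-stabilizer computation for the marginal is exactly what the paper does (Theorem~\ref{thm:seq_dist_iso_types}). The difference lies in how normalization is established. The paper identifies $\P_{\{p\}}^{(k)}$ with the Hall--Littlewood process $\P_{\infty;1/p}^{(k)}$ of Definition~\ref{def:P(k)} (via Proposition~\ref{prop:joint_aut_hl}), and that object is a probability measure by construction from the skew Cauchy identity; so the paper's normalization is inherited from the Hall--Littlewood machinery developed in Sections~\ref{sec:HL_background}--\ref{sect:Haar}. Your approach~(a), by contrast, is a direct elementary induction on $k$ using only the classical Cohen--Lenstra moment identity $\sum_G \#\Sur(G,A)/\#\Aut(G)=(p^{-1};p^{-1})_\infty^{-1}$, and avoids Hall--Littlewood polynomials entirely for this particular statement. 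This is cleaner and more self-contained; the paper's route is natural only because the HL framework is already in place for the moment computations elsewhere. Your alternative~(b) is also valid in the context of the paper, since by the time Theorem~\ref{theorem:main:joint} is proved the paper has already independently established (Theorem~\ref{thm:intro_dist_moment_joint}) that $\P_P^{(k)}$ sums to $1$; but as a standalone argument it would need that input, so~(a) is the preferable route.
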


We refer to Section \ref{sec:automorphisms} for more detail. The closest previous work we are aware of is \cite{BKLPR}, which considers certain random short exact sequences of abelian $p$-groups as models for the distribution of exact sequences relating Selmer and Tate-Shafarevich groups of elliptic curves. However, these distributions are supported on split short exact sequences, so the maps between them do not add information beyond the isomorphism types of the groups, in contrast to our case. Nonetheless, such heuristics for groups with maps between them motivate the general problem of developing technology to prove universality for joint distributions of multiple groups, and we hope that the present paper may conversely help to spur more work on heuristics for sequences of groups in number theory. It would certainly be natural and interesting to prove a generalization of the universality result Theorem \ref{theorem:main:joint} which incorporates the extra data of the sequence of maps \eqref{eq:coks_with_maps_intro}, but we leave this question to the future as our methods are not currently adapted to it. 


In another direction, by taking everything modulo $p$, Theorem \ref{theorem:main:joint} and a linear algebra computation imply the following corollary on the joint distribution of ranks of matrix products over $\F_p$.

\begin{theorem}\label{thm:prod:rank} Let $p$ be a given prime. Let $\xi$ be a nonconstant random variable valued in $\F_p$, $r_1,\ldots,r_k \in \Z_{\geq 0}$, and $M_1,\ldots,M_k$ be independent random elements of $\Mat_n(\F_p)$ with entries iid copies of $\xi$. Then
$$\lim_{n \to \infty} \P\left(\rank(M_1 \cdots M_i) = n-(r_1+\ldots+r_i) \text{ for all }1 \leq i \leq k\right) = (p^{-1};p^{-1})_\infty^k \prod_{i=1}^k \frac{p^{-r_i(r_i+\ldots+r_1)}}{(p^{-1};p^{-1})_{r_i} (p^{-1};p^{-1})_{r_i+\ldots+r_1}}.$$
\end{theorem}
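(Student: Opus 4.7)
The plan is to deduce Theorem \ref{thm:prod:rank} from the joint cokernel result in Theorem \ref{theorem:main:joint} (or, more precisely, from its $\Z_p$ version mentioned in the footnote) by summing over all $p$-group configurations compatible with the prescribed ranks. Any nonconstant $\F_p$-valued $\xi$ may be viewed as a $\Z_p$-valued random variable supported on $\{0,1,\ldots,p-1\}$, whose mod-$p$ reduction is nonconstant, so Theorem \ref{theorem:main_joint_Zp} applies with $P=\{p\}$ and gives the limiting joint distribution of the $p$-Sylow cokernels $B_j := (\cok(M_1\cdots M_j))_p$. Since $\rank_{\F_p}(M_1\cdots M_j) = n - \dim_{\F_p}(B_j/pB_j)$, the event appearing in Theorem \ref{thm:prod:rank} is exactly $\{\dim_{\F_p}(B_j/pB_j) = s_j,\; 1 \le j \le k\}$, with $s_j := r_1 + \cdots + r_j$.

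Summing \eqref{eq:limit_joint_intro} over all tuples of finite abelian $p$-groups $(B_1,\ldots,B_k)$ meeting this condition, the theorem reduces to the purely combinatorial identity
\begin{equation}\label{eq:key_identity_plan}
\sum_{\substack{(B_1,\ldots,B_k) \\ \dim(B_j/pB_j)=s_j}} \prod_{i=1}^{k} \frac{\#\Sur(B_i,B_{i-1})}{\#\Aut(B_i)} \;=\; \prod_{i=1}^{k} \frac{p^{-r_i s_i}}{(p^{-1};p^{-1})_{r_i}\,(p^{-1};p^{-1})_{s_i}},
\end{equation}
with the convention $B_0 = 0$. The base case $k=1$ reads $\sum_{\dim(B/pB)=s} 1/\#\Aut(B) = p^{-s^2}/(p^{-1};p^{-1})_s^2$ and is obtained by combining \eqref{eq:1-matrix_rank} with \eqref{eq:cl} and noting that the LHS is exactly the limiting probability of corank $s$ divided by $(p^{-1};p^{-1})_\infty$.

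I expect the main obstacle to lie in \eqref{eq:key_identity_plan}. The subtlety is that the inner sum $\sum_{B_k} \#\Sur(B_k,B_{k-1})/\#\Aut(B_k)$ over $B_k$ of fixed $p$-rank $s_k$ a priori depends on the full isomorphism type of $B_{k-1}$, not merely on $s_{k-1}$, so a naive layer-by-layer telescoping does not work. To bypass this, I would parametrize each $B_i$ by a partition $\lambda^{(i)}$, rewrite $\#\Sur(G_\lambda,G_\mu)$ and $\#\Aut(G_\lambda)$ via the classical Hall polynomial formulas at $q = 1/p$, and then invoke Hall--Littlewood Cauchy-type identities of the kind developed elsewhere in the paper. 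These collapse the sum over chains of partitions $\lambda^{(k)}\supseteq\cdots\supseteq\lambda^{(1)}$ with prescribed numbers of parts $\ell(\lambda^{(i)}) = s_i$ into precisely the product of $q$-Pochhammers on the right-hand side of \eqref{eq:key_identity_plan}.

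As an internal check, the chain measure $\tPP$ of Theorem \ref{theorem:seq_aut_intro} is a probability measure on sequences, and the event $\{\dim(B_i/pB_i) = s_i, \forall i\}$ is one of its measurable events, so its marginal probability must equal the right-hand side of \eqref{eq:key_identity_plan}; establishing that marginal by the Hall--Littlewood computation outlined above completes the proof of Theorem \ref{thm:prod:rank}.
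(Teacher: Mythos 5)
Your reduction of the theorem to a combinatorial identity — namely, summing \eqref{eq:limit_joint_intro} over all tuples $(B_1,\ldots,B_k)$ of finite abelian $p$-groups with $\dim_{\F_p}(B_j/pB_j) = r_1 + \cdots + r_j$, and proving that this sum collapses to the claimed $q$-Pochhammer product — is logically valid. The passage from pointwise convergence of the joint cokernel probabilities to convergence of the (infinite-index) rank event is fine, since the limiting measure is a genuine probability measure. Your $k=1$ sanity check also holds. And you correctly spot the key difficulty: the inner sum over $B_k$ with prescribed $p$-rank depends on the full isomorphism type of $B_{k-1}$, so no naive layer-by-layer telescoping works.

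But your plan stops exactly where it needs to start: you say you would ``invoke Hall--Littlewood Cauchy-type identities'' to collapse the sum over chains of partitions with prescribed numbers of parts, without supplying the computation. That collapse is the entire content of the result under your approach, and it is not handed to you by any Cauchy identity stated in the paper — the Cauchy identity sums over \emph{all} partitions, not over partitions of fixed length, so one must control a truncated Hall--Littlewood sum (via Pieri coefficients with a length constraint, or a suitably specialized $Q'$-function). You would need to do genuine work here.

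The paper's proof circumvents this entirely. It uses universality (Theorem \ref{theorem:main:joint}) only to show that the limit in Theorem \ref{thm:prod:rank} exists and is \emph{independent} of $\xi$; it then computes that limit when $\xi$ is uniform over $\F_p$ by elementary linear algebra. The one-step calculation is: for fixed $B$ of corank $\kappa$ and a uniform $A$, put $B$ into Smith normal form over $\F_p$ so that $\rank(BA)$ becomes the rank of a uniform $(n-\kappa)\times n$ matrix, whose corank distribution is classical (\cite{Ge}); iterating this one-step rank evolution and taking $n\to\infty$ gives the product directly. This route never touches the group-theoretic summation. Your approach can surely be completed — your identity is equivalent to the uniform-$\xi$ limit the paper computes — but the paper's route via the uniform model is considerably shorter and avoids the symmetric-function machinery altogether.
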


\subsection{Parallels with complex random matrices.}

Most universality results of random matrices in the literature concern the spectral distributions, most notably the Wigner semi-circular law, the quarter-circle law, and the circular law. We recite below the last two laws for the model closely related to ours. 

\begin{theorem}Assume that $M=(m_{ij})$ is a random matrix  where $m_{ij}$ are iid copies of a real-valued random variable $\xi$ of mean-zero and variance one. 

\begin{itemize}
\item (Marchenko-Pastur law, see for instance \cite{MP} and also \cite{GTcov, P, Wa,Yin}) Let $x_1,\dots, x_n$ be the eigenvalues of $(1/n)MM^\ast$. Then almost surely
$$\lim_{n \to \infty}\frac{1}{n} \sum_{i=1}^n \delta_{x_i} = \frac{1}{2\pi} \frac{\sqrt{(4-x)x}}{x}\mathbf{1}_{x\in (0,4)}dx=:\mu_{MP}(dx).$$
\item (Circular law, see for instance \cite{TVcir} and also \cite{Gin, GT, Mehta, PZ}) Let $z_1,\dots, z_n$ be the (possibly complex) eigenvalues of $(1/\sqrt{n})M$. Then almost surely
$$\lim_{n \to \infty}\frac{1}{n} \sum_{i=1}^n \delta_{z_i} = \frac{1}{\pi} \mathbf{1}_{|z| \le 1}dz.$$

\end{itemize}
\end{theorem}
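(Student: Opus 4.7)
The plan is to handle the Marchenko--Pastur and Circular statements by standard but rather different toolkits from random matrix theory.

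For Marchenko--Pastur, I would use the method of moments. With $W_n := (1/n)MM^*$, the $k$-th moment of its empirical spectral distribution is $\frac{1}{n}\tr(W_n^k)$, which expands as
\[
\frac{1}{n^{k+1}} \sum m_{i_1 j_1} m_{i_2 j_1} m_{i_2 j_2} m_{i_3 j_2} \cdots m_{i_k j_k} m_{i_1 j_k},
\]
a sum over closed cyclic walks on a bipartite graph whose two sides are indexed by row and column labels of $M$. Under $\E$, only those walks in which every edge is traversed with even multiplicity survive. By a bipartite variant of Wigner's combinatorial argument, the leading $n^{k+1}$ contribution is counted by planar rooted bipartite trees with $k$ edges, giving the Catalan number $C_k$; this matches $\int_0^4 x^k \mu_{MP}(dx)$. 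An analogous count of ``doubled'' walks on two copies of the bipartite graph shows $\Var\!\left(\frac{1}{n}\tr(W_n^k)\right) = O(n^{-2})$, and combining this with an Azuma-type martingale bound obtained by exposing the rows of $M$ one at a time, Borel--Cantelli promotes convergence in expectation to a.s. convergence of every polynomial moment. Since $\mu_{MP}$ is compactly supported and moment-determined, this yields a.s. weak convergence of the empirical measure.

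For the Circular Law, I would apply Girko's Hermitization procedure combined with logarithmic potential theory. Set $A_n(z) := (1/\sqrt{n}) M - zI$, write $\nu_{n,z}$ for the empirical distribution of the singular values of $A_n(z)$, and let $\mu_n := \frac{1}{n}\sum_i \delta_{z_i}$. Girko's identity
\[
\int \log|w-z|\, \mu_n(dw) \;=\; \frac{1}{n}\log|\det A_n(z)| \;=\; \int_0^\infty \log x\, \nu_{n,z}(dx)
\]
reduces convergence of $\mu_n$ to pointwise-in-$z$ convergence of the right-hand side, together with enough regularity to recover $\mu_n$ from its logarithmic potential. I would first show that $\nu_{n,z}$ converges a.s. to an explicit shifted Marchenko--Pastur-type limit $\nu_z$, again by a (now shifted) moment computation applied to $A_n(z)A_n(z)^* = (1/n)MM^* - (z/\sqrt{n})M^* - (\bar z/\sqrt{n})M + |z|^2 I$. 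One then checks that $z \mapsto \int \log x\, \nu_z(dx)$ coincides, up to additive constants, with the logarithmic potential of the uniform probability measure on the unit disk. Finally, to reduce to the Gaussian (Ginibre) base case, in which the law follows from Ginibre's explicit joint eigenvalue density, I would invoke a Lindeberg-type replacement principle comparing Stieltjes transforms of $A_n(z)A_n(z)^*$ under the given distribution and under a matching Gaussian.

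The principal obstacle is the smallest singular value estimate needed to promote convergence of $\nu_{n,z}$ to convergence of its logarithmic transform: one needs a polynomial lower bound
\[
\Pr\!\left(\sigma_{\min}(M - \sqrt{n}\, z I) \le n^{-A}\right) = o(1)
\]
for some fixed $A > 0$ and a.e. $z \in \mathbb{C}$, so that the singular values near zero cannot make the integral $\int \log x\, \nu_{n,z}(dx)$ blow up. For general (possibly discrete) distributions with only a finite second moment, this requires the inverse Littlewood--Offord machinery of Rudelson--Vershynin and Tao--Vu to exclude arithmetic structure in the left null vectors of random integer shifts of $M$, and is by far the technically deepest part of the argument. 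By contrast, in the Marchenko--Pastur case the spectrum is real and the above identity is not invoked, so no comparable obstacle arises.
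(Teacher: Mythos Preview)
The paper does not prove this theorem at all: it is stated purely as background in the introduction (Section on ``Parallels with complex random matrices'') with citations to \cite{MP,GTcov,P,Wa,Yin} and \cite{TVcir,Gin,GT,Mehta,PZ}, and no argument is given. Your sketch is a reasonable high-level outline of the standard literature proofs---method of moments for Marchenko--Pastur, and Girko's Hermitization plus least-singular-value bounds for the circular law---so there is nothing to compare against in the paper itself.
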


As mentioned in the first paragraph, generalizations of the above exist for products of a finite number of matrices.

\begin{theorem} \label{thm:cplx_products} Assume that $M_1,\dots, M_k$ are independent and their entries are iid copies of a real-valued random variable $\xi$ of mean zero, variance one, and bounded $(2+\eps)$-moment for some $\eps>0$. Let $M=\prod_{i=1}^k \frac{1}{\sqrt{n}}M_i$.

\begin{itemize}
\item (See \cite{BBCC, Ben, BJW,Burda, Mu, ZPNC} for the Gaussian case, and \cite[Theorem 3.1]{GTprod} for the general case) Let $x_1,\dots, x_n$ be the singular values of $M$, then in probability
$$\lim_{n \to \infty}\frac{1}{n} \sum_{i=1}^n \delta_{x_i} = \mu_{MP}^{\boxtimes k}(dx),$$
where we refer the reader to \cite{BBCC} for more discussion on the free multiplicative convolution.

 \item (See for instance \cite{GTprod,OS,ORSV}) If $z_1,\dots, z_n$ are the eigenvalues of $M$, then in probability
$$\lim_{n \to \infty}\frac{1}{n} \sum_{i=1}^n \delta_{z_i} = \frac{1}{k \pi} |z|^{\frac{2}{k} -2} \mathbf{1}_{|z| \le 1} dz.$$
\end{itemize}
\end{theorem}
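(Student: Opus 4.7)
The plan is to treat the two bullet points separately, with the eigenvalue convergence reduced to the singular value convergence via Girko's Hermitization trick.

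For the singular value statement I would use the method of moments. Set $H := MM^* = n^{-k} M_1 \cdots M_k M_k^* \cdots M_1^*$ and $\mu_n = \frac{1}{n}\sum_i \delta_{\sigma_i(M)^2}$. Since $\mu_{MP}^{\boxtimes k}$ is compactly supported and determined by its moments, it suffices to show
$$\E\!\left[\frac{1}{n}\tr H^p\right] \longrightarrow \frac{1}{kp+1}\binom{(k+1)p}{p}, \qquad \var\!\left(\frac{1}{n}\tr H^p\right) \longrightarrow 0$$
for each fixed $p$, the right-hand limit being the $p$-th Fuss--Catalan number. One expands $\tr H^p$ as a sum over closed walks of length $2kp$ on a $(k+1)$-partite multigraph with vertex sets $\{1,\ldots,n\}$ and groups by the partition the walk induces on the vertex labels; standard random-matrix combinatorics (cf.~Hiai--Petz, or \cite{GTprod}) show that only tree-like pairings survive at the leading order $n^{kp+1}$, and the resulting count is exactly the $p$-th Fuss--Catalan number. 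For the universality step, under the $(2+\eps)$-moment hypothesis one truncates entries at scale $n^{1/2-\delta}$ and recenters; a Hoffman--Wielandt bound handles the discrepancy in $\mu_n$, and a Lindeberg-type replacement (or the direct observation that only the first two moments of $\xi$ contribute at leading order of the trace) reduces to the Gaussian case.

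For the eigenvalue statement I would apply Hermitization: the logarithmic potential of $\mu_n^{\mathrm{eig}} = \frac{1}{n}\sum_i \delta_{z_i}$ satisfies
$$U_{\mu_n^{\mathrm{eig}}}(z) = -\frac{1}{n}\log|\det(M-zI)| = -\frac{1}{2n}\sum_{i=1}^n \log\sigma_i(M-zI)^2,$$
so the task reduces to proving convergence in probability of this quantity at Lebesgue-a.e.~$z \in \C$, after which applying $-\frac{1}{2\pi}\Delta$ in the distributional sense extracts the limiting density. A linearization trick (or a direct moment computation for $(M-zI)(M-zI)^*$ tracking $z$ as a perturbation along the cycles) identifies the limiting singular value distribution of $M-zI$ by the method of the first paragraph; on the free probability side this limit is controlled by the free multiplicative convolution $\mu_{MP}^{\boxtimes k}$ shifted by $z$, and the Voiculescu $S$-transform calculus (Haagerup--Larsen-type formulas generalized to $k$-fold products) pins down its explicit $z$-dependence. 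Applying $-\frac{1}{2\pi}\Delta$ then yields exactly $\frac{1}{k\pi}|z|^{2/k-2}\1_{|z|\le 1}$.

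The main obstacle, as usual for Girko's method, is uniform integrability of $\log\sigma_i(M-zI)$ near zero: one needs a quantitative lower tail estimate of the form $\P(\sigma_{\min}(M-zI) \le n^{-A}) \to 0$ with enough decay to rule out the contribution of atypically small singular values. Under only the $(2+\eps)$-moment hypothesis this is the delicate input, requiring Rudelson--Vershynin-style anti-concentration for the least singular value together with a careful propagation of small-ball estimates through the product $M_1 \cdots M_k$. Once this tail control is secured, combined with a priori tightness of $\mu_n^{\mathrm{eig}}$ (from the spectral radius bound $\rho(M) \le \|M\|_{\mathrm{op}} = O(1)$ supplied by the first part), pointwise-in-$z$ convergence of $U_{\mu_n^{\mathrm{eig}}}$ and distributional continuity of $-\frac{1}{2\pi}\Delta$ combine to give weak convergence in probability of $\mu_n^{\mathrm{eig}}$ to the stated limit.
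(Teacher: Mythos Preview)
The paper does not prove this theorem. It is stated in the introduction purely as background from the literature on complex/real random matrices, to motivate the analogous questions for cokernels over $\Z$; all content is attributed to the cited references (G\"otze--Tikhomirov, O'Rourke--Soshnikov, etc.), and no proof appears anywhere in the paper. So there is no ``paper's own proof'' to compare against.

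That said, your sketch is the standard route taken in the cited works: moments/Fuss--Catalan combinatorics for the singular value law, and Girko Hermitization together with least-singular-value bounds for the eigenvalue law. This is exactly the strategy of \cite{GTprod} and \cite{OS,ORSV}, so as a reconstruction of those proofs your outline is on target. One small quibble: the bound $\rho(M)\le \|M\|_{\mathrm{op}}$ does not immediately give tightness under only a $(2+\eps)$-moment assumption, since $\|M\|_{\mathrm{op}}$ need not be $O(1)$ almost surely without a fourth-moment hypothesis; tightness is instead obtained from the second-moment bound $\frac{1}{n}\sum_i |z_i|^2 \le \frac{1}{n}\tr(MM^*)$, which is $O(1)$ in expectation.
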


The groups $\cok(A)$ in the discrete setting are in fact structurally analogous to singular values in the continuous setting. In complex (or real) random matrix theory, singular value decomposition tells that for any $A \in \Mat_n(\BBC)$ there exist unitary $U,V \in U(n)$ (or, if $A$ is real, $O(n)$) so that $UAV$ is diagonal with nonnegative reals on the diagonal—the singular values. Analogously, for $A \in \Mat_n(\BBZ)$ there exist $U,V \in \GL_n(\BBZ)$ for which $UAV=\diag(a_1,\ldots,a_n)$ is diagonal with nonnegative integers on the diagonal. This result is known as Smith normal form, and the diagonal entries furthermore determine the isomorphism type of the cokernel by 
$$\cok(A) \simeq \bigoplus_{i=1}^n \Z/a_i\Z.$$
Theorem \ref{theorem:main:prod} can thus be viewed as a cokernel analog of the above results concerning the singular values and eigenvalues of a product of $k$ independent iid matrices. However, an important difference between the two settings is that the \emph{random} empirical spectral measures above limit (with scaling) to \emph{deterministic} measures, while in the cokernel setting there is no rescaling and the limit object is a \emph{random} group or collection of integers $a_i$. 

A related setting in classical random matrix theory, where the limits are not deterministic, is that of \emph{local limits}. Singular values of a random $A \in \Mat_n(\BBC)$ form a random collection of points on $\R_{\geq 0}$, and by zooming in at the scale of individual singular values as $n \to \infty$, one may obtain a random collection of infinitely many points. The limit object differs depending on whether one zooms in close to the largest singular value (the \emph{soft edge}), close to the smallest singular value (the \emph{hard edge}), or in between the two (the \emph{bulk}), and is a random collection of points with a rightmost point, leftmost point, or infinitely many points in both directions in each case respectively. For singular values of a product of a fixed number $k$ of complex Gaussian matrices, these scaling limits were computed for the bulk and soft edge in \cite{LWZ}. Unlike our Theorem \ref{theorem:main:prod} and Theorem \ref{thm:cplx_products} above, the limit in \cite{LWZ} does not depend on the number of products, and matches the one for a single matrix. However, at the hard edge the limit does depend on the number of products: the $n \to \infty$ limiting joint distribution of singular values of products $M_1, M_2M_1,\ldots,M_k \cdots M_1$ of $n \times n$ complex Gaussian matrices $M_i$ was computed in \cite{KZ}, and for other explicit cases outside the Gaussian on the hard edge limit was computed in \cite{KS,KKS}. The work \cite{KZ} is probably the closest complex analogue of our Theorem \ref{theorem:main:joint}.




\subsection{Methods, moments and Hall-Littlewood polynomials.} Previous works such as \cite{NgW-iid,NgW-lap} and especially \cite{W0,W1} show that the law of $\cok(M)$ converges to some universal distribution\footnote{Depending on the class of $M$; symmetric matrices \cite{W0} or rectangular matrices \cite{W1} yield different limiting laws for the cokernels.} by the following general strategy:
\begin{enumerate}[(i)]
  \item Find a candidate universal random group $G$ and compute its \emph{moments} $\E[\#\Sur(G,H)]$ for each finite abelian group $H$ with $|H|$ divisible by the appropriate primes. 
  \label{item:univ}
  \vskip .05in
  \item Compute the asymptotics of the moments $\E[\#\Sur(\cok(M),H)]$ and check they agree. \label{item:asymp}
  \vskip .05in
  \item Show that the moments determine the distribution. \label{item:determine}
\end{enumerate}
For Theorem \ref{theorem:main:prod}, we follow exactly this strategy: the computation \eqref{item:univ} is in Section \ref{sect:Haar}, the asymptotics \eqref{item:asymp} are in Section \ref{section:prod:moments}, and for \eqref{item:determine} we slightly strengthen existing moment determinacy results of \cite{W0} and combine these ingredients to prove the theorem in Section \ref{sect:comparison}. For Theorem \ref{theorem:main:joint}, however, we must introduce an appropriate notion of joint moments of a sequence of random groups. We are able to generalize \eqref{item:asymp} and \eqref{item:determine} to joint moments
\begin{equation}\label{eq:joint_moment_def}
\E[\#\Sur(\cok(M_1),G^{(1)}) \cdots \#\Sur(\cok(M_1 \cdots M_k),G^{(k)})],
\end{equation}
see Theorems \ref{theorem:jointsur:k} and \ref{theorem:t:distribution:joint}. We find it helpful from an expository standpoint to prove Theorem \ref{theorem:main:prod} separately beforehand, as many ingredients are shared, and for this result \eqref{item:asymp} and \eqref{item:determine} correspond to Theorems \ref{theorem:surmoment:k} and \ref{theorem:t:distribution}. In both cases, we rely heavily on an existing analytic result \cite[Theorem 8.2]{W0} proven in a related context. It is also worth noting that a generalization of moment determinacy \eqref{item:determine} to joint moments of multiple groups as defined by \eqref{eq:joint_moment_def} was carried out independently in \cite{Lee2}, which appeared shortly after the first posting of the present paper, and applied to different joint distributions. 

For \eqref{item:univ}, the candidate for the joint distribution of cokernels comes from previous work \cite{VP21limits} (specifically Corollary 3.4) in the setting of random matrices over the $p$-adic integers $\Z_p$. The analogous cokernel joint distribution corresponds to the distribution in Theorem \ref{theorem:main:joint} when $P=\{p\}$. However, in \cite{VP21limits} it was phrased in a nontrivially equivalent manner in terms of Hall-Littlewood polynomials, certain symmetric polynomials in $n$ variables which encode harmonic analysis on the groups $\GL_n(\Z_p) \subset \GL_n(\mathbb{Q}_p)$ and (equivalently) combinatorics of abelian $p$-groups, see \cite[Chapters II, III, V]{Macd}. Previous to \cite{VP21limits}, Hall-Littlewood polynomials had been connected to the Cohen-Lenstra measure in \cite{L}, following their connection to an essentially equivalent measure arising in random matrix theory over finite fields in \cite{Fulman1999probabilistic} (see also \cite{Fulman2014cohen}). Recent applications to $p$-adic random matrix theory include \cite{DJ,Fulman2016hall,FK,VP21limits,VP22hall,VP22+universal}.

After \cite[Corollary 3.4]{VP21limits}, the subsequent work \cite[Theorem 1.4]{VP22hall} further gave explicit elementary formulas for this distribution, not featuring Hall-Littlewood polynomials. However, a more structural interpretation of these formulas was still lacking. Such an interpretation is furnished by the explicit group-theoretic formulation afforded by our Theorem \ref{theorem:seq_aut_intro}, finally placing the distribution in the context of similar ``$1/\#\Aut$'' distributions which have appeared previously in integer and $p$-adic random matrix theory. 

We have phrased our results in the group-theoretic language above, but Hall-Littlewood tools continue to be useful in our computations for \eqref{item:univ} of the moments of the limiting distributions, in Section \ref{sect:Haar}. To this end, Section \ref{sec:HL_and_groups} states many basic results translating between Hall-Littlewood and group-theoretic notation, and some purely group-theoretic results, all of which are not difficult to derive from \cite{Mac} but many of which we are not aware of in the random matrix literature. We hope that the dictionary we give there, between Hall-Littlewood formulas and moments of abelian groups and maps between them, will be useful in the field beyond our matrix product setting. We note also that the analogies between cokernels and singular values mentioned above are somewhat cleaner with cokernels of $p$-adic---rather than integral---matrices, with structurally identical formulas appearing in both settings in terms of either Hall-Littlewood polynomials or the analogous special functions on the complex side, see \cite{VP21limits} and the references therein.

For \eqref{item:asymp}, as mentioned, our main contributions are Theorem \ref{theorem:surmoment:k} and Theorem \ref{theorem:jointsur:k}. Our proofs of these results focus on the evolution of ``code" and ``non-code" vectors after the application of each random matrix $M_i$ in the product. Roughly speaking, for a code vector $\Bv$, the vector $M_i\Bv$ is close to being a random uniform vector, and hence the main contribution in the moment computation comes from these vectors. For the non-code vectors $\Bv$, the laws of $M_i\Bv$ are intractable, but fortunately we can avoid this by relying on the fact that most vectors are codes. In a way, this approach is similar to \cite{Ng-L}, where similar dynamical aspects of ``structured" and ``non-structured" vectors were studied. The evolution in the joint distribution setting is more complicated, as one has to keep track of many code and non-code vectors at the same time. As such, for expository purposes we will present the simpler case $k=2$ first, and then use induction to proceed further.  
    
Lastly, for (iii) our contributions are Theorem \ref{theorem:t:distribution}, Theorem \ref{theorem:t:distribution:joint} and Theorem \ref{t:distribution:joint'}. Although our approach mainly follows \cite[Theorem 8.3]{W0}, these results, especially Theorem \ref{theorem:t:distribution:joint} and Theorem \ref{t:distribution:joint'} require some non-trivial modifications as our focus is on the joint moments, where the growth rates are not straightforward to check. We hope that our joint moment comparison result, together with the developments in \cite{W0,W1} (see also \cite{WICM}), will provide useful tools to prove universality.

\subsection{Plan of paper.} In Section \ref{sect:support} we state many basic definitions and results from \cite{W1} pertaining to the moment method for abelian groups. In Sections \ref{section:prod:moments} and \ref{section:joint:moments} we compute the moments and joint moments of matrix products, needed for Theorems \ref{theorem:main:prod} and \ref{theorem:main:joint} respectively (while the latter theorem implies the former, for simplicity of exposition we usually prove needed results for the former first). General background on Hall-Littlewood polynomials and processes is in Section \ref{sec:HL_background}, and we relate it to abelian $p$-groups in Section \ref{sec:HL_and_groups}. We use this to compute the moments and joint moments of the limit distributions of Theorems \ref{theorem:main:prod} and \ref{theorem:main:joint} in Section \ref{sect:Haar}. In Sections \ref{sect:comparison} and \ref{sect:comparison:2} we combine these ingredients to prove Theorems \ref{theorem:main:prod} and \ref{theorem:main:joint} respectively, along with their analogues for $\Z_p$. In Section \ref{sec:automorphisms} we set up and prove Theorem \ref{theorem:seq_aut_intro}. Finally, in Section \ref{sec:fp} we reduce to $\F_p$ and prove Theorem \ref{thm:prod:rank}.

\subsection{Acknowledgements.}

We thank Melanie Matchett Wood for helpful discussions and for asking about interpretations of the distribution of \cite[Corollary 3.4]{VP21limits} in terms of automorphisms, and the anonymous referees for many helpful questions and comments. RVP also thanks Alexei Borodin for discussions and feedback, Alisa Knizel for asking the same question about automorphisms, and Oron Propp for helpful discussions on characterizing automorphism classes of sequences of modules. HN was supported by NSF CAREER grant DMS-1752345, and RVP was supported by an NSF Graduate Research Fellowship under grant \#$1745302$.

\section{Supporting lemmas}\label{sect:support}
Throughout this section fix $a \in \BBZ_{> 1}$ and set $R= \Z/a\Z$. Let $V=R^n$ with standard basis $\Bv_i,1 \leq i \leq n$. For $\sigma \subset [n]$ we denote by $V_{\sigma^c}$ the submodule generated by $\{\Bv_i: i \in \sigma^c\}$. Throughout the paper, to declutter notation we will write $(x_1,\ldots,x_n) \in R^n$ for usual (column) vectors, and similarly for vectors in e.g. $G^n$ where $G$ is a group, rather than using the notation $(x_1,\ldots,x_n)^T$.

\begin{definition}\label{def:R_alpha_balanced}
Given real $\alpha \in (0,1/2]$, we say an $R$-valued random variable $\xi$ is \emph{$\al$-balanced} if for every prime $p|a$ we have
\begin{equation}\label{eqn:alpha_R}
\max_{r \in \Z/p\BBZ} \P(\xi\equiv r \pmod{p}) \le 1-\al.
\end{equation}
\end{definition}

Clearly if $\xi$ is a $\Z$-valued $\alpha$-balanced random variable as in Definition \ref{def:alpha_balanced}, then $\xi \pmod{a}$ is an $\alpha$-balanced $R$-valued random variable as in Definition \ref{def:R_alpha_balanced}. Hence the random matrices of Theorems \ref{theorem:main:prod} and \ref{theorem:main:joint}, reduced modulo $a$, have iid $\alpha$-balanced entries in $R$. From this section through Section \ref{section:joint:moments} we will work in this setting, and work with abelian groups $G$ with exponent dividing $a$ (i.e. $R$-modules). Most of the results below are from \cite{W1}.

\subsection{Codes} 
\begin{definition} Given $w \le n$, we say that $F \in \Hom(V,G)$ is a code of distance $w$ if for every $\sigma \subset [n]$ with $|\sigma| <w$ we have $F (V_{\sigma^c})=G$.
\end{definition}

Sometimes it is convenient to identify $F$ with the vector $(F(\Bv_1),\dots, F(\Bv_n)) \in G^n$, and we will usually abuse notation and view $F$ as a vector rather than a map. In particular, if $X = (x_1,\ldots,x_n) \in R^n$ is a vector, we write $\lang F, X \rang := \sum_{i=1}^n x_i F(\Bv_i)$; note this is not a usual dot product because $(F(\Bv_1),\dots, F(\Bv_n)) \in G^n$ and $(x_1,\ldots,x_n) \in R^n$ live in different spaces, though the formula is the same. If $M$ is an $n \times n$ matrix with entries in $R$, then for any $R$-module $G$, $M$ defines a linear map $G^n \to G^n$ by usual matrix multiplication, and we write $MF$ for the image of the vector $(F(\Bv_1),\dots, F(\Bv_n)) \in G^n$ under this map.

It is convenient to work with codes because the random walk $S_k = \sum_{i=1}^k x_i F(\Bv_i)$ (in discrete time indexed by $k=1,2,\ldots,n$) spreads out in $G$ very fast, as the following lemma shows. 

\begin{lemma}\cite[Lemma 2.1]{W1}\label{lemma:code:single:1} Assume that $x_i\in R$ are iid copies of $\xi$ satisfying \eqref{eqn:alpha_R}. Then for any code $F$ of distance $\delta n$ and any $g\in G$,
$$\left|\P(\lang F, X \rang  =g) - \frac{1}{|G|}\right| \le \exp(-\al \delta n/a^2),$$
where $X=(x_1,\dots,x_n)$.
\end{lemma}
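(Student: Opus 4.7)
The plan is to apply Fourier inversion on the finite abelian group $G$. Writing out the character decomposition of the indicator of $\{g\}$,
\begin{equation*}
\P(\lang F, X\rang = g) - \frac{1}{|G|} = \frac{1}{|G|}\sum_{\psi \neq 1} \overline{\psi(g)}\, \E[\psi(\lang F, X\rang)] = \frac{1}{|G|} \sum_{\psi \neq 1} \overline{\psi(g)} \prod_{i=1}^n \E[\psi(x_i F(\Bv_i))],
\end{equation*}
where the factorization uses that $x_1,\dots,x_n$ are iid. For each $\psi \in \widehat G$ and $i \in [n]$, the map $\chi_{\psi,i} : R \to \mathbb{C}^\times$ given by $r \mapsto \psi(r F(\Bv_i))$ is a character of $R$, and the triangle inequality reduces the task to bounding $\frac{1}{|G|}\sum_{\psi \neq 1} \prod_{i=1}^n |\E[\chi_{\psi,i}(\xi)]|$.

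The code hypothesis enters through a combinatorial claim: for every nontrivial $\psi$, the set $S_\psi := \{i \in [n] : \chi_{\psi,i} \text{ is nontrivial}\}$ has cardinality at least $\delta n$. Indeed, if $|S_\psi| < \delta n$, take $\sigma = S_\psi$; then for all $i \in \sigma^c$ and all $r \in R$ we have $\psi(r F(\Bv_i)) = 1$, so $\psi$ vanishes on the submodule $F(V_{\sigma^c}) \subseteq G$. But $F$ being a code of distance $\delta n$ forces $F(V_{\sigma^c}) = G$, contradicting $\psi \neq 1$.

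The remaining ingredient is a quantitative Fourier estimate for characters of $R = \Z/a\Z$: for every nontrivial character $\chi$ of $R$ and every $\alpha$-balanced $\xi$, one needs $|\E[\chi(\xi)]| \leq 1 - \alpha/a^2$. I would prove this by writing $\chi(t) = e^{2\pi i s t/a}$ with $1 \leq s \leq a-1$, choosing a prime $p \mid a$ dividing $a/\gcd(s,a)$ (which exists since $\chi \neq 1$), and combining the $\alpha$-balanced assumption at $p$ with the angular separation of the relevant roots of unity to bound $|\E[\chi(\xi)]|^2 = \E[\chi(\xi-\xi')]$ away from $1$ by at least $\alpha/a^2$. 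With this in hand, each product $\prod_i |\E[\chi_{\psi,i}(\xi)]|$ has at least $\delta n$ factors of size at most $1 - \alpha/a^2$, hence is at most $(1 - \alpha/a^2)^{\delta n} \leq \exp(-\alpha \delta n/a^2)$; summing over the $|G|-1$ nontrivial $\psi$ and dividing by $|G|$ gives the stated bound.

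The main obstacle is the Fourier estimate on characters of $R$: the $\alpha$-balanced hypothesis is stated only for reductions modulo primes $p \mid a$, whereas a general character of $R$ may have prime-power order $p^j$ and be nontrivial only on a small subgroup, so transferring the imbalance of $\xi$ mod $p$ into a uniform deviation bound for $|\E[\chi(\xi)]|$ requires carefully passing to an appropriate quotient, and this is the source of the $a^2$ in the exponent.
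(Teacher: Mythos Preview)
The paper does not give its own proof of this lemma; it is quoted directly from \cite[Lemma 2.1]{W1}. Your Fourier-analytic argument is the standard route and is essentially what Wood does in the cited reference: character expansion on $G$, factorization over the independent coordinates, the code hypothesis forcing at least $\delta n$ nontrivial factors for each $\psi\neq 1$, and a uniform bound $|\E[\chi(\xi)]|\le 1-\alpha/a^2$ for nontrivial characters $\chi$ of $R$.

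Your sketch of the character estimate is correct. To make the constants explicit: with $m=a/\gcd(s,a)$ and a prime $p\mid m$, one has $\P(\xi\not\equiv\xi'\pmod m)\ge\P(\xi\not\equiv\xi'\pmod p)\ge\alpha$ from the $\alpha$-balanced hypothesis (since $\sum_r q_r^2\le\max_r q_r\le 1-\alpha$), and on that event $1-\Re\chi(\xi-\xi')\ge 1-\cos(2\pi/m)\ge 8/m^2\ge 8/a^2$. This gives $1-|\E[\chi(\xi)]|^2\ge 8\alpha/a^2$, comfortably stronger than what you need. So the proposal is correct and matches the cited proof.
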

In what follows, if not specified otherwise, $X$ is always understood as the random vector $(x_1,\dots, x_n)$ where $x_i$ are iid copies of $\xi$ satisfying \eqref{eqn:alpha_R} as in Lemma \ref{lemma:code:single:1}.

Using the above result, it is not hard to deduce the following matrix form.
\begin{lemma}\cite[Lemma 2.4]{W1}\label{lemma:code:single:matrix:1} Assume that the entries of $M$ of size $n$ are iid copies of $\xi$ satisfying \eqref{eqn:alpha_R}. For code $F$ of distance $\delta n$, for any vector $A \in G^n$ 
$$\left|\P(M F = A) - \frac{1}{|G|^{n}}\right| \le \frac{K \exp(-cn)}{|G|^{n}},$$
where $K,c$ depend on $a, G, \al$ and $\delta$.
\end{lemma}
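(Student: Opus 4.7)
\textbf{Proof plan for Lemma \ref{lemma:code:single:matrix:1}.} The key observation is that the statement reduces to an independence argument combined with Lemma \ref{lemma:code:single:1}. Identifying $F$ with the tuple $(F(\Bv_1),\ldots,F(\Bv_n))$ and writing $M\Bv_i$ for the $i$-th column of $M$, we have
\[
MF = (F(M\Bv_1),\ldots,F(M\Bv_n)).
\]
Since the entries of $M$ are iid, its columns $M\Bv_1,\ldots,M\Bv_n$ are independent random vectors in $R^n$, each with iid coordinates distributed as $\xi$. Writing $A = (A_1,\ldots,A_n) \in G^n$, this yields the factorization
\[
\P(MF = A) = \prod_{i=1}^n \P\bigl(\lang F, M\Bv_i \rang = A_i\bigr).
\]

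Each factor on the right is of exactly the form controlled by Lemma \ref{lemma:code:single:1}: since $F$ is a code of distance $\delta n$, we obtain
\[
\P\bigl(\lang F, M\Bv_i \rang = A_i\bigr) = \frac{1}{|G|}(1 + \epsilon_i), \qquad |\epsilon_i| \le |G| \exp(-\alpha \delta n / a^2),
\]
for each $1 \leq i \leq n$. The remaining task is to show that $\prod_{i=1}^n (1+\epsilon_i) = 1 + O(\exp(-cn))$ for suitable $c>0$ and implicit constant depending on the allowed parameters.

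This is routine: set $\eta_n := |G|\exp(-\alpha\delta n/a^2)$, which tends to $0$ as $n\to\infty$. For $n$ large enough that $\eta_n \le 1/2$, the standard bound $|\log(1+x)| \le 2|x|$ for $|x|\le 1/2$ gives
\[
\Bigl|\sum_{i=1}^n \log(1+\epsilon_i)\Bigr| \le 2n\eta_n = 2n|G|\exp(-\alpha \delta n/a^2),
\]
and exponentiating yields $|\prod_{i=1}^n(1+\epsilon_i) - 1| \le K' n \exp(-\alpha \delta n/a^2)$ for some $K'$ depending on $a,|G|,\alpha,\delta$. Choosing any $c < \alpha\delta/a^2$, the factor $n$ is absorbed and we obtain $K\exp(-cn)$; for the finitely many small values of $n$ where $\eta_n > 1/2$, the trivial bound $|\prod(1+\epsilon_i) - 1| \le 2$ is absorbed by enlarging $K$. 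Dividing through by $|G|^n$ gives the claimed estimate.

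There is no serious obstacle: the argument is essentially just independence of columns plus a log-sum bookkeeping exercise. The only point to be careful about is that the one-step error $|G|\exp(-\alpha\delta n/a^2)$ from Lemma \ref{lemma:code:single:1} need not be less than $1$ for small $n$, but since the final constants $K,c$ are allowed to depend on $a,|G|,\alpha,\delta$, this causes no difficulty.
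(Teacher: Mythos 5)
Your proof is correct and is essentially the intended derivation: the paper cites this as \cite[Lemma 2.4]{W1} and remarks only that it follows from Lemma \ref{lemma:code:single:1} ``in matrix form,'' which is exactly the factorization over the (iid) columns $M\Bv_i$ followed by the telescoping estimate on $\prod_i(1+\epsilon_i)$ that you carry out. You also handle correctly the minor subtlety that the per-column error $|G|\exp(-\alpha\delta n/a^2)$ need not be small for finite $n$, absorbing the small-$n$ cases into the constant $K$.
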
  

\begin{remark}
In our applications, $G$ will always be a fixed group (or perhaps summed over a finite collection of groups), so the dependence of the constants on $G$ which we allow in Lemma \ref{lemma:code:single:matrix:1} and similar results does not create any issue with our asymptotics.
\end{remark}

We will also need the following useful result.
\begin{lemma}\label{lemma:code:subgp} Let $\delta$ be sufficiently small. Assume that $F\in Hom(V,G)$ is a code of distance $\delta n$. Assume that the entries of the matrix $M$ of size $n$ are iid copies of $\xi$ satisfying \eqref{eqn:alpha_R}. Then for any $H \le G$
$$\P(M F \mbox{ is a code of distance $\delta n$ in $H_{}$}) = |H_{}|^{n} \frac{1+ O( \exp(-c''n))}{|G|^n},$$
where $c''$ depends on $a,G,H, \delta, \al$.
\end{lemma}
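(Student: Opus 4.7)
The plan is to combine the per-tuple probability estimate in Lemma \ref{lemma:code:single:matrix:1} with an elementary counting bound on how many elements of $H^n$ fail to be codes of distance $\delta n$ in $H$. Identifying $MF$ with the tuple $(F(M\Bv_1),\ldots,F(M\Bv_n))$, the event ``$MF$ is a code of distance $\delta n$ in $H$'' requires first that $MF\in H^n$, and second that for every $\sigma\subset[n]$ with $|\sigma|<\delta n$ the entries $\{(MF)_i:i\in\sigma^c\}$ generate $H$ as an $R$-module. By Lemma \ref{lemma:code:single:matrix:1}, each individual $A\in G^n$ is hit by $MF$ with probability $|G|^{-n}(1+O(e^{-cn}))$, with a uniform implicit constant. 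So the task reduces to counting the number of $A\in H^n$ that are codes of distance $\delta n$ in $H$, and then multiplying by this per-tuple probability.

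First I would count the non-codes in $H^n$. If $A\in H^n$ is not a code of distance $\delta n$ in $H$, then there exists $\sigma\subset[n]$ with $|\sigma|<\delta n$ such that $(A_i)_{i\in\sigma^c}$ lies inside some maximal proper submodule of $H$. The maximal proper submodules of $H$ correspond to hyperplanes in $H/pH$ for primes $p\mid |H|$, so their number $N(H)$ is finite and depends only on $H$, and each has index at least $p_{\min}(H):=\min\{p:p\mid |H|\}\geq 2$ in $H$. By the union bound,
\begin{equation}
\#\{\text{non-codes in }H^n\}\;\leq\;\sum_{|\sigma|<\delta n} N(H)\cdot |H|^{|\sigma|}\cdot \bigl(|H|/p_{\min}(H)\bigr)^{n-|\sigma|}\;\leq\;N(H)\binom{n}{\leq\delta n}|H|^{n}p_{\min}(H)^{-(1-\delta)n}.
\end{equation}
Using the entropy bound $\binom{n}{\leq\delta n}\leq 2^{H(\delta)n}$, the right-hand side is $|H|^{n}e^{-c_1 n}$ for some $c_1=c_1(H,\delta)>0$, provided $\delta$ is small enough relative to $H$ (specifically so that $H(\delta)\log 2<(1-\delta)\log p_{\min}(H)$). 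Hence the number of codes of distance $\delta n$ in $H$ inside $H^n$ is $|H|^{n}(1+O(e^{-c_1 n}))$.

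Combining, and writing $\CC\subset H^n$ for the set of codes of distance $\delta n$ in $H$,
\begin{equation}
\P\bigl(MF\in\CC\bigr)\;=\;\sum_{A\in\CC}\P(MF=A)\;=\;|\CC|\cdot\frac{1+O(e^{-cn})}{|G|^{n}}\;=\;\frac{|H|^{n}}{|G|^{n}}\bigl(1+O(e^{-c''n})\bigr),
\end{equation}
where $c''=\min(c,c_1)/2$ (say) and the hidden constant depends on $a,G,H,\delta,\alpha$. The main (minor) obstacle is the threshold on $\delta$: one must choose $\delta$ small enough depending on $H$ so that the entropy factor beats the $p_{\min}(H)^{-(1-\delta)n}$ savings — this is precisely the hypothesis ``$\delta$ sufficiently small'' in the statement, and is the reason the constants in the conclusion depend on $H$ as well as on $G$.
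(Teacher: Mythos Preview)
Your proof is correct and follows essentially the same approach as the paper: apply Lemma~\ref{lemma:code:single:matrix:1} to get a uniform per-tuple probability $|G|^{-n}(1+O(e^{-cn}))$, count codes of distance $\delta n$ inside $H^n$ by a union bound over small $\sigma$ and proper subgroups of $H$, and multiply. The paper's Claim~\ref{claim:code:count} does exactly your counting step, union-bounding over all proper subgroups rather than just maximal ones (both give a finite, $H$-dependent number of choices with index $\geq 2$), and then combines as you do.
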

\begin{proof}[Proof of Lemma \ref{lemma:code:subgp}] First, by Lemma \ref{lemma:code:single:matrix:1}, for each $A$ a code of distance $\delta n$ of $H_{}$ we have 
$$\P(M F=A) =\frac{1+ O(K \exp(-c n))}{|G|^n}.$$
It remains to count the number of codes of distance $\delta n$ in $H_{}$. 

\begin{claim}\label{claim:code:count} Let $\CC(H)$ be the number of codes (defined as $F(V)$) of distance $\delta n$ in $H$. We have 
$$|\CC(H)| = (1+K'\exp(-c'_\delta n))|H|^n,$$
where $K'$ depends on $H$ and $c_\delta$ depends on $\delta$.
\end{claim}
\begin{proof} Let $g_1,\dots, g_n$ be chosen independently uniformly from $H$. For each $I\subset [n]$ an index set of size $n -\lfloor \delta n \rfloor$, and for each $H'$ a proper subgroup of $H$, let $\CE_{I,H'}$ be the event that $g_i \in H'$ for all $ i\in I$. Then clearly $\P(\CE_{I,H'}) = (|H'|/|H|)^{|I|}$. Taking a union bound over the choices of $I \in \binom{[n]}{n-\lfloor \delta n\rfloor}$ and over $H' < H$ we obtain a bound
$$K'(1/2)^{n-\lfloor \delta n \rfloor} \times \binom{n}{\lfloor \delta n \rfloor},$$
using that $|H'|/|H| \leq 1/2$. Since we assume that $\delta$ is sufficiently small, the above is bounded by $K'\exp(-c_\delta n)$ for some $K'$ and $c'_\delta$ as in the statement. 
 \end{proof}
To complete the proof of Lemma \ref{lemma:code:subgp} we have 
 $$\frac{1+ O(K \exp(-c n))}{|G|^n} \times  (1+O(K'\exp(-c_\delta'n))|H_{}|^n =  |H_{}|^{n} \frac{1+ O( \exp(-c'' n))}{|G|^n}.$$ 
\end{proof}

\subsection{Non-codes} Next, for non-code $F$, the random walk $\lang F, X \rang $ does not converge quickly to the uniform distribution on $G$. However it is likely to be uniform over the subgroup where the restriction of $F$ is a code. 

\begin{definition}
For $D =\prod_i p_i^{e_i}$ let 
$$\ell(D):= \sum_i e_i.$$ 
\end{definition}

In all results introduced below we remark that $F$ is not necessarily a surjection. 

\begin{definition}\label{def:depth} For a real $\delta>0$, the $\delta$-depth of $F \in \Hom(V,G)$ is the maximal positive integer $D$ such that there exists $\sigma \subset [n]$ with $|\sigma| < \ell(D) \delta n$ such that $D = |G/F(V_{\sigma^c})|$, or is 1 if there is no such $D$.
 \end{definition}
So roughly speaking the $\delta$-depth measures the maximum of $|G/F(V_{\sigma^c})|$ over $\sigma$ of size significantly smaller than $\delta n$. The depth is large if there exists such $\sigma$ where $F(V_{\sigma^c})$ is a small subgroup of $G$. The reason for this definition of depth is the following lemma, which shows that depth encodes how much one has to restrict $F$ to obtain a code.

\begin{lemma}\label{lemma:restrict_to_code}
If $F \in \Hom(V,G)$ has $\delta$-depth $D>1$, and $\sigma \subset [n]$ is such that $D = |G/F(V_{\sigma^c})|$ and $|\sigma| < \ell(D) \delta n$, then the restriction $F|_{V_{\sigma^c}} \in \Hom(V_{\sigma^c}, F(V_{\sigma^c}))$ is a code of distance $\delta  |\sigma^c|$.
\end{lemma}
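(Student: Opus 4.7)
The plan is to prove the contrapositive by contradiction: suppose $F|_{V_{\sigma^c}}$ fails to be a code of distance $\delta|\sigma^c|$ in $F(V_{\sigma^c})$, and produce a witness that the $\delta$-depth of $F$ is strictly larger than $D$, violating maximality.

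Concretely, if $F|_{V_{\sigma^c}}$ is not a code of distance $\delta|\sigma^c|$, then by definition there exists $\tau\subset \sigma^c$ with $|\tau|<\delta|\sigma^c|$ such that $F(V_{\sigma^c\setminus\tau})\subsetneq F(V_{\sigma^c})$. First I would set $\sigma' := \sigma\cup\tau$, so that $V_{(\sigma')^c} = V_{\sigma^c\setminus\tau}$, and define $D' := |G/F(V_{(\sigma')^c})|$. Writing $m := |F(V_{\sigma^c})/F(V_{\sigma^c\setminus\tau})|\ge 2$, the multiplicativity of index gives $D' = Dm$, and the additivity of $\ell$ on products of integers gives $\ell(D') = \ell(D)+\ell(m)\ge \ell(D)+1$, since $m\ge 2$ forces $\ell(m)\ge 1$.

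The key step is then verifying $|\sigma'|<\ell(D')\delta n$, which would show that $\sigma'$ witnesses depth at least $D'>D$, contradicting the maximality of $D$. For this, I would combine $|\sigma|<\ell(D)\delta n$ (by hypothesis on $\sigma$) with $|\tau|<\delta|\sigma^c|\le \delta n$ to obtain
\[
|\sigma'| \le |\sigma|+|\tau| < \ell(D)\delta n + \delta n \le \ell(D)\delta n + \ell(m)\delta n = \ell(D')\delta n,
\]
using $\ell(m)\ge 1$ in the last inequality. This yields the contradiction and completes the proof.

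I do not expect any serious obstacle: the whole argument rests on two elementary facts, namely multiplicativity of the index in a chain $F(V_{\sigma^c\setminus\tau})\le F(V_{\sigma^c})\le G$ and additivity of $\ell$. The only mild subtlety to keep in mind is that $\ell(m)\ge 1$ as soon as $m\ge 2$, which is exactly what lets the additional term $|\tau|<\delta n$ be absorbed into the depth bound for $D'$; once this is noted the computation is immediate.
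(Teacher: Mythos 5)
Your proof is correct and follows essentially the same strategy as the paper: assume $F|_{V_{\sigma^c}}$ is not a code, enlarge $\sigma$ to $\sigma' = \sigma \cup \tau$, observe $\ell(D') \geq \ell(D)+1$, and check $|\sigma'| < \ell(D')\delta n$ to contradict maximality of the depth. In fact your bookkeeping is slightly cleaner: you correctly use $|\tau| < \delta|\sigma^c| \leq \delta n$, whereas the paper writes the intermediate (and erroneous) chain $|\eta| < \delta|\sigma^c| < \delta(1-\ell(D)\delta)n$ --- the second inequality actually goes the wrong way since $|\sigma| < \ell(D)\delta n$ forces $|\sigma^c| > (1-\ell(D)\delta)n$ --- though this slip does not affect the final conclusion, which your computation confirms.
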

\begin{proof}
Suppose for the sake of contradiction that $F|_{V_{\sigma^c}}$ is not a code of distance $\delta |\sigma^c|$. Then there exists a set $\eta \subset \sigma^c$ with 
 $$|\eta| < \delta |\sigma^c| $$
such that $\Im(F|_{V_{(\eta \cup \sigma)^c}}) \subsetneq \Im(F_{V_{\sigma^c}})$. Hence $\tilde{D} := [G:\Im(F|_{V_{(\eta \cup \sigma)^c}})] > D$, and of course $D | \tilde{D}$. So
$$|\eta \cup \sigma| < \delta(n -|\sigma|) + |\sigma| = \delta n +(1-\delta)|\sigma| < \delta n + (1-\delta) \ell(D) \delta n <\delta (\ell(D)+1) n$$
and $\ell(\tilde{D}) \geq \ell(D)+1$, therefore 
$$|\eta \cup \sigma| < \ell(\tilde{D}) \delta n.$$
But this means that $\tilde{D}$ satisfies the condition in the definition of depth, and is larger than $D$, contradicting maximality, which completes the proof.
\end{proof}

\begin{lemma}\cite[Lemma 2.6]{W1}\label{lemma:non-code:count:1} 
The number of $F\in \Hom(V,G)$ with depth $D$ is at most
$$K \binom{n}{\lceil \ell(D) \delta n\rceil -1} |G|^n D^{-n+\ell(D) \delta n},$$
where $K$ depends on $a$ and $G$. 
\end{lemma}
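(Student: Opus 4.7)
\textbf{Proof plan for Lemma \ref{lemma:non-code:count:1}.} The strategy is a union bound over a "witness" pair $(\sigma, H)$: a subset $\sigma \subset [n]$ and an index-$D$ subgroup $H \leq G$ that together force $F$ to have depth at least $D$. To match the binomial factor in the stated bound exactly, I will arrange the witness $\sigma$ to have a single prescribed size $s_0 := \lceil \ell(D)\delta n\rceil - 1$.

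\textbf{Step 1 (Producing a witness of fixed size).} Suppose $F \in \Hom(V,G)$ has depth $D$. By Definition \ref{def:depth} there exist $\sigma \subset [n]$ and a subgroup $H := F(V_{\sigma^c}) \leq G$ with $|\sigma| < \ell(D)\delta n$ and $[G:H]=D$, so in particular $F(V_{\sigma^c}) \leq H$. Now extend $\sigma$ arbitrarily to a set $\sigma' \supseteq \sigma$ of size exactly $s_0$; since $s_0 \leq \ell(D)\delta n$ (and $s_0 \leq n$, by our standing smallness assumption on $\delta$) this is possible. Because $V_{(\sigma')^c} \subseteq V_{\sigma^c}$, we still have $F(V_{(\sigma')^c}) \leq H$. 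Hence every $F$ of depth $D$ admits a pair $(\sigma', H)$ with $|\sigma'|=s_0$, $[G:H]=D$, and $F(V_{(\sigma')^c}) \leq H$.

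\textbf{Step 2 (Counting such tuples).} I will bound the number of triples $(F,\sigma',H)$ of the type just described. The number of choices of $\sigma'$ is $\binom{n}{s_0}$. The number of index-$D$ subgroups of $G$ is bounded by a constant $K$ depending only on $G$ (and hence on $a$, since $G$ has exponent dividing $a$). For each fixed $(\sigma', H)$, the homomorphisms $F$ with $F(V_{(\sigma')^c}) \leq H$ are determined by arbitrary values in $G$ on the $s_0$ basis vectors indexed by $\sigma'$ and arbitrary values in $H$ on the remaining $n-s_0$ basis vectors, giving
\[
|G|^{s_0}\,|H|^{n-s_0} = |G|^{s_0}\bigl(|G|/D\bigr)^{n-s_0} = |G|^n\, D^{\,s_0 - n}.
\]
Multiplying out and using $s_0 \leq \ell(D)\delta n$ to convert $D^{s_0-n}$ to the stated exponent yields the claimed bound $K\binom{n}{s_0}|G|^n D^{-n+\ell(D)\delta n}$.

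\textbf{Main subtlety.} The only nontrivial point is the padding step that lets us fix $|\sigma'|=s_0$ rather than sum over all allowed sizes; otherwise one would pick up a spurious polynomial-in-$n$ factor from $\sum_{s \leq s_0}\binom{n}{s}$ that is not present in the statement. The crucial monotonicity is that enlarging $\sigma$ can only shrink $V_{\sigma^c}$ (and hence its image under $F$), so the containment $F(V_{(\sigma')^c}) \leq H$ is preserved, which is all one needs for the union-bound count. Everything else is routine enumeration.
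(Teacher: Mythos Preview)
Your proof is correct and is essentially the standard argument (the one in \cite[Lemma 2.6]{W1}); the paper itself does not supply a proof but simply cites that reference. The padding trick in Step~1 is exactly the right move to land on a single binomial coefficient rather than a sum, and your observation that $|\sigma| < \ell(D)\delta n$ forces $|\sigma| \le \lceil \ell(D)\delta n\rceil - 1 = s_0$ (since $|\sigma|$ is an integer) is what makes the extension possible.
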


\begin{lemma}\label{lemma:non-code:single:1} 
Let $F \in \Hom(V,G)$ have $\delta$-depth $D>1$ and $|G/F(V)|<D$. Then for any $g\in G$
$$\P(\lang F, X \rang   = g) \le (1-\al) \left(\frac{D}{|G|} + \exp(-\al \delta n/a^2)\right).$$

\end{lemma}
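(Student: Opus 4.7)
The plan is to exploit the $\delta$-depth of $F$ to split the domain into a large ``code part'' (on a complementary subspace $V_{\sigma^c}$) and a small ``bad part'' (on $V_\sigma$), then combine Lemma~\ref{lemma:code:single:1} on the code part with a one-coordinate anticoncentration argument on the bad part that uses the extra hypothesis $|G/F(V)| < D$.

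First I would apply Lemma~\ref{lemma:restrict_to_code} to obtain a subset $\sigma \subset [n]$ with $|\sigma| < \ell(D)\delta n$ such that $H := F(V_{\sigma^c})$ has $|G/H| = D$ and the restriction $F|_{V_{\sigma^c}}\in\Hom(V_{\sigma^c},H)$ is a code of distance $\delta|\sigma^c|$ in $H$. Decompose
$$\lang F, X\rang = S_\sigma + S_{\sigma^c}, \qquad S_\sigma := \sum_{i \in \sigma} x_i F(\Bv_i), \quad S_{\sigma^c} := \sum_{i \in \sigma^c} x_i F(\Bv_i),$$
and observe that $S_{\sigma^c} \in H$ deterministically, so $\lang F, X\rang = g$ forces $g - S_\sigma \in H$. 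Conditioning on $(x_i)_{i\in \sigma}$ and applying Lemma~\ref{lemma:code:single:1} to $F|_{V_{\sigma^c}}$ (valued in $H$ with $|H|=|G|/D$) then gives
$$\P(S_{\sigma^c} = g - S_\sigma \mid (x_i)_{i \in \sigma}) \;\le\; \frac{D}{|G|} + \exp(-\alpha \delta |\sigma^c|/a^2)$$
on $\{g - S_\sigma \in H\}$ and $0$ otherwise. Integrating reduces the lemma to proving $\P(S_\sigma \equiv g \pmod H) \le 1-\alpha$; the exponential $\exp(-\alpha\delta|\sigma^c|/a^2)$ matches the stated $\exp(-\alpha\delta n/a^2)$ after absorbing the $(1-\ell(D)\delta)$ factor coming from $|\sigma^c| \ge (1-\ell(D)\delta)n$, which is harmless provided $\delta$ is fixed small.

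For the anticoncentration of $S_\sigma$ modulo $H$, the hypothesis $|G/F(V)| < D = |G/H|$ forces $F(V) \supsetneq H$, so there must exist an index $i_0 \in \sigma$ with $\bar{g} := F(\Bv_{i_0}) + H \ne 0$ in $G/H$. Let $m$ be the order of $\bar g$: since $G$ has exponent dividing $a$, we have $m \mid a$ and $m > 1$, so fix any prime $p \mid m$ (hence $p \mid a$). Conditioning additionally on $(x_i)_{i \in \sigma \setminus \{i_0\}}$, the event $\{S_\sigma \equiv g \pmod H\}$ becomes a single equation $x_{i_0}\bar g = \bar h$ in $G/H$ for some deterministic $\bar h$. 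If $\bar h \notin \lang \bar g \rang$ the conditional probability is $0$; otherwise $\bar h = k_0 \bar g$ for some $k_0$, and the equation reduces to $x_{i_0} \equiv k_0 \pmod m$, which in particular forces $x_{i_0} \equiv k_0 \pmod p$. By the $\alpha$-balance condition \eqref{eqn:alpha_R} this residue class has probability $\le 1-\alpha$. Averaging over the conditioning yields the required bound.

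The main obstacle is recognising that the $1-\alpha$ saving cannot come from Lemma~\ref{lemma:code:single:1} applied to the restriction alone: that lemma only produces the term $D/|G| + \exp(-\alpha\delta|\sigma^c|/a^2)$. The strict inequality $|G/F(V)| < D$ is exactly what supplies an index $i_0 \in \sigma$ along which $F$ escapes $H$, and the $\alpha$-balance at a single prime dividing $a$ is precisely the tool needed to convert this escape into a uniform anticoncentration estimate. Everything else is routine conditioning.
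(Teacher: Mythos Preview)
Your argument is correct and follows essentially the same route as the paper's proof: split $V$ via a witnessing $\sigma$ from the definition of $\delta$-depth, apply Lemma~\ref{lemma:code:single:1} on $V_{\sigma^c}$ (via Lemma~\ref{lemma:restrict_to_code}) for the $D/|G|+\exp(\cdot)$ factor, and use a single coordinate $i_0\in\sigma$ with $F(\Bv_{i_0})\notin H$ for the $1-\alpha$ factor. Your anticoncentration step is in fact more carefully justified than the paper's (you explicitly reduce to a residue class modulo a prime $p\mid a$), and your remark about $|\sigma^c|$ versus $n$ in the exponential is a fair observation---the same proof of Lemma~\ref{lemma:restrict_to_code} actually yields code distance $\delta n$ rather than $\delta|\sigma^c|$, which recovers the stated bound exactly, but either version suffices downstream.
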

We remark that the assumption above is automatically true if $F$ is a surjection. This result is different from \cite[Lemma 2.7]{W1} in that $g$ is any element instead of just 0.

\begin{proof} We follow the proof of \cite[Lemma 2.7]{W1}. Pick $\sigma \subset [n]$ with $|\sigma| < \ell(D) \delta n$ such that $D=|G/F(V_{\sigma^c})|$. Let $H=F(V_{\sigma^c})$. As $|G/F(V)|<D$, we have $\sigma \neq \emptyset$. 
We write 
\begin{align*}
\P(\lang F, X \rang  =g) &= \P\left(\sum_{i\in \sigma} x_i f_i +\sum_{i\in \sigma^c} x_i f_i =g\right) = \P\left(\sum_{i\in \sigma} x_i f_i  \in H_g \wedge \sum_{i\in \sigma^c} x_i f_i = g- \sum_{i\in \sigma} x_i f_i \right) \\
&= \P\left(\sum_{i\in \sigma} x_i f_i  \in H_g\right) \P\left(\sum_{i\in \sigma^c} x_i f_i = g- \sum_{i\in \sigma} x_i f_i | \sum_{i\in \sigma} x_i f_i  \in H_g\right),
\end{align*}

where $H_g$ is the coset of $H$ containing $g$. Now as $|G/F(V)|<D$, there exists $i_0 \in \sigma$ such that $f_{i_0} \notin H$. Since $x_{i_0}$ is $\al$-balanced, for any fixed values of $x_i, i \in \sigma \setminus i_0$ we have using the randomness of $x_{i_0}$ that
$$\P_{x_{i_0}}\left(\sum_{i\in \sigma }x_i f_i \in H_g\right)  \le 1-\al.$$

Furthermore, by Lemma \ref{lemma:restrict_to_code} $F(V_{\sigma^c})$ is a code of distance $\delta n$ over $H$. Hence
 $$\left|\P\left(\sum_{i\in \sigma^c} x_i f_i = g- \sum_{i\in \sigma^c} x_i f_i | \sum_{i\in \sigma} x_i f_i  \in H_g\right) - \frac{1}{|H|}\right| \le  \exp(-\al \delta n/a^2).$$
Putting together we have 
$$\P\left(\sum_{i\in \sigma }x_i f_i \in H_g \wedge \sum_{i \in \sigma^c} x_i f_i =g-\sum_{i\in \sigma }x_i f_i\right) \le (1-\al) \left(\frac{1}{|H|} + \exp(-\al \delta n/a^2)\right).$$
\end{proof}

Using this result, we can obtain similar bound in matrix form, the same way \cite[Lemma 2.8]{W1} was deduced from \cite[Lemma 2.7]{W1}.
\begin{lemma}\label{lemma:non-code:1:matrix}
 If $F \in \Hom(V,G)$ has $\delta$-depth $D>1$ and $|G/F(V)|<D$ as in the previous lemma, then for any $A \in G^n$,
$$\P(MF = A) \le K \exp(-\al n)\frac{D^n}{|G|^n},$$
where $K$ depends on $a, G, \al$ and $\delta$.
\end{lemma}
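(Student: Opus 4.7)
The plan is to exploit the column-independence of $M$ to reduce the matrix statement to the single-vector estimate in Lemma \ref{lemma:non-code:single:1}. Since $MF$ denotes the composition $F \circ M$, one has $(MF)(\Bv_i) = F(M\Bv_i) = \lang F, M\Bv_i \rang$, where $M\Bv_i$ is the $i$-th column of $M$. The columns of $M$ are independent, each with entries iid copies of $\xi$ satisfying \eqref{eqn:alpha_R}, so writing $A = (g_1,\ldots,g_n)$ gives
$$\P(MF = A) = \prod_{i=1}^n \P(\lang F, M\Bv_i \rang = g_i).$$

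To each factor I apply Lemma \ref{lemma:non-code:single:1}, noting crucially that the bound there is uniform over the target element $g \in G$, so it may be applied in parallel with the distinct targets $g_1,\ldots,g_n$. This yields
$$\P(MF = A) \le (1-\al)^n \left(\frac{D}{|G|} + \exp(-\al \delta n/a^2)\right)^n = (1-\al)^n \left(\frac{D}{|G|}\right)^n \left(1 + \frac{|G|}{D} \exp(-\al \delta n/a^2)\right)^n.$$

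Finally I would clean up constants. From $(1-\al)^n \le e^{-\al n}$ one extracts the main exponential decay. For the last factor, since $D \ge 1$ and $|G|$ is a fixed finite group, the quantity $\tfrac{|G|}{D} \exp(-\al \delta n/a^2)$ decays exponentially in $n$, hence $\bigl(1 + \tfrac{|G|}{D}\exp(-\al \delta n/a^2)\bigr)^n$ is uniformly bounded by a constant $K$ depending only on $a, G, \al, \delta$ (using $(1+x)^n \le e^{nx}$ with $nx \to 0$). Combining gives exactly $K \exp(-\al n) D^n/|G|^n$.

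I do not anticipate a genuine obstacle here: the argument is essentially an independence-plus-union computation, parallel to the passage from \cite[Lemma 2.7]{W1} to \cite[Lemma 2.8]{W1}. The only point deserving care is verifying that Lemma \ref{lemma:non-code:single:1} is stated uniformly in $g$ (which is explicitly noted after its statement, in contrast to \cite[Lemma 2.7]{W1}), ensuring the product bound is legitimate when the targets $g_i$ vary with $i$.
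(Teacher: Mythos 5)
Your proof is correct and matches the paper's argument essentially step for step: factor $\P(MF=A)$ over the independent columns of $M$, apply Lemma \ref{lemma:non-code:single:1} to each factor (using the uniform-in-$g$ form), and then absorb the subleading $\exp(-\alpha\delta n/a^2)$ term into a constant $K$ via the same $e^{nx_n}$ bound. Nothing to add.
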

\begin{proof} By Lemma \ref{lemma:non-code:single:1},
\begin{align*}
\P(MF = A)  &= \P(\lang F, X_i \rang  = a_i, 1\le i\le n) \le  \left((1-\al) \left(\frac{D}{|G|} + \exp(-\al \delta n/a^2)\right)\right)^n.
\end{align*}
This is bounded above by 
$$\exp(n \log(1-\alpha)) \left(\frac{D}{|G|}\right)^n \exp(n(|G|/D)\exp(-\alpha\delta n/a^2)) \leq K \exp(-\alpha n) \left(\frac{D}{|G|}\right)^n,$$
where we have Taylor expanded the logarithm inside the first exponential and kept only the first term (the rest are also negative), and $K$ is the maximum over $n \geq 1$ of $e^{n(|G|/D)\exp(-\alpha\delta n/a^2)}$. 
\end{proof}

To complete this section we introduce two more definitions that will be crucial to our work. 

\begin{definition}\label{def:n_k}
For a given $k\ge 0$ we let $n_k(G)$ denote the number of sequences of nested subgroups 
$$0=H_0\le H_1 \le H_2 \le \dots \le H_{k-1} \le H_k=G.$$ 
\end{definition}

For projections onto direct summands, when $g=(g_1,\dots, g_k) \in G_1\oplus \dots \oplus G_k$ we write $\pi_i(g) =g_i$, and $\pi_I(g)=(g_i, i\in I)$.

\begin{definition}\label{def:m_k}
For a given $k\ge 0$ and given $k$ finite abelian groups $G_1,\dots, G_k$ we let $m_k(G_1, \dots, G_k)$ denote the number of sequences $(H_1, H_2,\dots, H_k)$ such that $H_k = G_k$ and for each $i=1,\ldots,k-1$,
\begin{itemize}
\item $H_i \le G_i \oplus \dots \oplus G_k$,
\item $\pi_i(H_i) = G_i$, and
\item
$\pi_{\{i+1,\dots, k\}}(H_i) \le H_{i+1}$. 
\end{itemize}
\end{definition}
In the simple case $k=2$, $m_2(G_1, G_2)$ is just the number of subgroups $G' \in G_1 \oplus G_2$ such that $\pi_1(G')=G_1$. Furthermore, for each $H \le G_i \oplus \dots \oplus G_k$ such that $\pi_i(H) = G_i$, we will abuse notation to let $m_{k-i}(H)$ denote the number of sequences $(H_i=H,H_{i+1},\dots, H_k)$ such that $H_k = G_k$ and, similarly to the above, for each $j=i,\ldots,k-1$ we have $H_j \le G_j \oplus \dots \oplus G_k$ and $\pi_j(H_j) = G_j$, as well as $\pi_{\{j+1,\dots, k\}}(H_j) \le H_{j+1}$. By this, we see that 
\begin{equation}\label{eqn:kk-1}
m_k(G_1, \dots, G_k) = \sum_{\substack{H_1\le G_1 \oplus \dots \oplus G_k \\ \pi_1(H_1)=G_1}} m_{k-1}(H_1)
\end{equation}
and in general for each $H_i$ (such that $H_i \le G_i \oplus \dots \oplus G_k$ and $\pi_i(H_i) = G_i$) we have
\begin{equation}\label{eqn:kk-1'}
m_{k-i}(H_i) = \sum_{\substack{\pi_{\{ i+1,\dots, k\}}(H_{i}) \le H_{i+1}\le G_{i+1} \oplus \dots \oplus G_k \\ \pi_{i+1}(H_{i+1})=G_{i+1}}} m_{k-i-1}(H_{i+1}).
\end{equation}

Finally, note also that as $H_1 \le G_1\oplus H_2 \le \dots \le G_1 \oplus \dots \oplus G_{k-1} \oplus H_k \le  G_1 \oplus \dots \oplus G_{k}$, we have
\begin{equation}\label{eq:m_n_bound}
m_k(G_1, \dots, G_k) \le n_k(G_1 \oplus \cdots \oplus  G_k).
\end{equation}

\section{Counting surjections for Theorem \ref{theorem:main:prod}}\label{section:prod:moments}

Let $a,R,V$ be as in the previous section. Throughout the section we write $\Hom(A,B)$ and  $\Sur(A,B)$ for the set of homomorphisms and surjective homomorphisms, respectively, from $A$ to $B$. 

\subsection{Set-up}\label{S:mom}

We know from \cite{W1} that to understand the distribution of $\cok (M)$, it suffices to determine the ``moments"\footnote{We refer the reader to \cite{W0,W1} for the history of these statistics.} of $\cok(M)$, i.e. the quantities $\E[\#\Sur(\cok (M), G)]$ for each finite abelian group $G$. 
To investigate each such moment, we recognize that each such surjection lifts to a surjection $V\ra G$ and so we have
\begin{equation}\label{E:expandF}
\E[\#\Sur(\cok (M), G)]=\sum_{F\in \Sur(V,G)}  \P(F(MV)=0 \mbox{ in $G$})= \sum_{F\in \Sur(V,G)}  \P(MF=0 \mbox{ in $G$}),
\end{equation}
where we view $F$ as a column vector $F=(F(\Bv_1),\dots, F(\Bv_n)) \in G^n$. By the independence of columns, we have
$$
\P(MF=0)= \prod_{j=1}^n \P(\lang F,X_j \rang)=0),
$$
where $X_1,\dots, X_n$ are rows of $M$. So in the case of a single matrix, ones must estimate these probabilities $\P(F(X_j)=0)$, which give the desired moments. In our situation we have random matrices $M_1, M_2,\dots, M_k$, and want to study $\P(M_1M_2 \cdots M_k  F=0)$ for surjections $F:V \to G$.

Recall $n_k(G)$ from Definition \ref{def:n_k}. Our key result in this section is a generalization of Lemma \ref{lemma:code:single:matrix:1} and Lemma \ref{lemma:non-code:1:matrix} (though in what comes later we will not use the result itself as stated below, but actually use several intermediate steps of its proof).
 
\begin{proposition}\label{prop:single:k} With the same assumption as in Theorem \ref{theorem:main:prod}, the following holds for $\delta$ sufficiently small: there exist $c,K$ depending on $k,\al, G,a,\delta$ such that

\begin{enumerate}[(i)]
\item  (Code) assume that $F$ spans $H_k=G$ and is a code of distance $\delta n$ in $G$. Then 
$$\Big |\P( M_1\cdots M_k  F=0) -\frac{n_k(G)}{|G|^n} \Big |\le  K \frac{\exp(-c n)}{|G|^n}.$$ 
\item (Non-code) Assume that $F$ spans $H_k=G$ and the $\delta$-depth of $F$ is $D_k\ge 2$. Then 
$$\P(M_1\cdots M_k  F=0) \le K \exp(-\al n/2)  \frac{D_k^n}{|G|^n}.$$
\end{enumerate}
 
\end{proposition}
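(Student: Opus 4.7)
The plan is to prove both parts simultaneously by induction on $k$, with base case $k=1$ being Lemmas \ref{lemma:code:single:matrix:1} and \ref{lemma:non-code:1:matrix} respectively (noting that $n_1(G)=1$). For the inductive step, I condition on the value $A = M_1 F = F \circ M_1 \in \Hom(V,G)$; since $M_1$ is independent of $M_2,\ldots,M_k$, this gives
\begin{equation*}
\P(M_1 M_2 \cdots M_k F = 0) = \sum_{A \in \Hom(V,G)} \P(M_1 F = A) \, \P(M_2 \cdots M_k A = 0),
\end{equation*}
and the inductive hypothesis is applied to $\P(M_2 \cdots M_k A = 0)$ with target group $H := \Im A$ and surjection $A: V \surj H$. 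Stratify the outer sum by $H \leq G$ and by whether $A$ is a code of distance $\delta n$ in $H$ or has $\delta$-depth $D \geq 2$ in $H$; each stratum is controlled by combining the appropriate inductive part with the single-matrix estimate for $\P(M_1 F = A)$.

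For part (i), Lemma \ref{lemma:code:single:matrix:1} gives $\P(M_1 F = A) = (1 + O(e^{-cn}))/|G|^n$ uniformly in $A$. Summing the inductive bound (i) over code $A$ and using Claim \ref{claim:code:count} to count the codes in each $H$ yields, via the telescoping identity $n_k(G) = \sum_{H \leq G} n_{k-1}(H)$ (obtained by peeling off the penultimate subgroup $H_{k-1}$ of a length-$k$ chain), the leading term $(1 + O(e^{-cn})) n_k(G)/|G|^n$. The non-code contribution, controlled using Lemma \ref{lemma:non-code:count:1} for the count of non-codes of depth $D$ and inductive hypothesis (ii) for the inner probability, is exponentially small once $\delta$ is taken small.

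For part (ii), Lemma \ref{lemma:non-code:1:matrix} applies because $F$ surjects onto $G$ (so $|G/F(V)|=1 < D_k$), giving $\P(M_1 F = A) \leq K e^{-\alpha n} D_k^n/|G|^n$. The inductive hypothesis furnishes the uniform bound $\P(M_2 \cdots M_k A = 0) \leq C n_{k-1}(H) D^n/|H|^n$ (with $D=1$ for codes, and part (ii) for depth $D \geq 2$). Stratifying as before and summing, together with $\sum_{H \leq G} n_{k-1}(H) = n_k(G)$, yields the claimed bound $K e^{-\alpha n/2} n_k(G) D_k^n/|G|^n$ after absorbing the depth-$D$ combinatorial factor into the exponential decay.

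The main technical point, appearing in both parts when handling non-code $A$, is that the product of the Lemma \ref{lemma:non-code:count:1} count (containing $\binom{n}{\lceil \ell(D)\delta n \rceil - 1} D^{-n+\ell(D)\delta n}$) with the inductive non-code probability (containing $D^n/|H|^n$) leaves a net factor $\binom{n}{\ell(D)\delta n}\, D^{\ell(D)\delta n}$ that must be dominated by the available exponential decay. Since $D \mid |G|$ and $\ell(D) \leq \log_2|G|$ are bounded, the estimate $\binom{n}{m} \leq (en/m)^m$ bounds this factor by $e^{O(\delta n \log(1/\delta))}$, which is absorbed by $e^{-\alpha n}$ (respectively $e^{-\alpha n/2}$) once $\delta$ is chosen small enough in terms of $k,\alpha,a,G$. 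Keeping this absorption uniform across the finitely many subgroups $H \leq G$ and possible depths $D$ is the only book-keeping delicacy; everything else is a straightforward telescoping.
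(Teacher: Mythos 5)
Your proposal is correct and follows essentially the same inductive scheme as the paper: both stratify by the spanned subgroup $H$ and the code/non-code dichotomy, use the same counting lemmas (Claim \ref{claim:code:count}, Lemma \ref{lemma:non-code:count:1}), and absorb the $\binom{n}{\lceil\ell(D)\delta n\rceil-1}D^{\ell(D)\delta n}$ factor into exponential decay for small $\delta$. The only cosmetic difference is the direction of the peeling: you condition on $A = F\circ M_1$ and apply the inductive hypothesis to $M_2\cdots M_k$, while the paper conditions on the event that $M_kF$ spans $H_{k-1}$ and is a code (resp.\ has depth $D_{k-1}$) and applies it to $M_1\cdots M_{k-1}$; since the $M_i$ are iid with iid entries these are equivalent, and summing $\P(M_1 F=A)$ over the $\approx|H|^n$ codes $A$ reproduces exactly the quantity the paper estimates via Lemma \ref{lemma:code:subgp}.
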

\begin{proof}
In what follows $K$ and $c$ may vary, and the implied constants in $O(.)$ are allowed to depend on $k,\al, G,a$ and $\delta$.  

We prove (i) and (ii) together by induction on $k$, assuming both (i) and (ii) hold for $k-1$ as the inductive hypothesis. When $k=1$, (i) and (ii) follow from Lemma \ref{lemma:code:single:matrix:1} and Lemma \ref{lemma:non-code:1:matrix} respectively. Next we consider $k\ge 2$.

{\bf Codes.} We first prove (i) by working with $F$ a code of distance $\delta n$. 

Let $H_{k-1}$ be a subgroup of $H_k=G$. We consider the event (in the $\sigma$-algebra generated by $M_k$) that $M_kF$ spans $H_{k-1}$ in two ways 
\begin{enumerate}[(1)]
\item $M_k F$ is a code of distance $\delta n$ in $H_{k-1}$; 
\item $M_k F$ is not a code of distance $\delta n$, and hence has $\delta$-depth $D_{k-1}\ge 2$ in $H_{k-1}$.
\end{enumerate}
For the first case, we apply the induction hypothesis for (i) to obtain
$$\Big|\P_{M_1,\dots, M_{k-1}}(M_1\dots M_{k-1}(M_k F)=0|  \text{$M_k F$ is $\delta n$ code in $H_{k-1}$}) -\frac{n_{k-1}(H_{k-1})}{|H_{k-1}|^n}\Big|\le K  \frac{\exp(-c n)}{|H_{k-1}|^n}.$$ 
For the second case, we also apply the induction hypothesis for (ii) to obtain
$$\P_{M_1,\dots, M_{k-1}}\left(M_1\dots M_{k-1}(M_k F)=0|  \text{ $M_k F$ has $\delta$-depth $D_{k-1}\ge 2$ in $H_{k-1}$}\right)\le K \exp(-c n)n_{k-1}(H_{k-1}) \frac{D_{k-1}^n}{|H_{k-1}|^n}.$$ 
Hence
\begin{align*}
&\ \P_{}\left(\prod_{i=1}^k M_i F=0, \text{$M_k F$ spans $H_{k-1}$}\right) =\P_{M_1,\dots, M_{k-1}}(M_1\dots M_{k-1}(M_k F)=0|  \text{$M_k F$ is $\delta n$-code in $H_{k-1}$})  \times \\
& \times \P(\text{$M_k F$ is $\delta n$-code in $H_{k-1}$})\\ 
& +\sum_{\substack{D_{k-1} \geq 2 \\ D_{k-1} \big{\vert} |H_{k-1}|}} \P_{M_1,\dots, M_{k-1}}(M_1\dots M_{k-1} (M_k F)=0| \text{ $M_k F$ has $\delta$-depth $D_{k-1}$ in $H_{k-1}$}) \\
& \times \P(\text{$M_k F$ has $\delta$-depth $D_{k-1}$ in $H_{k-1}$}) \\
 &=: S_1(H_{k-1}) + \sum_{\substack{D_{k-1} \geq 2 \\ D_{k-1} \big{\vert} |H_{k-1}|}}S_2(H_{k-1}, D_{k-1}).
\end{align*}

For the first sum, by Claim \ref{claim:code:count}, and then by Lemma \ref{lemma:code:subgp} and the inductive hypothesis for (i) we have
\begin{align*}
S_1(H_{k-1})&= \left(\frac{n_{k-1}(H_{k-1})}{|H_{k-1}|^n} + O\left(\frac{n_{k-1}(H_{k-1}) \exp(-c n)}{|H_{k-1}|^n}\right)\right)  |\CC(H_{k-1})| \frac{1+ K \exp(-c n)}{|G|^n}\\
&= \left(\frac{n_{k-1}(H_{k-1})}{|H_{k-1}|^n} + O\left(\frac{n_{k-1}(H_{k-1}) \exp(-c n)}{|H_{k-1}|^n}\right)\right)   |H_{k-1}|^n \frac{1+ K \exp(-c n)}{|G|^n}\\
&=  \frac{n_{k-1}(H_{k-1})}{|G|^n} +  O\left(\frac{n_{k-1}(H_{k-1}) \exp(-cn)}{|G|^n}\right).
\end{align*}
For the second sum, for each $D_{k-1}$ we apply Lemma \ref{lemma:non-code:count:1} and Lemma \ref{lemma:code:single:matrix:1} to bound
\begin{equation}\label{eq:some_depth_prob_bound}
\P(\text{$M_k F$ has $\delta$-depth $D_{k-1}$ in $H_{k-1}$}) \leq K' \binom{n}{\lceil \ell(D_{k-1}) \delta n\rceil -1} |H_{k-1}|^n D_{k-1}^{-n+\ell(D_{k-1}) \delta n}  \frac{1+ K'' \exp(-c n)}{|G|^n}
\end{equation}
and apply the inductive hypothesis for (ii) to bound 
\begin{equation}\label{eq:ind(ii)}
\P_{M_1,\dots, M_{k-1}}(M_1\dots M_{k-1} (M_k F)=0| \text{ $M_k F$ has $\delta$-depth $D_{k-1}$ in $H_{k-1}$}) \le K \exp(-\al n/2)\frac{D_{k-1}^n}{|H_{k-1}|^n}.
\end{equation}
Combining \eqref{eq:some_depth_prob_bound} with \eqref{eq:ind(ii)} yields
\begin{align*}
S_2(H_{k-1},D_{k-1})& \le  K \exp(-\al n/2)\frac{D_{k-1}^n}{|H_{k-1}|^n}  \times K' \binom{n}{\lceil \ell(D_{k-1}) \delta n\rceil -1} |H_{k-1}|^n D_{k-1}^{-n+\ell(D_{k-1}) \delta n}  \frac{1+ K'' \exp(-c n)}{|G|^n} \\
&=O\left( \frac{ \exp(-\al n/4) }{|G|^n}\right),
\end{align*}
where for the second line we recall that $\delta$ was chosen sufficiently small and $n$ is sufficiently large. %

Summing over divisors $D_{k-1}$ of $|H_{k-1}|$,
\begin{align*}
\sum_{D_{k-1} \geq 2, D_{k-1}\big{\vert} \left\vert H_{k-1}\right\vert} S_2(H_{k-1},D_{k-1}) =O\left( \frac{ \exp(-\al n/4) }{|G|^n}\right).
\end{align*} 
Summing over $H_{k-1}\le H_k$ we thus obtain
\begin{align}\label{eqn:code:M_k:1}
\begin{split}
\P_{}\left(\prod_{i=1}^k M_i F=0\right) &=\sum_{H_{k-1}} \P_{}\left(\prod_{i=1}^k M_i F=0 \wedge \text{$M_k F$ spans $H_{k-1}$}\right)   \\
& = \sum_{H_{k-1}} S_1(H_{k-1}) + \sum_{\substack{D_{k-1} \geq 2 \\ D_{k-1} \big{\vert} |H_{k-1}|}}S_2(H_{k-1}, D_{k-1})  \\
& = \frac{n_k(G)}{|G|^n}+O\left(\frac{\exp(-cn)}{|G|^n}\right) + O\left( \frac{ \exp(-\al n/4) }{|G|^n}\right).
\end{split}
\end{align}

completing the estimates for codes.

{\bf Non-codes.} We next prove (ii) by working with $F$ of $\delta$-depth $D_k\ge 2$, where $D_k$ also divides $|H_k|=|G|$. Let $H_{k-1}$ be a subgroup of $H_k$. Similarly to the previous part, we again compute the probability that $M_kF$ spans $H_{k-1}$ in the two possible ways:

\begin{enumerate}[(1)]
\item $M_k F$ is a code of distance $\delta n$ in $H_{k-1}$; 
\item $M_k F$ is not a code of distance $\delta n$, and hence has $\delta$-depth $D_{k-1}\ge 2$ in $H_{k-1}$.
\end{enumerate}

For the first case, the probability with respect to $M_k$ is bounded by
$$\P_{M_k}(   \text{$M_k F$ is a code of distance $\delta n$ in $H_{k-1}$} ) \le K |H_{k-1}|^n   \exp(-\al n)  \frac{D_{k}^n}{|G|^n}$$
by bounding the number of codes by $|H_{k-1}|^n$ and applying Lemma  \ref{lemma:non-code:1:matrix}. Hence, by induction and by the independence of $M_1,\dots, M_k$
\begin{align*}
& \P_{}(M_1\cdots M_k  F=0 \text{ and $M_k F$ is code of distance $\delta n$ in $H_{k-1}$}) \\
& \le \left(\frac{n_{k-1}(H_{k-1})}{|H_{k-1}|^n}+ K\frac{\exp(-c n)}{|H_{k-1}|^n}\right) \times  K |H_{k-1}|^n   \exp(-\al n)  \frac{D_{k}^n}{|G|^n} \\
&=  O\left(\exp(-\al n) \frac{n_{k-1}(H_{k-1}) D_{k}^n}{|G|^n}\right).
\end{align*}
Summing over the subgroups $H_{k-1}$, we obtain
\begin{equation}\label{eqn:non-code:M_k:1}
\P_{}(M_1\cdots M_k  F=0 \text{ and $M_k F$ is a $\delta n$ code in $H_{k-1}$ for some $H_{k-1}$}) =O\left( \exp(-\al n) \frac{n_{k}(G) D_{k}^n}{|G|^n}\right).
\end{equation}

For the second case (2), the probability with respect to $M_k$, by Lemma \ref{lemma:non-code:count:1} and Lemma  \ref{lemma:non-code:1:matrix}, is bounded by
$$\P_{M_k}(   \text{$M_k F$ is of $D_{k-1}$-depth in $H_{k-1}$} ) \le K\binom{n}{\lceil \ell(D_{k-1}) \delta n\rceil -1} |H_{k-1}|^n D_{k-1}^{-n+\ell(D_{k-1}) \delta n}  \times  K'\exp(-\al n)  \frac{D_{k}^n}{|G|^n} .$$
Hence, by induction (applied to $M_1 \cdots M_{k-1}$ with the starting vector $(M_kF)$)
\begin{align*}
& \P_{}(M_1\cdots M_k  F=0 \text{ and $M_k F$ is of $\delta$-depth $D_{k-1}$ in $H_{k-1}$}) \\
&= \P_{}(M_1\cdots M_k  F=0 |\text{$M_k F$ is of $\delta$-depth $D_{k-1}$ in $H_{k-1}$}) \cdot \P(\text{$M_k F$ is of $\delta$-depth $D_{k-1}$ in $H_{k-1}$})\\
& \le K \exp(-\al n/2) n_{k-1}(H_{k-1}) \frac{D_{k-1}^n}{|H_{k-1}|^n} \times K' \binom{n}{\lceil \ell(D_{k-1}) \delta n\rceil -1} |H_{k-1}|^n D_{k-1}^{-n+\ell(D_{k-1}) \delta n}  \times  \exp(-\al n)  \frac{D_{k}^n}{|G|^n}\\
& =O\left( \exp(-\al n)n_{k-1}(H_{k-1})  \frac{D_{k}^n}{|G|^n}\right),
\end{align*}
provided that $\delta$ was chosen sufficiently small and $n$ is sufficiently large.

Summing over $D_{k-1}$ a divisor of $|H_{k-1}|$, and then over the subgroup $H_{k-1}$
\begin{align}\label{eqn:non-code:M_k:2}
\begin{split}
& \P_{}(M_1\cdots M_k  F=0 \text{ and $M_k F$ is of $D_{k-1}$-depth in $H_{k-1}$ for some $D_{k-1}\ge 2$ and subgroup $H_{k-1}$}) \\ 
& =O\left( \exp(-\al n)n_{k}(G)  \frac{D_{k}^n}{|G|^n}\right),
\end{split}
\end{align}
proving our upper bound for non-codes $F$.
\end{proof}

Using the proof of Proposition \ref{prop:single:k} above we obtain

\begin{theorem}[Asymptotic moments of matrix products]\label{theorem:surmoment:k} Let $a \geq 2$ and $R=\Z/a\Z$, $G$ be any finite abelian group whose exponent is divisible by $a$, and $M_1,\ldots,M_k$ be random matrices in $\Mat_n(R)$ with iid $\alpha$-balanced entries. Then
$$\Big |\E \left[\# \Sur(\cok(M_1\cdots M_k ),G)) - n_k(G)\right]\Big| \le Ke^{-cn},$$
for some $K,c$ depending on $k,\al, G,a$.
\end{theorem}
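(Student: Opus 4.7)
The plan is to combine the pointwise probability estimates of Proposition \ref{prop:single:k} with counting estimates on codes and non-codes. Starting from the identity \eqref{E:expandF}, namely
$$\E[\#\Sur(\cok(M_1 \cdots M_k), G)] = \sum_{F \in \Sur(V, G)} \P(M_1 \cdots M_k F = 0 \text{ in } G),$$
I would split the sum over $F \in \Sur(V,G)$ into two pieces according to whether $F$ is a code of distance $\delta n$ (equivalently, has $\delta$-depth equal to $1$) or $F$ has $\delta$-depth $D_k \geq 2$. Note that every code of distance $\delta n$ is automatically surjective, since taking $\sigma = \emptyset$ in the code definition forces $F(V) = G$; conversely, a surjective $F$ that fails to be a code of distance $\delta n$ must have depth at least $2$.

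For the code contribution, I would apply Proposition \ref{prop:single:k}(i) to bound each term by $n_k(G)/|G|^n + O(e^{-cn}/|G|^n)$, and then use Claim \ref{claim:code:count} to count the number of codes as $(1 + O(e^{-c'n}))|G|^n$. The product yields a total code contribution of $n_k(G) + O(e^{-cn})$, supplying the main term.

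For the non-code contribution, I would fix a divisor $D_k \geq 2$ of $|G|$ and use Lemma \ref{lemma:non-code:count:1} to bound the number of $F$ with $\delta$-depth $D_k$ by $K \binom{n}{\lceil \ell(D_k) \delta n\rceil - 1} |G|^n D_k^{-n + \ell(D_k) \delta n}$, while Proposition \ref{prop:single:k}(ii) bounds each such probability by $K' e^{-\alpha n/2} n_k(G) D_k^n / |G|^n$. The factors $|G|^n$ and $D_k^n$ cancel neatly, leaving a bound of the form
$$K'' e^{-\alpha n/2} n_k(G) \binom{n}{\lceil \ell(D_k) \delta n\rceil - 1} D_k^{\ell(D_k) \delta n}.$$
Summing over the finitely many divisors $D_k \geq 2$ of $|G|$ then gives $O(e^{-cn})$, completing the comparison with $n_k(G)$.

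The only real issue is choosing $\delta$ correctly: using $\binom{n}{m} \leq (en/m)^{m}$, the entropy factor above is of order $\exp(\ell(D_k) \delta n \log(e D_k/(\ell(D_k) \delta)))$, so we need $\delta \log(1/\delta)$ to be small compared to $\alpha$. Since $D_k$ ranges over a finite set of divisors of $|G|$, a single small $\delta$ (depending on $\alpha$, $G$, $a$, $k$) works uniformly. Beyond this, the argument is essentially bookkeeping, as Proposition \ref{prop:single:k} has already done the work of propagating the code/non-code dichotomy through the $k$-fold product.
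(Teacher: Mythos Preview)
Your proposal is correct and follows essentially the same approach as the paper: split the sum \eqref{E:expandF} into codes and non-codes, use Proposition~\ref{prop:single:k}(i) together with Claim~\ref{claim:code:count} for the main term, and Proposition~\ref{prop:single:k}(ii) together with Lemma~\ref{lemma:non-code:count:1} for the error term. The paper's write-up cites the intermediate equations \eqref{eqn:code:M_k:1}, \eqref{eqn:non-code:M_k:1}, \eqref{eqn:non-code:M_k:2} from the proof of Proposition~\ref{prop:single:k} rather than the proposition statement itself, but your direct invocation of parts (i) and (ii) is equivalent and arguably cleaner.
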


\begin{proof}[Proof of Theorem \ref{theorem:surmoment:k}] By \eqref{E:expandF} it suffices to show that 
\begin{equation}\label{eqn:prod:sum}
\Big|\sum_{F\in \Sur(V,G)}  \P(M_1\cdots M_k  F=0 \mbox{ in $G$}) - n_k(G)\Big| \le Ke^{-cn}.
\end{equation}
From \eqref{eqn:code:M_k:1} and Claim \ref{claim:code:count}, we sum over $F$ as codes of distance $\delta n$ in $G$ to obtain

\begin{align}\label{eqn:code:sum:1}
\begin{split}
\sum_{F \mbox{ code of distance $\delta n$ in $G$}}\P_{}\left(\prod_{i=1}^k M_i F=0\right)& = |G|^n(1+K\exp(-c n)) \times \left(\frac{n_k(G)}{|G|^n}+K'''\frac{\exp(-c n)}{|G|^n}\right)  \\
& = n_k(G)(1+ O(\exp(-c'n))).
\end{split}
\end{align}

From \eqref{eqn:non-code:M_k:1} and Lemma \ref{lemma:non-code:count:1}, for each $D_k$ as divisor of $|G|$, summing over non-codes $F$ 
of $\delta$-depth $D_k$\begin{align}\label{eqn:noncode:code:1}
\begin{split}
& \P_{}(\mbox{ $\exists F$ of depth $D_k$ in $G$ and } M_1\cdots M_k  F =0 \text{ and $M_k F$ is a $\delta n$ code in $H_{k-1}$ for some $H_{k-1}$}) \\
&\le  K'' \binom{n}{\lceil \ell(D_{k}) \delta n\rceil -1} |G|^n D_{k}^{-n+\ell(D_{k}) \delta n}   \exp(-\al n) \frac{n_{k}(G) D_{k}^n}{|G|^n} =O( \exp(-\al n/2)),
\end{split}
\end{align}
provided that $\delta$ was chosen sufficiently small and $n$ is sufficiently large. 

Also, from \eqref{eqn:non-code:M_k:2} and Lemma \ref{lemma:non-code:count:1}, 
\begin{align}\label{eqn:noncode:noncode}
\begin{split}
& \P(\text{$\exists F$ of some depth $D_k$ in $G$ with } M_1\cdots M_k  F=0 \text{ and $M_k F$ is of depth $D_{k-1}$ in $H_{k-1}$} \\
&\text{ for some $D_{k-1}\ge 2, H_{k-1} \le H_k$}) \le K \binom{n}{\lceil \ell(D_{k}) \delta n\rceil -1} |G|^n D_{k}^{-n+\ell(D_{k}) \delta n}  \exp(-\al n)n_{k}(G)  \frac{D_{k}^n}{|G|^n}  \\
&= O(\exp(-\al n/2)),
\end{split}
\end{align}
again as $\delta$ is small  and $n$ is sufficiently large.

Summing \eqref{eqn:noncode:code:1}, \eqref{eqn:noncode:noncode} over all $D_k| |G|$, together with \eqref{eqn:code:sum:1} we obtain \eqref{eqn:prod:sum} as claimed.
\end{proof}

We will not use the following result later, but include it because it demonstrates how moment bounds for finite rings can imply them for infinite ones such as $\Z$ and $\Z_p$.

\begin{corollary}\label{cor:sur:uniform:asym} Let $M_1,\dots, M_k$ have iid entries in $\Z_p$ which are not constant modulo $p$. Then for any $p$-groups $G$ we have 
$$\Big |\E (\# \Sur(\cok(M_1\cdots M_k \Z_p^n),G)) - n_k(G)\Big| \le Ke^{-cn}$$
for some $K,c$ depending on $k,\al, G$, where $\alpha \in (0,1/2]$ is such that the matrix entries modulo $p$ are $\alpha$-balanced.

\end{corollary}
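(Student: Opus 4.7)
My plan is to deduce this corollary from Theorem \ref{theorem:surmoment:k} by reducing modulo a suitable power of $p$. The key point is that surjections onto a finite $p$-group $G$ only see the matrices modulo $p^r$ for $r$ large enough, so the problem over $\Z_p$ is equivalent to a problem over the finite ring $R = \Z/p^r\Z$ to which our previous theorem applies.

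Concretely, let $r$ be the smallest positive integer with $p^r G = 0$ (i.e. $p^r$ is the exponent of $G$), and set $a = p^r$, $R = \Z/a\Z$. I would first observe that any group homomorphism $\phi: \Z_p^n \to G$ kills $p^r \Z_p^n$, hence factors uniquely through the reduction map $\Z_p^n \twoheadrightarrow R^n$. Therefore, writing $\bar{M}_i$ for the reduction of $M_i$ modulo $p^r$, there is a natural bijection
$$\Sur(\cok(M_1 \cdots M_k \Z_p^n),G) \;\longleftrightarrow\; \Sur(\cok(\bar{M}_1 \cdots \bar{M}_k R^n),G),$$
since a surjection from the $\Z_p$-cokernel to $G$ corresponds to a surjection $\Z_p^n \to G$ vanishing on $M_1 \cdots M_k \Z_p^n$, which is the same (by the factoring above) as a surjection $R^n \to G$ vanishing on the image of $\bar{M}_1 \cdots \bar{M}_k$. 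Consequently the two moments are equal as integers:
$$\E[\#\Sur(\cok(M_1 \cdots M_k \Z_p^n),G)] = \E[\#\Sur(\cok(\bar{M}_1 \cdots \bar{M}_k R^n),G)].$$

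Next I would verify that the reductions $\bar{M}_i$ fall within the hypotheses of Theorem \ref{theorem:surmoment:k}. The entries of each $\bar{M}_i$ are iid copies of a random variable $\bar{\xi}$ valued in $R$. By assumption the entries of $M_i$ modulo $p$ are not constant, so there exists $\alpha \in (0, 1/2]$ with $\max_{r \in \Z/p\Z} \Pr(\xi \equiv r \pmod p) \leq 1 - \alpha$; since $p$ is the only prime dividing $a = p^r$, this is exactly the $R$-valued $\alpha$-balanced condition of Definition \ref{def:R_alpha_balanced}. Moreover the exponent of $G$ divides $a$ by construction.

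Thus Theorem \ref{theorem:surmoment:k} applies with this $R$ and $G$, giving
$$\big| \E[\#\Sur(\cok(\bar{M}_1 \cdots \bar{M}_k R^n), G)] - n_k(G) \big| \leq K e^{-c n}$$
for constants $K, c$ depending on $k, \alpha, G, a$ — hence on $k, \alpha, G$ once $r$ is fixed in terms of $G$. Combined with the equality of the two expected values, this is exactly the claimed bound. There is no real obstacle here: the only thing to check carefully is that $\alpha$-balance modulo $p$ propagates to the $R$-valued notion (which is automatic since $p$ is the sole prime divisor of $a$), and that the constants can be taken to depend only on the stated parameters.
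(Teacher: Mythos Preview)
Your proof is correct and follows essentially the same approach as the paper: reduce modulo $p^L$ where $p^L$ is the exponent of $G$, observe that surjections to $G$ factor through this reduction so the moments over $\Z_p$ and over $\Z/p^L\Z$ coincide, verify the $\alpha$-balanced condition survives, and apply Theorem~\ref{theorem:surmoment:k}. The only cosmetic difference is that the paper phrases the factoring step as $\#\Sur(H,G) = \#\Sur(H/p^L H, G)$ together with $\cok(\tilde{M}) = \cok(M)/p^L\cok(M)$, whereas you argue directly that homomorphisms $\Z_p^n \to G$ descend to $R^n$; these are equivalent.
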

\begin{proof}
Let $p^L$ be the exponent of $G$. First note that for any abelian $p$-group $H$, 
\begin{equation}\label{eq:reduce_surj_simple}
\#\Sur(H,G) = \#\Sur(H/p^LH,G),
\end{equation}
as any surjection from $H$ to $G$ automatically annihilates $p^{L}H$. Note also, with the notation $\tilde{M} := M \pmod{p^L} \in \Mat_n(\Z/p^L\Z)$ for $M \in \Mat_n(\Z_p)$, that $\cok(\tilde{M}) = \cok(M)/p^L \cok(M)$. Combining with \eqref{eq:reduce_surj_simple} yields that  
\begin{equation}
\# \Sur(\cok(M_1\cdots M_k \Z_p^n),G) = \# \Sur(\cok(\tilde{M}_1\cdots \tilde{M}_k (\Z/p^{L}\Z)^n),G).
\end{equation}
The result now follows from Theorem \ref{theorem:surmoment:k} applied with $R=\Z/p^L\Z$ and matrices $\tilde{M}_1,\ldots,\tilde{M}_k$, which are $\alpha$-balanced since the entries are not constant modulo $p$.
\end{proof}

\section{Counting joint surjections for Theorem \ref{theorem:main:joint}}\label{section:joint:moments}

Recall that $R =\Z/a\Z$ where $a$ is a positive integer. Let $G_1,\dots, G_k$ be finite abelian groups whose exponents divide $a$. For matrices $M_1,\ldots,M_k \in \Mat_n(R)$ and $(F_1', \dots, F_k')$ surjections from the quotients $R^n/ M_1R^n,\dots, R^n/M_1\cdots M_k R^n$ to $G_1,\dots,G_k$ we can lift to a surjection tuple $(F_1, \dots, F_k)$ from $V=R^n$. So we have  
 to $G_1,\dots, G_k$ respectively. 
$$\E \left(\# \Sur(R^n/M_1R^n, G_1) \times \cdots \times \# \Sur(R^n/M_1\cdots M_k  R^n,G_k)\right)$$ 
$$= \sum_{(F_1,\dots,F_k) \in \Sur(V,G_1) \times \cdots \times \Sur(V,G_k)} \P( M_1 F_1=0 \mbox { in $G_1$} \wedge \cdots \wedge M_1 \cdots M_k F_k=0 \mbox{ in $G_k$}).$$

Recall $m_k(G_1,\dots, G_k)$ from Definition \ref{def:m_k}. Our main goal for the proof of Theorem \ref{theorem:main:joint} is the following counting formula for the joint surjections.

\begin{theorem}[Asymptotic joint moments of matrix products]\label{theorem:jointsur:k} Let $M_1,\dots, M_k$ be independent random elements of $\Mat_n(R)$ with iid entries which are copies of some $\alpha$-balanced $\xi$. Let $G_1,\dots, G_k$ be finite abelian groups whose exponents divide $a$. We have
$$\Big|\E \left(\# \Sur(R^n/M_1R^n, G_1) \times \dots \times \# \Sur(R^n/M_1\cdots M_k  R^n,G_k)\right) - m_k (G_1,\dots, G_k)\Big| \le K e^{-cn},$$
for some $K,c$ depending on $k,\al, G_i,a,\delta$.
\end{theorem}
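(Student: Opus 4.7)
I plan to prove Theorem \ref{theorem:jointsur:k} by induction on $k$, adapting the code/non-code analysis of Theorem \ref{theorem:surmoment:k} and Proposition \ref{prop:single:k} to track multiple surjections simultaneously. The base case $k=1$ coincides with Theorem \ref{theorem:surmoment:k} (using $m_1(G_1) = n_1(G_1) = 1$). Expanding as in \eqref{E:expandF},
$$\E\Big[\prod_{j=1}^k \#\Sur(R^n/M_1\cdots M_j R^n, G_j)\Big] = \sum_{(F_1, \ldots, F_k)} \P\Big(\bigcap_{j=1}^k M_1 \cdots M_j F_j = 0\Big),$$
summed over $(F_1, \ldots, F_k) \in \prod_j \Sur(V, G_j)$, so the task is to evaluate this sum.

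The key step is a joint analog of Proposition \ref{prop:single:k}: for a tuple $(F_1, \ldots, F_k)$ that is a \emph{joint code} of distance $\delta n$ (a condition on the joint map $\Phi := (F_1, \ldots, F_k) \colon V \to \bigoplus_j G_j$ ensuring the joint structure remains well-distributed after each matrix multiplication), the probability $\P(\bigcap_j M_1\cdots M_j F_j = 0)$ equals $m_k(G_1, \ldots, G_k)/\prod_j |G_j|^n$ up to multiplicative error $1 + O(e^{-cn})$; for joint $\delta$-non-codes of joint depth $D \geq 2$, this probability is bounded by $K e^{-\alpha n/2} m_k(G_1, \ldots, G_k) D^n/\prod_j |G_j|^n$. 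I would prove this joint analog by the same induction on $k$. For the step, condition on $M_1$: the event $M_1 F_1 = 0$ is a single-matrix event on $F_1$ whose probability is $\approx 1/|G_1|^n$ by Lemma \ref{lemma:code:single:matrix:1}, while for $j \geq 2$ the event $M_1 \cdots M_j F_j = 0$ rewrites as $M_2 \cdots M_j \widetilde{F}_j = 0$ where $\widetilde{F}_j := M_1 F_j = F_j \circ M_1 \in \Hom(V, G_j)$. The inner expectation becomes a $(k-1)$-matrix joint probability for the tuple $(\widetilde{F}_2, \ldots, \widetilde{F}_k)$ whose joint image $\Phi(M_1 V) \leq G_2 \oplus \cdots \oplus G_k$ is typically a proper subgroup. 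A strengthened inductive hypothesis, applicable to joint maps with arbitrary image (not just tuples of surjections), together with the recursions \eqref{eqn:kk-1}--\eqref{eqn:kk-1'} for $m_k$, assembles the inner probabilities into the claimed main term.

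Summing the joint-analog estimate over tuples $(F_1, \ldots, F_k) \in \prod_j \Sur(V, G_j)$ then follows the pattern of Theorem \ref{theorem:surmoment:k}: joint-code tuples contribute $\prod_j |G_j|^n \cdot m_k(G_1, \ldots, G_k)/\prod_j |G_j|^n = m_k(G_1, \ldots, G_k)$ up to exponentially small error (with the code count per coordinate controlled by Claim \ref{claim:code:count}), while joint non-code contributions of all depths are bounded iteratively by Lemmas \ref{lemma:non-code:count:1} and \ref{lemma:non-code:1:matrix}, mirroring the chain of estimates \eqref{eqn:code:sum:1}--\eqref{eqn:noncode:noncode}. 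Combining these yields the desired bound with exponentially decaying error.

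The principal obstacle is formulating the joint code/non-code dichotomy and establishing the strengthened inductive hypothesis for joint maps with arbitrary image, since after conditioning on $M_1$ the tuple $(\widetilde{F}_2, \ldots, \widetilde{F}_k)$ is generically non-surjective on each coordinate. Matching the inductive main term to $m_k(G_1, \ldots, G_k)$ requires identifying the subgroups $\Phi(M_1 \cdots M_j V)$ arising during the evolution with the nested sequence $H_1, \ldots, H_k$ in the definition of $m_k$, and applying the recursions \eqref{eqn:kk-1}--\eqref{eqn:kk-1'} carefully. Keeping track of many joint codes and non-codes simultaneously is itself a non-trivial bookkeeping task; for this reason, as the authors indicate, the case $k = 2$ should be treated first as a blueprint for the general induction, where the interplay between the joint image of $(F_1, F_2)$ and its projections can already be made fully explicit.
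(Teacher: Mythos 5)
Your overall strategy — expanding the joint moment as a sum over tuples of surjections, dichotomizing by code structure, and inducting on $k$ — is the right framework and parallels the paper's, and conditioning on $M_1$ first rather than on $M_2,\ldots,M_k$ is a legitimate alternative ordering. But the central per-tuple lemma you propose as the ``joint analog of Proposition \ref{prop:single:k}'' is false, and since it is exactly where $m_k$ is supposed to come from, the plan does not go through as written.

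You claim that for $(F_1,\ldots,F_k)$ a joint code the probability $\P(\bigcap_j M_1\cdots M_j F_j=0)$ is $(1+O(e^{-cn}))\,m_k(G_1,\ldots,G_k)/\prod_j|G_j|^n$, while joint non-codes of joint depth $D\ge 2$ obey a bound carrying an extra factor $e^{-\alpha n/2}$. Neither is correct, because the probability genuinely depends on the joint image $H:=\Im(F_1,\ldots,F_k)\le\bigoplus_jG_j$, which is generically a proper subgroup. Running your own $M_1$-conditioning for $k=2$: if $(F_1,F_2)$ is a code of distance $\delta n$ in $H$, then given $M_1F_1=0$, the pushforward $\widetilde F_2:=M_1F_2$ is nearly uniform on $(\ker(\pi_1|_H))^n$, and averaging $\P_{M_2}(M_2\widetilde F_2=0)$ over that distribution gives $\P(M_1F_1=0\wedge M_1M_2F_2=0)\approx n_2(\ker(\pi_1|_H))/|H|^n$. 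Even in the best case $H=G_1\oplus G_2$ this equals $n_2(G_2)/(|G_1|^n|G_2|^n)$, not $m_2(G_1,G_2)/(|G_1|^n|G_2|^n)$: with $G_1=G_2=\Z/p\Z$ one has $n_2(G_2)=2$ while $m_2=p+1$. The missing $p-1$ comes from tuples whose joint image $H$ is a line in $(\Z/p\Z)^2$; in your framework these have joint depth $D=[G_1\oplus G_2:H]\ge 2$ and so are ``non-codes,'' but since $(F_1,F_2)$ is a code \emph{in} $H$ the actual probability is $\Theta(D^n/\prod_j|G_j|^n)$ with no exponential savings, and the $\approx|H|^n$ such tuples contribute $\Theta(1)$ rather than $o(1)$. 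Your non-code bound is therefore violated; note that Lemma \ref{lemma:non-code:1:matrix} carries the hypothesis $|G/F(V)|<D$ precisely to exclude codes landing in proper subgroups from its exponential estimate. The paper sidesteps all of this by never asserting a per-tuple estimate for the joint event: it conditions on $M_2,\ldots,M_k$ instead, tracks the image $G'\le\bigoplus_j G_j$ of the \emph{random} joint map $(F_1,M_2F_2,\ldots,M_2\cdots M_kF_k)$ — which satisfies $\pi_1(G')=G_1$ while the other projections may be proper — and sums over $G'$ via the recursion $m_k(G_1,\ldots,G_k)=\sum_{G'}m_{k-1}(G')$, which is the actual source of the quantity $m_k$. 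Proposition \ref{prop:joint:k} is an estimate for the \emph{sum} over $(F_1,\ldots,F_k)$ of such probabilities, not a per-tuple statement. A repaired version of your $M_1$-first plan would have to replace the fictitious constant by the $H$-dependent quantity above and sum over the images of $(F_1,\ldots,F_k)$ and of each successive pushforward tuple; that bookkeeping is exactly where $m_k$ emerges, and it cannot be compressed into a single universal per-tuple density.
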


As before, this yields a corresponding moment result for $p$-adic matrices.

\begin{corollary}\label{cor:jointsur:uniform:asym} Let $M_1,\dots, M_k$ have iid entries in $\Z_p$ which are not constant modulo $p$. Then for any finite abelian $p$-groups $G_1,\dots, G_k$
$$\Big | \E \left(\# \Sur(\Z_p^n/M_1 \Z_p^n, G_1) \times \dots \times \# \Sur(\Z_p^n/M_1\cdots M_k  \Z_p^n,G_k)\right)-m_k (G_1, \dots, G_k) \Big| \le K e^{-cn},$$
for some $K,c$ depending on $k,\al, G_i$.
\end{corollary}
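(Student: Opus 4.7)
The plan is to mimic, essentially verbatim, the reduction argument used in the proof of Corollary \ref{cor:sur:uniform:asym}, lifting it to the joint setting. The only task is to reduce the $p$-adic joint moment to a finite-ring joint moment so that Theorem \ref{theorem:jointsur:k} can be applied directly.

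First, I would choose an integer $L \ge 1$ such that $p^L$ annihilates every $G_i$ (for instance $p^L = \exp(G_1 \oplus \cdots \oplus G_k)$), set $R = \Z/p^L\Z$, and write $\tilde M := M \pmod{p^L} \in \Mat_n(R)$ for $M \in \Mat_n(\Z_p)$. The key two observations are: first, for any abelian $p$-group $H$, any surjection $H \surj G_i$ factors through $H/p^L H$, so
\[
\#\Sur(H, G_i) = \#\Sur(H/p^L H,\, G_i);
\]
and second, matrix multiplication commutes with reduction modulo $p^L$, which gives $\widetilde{M_1 \cdots M_i} = \tilde M_1 \cdots \tilde M_i$ and therefore
\[
\cok(M_1 \cdots M_i)/p^L \cok(M_1 \cdots M_i) = \cok(\tilde M_1 \cdots \tilde M_i).
\]

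Applying the first identity with $H = \cok(M_1 \cdots M_i)$ and using the second for each $1 \le i \le k$, we obtain the pointwise equality of random variables
\[
\prod_{i=1}^k \#\Sur(\Z_p^n/M_1 \cdots M_i \Z_p^n,\, G_i) = \prod_{i=1}^k \#\Sur(R^n/\tilde M_1 \cdots \tilde M_i R^n,\, G_i).
\]
Taking expectations, the left-hand side is exactly the quantity we want to estimate, and the right-hand side is the joint moment to which Theorem \ref{theorem:jointsur:k} applies: the reduced entries $\tilde m_{ij}$ are iid $R$-valued and $\alpha$-balanced (for some $\alpha \in (0,1/2]$ inherited from the hypothesis that the entries of the $M_j$ are not constant modulo $p$), and the exponents of the $G_i$ divide $a = p^L$ by our choice of $L$. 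This immediately yields both the main term $m_k(G_1, \dots, G_k)$ and the exponentially small error.

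There is essentially no obstacle here. The argument is purely formal; the only mild point to verify is the compatibility of reduction mod $p^L$ with the product structure, and that being nonconstant mod $p$ transfers to the $\alpha$-balanced condition for the reduced matrices, both of which are immediate from the definitions. The entire proof should be one short paragraph referencing Theorem \ref{theorem:jointsur:k}.
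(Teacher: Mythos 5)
Your proposal is correct and follows precisely the paper's intended proof: the paper likewise takes $p^L$ to be the maximum exponent of the $G_i$, reduces all matrices modulo $p^L$, uses the factoring of surjections through $H/p^LH$ together with compatibility of reduction with products, and then invokes Theorem~\ref{theorem:jointsur:k}.
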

\begin{proof}
Argue as in Corollary \ref{cor:sur:uniform:asym} by letting $p^L$ be the maximum exponent of $G_1,\ldots,G_k$, reducing matrices modulo $p^L$, and applying Theorem \ref{theorem:jointsur:k}.
\end{proof}


\subsection{Multidimensional setting} In this part we will give some preparation for the proof of Theorem \ref{theorem:jointsur:k}. Recall that we are interested in the event that $M_1F_1=0,\dots, M_1(M_2 \cdots M_k F_k)=0$. Hence it is natural to consider a more general related problem of determining, for some maps $F_1',\dots, F_k'$ (which correspond to $F_1,M_2F_2,\ldots,M_2\cdots M_k F_k$ in the previous example), what is the probability of the joint events $MF_1'=0,\dots, MF_k'=0$ for a matrix $M$ with iid $\alpha$-balanced entries.

\begin{definition}  We say that $F_1 \in \Hom(V,H_1),\dots, F_k\in \Hom(V,H_k)$ are a joint code of distance $\delta n$ with respect to $H' \le \bigoplus_{i=1}^k H_i$ if $F=(F_1,\ldots,F_k)$ is a code of distance $\delta n$ in $\Hom\left(V,H'\right)$.
\end{definition}

\begin{remark}\label{rmk:joint_code_not_code}
To avoid a potential point of confusion: if $(F_1,\ldots,F_k)$ are a joint code, it is \emph{not} in general true that the $F_i$ are individually codes with respect to $H_i$---they do not even have to be surjections, since the definition of joint code is with respect to some subgroup $H'$.
\end{remark}

Our first result is Lemma \ref{lemma:code:single:1} restated under the ``multidimensional" setting. 

\begin{lemma}\label{lemma:code:joint} Let $F_1,\dots, F_k$ be a joint code of distance $\delta n$ with respect to $H'\le \bigoplus_{i=1}^k H_i$, and $X$ a random vector in $R^n$ with iid $\alpha$-balanced entries. Then for any $(h_1,\dots, h_k) \in H'$,
$$\left|\P(\lang X, F_1 \rang =h_1 \wedge \dots \wedge  \lang X, F_k \rang = h_k) - \frac{1}{|H'|}\right| \le \exp(-\al \delta n/a^2).$$
\end{lemma}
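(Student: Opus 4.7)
The plan is to reduce this directly to the single-map statement Lemma \ref{lemma:code:single:1}. Bundle the component maps into a single homomorphism $F = (F_1,\ldots,F_k) \in \Hom(V, H)$, where $H := \bigoplus_{i=1}^k H_i$. Under this identification, the joint event
$$\lang X, F_1\rang = h_1 \wedge \ldots \wedge \lang X, F_k\rang = h_k$$
is exactly the single event $\lang X, F\rang = g$, where $g := (h_1,\ldots,h_k) \in H$.

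By hypothesis, $F$ is a code of distance $\delta n$ viewed as an element of $\Hom(V, H)$, which is precisely the setting of Lemma \ref{lemma:code:single:1}. Applying that lemma with target group $H$ and target element $g$ yields
$$\left|\P(\lang X, F\rang = g) - \frac{1}{|H|}\right| \le \exp(-\al \delta n / a^2).$$
Since $|H| = |H_1|\cdots|H_k|$, this is the desired bound. The main (and only) conceptual point is to observe that the joint code condition was defined precisely so that this bundling works: no further coupling argument between the coordinates is needed. I do not anticipate any substantive obstacle; the lemma is essentially a notational repackaging of the single-coordinate statement.
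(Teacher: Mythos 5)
Your proof is correct and is exactly the paper's intended argument: the paper even introduces the joint-code definition as ``$F=(F_1,\ldots,F_k)$ is a code in $\Hom(V,\bigoplus_i H_i)$'' precisely so that Lemma \ref{lemma:code:joint} is an immediate restatement of Lemma \ref{lemma:code:single:1} applied to the bundled map with target group $H=\bigoplus_i H_i$. Nothing more is needed.
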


\begin{definition}\label{def:projection} Given finite abelian groups $G_1,\dots, G_k$, for each $1\le i\le k$ let $\CG_{i;i+1,\dots,k}$ be the set of subgroups $G \le G_i \oplus G_{i+1} \oplus \dots \oplus G_k$ where the projection onto $G_i$ is the whole group. That is
\begin{equation}\label{eqn:proj}
\pi_i(G) := \{g_1\in G_i: \exists g_2 \in G_{i+1}\oplus \dots \oplus G_k \text{ such that } (g_1,g_2) \in G\}=G_i.
\end{equation}

\end{definition}
The reason we have this projection condition is that later $G$ is generated by $(F_i,M_{i+1}F_{i+1},\dots, M_{i+1}\cdots M_k F_k)$. By assumption $F_i: V \to G_i$ is surjective, and hence the projection onto $G_i$ of the group above is $G_i$ itself. We note that the sets $H_i$ from Definition \ref{def:m_k} belongs to $\CG_{i;i+1,\dots,k}$.

\begin{remark}\label{rmk:proj_F}
Since projections $\pi: G \to H$ appear frequently in this section, we will use the notation $\pi(F)$ for the vector in $H^n$ given by $(\pi \circ F(\Bv_1),\ldots,\pi \circ F(\Bv_n))$.
\end{remark}

Next, consider $G \le G_1 \oplus \dots \oplus G_k$, and  let $F=(F_1,\dots, F_k) \in \Hom(V,G)$. We recall from Definition \ref{def:depth} that the  $\delta$-depth of $F$ is the maximal $D$ such that there exist $\sigma$ and $H \le G$ such that
\begin{itemize}
\item $|\sigma| < \ell(D) \delta n$;
\vskip .1in
\item $F(V_{\sigma^c})= (F_1(V_{\sigma^c}),\dots, F_k(V_{\sigma^c}) )= H, [G:H]=D$;
\end{itemize}
If there is no such $D$, then the depth is 1. 

Lemma \ref{lemma:non-code:count:1} applied to $G$ yields 
 \begin{lemma}[Number of tuples with given depth]\label{lemma:depth:count:2}\label{lemma:depth:count:k}
  The number of $(F_1,\dots, F_k)\in \Hom(V,G)$ with depth $D$ is at most
$$K \binom{n}{\lceil \ell(D) \delta n\rceil -1} |G|^n D^{-n+\ell(D) \delta n},$$
where $K$ depends on $a, G$.
 \end{lemma}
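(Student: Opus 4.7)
The plan is essentially to observe that this is a direct specialization of Lemma \ref{lemma:non-code:count:1} and requires no new argument beyond an identification. First, I would identify the tuple $(F_1,\ldots,F_k) \in \Hom(V,G_1) \times \cdots \times \Hom(V,G_k)$ with a single homomorphism
\[
F : V \longrightarrow G := G_1 \oplus \cdots \oplus G_k, \qquad F(\Bv) = (F_1(\Bv),\ldots,F_k(\Bv)),
\]
using the canonical isomorphism $\Hom(V, G_1 \oplus \cdots \oplus G_k) \simeq \bigoplus_{i=1}^k \Hom(V,G_i)$.

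Next I would verify that the notion of $\delta$-depth introduced for the multidimensional setting agrees, under this identification, with the $\delta$-depth of $F$ as an element of $\Hom(V,G)$ in the sense of Definition \ref{def:depth}. This is immediate from the bullet points preceding the lemma: a subset $\sigma$ and subgroup $H \leq G_1 \oplus \cdots \oplus G_k$ with $|\sigma|<\ell(D)\delta n$ and $F(V_{\sigma^c})=H$, $[G:H]=D$ is literally the defining data of depth $D$ for the single homomorphism $F$ into $G$.

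Finally I would invoke Lemma \ref{lemma:non-code:count:1} with the group taken to be $G = G_1 \oplus \cdots \oplus G_k$ (whose exponent still divides $a$), obtaining the bound
\[
K \binom{n}{\lceil \ell(D)\delta n\rceil-1}\, |G|^n\, D^{-n+\ell(D)\delta n}
\]
on the number of homomorphisms $F \in \Hom(V,G)$ of depth $D$. Substituting $|G| = |G_1|\cdots|G_k|$ gives exactly the claimed bound, and the constant $K$ from Lemma \ref{lemma:non-code:count:1} depends on $a$ and $G$, hence on $a$ and $G_1,\ldots,G_k$, as stated.

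There is no real obstacle here; the only thing worth noting is the compatibility of the two depth definitions, which is a tautology once one observes that $V_{\sigma^c}$ and the index $[G:H]$ are features of the target group $G$, and that the bullets in the multidimensional setup are stated for exactly the direct sum $G_1 \oplus \cdots \oplus G_k$.
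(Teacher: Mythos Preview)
Your proposal is correct and matches the paper's approach exactly: the paper states the lemma with the one-line justification ``Lemma \ref{lemma:non-code:count:1} applied to $G_1 \oplus \dots \oplus G_k$ yields'' and gives no further proof. Your write-up simply spells out this identification in more detail.
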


Let $G' \in \CG_{1;2,\dots,k}$ (defined in Definition \ref{def:projection}). Lemma \ref{lemma:non-code:single:1} stated for $G'$ implies
\begin{lemma}\label{lemma:non-code:joint:2}\label{lemma:non-code:joint:k} Let $M_2,\dots, M_k$ be fixed matrices of size $n \times n$.
Let $a$ be an integer with $a\ge 2$. Let $G_1,\dots, G_k$ be finite abelian groups with exponents dividing $a$. Let $\al>0$ and $\delta>0$ be real numbers. Let $G' \in \CG_{1;2,\dots,k}$, and let $F=(F_1,M_2F_2,\dots, M_2 \cdots M_k F_k) \in \Hom(V,G')$ have $\delta$-depth $D>1$ and $|G'/F(V)|<D$. Then for all $\al$-balanced random vectors $X$ valued in $V$, for any $(g_1,\dots, g_k) \in G'$
$$\P_X\left(\lang F_1, X \rang = g_1 \wedge \lang M_2F_2, X \rang =g_2 \wedge \dots \wedge \lang M_2\cdots M_k F_k, X \rang =g_k \right) \le (1-\al) (\frac{D}{|G'|} + \exp(-\al \delta n/a^2)).$$
\end{lemma}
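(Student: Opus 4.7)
The plan is essentially bookkeeping: this lemma is a direct specialization of Lemma \ref{lemma:non-code:single:1} with the ambient finite abelian group taken to be $G'$ rather than $G$. Once we condition on the fixed matrices $M_2, \ldots, M_k$, the object $F=(F_1,M_2F_2,\dots, M_2 \cdots M_k F_k)$ is a deterministic element of $\Hom(V,G')$ by hypothesis.

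The key observation is that the joint event
$$\lang F_1, X\rang = g_1 \wedge \lang M_2 F_2, X\rang = g_2 \wedge \cdots \wedge \lang M_2\cdots M_k F_k, X\rang = g_k$$
is literally the single event $\lang F, X\rang = g$ in $G'$, where $g = (g_1,\ldots,g_k) \in G_1 \oplus \cdots \oplus G_k$ (and necessarily $g \in G'$ whenever this event has positive probability, since $F(V) \subseteq G'$). The hypotheses $\delta$-depth $D > 1$ and $|G'/F(V)| < D$ on $F$ are precisely the hypotheses Lemma \ref{lemma:non-code:single:1} requires of a map into $G'$. Moreover, the exponent of $G'$ divides $a$, because $G' \le G_1 \oplus \cdots \oplus G_k$ and each $G_i$ has exponent dividing $a$; so the constant $a$ in the error term $\exp(-\al \delta n / a^2)$ is valid.

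Applying Lemma \ref{lemma:non-code:single:1} with the group $G$ replaced by $G'$ and the map taken to be $F$ then yields the claimed bound
$$(1-\al)\left(\frac{D}{|G'|} + \exp(-\al \delta n/a^2)\right).$$
There is no substantial obstacle to address here; the purpose of the statement is simply to repackage the single-map estimate in the notation used in the joint-moment computations that follow, so that when $F_2,\ldots,F_k$ are themselves random (through the matrices $M_2,\ldots,M_k$) one can condition on them and invoke this estimate uniformly.
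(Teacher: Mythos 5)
Your proposal is correct and matches the paper exactly: the paper itself introduces this lemma with the remark that it is simply ``Lemma \ref{lemma:non-code:single:1} stated for $G'$,'' giving no separate proof. Your additional checks (that the joint event is a single event in $G'$, that $G'$ has exponent dividing $a$, and that out-of-range targets $g$ contribute zero probability) are exactly the right sanity verifications, and nothing is missing.
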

Similarly, Lemma \ref{lemma:non-code:1:matrix}  stated for $G'$ implies the following matrix form.
\begin{lemma}\label{lemma:non-code:joint:matrix:2}\label{lemma:non-code:joint:matrix:k}  
With the same assumption as in Lemma \ref{lemma:non-code:joint:k}, for any $A \in {G'}^n$ and $n \times n$ matrix $M_1$ with iid $\alpha$-balanced entries,
$$\P_{M_1}(M_1F = A) \le K \exp(-\al n)\frac{D^n}{|G'|^n},$$
where $K$ depends on $a, G, \al$ and $\delta$.
\end{lemma}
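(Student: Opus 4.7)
The plan is to follow exactly the strategy used to deduce Lemma \ref{lemma:non-code:1:matrix} from Lemma \ref{lemma:non-code:single:1}, the only change being that the single-row bound is now furnished by the ``multidimensional'' Lemma \ref{lemma:non-code:joint:k} instead of Lemma \ref{lemma:non-code:single:1}. Since $M_2,\ldots,M_k$ are fixed in the hypothesis, the object $F = (F_1, M_2 F_2, \ldots, M_2 \cdots M_k F_k) \in \Hom(V, G')$ is also deterministic, so the only randomness comes from the rows $X_1,\ldots,X_n$ of $M_1$.

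First I would use the independence of the rows of $M_1$. Writing $A = (a_1,\ldots,a_n) \in (G')^n$, the event $M_1 F = A$ decomposes as $\lang F, X_j \rang = a_j$ for $j=1,\ldots,n$, so
\[
\P_{M_1}(M_1 F = A) = \prod_{j=1}^n \P\bigl(\lang F, X_j \rang = a_j\bigr).
\]
Then apply Lemma \ref{lemma:non-code:joint:k} to each factor: since $F$ has $\delta$-depth $D > 1$ in $G'$ with $|G'/F(V)| < D$, each factor is at most $(1-\al)\bigl(D/|G'| + \exp(-\al\delta n/a^2)\bigr)$. Multiplying,
\[
\P_{M_1}(M_1 F = A) \le (1-\al)^n \left(\frac{D}{|G'|}\right)^n\left(1+\frac{|G'|}{D}\exp(-\al\delta n/a^2)\right)^n.
\]

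Finally I would simplify this expression just as in the proof of Lemma \ref{lemma:non-code:1:matrix}. The factor $(1-\al)^n \le \exp(-\al n)$ by $\log(1-\al) \le -\al$. For the remaining factor, using $\log(1+x) \le x$ gives
\[
\left(1+\frac{|G'|}{D}\exp(-\al\delta n/a^2)\right)^n \le \exp\!\left(n\,\frac{|G'|}{D}\exp(-\al\delta n/a^2)\right),
\]
which is uniformly bounded in $n$ by some constant $K$ depending only on $a, G', \al, \delta$ (since the exponent tends to $0$ as $n\to\infty$). Combining these estimates gives the desired bound $K\exp(-\al n) (D/|G'|)^n$.

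There is no real obstacle here: the proof is a direct translation of Lemma \ref{lemma:non-code:1:matrix} with the single-row input replaced by the multidimensional Lemma \ref{lemma:non-code:joint:k}, and no new ideas are required. The only mild subtlety to keep track of is that the constant $K$ now depends on the enlarged group $G'$ (and thus on all of $G_1,\ldots,G_k$) rather than a single $G$, but this is already absorbed into the statement's quantifier on dependencies.
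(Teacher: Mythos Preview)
Your proposal is correct and matches the paper's approach exactly: the paper simply states that this is Lemma \ref{lemma:non-code:1:matrix} applied with $G'$ in place of $G$, and you have written out precisely that argument, using the row-wise independence of $M_1$ together with the single-row bound from Lemma \ref{lemma:non-code:joint:k} and simplifying identically to the proof of Lemma \ref{lemma:non-code:1:matrix}.
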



Now we turn to Theorem \ref{theorem:jointsur:k}. As in the previous section, in what follows the implied constants in $O(.)$ are allowed to depend on $k,\al, G,a,\delta$. For expository purposes we focus on $k=2$ first.

\subsection{Proof of Theorem \ref{theorem:jointsur:k} for $k=2$}\label{sub:k=2}  
We have
\begin{align}\label{eq:bigsum_k=2}
\E \left(\# \Sur(R^n/M_1R^n, G_1)  \# \Sur(R^n/M_1M_2 R^n,G_2)\right)  = \sum_{\substack{F_1\in \Sur(V,G_1)\\ F_2 \in \Sur(V,G_2)}}\P(M_1 F_1=0 \wedge M_1M_2F_2=0).
\end{align}
The remainder of the proof consists of analyzing this sum; let us now fix surjections $F_1: V \to G_1$ and $F_2: V \to G_2$. For any fixed matrix $M$, the image of $(F_1, M F_2)$ is some subgroup $G' \leq G= G_1 \oplus G_2$, and since $F_1$ is a surjection, we must have $\pi_1(G') = G_1$. We will consider the random map $(F_1, M_2 F_2): V \to G$ (recall $F_i$ are fixed but $M_2$ is random), and we note that the events $\{M_2: \Im(F_1,M_2 F_2) = G'\}$ are disjoint and 
$$\bigsqcup_{\substack{G' \le G= G_1 \oplus G_2 \\ \pi_1(G') = G_1}} \{M_2: \Im(F_1,M_2 F_2) = G'\} = \Mat_{n \times n}(R).$$
For each such $G'$, we further partition the event $\{M_2: \Im(F_1,M_2 F_2) = G'\}$ into two subcases, based on whether
\begin{enumerate}[(a)]
\item $(F_1, M_2F_2)$ form a joint code of distance $\delta n$ with respect to $G'$;
\vskip .1in
\item  $(F_1, M_2F_2)$ is not joint code of distance $\delta n$, hence $(F_1, M_2 F_2)$ has $\delta$-depth $D>1$ over $G'$.
\end{enumerate}
Here $\delta$ is a sufficiently small constant, to be fixed later. Let $S_1=S_1(F_1,F_2),S_2=S_2(F_1,F_2)$ be the corresponding contributions to the sum in \eqref{eq:bigsum_k=2}:

$$S_1:=\sum_{G'}   \P_{M_1,M_2}\left( M_1F_1=0 \wedge M_1 M_2 F_2=0 \wedge \mbox{ $(F_1,M_2F_2)$ is a joint code of distance $\delta n$ over $G'$}\right)$$
and
$$S_2:=\sum_{G'}   \P_{M_1,M_2}\left( M_1F_1=0 \wedge M_1 M_2 F_2=0 \wedge  \mbox{ $(F_1,M_2F_2)$ spans $G'$ but not joint }\right),$$
so 
\begin{equation}
\label{eq:split_S1_S2}
\text{RHS\eqref{eq:bigsum_k=2}} = \sum_{\substack{F_1\in \Sur(V,G_1)\\ F_2 \in \Sur(V,G_2)}} S_1 + \sum_{\substack{F_1\in \Sur(V,G_1)\\ F_2 \in \Sur(V,G_2)}} S_2.
\end{equation}
We now analyze these two contributions to \eqref{eq:split_S1_S2}, showing that the contribution $S_1$ is the main term while that of $S_2$ is asymptotically small. Here and below, we use $\sum_{G'}$ as shorthand for a sum over all $G' \leq G_1 \op G_2$ with $\pi_1(G')=G_1$. 

{\bf a. Analysis of $S_1$.} Note that as $(F_1,M_2F_2)$ is joint code of distance $\delta n$ over $G'$, since $\pi_1(G')=G_1$ by the definition of $G'$, $F_1$ is a code of distance $\delta n$ over $G_1$. By Lemma \ref{lemma:code:joint} we have 
\begin{equation}\label{eq:S1_expansion}
\P_{M_1}\left( M_1F_1=0 \wedge M_1 M_2 F_2=0 \ | \mbox{ joint over $G'$}\right) =\frac{1}{|G'|^n}(1+O( \exp(- cn))),
\end{equation}
where $c$ depends on $a,\delta, \al$, and $G$ (\emph{a priori} $c$ depends on $G'$, but we simply take the worst constant $c$ over the finitely many choices of $G'$, which hence depends only on $G$). Hence 
$$S_1=  \sum_{G'} \frac{1}{|G'|^n}(1+ O(\exp(-cn)) \times   \P_{M_2}((F_1,M_2 F_2) \mbox{ is a joint code of distance $\delta n$ over } G').$$
It remains to evaluate the probability with respect to $M_2$. For this we will sum over $F_1$ as well. 
We will divide into two cases.

{\it (i) \underline{Main term: summation over codes $F_1$ and codes $F_2$}.} We first consider the case when $F_2$ is a code of distance $\delta n$ in $G_2$. Claim \ref{claim:code:count} applied to the group $G'$ immediately yields the following. 

\begin{claim}\label{claim:code:count'} The number of joint $ F=((f_{1},g_1),\dots, (f_{n},g_n)) $ of distance $\delta n$ in ${G'}^n$  is $(1+O(\exp(-c n)) |G'|^n$, where the constants are allowed to depend on $\delta$.
\end{claim}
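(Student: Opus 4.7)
The claim is essentially Claim~\ref{claim:code:count} restated for the group $G'$ in place of $H$, so the work is to verify that the earlier argument transfers cleanly and to match the notation. First I would unpack the definitions: writing $F(\Bv_i) = (f_i, g_i) \in G'$ identifies a vector $F \in (G')^n$ with a homomorphism $F : V \to G'$, and by the definition of joint code, such an $F$ is a joint code of distance $\delta n$ with respect to $G'$ if and only if $F(V_{\sigma^c}) = G'$ for every $\sigma \subset [n]$ with $|\sigma| < \delta n$ (i.e., a code of distance $\delta n$ in $G'$ in the sense of Definition~\ref{def:depth} with trivial depth). The total number of $F \in (G')^n$ is $|G'|^n$, so we only need to show that the number of \emph{non}-codes is $O(\exp(-cn)) |G'|^n$.

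Second, I would invoke the union-bound argument used in the proof of Claim~\ref{claim:code:count}, now applied to $G'$. For each proper subgroup $H' < G'$ and each index set $I \subset [n]$ with $|I| = n - \lfloor \delta n \rfloor$, the fraction of $F \in (G')^n$ satisfying $f_i \in H'$ for all $i \in I$ equals $(|H'|/|G'|)^{|I|} \leq (1/2)^{n - \lfloor \delta n \rfloor}$. Taking a union bound over the $\binom{n}{\lfloor \delta n \rfloor}$ choices of $I$ and over the (finitely many) proper subgroups $H'$ of $G'$ yields a non-code count bounded by
$$K' \binom{n}{\lfloor \delta n \rfloor} 2^{-(n-\lfloor \delta n \rfloor)} |G'|^n,$$
which is $K' \exp(-c'_\delta n)|G'|^n$ once $\delta$ is chosen small enough that the entropy factor $\binom{n}{\lfloor \delta n \rfloor} \leq \exp(H(\delta)n)$ is dominated by $2^{-(1-\delta)n}$.

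The only minor subtlety, which I would flag but not labor over, is that the constants in Claim~\ref{claim:code:count} depend on the ambient group, so here they depend on $G'$; since $G'$ ranges over finitely many subgroups of $G_1 \oplus G_2$, we take the worst such constants and absorb the dependence into $K, c$ (which, per the statement, may depend on $\delta$ and on $G_1,G_2$). There is no real obstacle: the proof is a direct specialization of Claim~\ref{claim:code:count}.
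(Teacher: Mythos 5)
Your proposal is correct and matches the paper's approach exactly: the paper's entire proof of this claim is the single sentence ``Claim~\ref{claim:code:count} applied to the group $G'$ immediately yields the following,'' and you have simply unrolled that specialization (identifying a joint code with an ordinary code in $G'$, then rerunning the union bound over index sets $I$ and proper subgroups $H' < G'$). The only nit is a notational slip — ``$f_i \in H'$'' should read ``$(f_i,g_i) \in H'$,'' since $H'$ is a subgroup of $G' \leq G_1 \oplus G_2$ — but this does not affect the argument.
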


Now for each such joint code of distance $\delta n$ $F$, because $F_2$ is a code  of distance $\delta n$ in $G_2$, by Lemma \ref{lemma:code:single:matrix:1} 
$$\P_{M_2} ((F_1, M_2F_2)=F) = \mathbf{1}_{F_1= \pi_1(F)} \frac{1}{|G_2|^n}(1+O(\exp(-cn))).$$

Summing over $F_1$, noting that $\pi_1(F)$ is a code of distance $\delta n$ over $G_1$, we thus obtain 
\begin{equation}\label{k=2:F_1}
\sum_{F_1} \P_{M_2} ((F_1, M_2F_2)=F) = \frac{1}{|G_2|^n}(1+O(\exp(-cn))).
\end{equation}
We then sum over codes $F$  of distance $\delta n$ to obtain
$$\sum_{F_1}  \P_{M_2}((F_1,M_2 F_2) \mbox{ is a code of distance $\delta n$ over } G') = \frac{|G'|^n}{|G_2|^n} (1+O(\exp(-cn))).$$
Hence in total we have, for a fixed code of distance $\delta n$ $F_2$ over $G_2$ 
\begin{align*}
&\sum_{F_1} \P_{M_1}\left( M_1F_1=0 \wedge M_1 M_2 F_2=0 \ | \mbox{ joint code of distance $\delta n$ over $G'$}\right) \\
& \times  \P_{M_2}((F_1,M_2 F_2) \mbox{ joint code of distance $\delta n$ over $G'$}) \\ 
&=(1+ O( \exp(-c n))) \frac{1}{|G'|^n} \times  \frac{|G'|^n}{|G_2|^n} (1+ O(\exp(-cn))) \\
&= \frac{1}{|G_2|^n} (1+ O(\exp(-cn))).
\end{align*}
Now we sum over codes of distance $\delta n$ $F_2$ (using Claim \ref{claim:code:count}), and then over $G'$ to obtain that
\begin{align*}
& \sum_{\substack{F_1 \in \Sur(V,G_1) \\ F_2 \mbox{ code over $G_2$}}}\sum_{G'}   \P_{M_1,M_2}\left( M_1F_1=0 \wedge M_1 M_2 F_2=0 \wedge \mbox{ $(F_1,M_2F_2)$ spans and is joint over $G'$}\right) \\
&= (1+O(\exp(-cn))) m_2(G_1, G_2).
\end{align*}
Before moving to the next estimate, we record below some other useful results by summing \eqref{k=2:F_1} over codes $F_2$,
\begin{equation}\label{k=2:F_1F_2code-code=code}
\sum_{F_1, F_2 \mbox{ codes of distance $\delta n$}}  \P_{M_2}((F_1,M_2 F_2)=F) = 1+O(\exp(-cn))
\end{equation}
and 
\begin{equation}\label{k=2:F_1F_2code-code}
\sum_{F_1, F_2 \mbox{ codes of distance $\delta n$}}  \P_{M_2}((F_1,M_2 F_2) \mbox{ is a code of distance $\delta n$ over } G') = |G'|^n (1+O(\exp(-cn))).
\end{equation}

{\it (ii) \underline{Error term: summation over codes $F_1$ and non-codes $F_2$}.} Assume that $F_2$ is not a code of distance $\delta n$ over $G_2$. 
Then it has some $\delta$-depth $D_2 > 1$ for some $D_2 \big{\vert} |G_2|$. We will shortly sum over all such $D_2$ and $F_2$, but for now note that for any fixed $D_2$ and $F_2$ of $\delta$-depth $D_2$, by Lemma \ref{lemma:non-code:1:matrix}

$$\P_{M_2} ((F_1, M_2F_2)=F) = \mathbf{1}_{F_1 =  \pi_1(F)} \P(M_2 F_2 = \pi_2(F)) \le \mathbf{1}_{F_1 =  \pi_1(F)}  (D_2/|G_2|)^n K\exp(-\al n).$$

Sum over codes $F$, over $F_1$, and then $F_2$ we obtain 
\begin{align}\label{k=2:F1F2nocode-code}
\begin{split}
& \sum_{\substack{F_1 \mbox{ code over $G_1$}\\ F_2 \in \Sur(V,G_2) \mbox{ non-code}}}\P_{M_2}((F_1,M_2 F_2) \mbox{ is code over } G') \\ 
& = \sum_{F \mbox{ code over $G'$}}  \sum_{F_1 \mbox{ code over $G_1$},  F_2 \in \Sur(V,G_2) \mbox{ non-code}}\P_{M_2}((F_1,M_2 F_2)=F)\\
& = \sum_{\substack{D_2>1\\ D_2 \big{\vert}|G_2|}} \sum_{F \mbox{ code over $G'$}}  \sum_{\substack{F_1 \mbox{ code over $G_1$}\\  F_2 \in \Sur(V,G_2) \mbox{ non-code of depth $D_2$}}}\P_{M_2}((F_1,M_2 F_2)=F)\\
& \le\sum_{F \mbox{ code over $G'$}}   \sum_{\substack{D_2>1\\ D_2 \big{\vert}|G_2|}} \ \sum_{F_1 \mbox{ code over $G_1$}} \mathbf{1}_{F_1=  \pi_1(F)}   K (D_2/|G_2|)^n\exp(-\al n) \times \\
& \times K' \binom{n}{\lceil \ell(D_2) \delta n\rceil -1} |G_2|^n D_2^{-n+\ell(D_2) \delta n} \\
&\leq  \sum_{D_2>1, D_2 \bver |G_2|} |G'|^n K (D_2/|G_2|)^n\exp(-\al n) \times  K' \binom{n}{\lceil \ell(D_2) \delta n\rceil -1} |G_2|^n D_2^{-n+\ell(D_2) \delta n} \\
& =O( |G'|^n \exp(-\al n/2)),
\end{split}
\end{align}
where we used Lemma \ref{lemma:non-code:count:1} to enumerate $F_2$, and in the last bound assumed that $n$ is large and $\delta$ is sufficiently small so that the exponential growth of the $\delta$-dependent factors is not too large.

Combining the above with \eqref{eq:S1_expansion} yields
$$\sum_{\substack{F_1 \mbox{code over $G_1$}\\ F_2 \in \Sur(V,G_2)  \mbox{ non-code}}} S_1(F_1,F_2) \le   \sum_{G'} \frac{1}{|G'|^n}(1+ O(\exp(-cn))) \times O( |G'|^n \exp(- \al n/2)) =O( \exp(-\al n/2)).$$

In summary, we have shown the following
\begin{align*}
&\sum_{F_1 \in \Sur(V,G_1),F_2 \in \Sur(V,G_2)}\sum_{G'}   \P_{M_1,M_2}\left( M_1F_1=0 \wedge M_1 M_2 F_2=0 \wedge \mbox{ $(F_1,M_2F_2)$ is surjection and joint over $G'$}\right)\\
&= (1+O(\exp(-cn))) m_2(G_1, G_2).
\end{align*}
For later use, we record here another useful result by putting \eqref{k=2:F_1F_2code-code} and  \eqref{k=2:F1F2nocode-code} together
\begin{equation}\label{k=2:F_1F_2-code}
\sum_{F_1, F_2}  \P_{M_2}((F_1,M_2 F_2) \mbox{ is a code of distance $\delta n$ over } G') = |G'|^n (1+O(\exp(-cn))).
\end{equation}

{\bf b. Analysis of $S_2$.} Our treatment for the case of $S_2$ is similar to (ii) of {\bf a.} We first partition $S_2$ into contributions corresponding to $(F_1,M_2 F_2)$ of given depth, 
\begin{equation}\label{eq:S2_partition}
S_2= \sum_{G'} \sum_{\substack{D>1 \\ D\big{\vert} |G'|}} \sum_{F' \mbox{ of depth $D$ over $G'$}}  \P_{M_1}\left( M_1F'=0  \wedge (F_1,M_2 F_2) = F' \right).
\end{equation}
Since the first two sums in \eqref{eq:S2_partition} are finite, for the contribution of $S_2$ it suffices to fix $G'$ and $D$, and show that 
$$\sum_{\substack{F_1 \in \Sur(V,G_1) \\  F_2 \in \Sur(V,G_2)}}   \sum_{F' \mbox{ of depth $D$ over $G'$}}  \P_{M_1}\left( M_1F'=0  \wedge (F_1,M_2 F_2) = F' \right)$$
is small. 

First, by using Lemma \ref{lemma:non-code:joint:matrix:2} we can bound 
\begin{align}\label{eq:main_b_bound}
\begin{split}
&\sum_{\substack{F_1 \in \Sur(V,G_1) \\  F_2 \in \Sur(V,G_2)}} \sum_{F' \mbox{ of depth $D$ over $G'$}}  \P_{M_1}\left( M_1F'=0  \wedge (F_1,M_2 F_2) = F' \right) \\
& \le \sum_{\substack{F_1 \in \Sur(V,G_1) \\  F_2 \in \Sur(V,G_2)}}\sum_{F' \mbox{ of depth $D$ over $G'$}} K \exp(-\al n)\left(\frac{D}{|G'|}\right)^n \P((F_1,M_2 F_2) = F').
\end{split}
\end{align}
{\it (i) \underline{Summation over $F_1 \in \Sur(V,G_1)$ and codes of distance $\delta n$ $F_2$ over $G_2$}.} We first fix code of distance $\delta n$ $F_2$, and $F'$ of depth $D$ over $G'$. By Lemma \ref{lemma:code:single:matrix:1},
$$ \sum_{F_1 \in \Sur(V,G_1)} \P_{M_2}((F_1,M_2 F_2) = F') = (1/|G_2|^n)(1+O(\exp(-cn))).$$
Summing over codes of distance $\delta n$ $F_2$ over $G_2$ using Claim \ref{claim:code:count},
$$ \sum_{F_2 \mbox{ $\delta n$ code over $G_2$}} \sum_{F_1} \P((F_1,M_2 F_2) = F') = |G_2|^n (1+O(\exp(-c'n))) (1/|G_2|^n)(1+O(\exp(-cn)))=1+O(\exp(-c''n)),$$
where $c''=\min \{c,c'\}$.

As a consequence, by summing over the non-codes $F'$ of depth $D$ over $G'$ we obtain
\begin{align}\label{k=2:code-noncode}
\begin{split}
\sum_{\substack{F_1 \in \Sur(V,G_1) \\ F_2 \mbox{ $\delta n$ code over $G_2$}}}   &\P_{M_2}((F_1,M_2 F_2) \mbox{ is of depth $D$ over } G'  ) \\
&\leq \sum_{\substack{F_1 \in \Sur(V,G_1) \\ F_2 \mbox{ $\delta n$ code over $G_2$}}} \sum_{F'  \mbox{ of depth $D$ over $G'$}} \P((F_1,M_2 F_2) = F') \\
& \le K \binom{n}{\lceil \ell(D) \delta n\rceil -1} |G'|^n D^{-n+\ell(D) \delta n}.
\end{split}
\end{align}

More importantly, by summing over the non-codes $F'$ of depth $D$ over $G'$, by Lemma \ref{lemma:depth:count:2} and \eqref{eq:main_b_bound} we obtain
\begin{align*}
\sum_{\substack{F_1 \in \Sur(V,G_1) \\ F_2 \mbox{ $\delta n$ code over $G_2$}}} S_2  &\leq \sum_{G'} \sum_{\substack{D>1 \\ D\big{\vert} |G'|}} \sum_{\substack{F_1 \in \Sur(V,G_1) \\ F_2 \mbox{ $\delta n$ code over $G_2$}}} \sum_{F'  \mbox{ of depth $D$ over $G'$}} \exp(-\al n) \left(\frac{D}{|G'|}\right)^n \P((F_1,M_2 F_2) = F')\\
& \le  \sum_{G'}   \sum_{\substack{D>1 \\ D\big{\vert} |G'|}} \exp(-\al n) \left(\frac{D}{|G'|}\right)^n K \binom{n}{\lceil \ell(D) \delta n\rceil -1} |G'|^n D^{-n+\ell(D) \delta n} (1+O(\exp(-c''n))) \\
& =O( \exp(-\al n/2)),
\end{align*}
where again in the last bound we require that $\delta$ is suffiently small and $n$ is sufficiently large.

{\it (ii) \underline{Summation over $F_1 \in \Sur(V,G_1)$ and non-codes $F_2\in \Sur(V,G_2)$}.} The treatment here is also similar to  {\bf a} (ii). Indeed, as $F_2$ is not a code of distance $\delta n$ over $G_2$, it has some $\delta$-depth $D_2 > 1$ for some $D_2 \big{\vert} |G_2|$. By Lemma \ref{lemma:non-code:1:matrix}
$$\P_{M_2} ((F_1, M_2F_2)=F') = \mathbf{1}_{F_1 =  \pi_1(F')} \P(M_2 F_2 = \pi_2(F')) \le \mathbf{1}_{F_1 =  \pi_1(F')}  (D_2/|G_2|)^n K\exp(-\al n).$$

As a consequence, summing over $F'$ of depth $D$, over $F_1$, and then $F_2$ (by summing over $D_2$) we obtain 
\begin{align}\label{k=2:F1F2nocode-code'}
\begin{split}
& \sum_{\substack{F_1 \in \Sur(V,G_1)\\ F_2 \in \Sur(V,G_2) \mbox{ non-code}}}\P_{M_2}((F_1,M_2 F_2) \mbox{ is of depth $D$ over } G') \\ 
& = \sum_{F' \mbox{ of depth $D$ over $G'$}}  \sum_{F_1 \in \Sur(V,G_1)}\sum_{\substack{D_2>1 \\ D_2\big{\vert}|G_2|}} \mathbf{1}_{F_1 =  \pi_1(F')}  (D_2/|G_2|)^n K\exp(-\al n)  \\
&\times K' \binom{n}{\lceil \ell(D_2) \delta n\rceil -1} |G_2|^n D_2^{-n+\ell(D_2) \delta n}   \\
&\le  \sum_{F' \mbox{ of depth $D$ over $G'$}} \exp(-cn) \\
&\le \exp(-cn) \binom{n}{\lceil \ell(D) \delta n\rceil -1} |G'|^n D^{-n+\ell(D) \delta n} .
\end{split}
\end{align}
Also,
\begin{align*}
\begin{split}
& \sum_{\substack{F_1 \in \Sur(V,G_1) \\ F_2 \mbox{ non-code over $G_2$}}} S_2\\
& \le \sum_{G'}   \sum_{\substack{D>1 \\ D\big{\vert} |G'|}}  \sum_{\substack{F_1 \in \Sur(V,G_1)\\ F_2 \in \Sur(V,G_2) \mbox{ non-code}}}  \sum_{F' \mbox{ of depth $D$ over $G'$}}  \P_{M_1}\left( M_1F'=0  \wedge (F_1,M_2 F_2) = F' \right) \\
& \le   \sum_{G'}   \sum_{\substack{D>1 \\ D\big{\vert} |G'|}} \sum_{\substack{D_2>1 \\ D_2\big{\vert}|G_2|}} \sum_{F_1 \in \Sur(V,G_1)} \exp(-\al n)\left(\frac{D}{|G'|}\right)^n K \binom{n}{\lceil \ell(D) \delta n\rceil -1} |G'|^n D^{-n+\ell(D) \delta n}  \\
& \times \ \mathbf{1}_{F_1 =  \pi_1(F')}  (D_2/|G_2|)^n K\exp(-\al n) \times K' \binom{n}{\lceil \ell(D_2) \delta n\rceil -1} |G_2|^n D_2^{-n+\ell(D_2) \delta n} \\
&=O( \exp(-\al n)),
\end{split}
\end{align*}
completing the treatment in this case.

Finally, we remark from \eqref{k=2:code-noncode} and \eqref{k=2:F1F2nocode-code'}  
that
\begin{equation}\label{k=2:F_1F_2-noncode}
\sum_{F_1, F_2}  \P_{M_2}((F_1,M_2 F_2) \mbox{ is of depth $D>1$ over } G') =O\left(  \binom{n}{\lceil \ell(D) \delta n\rceil -1} |G'|^n D^{-n+\ell(D) \delta n}\right).
\end{equation}


\subsection{Proof of Theorem \ref{theorem:jointsur:k} for general $k$}  
 We will proceed as in the case $k=2$. We have
$$\E \left(\# \Sur(R^n/M_1R^n, G_1)  \cdots \# \Sur(R^n/M_1\cdots M_k R^n,G_k)\right)$$ 
$$= \sum_{F_1\in \Sur(V,G_1), \dots, F_k \in \Sur(V,G_k)}\P(M_1 F_1=0 \wedge \dots \wedge M_1\cdots M_kF_k=0).$$

For each subgroup $G' \in \CG_{1;2,\dots,k}$, we consider the case that $(F_1,M_2F_2, \dots, M_2 \cdots M_k F_k)$ span $G'$ and 

\begin{enumerate}[(1)]
\item $F_1, M_2F_2,\dots, M_2\cdots M_k F_k$ form a joint code of distance $\delta n$ with respect to $G'$;
\vskip .1in
\item  They are not joint, hence have some $\delta$-depth $D>1$.
\end{enumerate}
Motivated by \eqref{k=2:F_1F_2-code} and \eqref{k=2:F_1F_2-noncode} in the proof of the $k=2$ case of Theorem \ref{theorem:jointsur:k}, we will show the following key result.

\begin{proposition}\label{prop:joint:k} With the same assumption as in Theorem \ref{theorem:main:prod} and $\delta$ sufficiently small, 
for each subgroup $G' \in \CG_{1;2,\dots,k}$ the following inequalities hold with the randomness from $M_2,\dots, M_k$:
\begin{align}\label{eq:joint:k}
\begin{split}
& \sum_{\substack{F_i\in \Sur(V,G_i) \\ 1 \leq i \leq k}} \P_{M_2,\dots, M_k}((F_1,M_2 F_2, \dots, M_2 \cdots M_k F_k) \mbox{ spans and is a code of distance $\delta n$ in } G')\\
&=(1+O(\exp(-cn)))m_{k-1}(G')|G'|^n
\end{split}
\end{align}
and
\begin{align}\label{eq:joint:k'}
\begin{split}
& \sum_{\substack{F_i\in \Sur(V,G_i) \\ 1 \leq i \leq k}} \P_{M_2,\dots, M_k}((F_1,M_2 F_2, \dots, M_2 \cdots M_k F_k) \mbox{ spans and has depth $D$ in } G')\\
& =O\left(\binom{n}{\lceil \ell(D) \delta n\rceil -1} D^{-n+\ell(D) \delta n} m_{k-1}(G')  |G'|^n \right),
\end{split}
\end{align}
where $c$ and the implied constants are allowed to depend on  $k,\al, \delta, G_1,\dots, G_k$ and $a$. 
\end{proposition}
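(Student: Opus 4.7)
I will prove \eqref{eq:joint:k} and \eqref{eq:joint:k'} together by induction on $k$. The base case $k=2$ is given by \eqref{k=2:F_1F_2-code} and \eqref{k=2:F_1F_2-noncode} in Subsection \ref{sub:k=2}. The inductive step rests on the factorization
$$(F_1, M_2 F_2, M_2 M_3 F_3, \ldots, M_2 \cdots M_k F_k) = (F_1, M_2 \tilde F), \quad \tilde F := (F_2, M_3 F_3, \ldots, M_3 \cdots M_k F_k),$$
which separates the outer layer $M_2$ from an inner tuple $\tilde F$ built from $k-1$ matrices and $k-1$ surjections. The inductive hypothesis will be applied to $\tilde F$, viewed as a map to $G_2 \oplus \cdots \oplus G_k$.

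For the code statement \eqref{eq:joint:k}, I first partition $\sum_{F_2,\ldots,F_k}$ according to the subgroup $G'' \in \CG_{2;3,\ldots,k}$ over which $\tilde F$ is a joint code of distance $\delta n$; non-code configurations of $\tilde F$ contribute negligibly via the inductive bound \eqref{eq:joint:k'} combined with Lemma \ref{lemma:depth:count:k}. By IH, the partial sum for each fixed $G''$ is $(1+O(e^{-cn})) m_{k-2}(G'') |G''|^n$. Conditioning on $\tilde F$ being a code over $G''$, Lemma \ref{lemma:code:single:matrix:1} yields $\P(M_2 \tilde F = A) = (1+O(e^{-cn}))/|G''|^n$ for any $A \in (G'')^n$. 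For fixed $F_1$ a code over $G_1$ (non-code $F_1$ contribute negligibly by Lemma \ref{lemma:non-code:1:matrix}, as in part (a)(ii) of Subsection \ref{sub:k=2}), set $K := \ker(\pi_1|_{G'}) \le G''$; an inclusion-exclusion argument paralleling Claim \ref{claim:code:count} shows that the number of $A \in (G'')^n$ making $(F_1, A)$ a code of distance $\delta n$ over $G'$ equals $(1+O(e^{-cn}))|K|^n$. Since $|G'| = |G_1| \cdot |K|$ and the number of codes $F_1$ is $(1+O(e^{-cn}))|G_1|^n$, summing over $F_1$ yields contribution $|G'|^n/|G''|^n$ per fixed $G''$, and combining across all $G''$ with $\pi_{\{2,\ldots,k\}}(G') \le G''$ gives $|G'|^n \sum_{G''} m_{k-2}(G'') = m_{k-1}(G') |G'|^n$ by the recursion \eqref{eqn:kk-1'}.

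For the depth statement \eqref{eq:joint:k'}, the argument is analogous except that the inclusion-exclusion count of codes $A$ is replaced by the bound of Lemma \ref{lemma:depth:count:k} applied to $(F_1, A)$ over $G'$, producing the extra factor $\binom{n}{\lceil \ell(D) \delta n\rceil -1} D^{-n+\ell(D)\delta n}$. The residual errors from mixed code/non-code configurations of $\tilde F$, $F_1$, and $(F_1, M_2 \tilde F)$ are all controlled by combining Lemma \ref{lemma:depth:count:k} with the exponential decay $\exp(-\alpha n)(D/|G''|)^n$ of Lemma \ref{lemma:non-code:joint:matrix:k}, mirroring the four-case split in Subsection \ref{sub:k=2}. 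The main obstacle will be the careful bookkeeping: verifying that the compatibility condition $\pi_{\{2,\ldots,k\}}(G') \le G''$---needed so the coset of $K$ sits inside $G''$ and produces a nonzero count of $A$---is exactly the summation range in \eqref{eqn:kk-1'}, and propagating error estimates through $k-1$ inductive layers by choosing $\delta$ small enough in terms of $k$, $\alpha$, and $|G_1|,\ldots,|G_k|$.
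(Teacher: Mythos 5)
Your proposal is correct and follows the same overall inductive skeleton as the paper: peel off $M_2$, condition on the image $G''$ and code/non-code status of the inner tuple $\tilde F = (F_2, M_3 F_3, \ldots, M_3 \cdots M_k F_k)$, apply the inductive hypothesis, and appeal to the base case from Subsection~\ref{sub:k=2}. The difference lies in the order of summation and the resulting bookkeeping. The paper fixes a code $F' \in (G')^n$ as the outermost object, notes that the event $(F_1, M_2\tilde F) = F'$ pins down $F_1 = \pi_1(F')$ via an indicator, and only at the very end multiplies by $\#\CC(G') = (1+O(e^{-cn}))|G'|^n$. You instead fix $G''$ and $\tilde F$ first, use Lemma~\ref{lemma:code:single:matrix:1} to make $M_2\tilde F$ approximately uniform on $(G'')^n$, and then need the refined per-$F_1$ count: that for each fixed code $F_1$ over $G_1$, the number of $A \in (G'')^n$ with $(F_1, A)$ a code of distance $\delta n$ over $G'$ is $(1+O(e^{-cn}))|K|^n$, $K = \ker(\pi_1|_{G'})$. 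This is a genuinely stronger intermediate claim than $\#\CC(G') \approx |G'|^n$ (it asserts near-uniformity of the count over $F_1$) which the paper never needs; it is nonetheless true by the union-bound argument you gesture at (over $\sigma \subset [n]$ with $|\sigma| < \delta n$ and proper subgroups $H < G'$ with $\pi_1(H) = G_1$, using that $|H \cap K|/|K| \le 1/2$). So your route buys a slightly cleaner conceptual separation of "which $G''$" from "which target code" at the cost of establishing this extra counting lemma. One small inaccuracy: for the code statement \eqref{eq:joint:k}, non-code $F_1$ contribute \emph{exactly zero}, not merely negligibly -- if $(F_1, A)$ is a code of distance $\delta n$ over $G'$ then $F_1 = \pi_1 \circ (F_1,A)$ is automatically a code of the same distance over $G_1 = \pi_1(G')$ -- so the appeal to Lemma~\ref{lemma:non-code:1:matrix} and part~(a)(ii) there is unnecessary (those reference non-code $F_2$, not $F_1$). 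In the depth statement \eqref{eq:joint:k'} non-code $F_1$ can contribute, but there your sum is handled in aggregate by Lemma~\ref{lemma:depth:count:k} without conditioning on $F_1$, which is correct.
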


We return to the proof of this result after proving Theorem \ref{theorem:jointsur:k}.
\begin{proof}[Proof of Theorem \ref{theorem:jointsur:k}, assuming Proposition \ref{prop:joint:k}] 
By the above discussion, we must show 
\begin{equation}\label{eq:jointsurk_wts}
\sum_{F_1\in \Sur(V,G_1), \dots, F_k \in \Sur(V,G_k)}\P(M_1 F_1=0 \wedge \dots \wedge M_1\cdots M_kF_k=0) = m_k(G_1,\ldots,G_k)(1+O(\exp(-cn))).
\end{equation}
For any $G'$, we have by Lemma \ref{lemma:code:joint} (where ``code" is understood as ``code of distance $\delta n$") that
\begin{align*}
& \P_{M_1}\left( M_1F_1=0 \wedge \dots \wedge  M_1 \cdots M_k F_k=0 | F_1, \dots, M_2\cdots M_k F_k \mbox{ joint over $G'$}\right)=  \frac{1}{|G'|^n}(1 + O(\exp(-cn))).
\end{align*}
Hence
\begin{align*}
& \sum_{\substack{F_i\in \Sur(V,G_i) \\ 1 \leq i \leq k}}\P_{M_1,\dots, M_k}\left( M_1F_1=0 \wedge \dots \wedge  M_1 \cdots M_k F_k=0 \wedge (F_1, \dots, M_2\cdots M_k F_k) \mbox{ joint over $G'$}\right)\\
&= \sum_{\substack{F_i\in \Sur(V,G_i) \\ 1 \leq i \leq k}}\P_{M_1}\left( M_1F_1=0 \wedge \dots \wedge  M_1 \cdots M_k F_k=0 \Big | (F_1, \dots, M_2\cdots M_k F_k) \mbox{ joint over $G'$}\right) \\
&\times \P_{M_2,\dots, M_k}\left(F_1, \dots, M_2\cdots M_k F_k \mbox{ joint  over $G'$}\right)\\
&= \sum_{\substack{F_i\in \Sur(V,G_i) \\ 1 \leq i \leq k}} \frac{1}{|G'|^n}(1 + O(\exp(-cn)))  \P_{M_2,\dots, M_k}\left(F_1, \dots, M_2\cdots M_k F_k \mbox{ joint over $G'$}\right)\\
& = (1+O(\exp(-cn)))m_{k-1}(G')
\end{align*}
where the last equality follows from the first part of Proposition \ref{prop:joint:k}.

Summing over $G'$ and using \eqref{eqn:kk-1}
\begin{align*}
& \sum_{G' \in \CG_{1;2,\dots,k}}\sum_{\substack{F_i\in \Sur(V,G_i) \\ 1 \leq i \leq k}}\P_{M_1,\dots, M_k}\left( M_1F_1=0 \wedge \dots \wedge  M_1 \cdots M_k F_k=0 \wedge F_1, \dots, M_2\cdots M_k F_k \mbox{ joint over $G'$}\right)\\
& = (1+O(\exp(-cn)))m_{k}(G_1,\dots, G_k).
\end{align*}

The sum over non-codes can be treated as follows:

\begin{align*}
& \sum_{\substack{F_i\in \Sur(V,G_i) \\ 1 \leq i \leq k}} \P \left( M_1F_1=0 \wedge \dots \wedge  M_1 \cdots M_k F_k=0  \wedge F_1,M_2 F_2, \dots, M_2 \cdots M_k F_k \mbox{ span but are not code in } G' \right)\\
& =\sum_{D>1, D||G'|}\sum_{\substack{F_i\in \Sur(V,G_i) \\ 1 \leq i \leq k}} \P_{M_2,\dots, M_k} \left( F_1,M_2 F_2, \dots, M_2 \cdots M_k F_k \mbox{ span and of depth $D$ in } G'\right)  \\
&\times \P_{M_1} \left( M_1F_1=0 \wedge \dots \wedge  M_1 \cdots M_k F_k=0 \Big | F_1,M_2 F_2, \dots, M_2 \cdots M_k F_k \mbox{ span and of depth $D$ in } G' \right)
\\
& \le\sum_{D>1, D||G'|} \sum_{\substack{F_i\in \Sur(V,G_i) \\ 1 \leq i \leq k}} \P_{M_2,\dots, M_k} \left( F_1,M_2 F_2, \dots, M_2 \cdots M_k F_k \mbox{ span and of depth $D$ in } G' \right)\\
& \times K \exp(-\al n)  \left(\frac{D}{|G|}\right)^n \\
& \le \sum_{D>1, D||G'|} K \exp(-\al n)  \left(\frac{D}{|G'|}\right)^n \times K' \binom{n}{\lceil \ell(D) \delta n\rceil -1} D^{-n+\ell(D) \delta n} m_{k-1}(G')  |G'|^n  \\
&=O(\exp(-\al n))  m_{k-1}(G'),
\end{align*}
where we used Lemma \ref{lemma:non-code:joint:matrix:k} in the first bound, the second part of Proposition \ref{prop:joint:k} in the second bound, and assumed that $\delta$ is sufficiently small for the final step. 
\end{proof}

\begin{proof}[Proof of Proposition \ref{prop:joint:k}] Similarly to the proof of Proposition \ref{prop:single:k} we will induct on $k$. The base case $k=2$ are 
\eqref{k=2:F_1F_2-code} and \eqref{k=2:F_1F_2-noncode} in the previous proof of the $k=2$ case of Theorem \ref{theorem:jointsur:k} in Subsection \ref{sub:k=2}. We induct on both \eqref{eq:joint:k} and \eqref{eq:joint:k'} simultaneously, i.e. we need the $k-1$ case of both \eqref{eq:joint:k} and \eqref{eq:joint:k'} to prove the $k$ case of each.

{\bf \underline{Proof of \eqref{eq:joint:k}}.} As we are working with joint, since $\pi_1(G')=G_1$ by the definition of $G'$, it follows automatically that $F_1$ is a code of distance $\delta n$ in $G_1$. Recall that $\CC(G')$ denotes the set of codes in $(G')^n$. By Claim \ref{claim:code:count'} we know that this set has size $(1+O(\exp(-cn))) |G'|^n$. 

For each code $F'$ of $G'$ we are interested in $\P_{M_2,\dots, M_k}((F_1,M_2 F_2,\dots, M_2 \cdots M_k F_k)=F')$, which can be written as $\mathbf{1}_{F_1 = \pi_1(F')} \P_{M_2,\dots, M_k}((M_2 F_2,\dots, M_2 \cdots M_k F_k)=\pi_{\{2,\dots, k\}}(F'))$ (here the notation of $\pi_{\{2,\dots, k\}}(F')$ is as in Remark \ref{rmk:proj_F}).  Note that the entries of the vector $\pi_{\{2,\dots, k\}}(F')$ generate the group $\pi_{\{2,\dots, k\}}(G')$. This probability depends on whether $(F_2, M_3F_3,\dots, M_3 \dots M_k F_k)$ forms a code or not. Hence we will fix a code $F'$ of $G'$, and divide into two cases.

{\it (i) \underline{Summing over codes}.} 
Since $(F_1,M_2F_2,\ldots,M_2 \cdots M_k F_k)$ generates $G'$ by assumption, projecting to $G_2 \oplus \cdots \oplus G_k$ we have that $M_2(F_2,M_3 F_3,\dots, M_3 \cdots M_k F_k)$ generates $\pi_{\{2,\dots, k\}}(G')$. Hence $(F_2,M_3 F_3,\dots, M_3 \cdots M_k F_k)$ will always generate some group $G''$ which contains $\pi_{\{2,\dots, k\}}(G')$, and furthermore $G''$ must lie in $\CG_{2;3,\dots, k}$ because $F_2$ is surjective. 

This motivates us to fix a group $G''  \in \CG_{2;3,\dots, k}$ with $G'' \geq \pi_{\{2,\dots, k\}}(G')$, and consider the probability that $(F_2,M_3 F_3,\dots, M_3 \cdots M_k F_k)$ span and is a code of distance $\delta n$ in $G''$.
By the inductive hypothesis for \eqref{eq:joint:k},
\begin{align}\label{eqn:k:ind}
\begin{split}
& \sum_{F_i\in \Sur(V,G_i),2\le i\le k}\P_{M_3,\dots, M_k}((F_2,M_3 F_3,\dots, M_3 \cdots M_k F_k) \mbox{ span and is a code of distance $\delta n$ in } G'') \\
&=(1+O(\exp(-cn)))m_{k-2}(G'')|G''|^n.
\end{split}
\end{align}

For short, let $\CE_{3,\dots,k, G''}$ be the event (depending on $F_i$)
$$\CE_{3,\dots,k,G''}:=\Big\{M_3,\ldots,M_k: (F_2,M_3 F_3,\dots, M_3 \cdots M_k F_k) \mbox{ span and is a code of distance $\delta n$ in } G''\Big\}.$$
By Lemma \ref{lemma:code:joint},
$$\mathbf{1}_{F_1 = \pi_1(F')}  \P_{M_2}(M_2 F_2,\dots, M_2 \cdots M_k F_k = \pi_{\{2,\dots, k\}}(F')|\CE_{3,\dots,k,G''})=\mathbf{1}_{F_1 = \pi_1(F')}  (1/|G''|^n) (1+O(\exp(-cn))).$$

We therefore have 
\begin{align*}
& \ \ \ \ \sum_{\substack{G'' \in \CG_{2;3,\dots, k} \\ G'' \geq \pi_{\{2,\dots, k\}}(G')}}  \sum_{\substack{F_i\in \Sur(V,G_i) \\ 1 \leq i \leq k}}\P_{M_2,\dots, M_k}((F_1,M_2 F_2,\dots, M_2 \cdots M_k F_k)=F' \wedge \CE_{3,\dots,k,G''})\\
&=\sum_{G''} \sum_{F_1\in \Sur(V,G_1)}\mathbf{1}_{F_1 = \pi_1(F')} \sum_{F_i\in \Sur(V,G_i),2\le i\le k}  \P_{M_2}((M_2 F_2,\dots, M_2 \cdots M_k F_k)=\pi_{\{2,\dots, k\}}(F')|\CE_{3,\dots,k,G''}) \times \\
& \times \P_{M_3,\dots,M_k}(\CE_{3,\dots,k,G''})\\
&= \sum_{G''}  \sum_{F_1\in \Sur(V,G_1)}\mathbf{1}_{F_1 = \pi_1(F')} \sum_{F_i\in \Sur(V,G_i),2\le i\le k} (1/|G''|^n)(1+O(\exp(-cn)))\times \P_{M_3,\dots,M_k}(\CE_{3,\dots,k,G''})\\
&= \sum_{G''} \sum_{F_i\in \Sur(V,G_i),2\le i\le k}  (1/|G''|^n)(1+O(\exp(-cn))) \times \P_{M_3,\dots,M_k}(\CE_{3,\dots,k,G''})\\
&= \sum_{G''}(1+O(\exp(-cn))) (1/|G''|^n) |G''|^n(1+O(\exp(-cn))) m_{k-2}(G'')= (1+O(\exp(-cn)))m_{k-1}(G'),
\end{align*}
where we used the induction hypothesis \eqref{eqn:k:ind} in the very last estimate, and the fact that 
$$\sum_{\substack{G'' \in \CG_{2;3,\dots, k} \\ G'' \geq \pi_{\{2,\dots, k\}}(G')}} m_{k-2}(G'') = m_{k-1}(G').$$

By Claim \ref{claim:code:count} there are $(1+O(\exp(-cn))) |G'|^n$ codes $F'$. Summing the above over them, we obtain
\begin{align}\label{eq:sum_code_induction}
\begin{split}
&\sum_{\substack{F_i\in \Sur(V,G_i) \\ 1 \leq i \leq k}} \P_{M_2,\dots, M_k}(F_1,M_2 F_2, \dots, M_2 \cdots M_k F_k \mbox{ span and is a code of distance $\delta n$ in } G' \\
 &\wedge (F_2,M_3 F_3,\dots, M_3 \cdots M_k F_k) \text{ is a code of distance $\delta n$ in its image}) \\
&=(1+O(\exp(-cn)))m_{k-1}(G')|G'|^n.
 \end{split}
\end{align}

It remains to show that the remaining part of LHS\eqref{eq:joint:k}, corresponding to the case when $(F_2,M_3 F_3,\dots, M_3 \cdots M_k F_k)$ is not a code in its image, is small.

{\it (ii) \underline{Summing over non-codes}.} Let $G''  \in \CG_{2;3,\dots, k}$ again be a group which contains $\pi_{\{2,\dots, k\}}(G')$. We are considering the case that $(F_2,M_3 F_3, \dots, M_3 \cdots M_k F_k)$ spans but is not a code of distance $\delta n$ over $G''$, hence it has depth $D>1$ for some $D||G''|$. Let $\CF_{3,\dots, k,G'',D}$ denote this event. We have
\begin{align*}
&\sum_{F_2,\dots, F_k}\P_{M_2, M_3,\dots, M_k}((M_2 F_2,\dots, M_2 \cdots M_k F_k)=\pi_{\{2,\dots, k\}}(F') \wedge \CF_{3,\dots, k,G'',D})\\
&= \sum_{F_2,\dots, F_k}\P_{M_2}((M_2 F_2,\dots, M_2 \cdots M_k F_k)=\pi_{\{2,\dots, k\}}(F')|\CF_{3,\dots, k,G'',D} )\P_{M_3,\dots, M_k}(\CF_{3,\dots, k,G'',D}).
\end{align*}
By Lemma \ref{lemma:non-code:joint:matrix:k}\begin{align*}
\P_{M_2}((M_2 F_2,\dots, M_2 \cdots M_k F_k)=\pi_{\{2,\dots, k\}}(F')|\CF_{3,\dots, k,G'',D} ) \le K \exp(-\al n) (D/|G''|)^n.
\end{align*}

Next, using the induction hypothesis \eqref{eq:joint:k'} for $k-1$ we have
$$\sum_{F_2,\dots, F_k}\P_{M_3,\dots, M_k } (\CF_{3,\dots, k,G'',D}) \leq K' \binom{n}{\lceil \ell(D) \delta n\rceil -1} D^{-n+\ell(D) \delta n} m_{k-2}(G'')  |G''|^n.$$
We thus obtain that 
\begin{align*}
&\sum_{F_2,\dots, F_k}\P_{M_2; M_3,\dots, M_k}((M_2 F_2,\dots, M_2 \cdots M_k F_k)=\pi_{\{2,\dots, k\}}(F') \wedge \CF_{3,\dots, k,G'',D})=O(\exp(-\al n/2)),
\end{align*}
provided that $\delta$ is sufficiently small.

Summing over $D$, over $G''$ and over codes $F'$ of $G'$ we thus obtain
\begin{align}\label{eq:sum_code_induction'}
\begin{split}
&\sum_{\substack{F_i\in \Sur(V,G_i) \\ 1 \leq i \leq k}} \P_{M_2,\dots, M_k}(F_1,M_2 F_2, \dots, M_2 \cdots M_k F_k \mbox{ span and is a code of distance $\delta n$ in } G' \\
 &\wedge (F_2,M_3 F_3,\dots, M_3 \cdots M_k F_k) \text{ is not a code in its image}) \\
&= O(\exp(-\al n/2))|G'|^n.
 \end{split}
\end{align}
 Combining \eqref{eq:sum_code_induction'} with \eqref{eq:sum_code_induction} yields \eqref{eq:joint:k}.


{\bf \underline{Proof of \eqref{eq:joint:k'}}.} For each spanning $F'$ of depth $D$ in $G'$ we are interested in $\P_{M_2,\dots, M_k}((F_1,M_2 F_2,\dots, M_2 \cdots M_k F_k)=F')$, which can be written as $\mathbf{1}_{F_1 = \pi_1(F')} \P_{M_2,\dots, M_k}((M_2 F_2,\dots, M_2 \cdots M_k F_k)=\pi_{\{2,\dots, k\}}(F'))$. This probability again depends on whether $(F_2, M_3F_3,\dots, M_3 \dots M_k F_k)$ forms a code or not. Hence we will divide into two cases.

{\it (i) \underline{Summing over codes}.} 
 
Our treatment is similar to the case (i) in the proof of \eqref{eq:joint:k'}, except that the sum over $F'$ is small as the number of non-codes is small. 

First, since $(F_1,M_2F_2,\ldots,M_2 \cdots M_k F_k)$ generates $G'$ by assumption, projecting to $G_2 \oplus \cdots \oplus G_k$ we have that $M_2(F_2,M_3 F_3,\dots, M_3 \cdots M_k F_k)$ generates $\pi_{\{2,\dots, k\}}(G')$. Hence $(F_2,M_3 F_3,\dots, M_3 \cdots M_k F_k)$ will always generate some group $G''$ which contains $\pi_{\{2,\dots, k\}}(G')$, and {}furthermore $G''$ must lie in $\CG_{2;3,\dots, k}$ because $F_2$ is surjective. 

This motivates us to fix a group $G''  \in \CG_{2;3,\dots, k}$ with $G'' \geq \pi_{\{2,\dots, k\}}(G')$, and consider the probability that $(F_2,M_3 F_3,\dots, M_3 \cdots M_k F_k)$ span and is a code of distance $\delta n$ in $G''$.
By the inductive hypothesis, 
\begin{align}
\begin{split}
& \sum_{F_i\in \Sur(V,G_i),2\le i\le k}\P_{M_3,\dots, M_k}((F_2,M_3 F_3,\dots, M_3 \cdots M_k F_k) \mbox{ span and is a code of distance $\delta n$ in } G'') \\
&=(1+O(\exp(-cn)))m_{k-2}(G'')|G''|^n.
\end{split}
\end{align}

For short, let $\CE_{3,\dots,k, G''}$ be the event (depending on $F_i$)
$$\CE_{3,\dots,k,G''}:=\Big\{M_3,\ldots,M_k: (F_2,M_3 F_3,\dots, M_3 \cdots M_k F_k) \mbox{ span and is a code of distance $\delta n$ in } G''\Big\}.$$
By Lemma \ref{lemma:code:joint},
$$\mathbf{1}_{F_1 = \pi_1(F')}  \P_{M_2}(M_2 F_2,\dots, M_2 \cdots M_k F_k = \pi_{\{2,\dots, k\}}(F')|\CE_{3,\dots,k,G''})=\mathbf{1}_{F_1 = \pi_1(F')}  (1/|G''|^n) (1+O(\exp(-cn))).$$

We therefore have 
\begin{align*}
& \ \ \ \ \sum_{\substack{G'' \in \CG_{2;3,\dots, k} \\ G'' \geq \pi_{\{2,\dots, k\}}(G')}}  \sum_{\substack{F_i\in \Sur(V,G_i) \\ 1 \leq i \leq k}}\P_{M_2,\dots, M_k}((F_1,M_2 F_2,\dots, M_2 \cdots M_k F_k)=F' \wedge \CE_{3,\dots,k,G''})\\
&=\sum_{G''} \sum_{F_1\in \Sur(V,G_1)}\mathbf{1}_{F_1 = \pi_1(F')} \sum_{F_i\in \Sur(V,G_i),2\le i\le k}  \P_{M_2}((M_2 F_2,\dots, M_2 \cdots M_k F_k)=\pi_{\{2,\dots, k\}}(F')|\CE_{3,\dots,k,G''}) \times \\
& \times \P_{M_3,\dots,M_k}(\CE_{3,\dots,k,G''})\\
&= \sum_{G''}  \sum_{F_1\in \Sur(V,G_1)}\mathbf{1}_{F_1 = \pi_1(F')} \sum_{F_i\in \Sur(V,G_i),2\le i\le k} (1/|G''|^n)(1+O(\exp(-cn)))\times \P_{M_3,\dots,M_k}(\CE_{3,\dots,k,G''})\\
&= \sum_{G''} \sum_{F_i\in \Sur(V,G_i),2\le i\le k}  (1/|G''|^n)(1+O(\exp(-cn))) \times \P_{M_3,\dots,M_k}(\CE_{3,\dots,k,G''})\\
&= \sum_{G''}(1+O(\exp(-cn))) (1/|G''|^n) |G''|^n(1+O(\exp(-cn))) m_{k-2}(G'')= (1+O(\exp(-cn)))m_{k-1}(G'),
\end{align*}
where we used the induction hypothesis \eqref{eqn:k:ind} in the very last estimate. 

As there are at most $O(\binom{n}{\lceil \ell(D) \delta n\rceil -1} D^{-n+\ell(D) \delta n}  |G'|^n) $ vectors $F'$ of depth $D$, summing the above over them we obtain 

\begin{align} \label{eq:sum_code_induction''}
\begin{split}
&\sum_{\substack{F_i\in \Sur(V,G_i) \\ 1 \leq i \leq k}} \P_{M_2,\dots, M_k}(F_1,M_2 F_2, \dots, M_2 \cdots M_k F_k \mbox{ span and of depth $D>1$ over } G') \\
 &\wedge (F_2,M_3 F_3,\dots, M_3 \cdots M_k F_k) \text{ is a code of distance $\delta n$ in its image}) \\
& \le O\Big(\binom{n}{\lceil \ell(D) \delta n\rceil -1} D^{-n+\ell(D) \delta n}\Big)m_{k-1}(G')|G'|^n(1+O(\exp(-cn))).
 \end{split}
\end{align}

It remains to show the case when $(F_2,M_3 F_3,\dots, M_3 \cdots M_k F_k)$ is not a code in its image.

{\it (ii) \underline{Summing over non-codes}.} Let $G''  \in \CG_{2;3,\dots, k}$ be a group which contains $\pi_{\{2,\dots, k\}}(G')$. We next consider the case that $(F_2,M_3 F_3, \dots, M_3 \cdots M_k F_k)$ spans but is not a joint code of distance $\delta n$ over $G''$. If $(F_2,M_3 F_3, \dots, M_3 \cdots M_k F_k)$ spans but is not joint code of distance $\delta n$, then it has depth $D_2$ for some $D_2>1$. Let $\CF_{3,\dots, k,G'',D_2}$ denote this event.  Hence  
\begin{align*}
&\sum_{F_2,\dots, F_k}\P_{M_2; M_3,\dots, M_k}((M_2 F_2,\dots, M_2 \cdots M_k F_k)=\pi_{\{2,\dots, k\}}(F') \wedge \CF_{3,\dots, k,G'',D_2})\\
&\le \sum_{F_2,\dots, F_k}\P_{M_2}((M_2 F_2,\dots, M_2 \cdots M_k F_k)=\pi_{\{2,\dots, k\}}(F')|\CF_{3,\dots, k,G'',D_2} )\\
&\times \P_{M_3,\dots, M_k}(\CF_{3,\dots, k,G'',D_2}).
\end{align*}

By Lemma \ref{lemma:non-code:joint:matrix:k}
\begin{align*}
\P_{M_2}((M_2 F_2,\dots, M_2 \cdots M_k F_k)=\pi_{\{2,\dots, k\}}(F')|\CF_{3,\dots, k,G'',D_2} )= \exp(-\al n) (D_2/|G''|)^n.
\end{align*}

Next, by \eqref{eq:joint:k'} (using the induction hypothesis for $k-1$) we have
$$\sum_{F_2,\dots, F_k}\P_{M_3,\dots, M_k } (\CF_{3,\dots, k,G'',D_2}) =O\left(  \binom{n}{\lceil \ell(D_2) \delta n\rceil -1} D_2^{-n+\ell(D_2) \delta n}\right)m_{k-1}(G'')|G''|^n.$$
Hence
$$\sum_{F_2,\dots, F_k}\P_{M_2; M_3,\dots, M_k}((M_2 F_2,\dots, M_2 \cdots M_k F_k)=\pi_{\{2,\dots, k\}}(F') \wedge \CF_{3,\dots, k,G'',D_2}) = O(\exp(-\al n/2)),$$
provided that $\delta$ is sufficiently small.

Summing over $D_2$ and over $G''$ we obtain 
\begin{align*}
\sum_{F_2,\dots, F_k}\P_{M_2; M_3,\dots, M_k}((M_2 F_2,\dots, M_2 \cdots M_k F_k)=\pi_{\{2,\dots, k\}}(F'))& =O( \exp(-\al n/2)),
\end{align*}
provided that $\delta$ is sufficiently small. 

Finally, again as there are at most $O(\binom{n}{\lceil \ell(D) \delta n\rceil -1} D^{-n+\ell(D) \delta n}  |G'|^n) $ vectors $F'$ of depth $D$, summing  over $F'$ we obtain
\begin{align*}
\begin{split}
&\sum_{\substack{F_i\in \Sur(V,G_i) \\ 1 \leq i \leq k}} \P_{M_2,\dots, M_k}(F_1,M_2 F_2, \dots, M_2 \cdots M_k F_k \mbox{ span and of depth $D>1$ over } G') \\
 &\wedge (F_2,M_3 F_3,\dots, M_3 \cdots M_k F_k) \text{ is not a code in its image}) \\
&=O( \exp(-\al n/2))\binom{n}{\lceil \ell(D) \delta n\rceil -1} D^{-n+\ell(D) \delta n}|G'|^n.
 \end{split}
\end{align*}
Together with \eqref{eq:sum_code_induction''}, this estimate completes the proof of  \eqref{eq:joint:k'}.

\end{proof}


\section{Hall-Littlewood polynomial background}\label{sec:HL_background}

This section contains standard definitions and results on Hall-Littlewood polynomials. We also introduce the ring of symmetric functions, which may be thought of as a ring meant to model symmetric polynomials in infinitely many variables, and is needed to obtain measures such as the Cohen-Lenstra measure in the Hall-Littlewood process formalism. This material may be found in \cite[Chapter III]{Macd}, and the setup of Hall-Littlewood processes which may be found for instance in \cite{BC14}; much of the material below is quoted with little modification from \cite[Section 2]{VP21q}.

We denote by $\BBY$ the set of all integer partitions $(\la_1,\la_2,\ldots)$, i.e. sequences of nonnegative integers $\la_1 \geq \la_2 \geq \cdots$ which are eventually $0$. We call the integers $\la_i$ the \emph{parts} of $\la$, set $\la_i' = \#\{j: \la_j \geq i\}$, and write $m_i(\la) = \#\{j: \la_j = i\} = \la_i'-\la_{i+1}'$. We write $\len(\la)$ for the number of nonzero parts, and denote the set of partitions of length $\leq n$ by $\BBY_n$. We write $\mu \prec \la$ or $\la \succ \mu$ if $\la_1 \geq \mu_1 \geq \la_2 \geq \mu_2 \geq \cdots$, and refer to this condition as \emph{interlacing}. Finally, we denote the partition with all parts equal to zero by $\emptyset$.

We denote by $\Lambda_n$ the ring $\C[x_1,\ldots,x_n]^{S_n}$ of symmetric polynomials in $n$ variables $x_1,\ldots,x_n$. It is a very classical fact that the power sum symmetric polynomials $p_k(x_1,\ldots,x_n) = \sum_{i=1}^n x_i^k, k =1,\ldots,n$, are algebraically independent and algebraically generate $\L_n$. For a symmetric polynomial $f$, we will often write $f(\bx)$ for $f(x_1,\ldots,x_n)$ when the number of variables is clear from context. We will also use the shorthand $\bx^\la := x_1^{\la_1} x_2^{\la_2} \cdots x_n^{\la_n}$ for $\la \in \BBY_n$. 

One has a chain of maps
\[
\cdots \to \L_{n+1} \to \L_n \to \L_{n-1} \to \cdots \to 0
\]
where the map $\L_{n+1} \to \L_n$ is given by setting $x_{n+1}$ to $0$. 
In fact, writing $\Lambda_n^{(d)}$ for symmetric polynomials in $n$ variables of total degree $d$, one has 
\[
\cdots \to \L_{n+1}^{(d)} \to \L_n^{(d)} \to \L_{n-1}^{(d)} \to \cdots \to 0
\]
with the same maps. The inverse limit $\L^{(d)}$ of these systems may be viewed as symmetric polynomials of degree $d$ in infinitely many variables. From the ring structure on each $\L_n$ one gets a natural ring structure on $\L := \bigoplus_{d \geq 0} \L^{(d)}$, and we call this the \emph{ring of symmetric functions}. 
An equivalent definition is $\Lambda := \BBC[p_1,p_2,\ldots]$ where $p_i$ are indeterminates; under the natural map $\Lambda \to \Lambda_n$ one has $p_i \mapsto p_i(x_1,\ldots,x_n)$. 

Each ring $\L_n$ has a natural basis $\{p_\la: \la_1 \leq n\}$ where
\begin{equation*}
    p_\la := \prod_{i \geq 1} p_{\la_i}.
\end{equation*}
Another natural basis, with the same index set, is given by the \emph{Hall-Littlewood polynomials}. Recall the $q$-Pochhammer symbol defined as $(a;q)_n := \prod_{i=0}^{n-1} (1-aq^i)$, and for $n \geq 0, \la \in \BBY_n$ define
\begin{equation*}
    v_{\la,n}(t) = \frac{(t;t)_{n-\len(\la)}}{(1-t)^{n-\len(\la)}}\prod_{i \geq 1} \frac{(t;t)_{m_i(\la)}}{(1-t)^{m_i(\la)}}.
\end{equation*}

\begin{definition}\label{def:HL}
The Hall-Littlewood polynomial indexed by $\la \in \BBY_n$ is
\begin{equation}\label{eq:hlP_formula}
    P_\la(x_1,\ldots,x_n;t) = \frac{1}{v_{\la,n}(t)} \sum_{\sigma \in S_n} \sigma\left(\bx^\la \prod_{1 \leq i < j \leq n} \frac{x_i-tx_j}{x_i-x_j}\right)
\end{equation}
where $\sigma$ acts by permuting the variables. We often drop the `$;t$' when clear from context.
\end{definition}

It follows from the definition that $P_\la(x_1,\ldots,x_n,0;t) = P_\la(x_1,\ldots,x_n;t)$, hence for each $\la \in \BBY$ there is a \emph{Hall-Littlewood symmetric function} $P_\la \in \L$.

\begin{definition}
For $\la \in \BBY$, we define the dual Hall-Littlewood polynomial by
\[
Q_\la(\bx;t) = \prod_{i \geq 1} (t;t)_{m_i(\la)} P_\la(\bx;t).
\]
These similarly are consistent under maps $\L_{n+1} \to \L_n$ and hence define symmetric functions.
\end{definition}


Because the $P_\la$ form a basis for the vector space of symmetric polynomials in $n$ variables, there exist symmetric polynomials $P_{\la/\mu}(x_1,\ldots,x_{n-k};t) \in \L_{n-k}$ indexed by $\la \in \BBY_n, \mu \in \BBY_k$ which are defined by
\begin{equation}\label{eq:def_skewP}
    P_\la(x_1,\ldots,x_n;t) = \sum_{\mu \in \BBY_k} P_{\la/\mu}(x_{k+1},\ldots,x_n;t) P_\mu(x_1,\ldots,x_k;t).
\end{equation}
The definition of $Q_{\la/\mu}$ is exactly analogous. As with non-skew Hall-Littlewood polynomials, the skew versions are consistent under the maps $\L_{n+1} \to \L_n$ and hence define symmetric functions in $\L$, which we also denote by $P_{\la/\mu}$ and $Q_{\la/\mu}$.

Hall-Littlewood polynomials and symmetric functions satisfy the \emph{skew Cauchy identity}, upon which most probabilistic constructions rely. For polynomials in a finite number of variables, it reads
\begin{multline}\label{eq:finite_cauchy}
    \sum_{\kappa \in \BBY} P_{\kappa/\nu}(x_1,\ldots,x_n;t)Q_{\kappa/\mu}(y_1,\ldots,y_m;t) \\
    = \prod_{\substack{1 \leq i \leq n \\ 1 \leq j \leq m}} \frac{1-tx_iy_j}{1-x_iy_j} \sum_{\la \in \BBY} Q_{\nu/\la}(y_1,\ldots,y_m;t) P_{\mu/\la}(x_1,\ldots,x_n;t).
\end{multline}

For later convenience we set 
\begin{equation}\label{eq:def_cauchy_kernel}
     \Pi_t(\bx;\by) := \prod_{\substack{1 \leq i \leq n \\ 1 \leq j \leq m}} \frac{1-tx_iy_j}{1-x_iy_j} = \exp\left(\sum_{\ell = 1}^\infty \frac{1-t^\ell}{\ell}p_\ell(\bx)p_\ell(\by)\right).
\end{equation}
The second equality in \eqref{eq:def_cauchy_kernel} is not immediate but is shown in \cite{Macd}. The RHS of \eqref{eq:def_cauchy_kernel} makes sense as formal series in a suitable completion of $\L \ot \L$, and \eqref{eq:finite_cauchy} generalizes straightforwardly with the skew $P$ and $Q$ functions in $x$ and $y$ replaced by corresponding elements of $\L \ot \L$.

\begin{proposition}\label{prop:cauchy}
Let $\nu, \mu \in \BBY$. Then
\begin{equation}\label{eq:infinite_cauchy}
    \sum_{\kappa \in \BBY} P_{\kappa/\nu}(\bx;t)Q_{\kappa/\mu}(\by;t) 
    = \Pi_t(\bx;\by) \sum_{\la \in \BBY} Q_{\nu/\la}(\by;t) P_{\mu/\la}(\bx;t).
\end{equation}
In particular, if $\mu = \emptyset$ we have 
\begin{equation}\label{eq:infinite_cauchy_special}
    \sum_{\kappa \in \BBY} P_{\kappa/\nu}(\bx;t)Q_{\kappa}(\by;t) 
    = \Pi_t(\bx;\by) Q_{\nu}(\by;t).
\end{equation}
\end{proposition}

Let us now set the parameter $t$ to be real. We would like to define probabilities by substituting real numbers for the variables of Hall-Littlewood polynomials. The analogue of `specializing infinitely many variables' for a general symmetric function is captured as follows.

\begin{definition}
Given a sequence of real numbers $\alpha_1 \geq \alpha_2 \geq \ldots \geq 0$ with $\sum_i \alpha_i < \infty$, the pure alpha specialization with parameters $\balpha = \{\alpha_i\}_{i \geq 1}$ is the homomorphism $\L \to \BBC$ defined on the generators $p_k$ by
$$p_k(\balpha) = \sum_{i \geq 1} \alpha_i^k.$$
For a general symmetric function $f \in \L$, we write $f(\balpha)$ for the image of $f$ under this homomorphism.
\end{definition}

When $\balpha$ has only finitely many (say, $k$) nonzero $\alpha_i$ parameters, $f(\alpha)$ is just the symmetric polynomial $\Im(f) \in \Lambda_k$ with $x_1=\alpha_1,\ldots,x_k = \alpha_k$ plugged in for the variables. Extending the notation of \eqref{eq:def_cauchy_kernel}, we write 
$$\Pi_t(\balpha;\balpha') =\exp\left(\sum_{\ell = 1}^\infty \frac{1-t^\ell}{\ell}p_\ell(\balpha)p_\ell(\balpha')\right).$$

Of course, one can substitute any real or complex numbers for the variables, but choosing $\alpha_i \geq 0$ has the advantage for probability that when $t \in [0,1]$, $P_\la(\balpha)$ and $Q_\la(\balpha)$ are nonnegative.

\begin{remark}
There are other homomorphisms $\L \to \C$ with nonnegative values on the $P_\la$, of which the pure alpha specializations form one family. These were classified in \cite{M19}.
\end{remark}

One obtains probability measures on sequences of partitions using Proposition~\ref{prop:cauchy} as follows.

\begin{definition}\label{def:HL_proc}
Let $\mathbf{\theta}$ and $\balpha_1,\ldots,\balpha_k$ be pure alpha specializations satisfying
$$\sum_{\la \in \BBY}P_\la(\mathbf{\theta})Q_\la(\balpha_i) < \infty$$
for each $i$. Then the associated \emph{ascending Hall-Littlewood process} is the probability measure on sequences $(\la^{(1)},\ldots,\la^{(k)}) \in \BBY^k$ given by 
\[
\Pr(\la^{(1)},\ldots,\la^{(k)}) = \frac{Q_{\la^{(1)}}(\balpha_1) Q_{\la^{(2)}/\la^{(1)}}(\balpha_2) \cdots Q_{\la^{(k)}/\la^{(k-1)}}(\balpha_k) P_{\la^{(k)}}(\mathbf{\theta})}{\prod_{i=1}^k \Pi_t(\balpha_i;\mathbf{\theta})}.
\]
\end{definition}

The $k=1$ case of Definition~\ref{def:HL_proc} is a measure on partitions, referred to as a \emph{Hall-Littlewood measure}. Two special cases will be relevant for our setting. Below and subsequently, we use the notation $x[k]$ in the arguments of Hall-Littlewood polynomials to denote the variable $x$ repeated $k$ times. 

\begin{definition}\label{def:P(k)} For $t \in (0,1)$ and $k \in \BBZ_{\geq 1}$, we define the probability measure $\P^{(k)}_{\infty; t}$ on $\BBY^k$ by
$$\P^{(k)}_{\infty; t} (\la^{(1)},\dots, \la^{(k)}) :=\frac{Q_{\la^{(1)}} (1,t,\dots) Q_{\la^{(2)}/\la^{(1)}}(1,t,\dots) \dots Q_{\la^{(k)}/\la^{(k-1)}}(1,t,\dots)  P_{\la^{(k)}} (t,t^2, \dots,)}{\Pi_t (1,t, \dots; t[k],t^2[k],\ldots)}.$$
\end{definition}

\begin{definition}\label{def:Past} For $t \in (0,1)$ and $k \in \BBZ_{\geq 1}$, we define the probability measure $\P^{\ast(k)}_{\infty;t}$ on $\BBY$ by
 $k$
$$\P^{\ast (k)}_{\infty; t}(\la) = \frac{Q_\la(1[k],t[k],\ldots)P_\la(t,t^2,\ldots)}{\Pi_t (1,t, \dots; t[k],t^2[k],\ldots)}.$$
\end{definition}

It follows from the branching rule \eqref{eq:def_skewP} that the marginal distribution of $\la^{(k)}$ under $\P^{(k)}_{\infty; t}$ is $\P^{\ast (k)}_{\infty; t}$. When $t=1/p$, random sequence of partitions specified by $\P^{(k)}_{\infty; t}$ is related to the random sequence of groups in Theorem \ref{theorem:main:joint} with $P=\{p\}$, and similarly $\P^{\ast (k)}_{\infty; t}$ is related to Theorem \ref{theorem:main:prod}. We discuss this in the next section.

\begin{remark}
We use a slightly different setup for Hall-Littlewood polynomials than in some previous works \cite{VP21limits,VP22hall} on Hall-Littlewood polynomials and $p$-adic random matrices. The reason is that those works considered random matrices over $\BBQ_p$, and consequently it was desirable to extend the indices of Hall-Littlewood polynomials to `partitions' with negative parts allowed, which required modifying the standard notation of \cite{Macd} slightly. Because we work only over $\BBZ_p$, there is no necessity to do this, and our notation follows that of \cite{Macd}. 
\end{remark}

\section{Hall-Littlewood polynomials and abelian $p$-groups}\label{sec:HL_and_groups}

The goal of this section is to prove several basic results giving formulas, in terms of Hall-Littlewood polynomials, for various counts of maps between abelian $p$-groups. All follow straightforwardly from the material in \cite[Chapters II and III]{Macd}, but most do not seem to be present in the random matrix theory literature. We therefore hope this section will have some value in translating between the usual terminology of moments and the Hall-Littlewood notation used in e.g. \cite{FK,VP21limits,VP22hall,VP22+universal}. Here and in the next section, we will usually fix a prime $p$ and let $t=1/p$ to declutter notation and keep with standard Hall-Littlewood usage. 



\begin{definition}\label{def:sbgp_number}
For any partition $\la \in \BBY$, we denote by $G_\la$ the abelian $p$-group $\bigoplus_{i \geq 1} \Z/p^{\la_i}\Z$ when $p$ is fixed and clear from context. The \emph{type} of a finite abelian $p$-group $H$ is the partition $\la$ for which $H \cong G_\la$. For partitions $\la,\mu \in \BBY$, we denote by $G_{\mu,\lambda}$ the set of subgroups of $G_\lambda$ which have type $\mu$.
\end{definition}

\begin{lemma} \label{lemma:from_mac}
With $t=1/p$ as above,
$$|G_{\mu,\lambda}| = \frac{Q_{\la/\mu}(1,t,\ldots) Q_\mu(1,t,\ldots)}{Q_\la(1,t,\ldots)}.
$$
\end{lemma}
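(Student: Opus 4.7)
The plan is to deduce the identity from classical results about principally specialized Hall-Littlewood polynomials and subgroups of finite abelian $p$-groups. The first step is to reduce the claim to its $P$-function form. Using the standard relations $Q_\nu = b_\nu(t) P_\nu$ with $b_\nu(t) = \prod_{i\geq 1}(t;t)_{m_i(\nu)}$ and the skew analogue $Q_{\lambda/\mu} = (b_\lambda/b_\mu) P_{\lambda/\mu}$ (both from Macdonald Chapter III), the $b$-factors cancel telescopically and the desired identity becomes
$$|G_{\mu,\lambda}| = \frac{P_\mu(1,t,\ldots;t)\, P_{\lambda/\mu}(1,t,\ldots;t)}{P_\lambda(1,t,\ldots;t)}.$$

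For this reduced identity I would invoke the Hall-theoretic interpretation of the principal specialization. The quantity $P_\nu(1,t,t^2,\ldots;t)$ admits an explicit product formula (Macdonald III.(2.1)) and enumerates, up to a straightforward factor, complete flags in $G_\nu$—that is, maximal chains of subgroups—with weights given by products of the branching coefficients $\psi$. The skew version $P_{\lambda/\mu}(1,t,\ldots;t)$ similarly enumerates weighted chains of subgroups from a type-$\mu$ subgroup up to $G_\lambda$. Any complete flag in $G_\lambda$ passing through a subgroup $H$ of type $\mu$ decomposes canonically as a chain in $H$ concatenated with a chain in the interval $[H, G_\lambda]$ of the subgroup lattice; after summing over the $|G_{\mu,\lambda}|$ such choices of $H$ and verifying that the $\psi$-weights respect this decomposition, one obtains the identity.

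Alternatively, and perhaps more compactly, one can work with the Hall algebra structure constants $g^\lambda_{\nu\mu}(p)$, which by Macdonald II.(4.3) count subgroups of $G_\lambda$ of type $\mu$ with quotient of type $\nu$, and satisfy $P_\nu P_\mu = \sum_\lambda t^{n(\lambda)-n(\nu)-n(\mu)} g^\lambda_{\nu\mu}(t^{-1}) P_\lambda$. Summing the expansion of $P_\nu(x;t) P_\mu(x;t)$ over $\nu$ with the principal specialization $x=(1,t,t^2,\ldots)$, and using $|G_{\mu,\lambda}| = \sum_\nu g^\lambda_{\nu\mu}(p)$, yields the claim.

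The main obstacle I anticipate is bookkeeping: correctly tracking the powers of $t$ that appear via $t^{n(\lambda)-n(\mu)-n(\nu)}$ in the Hall algebra structure constants and via the explicit product formula for the principal specialization, and making sure the branching weights factor correctly through the flag decomposition. Since the identity is essentially classical (due to Philip Hall, recast in the Hall-Littlewood framework by Macdonald), once the correct normalizations are pinned down no intricate combinatorics is required beyond citing Macdonald III.
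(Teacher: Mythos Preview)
Your proposal is correct, and the second (``more compact'') approach you sketch---summing the Hall-algebra structure-constant relation over the quotient type $\nu$ and using $|G_{\mu,\lambda}| = \sum_\nu g^\lambda_{\nu\mu}(p)$---is exactly the paper's argument. The paper writes $G_{\mu,\nu}^\lambda(\Z_p) = c_{\mu,\nu}^\lambda(t)\, t^{n(\mu)+n(\nu)-n(\lambda)}$, observes $c_{\mu,\nu}^\lambda$ are the $P$-multiplication (equivalently $Q$-comultiplication) constants so that $Q_{\lambda/\mu} = \sum_\nu c_{\mu,\nu}^\lambda Q_\nu$, and then sums over $\nu$ at the principal specialization; your reduction from $Q$ to $P$ via the $b$-factors and the identification $t^{n(\nu)} = Q_\nu(1,t,\ldots)/\text{(constant)}$ is the same bookkeeping in slightly different order.

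Your first approach via flag decompositions and $\psi$-branching weights is a genuinely different route and would also work, though it requires more care to track the weights through the concatenation of chains; the Hall-algebra approach is shorter precisely because it packages that combinatorics into the single citation $G_{\mu,\nu}^\lambda = c_{\mu,\nu}^\lambda t^{n(\mu)+n(\nu)-n(\lambda)}$ from Macdonald.
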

\begin{proof}
The result follows by collecting a few facts from chapters II and III of \cite{Macd}. By their definition \cite[Chapter II.2 (2.1)]{Macd}, the Hall algebra structure constant is 
\begin{equation}\label{eq:def_hall_poly}
G_{\mu,\nu}^\la(\BBZ_p) = \#\{H \subset G_\la: H \simeq G_\mu \text{ and }G_\la/H \simeq G_\nu\}.
\end{equation}
It is then shown in \cite[Chapter III.3 (3.4)]{Macd} that
\begin{equation}\label{eq:hall_and_hl}
G_{\mu,\nu}^\la(\BBZ_p) = c_{\mu,\nu}^\la(t) t^{n(\mu)+n(\nu)-n(\la)} = c_{\mu,\nu}^\la(t) \frac{Q_\mu(1,t,\ldots)Q_\nu(1,t,\ldots)}{Q_\la(1,t,\ldots)},
\end{equation}
where $n(\la) = \sum_{i \geq 1} (i-1)\la_i$ and the second equality follows by \cite[Chapter III.3 Ex. 2(a)]{Macd}. Here $c_{\mu,\nu}^\la(t)$ are the multiplicative structure constants defined by 
$$P_\mu(\mathbf{x}) \cdot P_\nu(\mathbf{x}) = \sum_{\la \in \BBY} c_{\mu,\nu}^\la(t) P_\la(\mathbf{x})$$
or equivalently the comultiplicative structure constants of the $Q$ polynomials defined by 
$$Q_\la(X,Y) = \sum_{\mu,\nu \in \BBY} c_{\mu,\nu}^\la(t) Q_\mu(\mathbf{x}) \cdot Q_\nu(\mathbf{y}).$$
The second definition is equivalent to 
$$Q_{\la/\mu}(\mathbf{x}) = \sum_{\nu \in \BBY} c_{\mu,\nu}^\la(t) Q_\nu(\mathbf{x}).$$
Hence the number of subgroups of $G_\lambda$ of type $\mu$ is
$$\sum_{\nu \in \BBY}G_{\mu,\nu}^\la(\BBZ_p) = \sum_{\nu \in \BBY} c_{\mu,\nu}^\la(t) \frac{Q_\mu(1,t,\ldots)Q_\nu(1,t,\ldots)}{Q_\la(1,t,\ldots)}= \frac{Q_{\la/\mu}(1,t,\ldots) Q_\mu(1,t,\ldots)}{Q_\la(1,t,\ldots)}.$$
\end{proof}

\begin{proposition}\label{prop:HL_pgrp_translate}
For any $\lambda,\mu \in \BBY$, we have
\begin{equation}\label{eq:HL_aut}
P_\la(t,t^2,\ldots)Q_\la(1,t,\ldots) = \frac{1}{\#\Aut(G_\la)}
\end{equation}
and more generally
\begin{equation}\label{eq:HL_inj_surj}
\frac{P_{\la/\mu}(t,t^2,\ldots)}{P_\la(t,t^2,\ldots)Q_\mu(1,t,\ldots)} = \frac{Q_{\la/\mu}(1,t,\ldots)}{Q_\la(1,t,\ldots)P_\mu(t,t^2,\ldots)} = \#\Sur(G_\la,G_\mu) = \#\Inj(G_\mu,G_\la).
\end{equation}
\end{proposition}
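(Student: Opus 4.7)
To prove Proposition \ref{prop:HL_pgrp_translate}, my plan is to first establish \eqref{eq:HL_aut} directly from classical Hall--Littlewood principal specialization formulas, and then derive \eqref{eq:HL_inj_surj} as a consequence combined with Lemma \ref{lemma:from_mac} and Pontryagin duality.

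For \eqref{eq:HL_aut}, the key inputs I would import from \cite[Chapters II and III]{Macd} are the automorphism count
$$|\Aut(G_\la)| = p^{|\la|+2n(\la)}\, b_\la(p^{-1}), \qquad b_\la(t) := \prod_{i \geq 1} \phi_{m_i(\la)}(t),$$
where $n(\la) := \sum_i (i-1)\la_i$ and $\phi_m(q) = (q;q)_m$, together with the principal specialization formula
$$P_\la(1,t,t^2,\ldots;t) = \frac{t^{n(\la)}}{b_\la(t)}.$$
Combining the identity $Q_\la = b_\la(t) P_\la$ (immediate from the definition of $Q_\la$) with the elementary homogeneity $P_\la(cx_1,cx_2,\ldots;t) = c^{|\la|}P_\la(x_1,x_2,\ldots;t)$ yields $Q_\la(1,t,t^2,\ldots;t) = t^{n(\la)}$ and $P_\la(t,t^2,\ldots;t) = t^{|\la|+n(\la)}/b_\la(t)$. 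Multiplying gives
$$P_\la(t,t^2,\ldots;t)\,Q_\la(1,t,t^2,\ldots;t) = \frac{t^{|\la|+2n(\la)}}{b_\la(t)},$$
which at $t = 1/p$ is exactly $1/|\Aut(G_\la)|$, proving \eqref{eq:HL_aut}.

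For \eqref{eq:HL_inj_surj}, the equality $\#\Sur(G_\la,G_\mu) = \#\Inj(G_\mu,G_\la)$ is Pontryagin duality: the contravariant functor $G \mapsto G^\vee := \Hom(G,\BBQ/\BBZ)$ is an equivalence on finite abelian groups which carries $G_\nu$ to a group isomorphic to itself and interchanges surjections with injections. To translate $\#\Inj(G_\mu,G_\la)$ into Hall--Littlewood notation, observe that each injection $\phi: G_\mu \hookrightarrow G_\la$ is determined by its image (a subgroup of $G_\la$ of type $\mu$) together with an automorphism of $G_\mu$, giving
$$\#\Inj(G_\mu,G_\la) = |G_{\mu,\la}| \cdot |\Aut(G_\mu)|.$$
Plugging in Lemma \ref{lemma:from_mac} and \eqref{eq:HL_aut} produces
$$\#\Inj(G_\mu,G_\la) = \frac{Q_{\la/\mu}(1,t,\ldots;t)}{Q_\la(1,t,\ldots;t)\,P_\mu(t,t^2,\ldots;t)},$$
and the homogeneity scalings $Q_\la(t,t^2,\ldots;t) = t^{|\la|}Q_\la(1,t,\ldots;t)$, $P_\mu(t,t^2,\ldots;t) = t^{|\mu|}P_\mu(1,t,\ldots;t)$, $Q_{\la/\mu}(t,t^2,\ldots;t) = t^{|\la|-|\mu|}Q_{\la/\mu}(1,t,\ldots;t)$ convert this to the $Q$-form stated in \eqref{eq:HL_inj_surj}. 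The equality between the $P$-form and the $Q$-form then follows at once from $Q_\nu = b_\nu(t) P_\nu$ and the skew analog $b_\mu(t)\,Q_{\la/\mu} = b_\la(t)\,P_{\la/\mu}$ from \cite[Chapter III]{Macd}: direct substitution shows the two ratios coincide.

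The only nontrivial ingredient is importing the principal-specialization formula for $P_\la(1,t,t^2,\ldots;t)$, which is standard in \cite{Macd} but not recorded among the lemmas earlier in the paper; every other step reduces to bookkeeping with the factor $b_\la(t)$, the invariant $n(\la)$, the homogeneity of Hall--Littlewood polynomials, and the standard duality interpretation of surjections versus injections.
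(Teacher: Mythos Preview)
Your proposal is correct and follows essentially the same route as the paper: both cite Macdonald for \eqref{eq:HL_aut} (the paper just invokes \cite[Chapter III.3 Ex.~2(a)]{Macd} while you unpack the principal specialization explicitly), and both derive \eqref{eq:HL_inj_surj} by writing $\#\Inj(G_\mu,G_\la) = |G_{\mu,\la}|\cdot|\Aut(G_\mu)|$, applying Lemma~\ref{lemma:from_mac} and \eqref{eq:HL_aut}, then using homogeneity and the $P$--$Q$ proportionality to pass between the two Hall--Littlewood forms, with Pontryagin duality giving $\#\Sur = \#\Inj$. The only difference is that you supply more of the intermediate bookkeeping.
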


\begin{proof}
The first part, \eqref{eq:HL_aut}, follows directly from \cite[Chapter III.3 Ex. 2(a)]{Macd}. This together with Lemma~\ref{lemma:from_mac} yields
$$
\#\Inj(G_\mu,G_\la) = \#\Aut(G_\mu) \cdot \#\{H \leq G_\la: H \simeq G_\mu\} = \frac{1}{P_\mu(t,t^2,\ldots)Q_\mu(1,t,\ldots)}\frac{Q_{\la/\mu}(1,t,\ldots) Q_\mu(1,t,\ldots)}{Q_\la(1,t,\ldots)}.
$$
The fact that the RHS is equal to 
$$\frac{P_{\la/\mu}(t,t^2,\ldots)}{P_\la(t,t^2,\ldots)Q_\mu(1,t,\ldots)} $$
follows since $Q_\la$ is a constant multiple of $P_\la$ and is homogeneous. Since a surjection $G_\la \surj G_\mu$ induces an injection $G_\mu^* \inj G_\la^*$ and vice versa, and finite abelian groups are isomorphic to their dual groups,
$$\#\Sur(G_\la,G_\mu) = \#\Inj(G_\mu^*,G_\la^*) = \#\Inj(G_\mu,G_\la),$$
hence we have established \eqref{eq:HL_inj_surj}.
\end{proof}

The following `joint moment' result will be useful later. It is also a natural generalization of the well-known fact that moments of the Cohen-Lenstra distribution are $1$, and reducing to this fact when $M$ is trivial.

\begin{proposition}\label{prop:what_cauchy_means}
Let $M,N$ be finite abelian $p$-groups. Then 
$$\frac{1}{\Pi_t(1,t,\ldots;t,t^2,\ldots)} \sum_K \frac{\#\Inj(M,K)\#\Sur(K,N)}{\#\Aut(K)} = \#\Hom(M,N)$$
where the sum is one representative $K$ from each isomorphism class of finite abelian $p$-groups.
\end{proposition}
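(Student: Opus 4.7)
The plan is to interpret the LHS as a joint moment under the Cohen--Lenstra measure, reduce it via a natural subdirect-subgroup identity to a count of subgroups of $M \oplus N$, and then match that count with $\#\Hom(M,N)$. By \eqref{eq:HL_aut} together with the $\mu = \nu = \emptyset$ case of the Cauchy identity \eqref{eq:infinite_cauchy}, the prefactor $1/\Pi_t(1,t,\ldots;t,t^2,\ldots)$ is exactly the normalizing constant that turns $K \mapsto 1/\#\Aut(K)$ into a probability measure on isomorphism classes of finite abelian $p$-groups --- the Cohen--Lenstra measure. It therefore suffices to prove $\E_{\mathrm{CL}}[\#\Inj(M,K)\,\#\Sur(K,N)] = \#\Hom(M,N)$.

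By Pontryagin self-duality of finite abelian groups, $\#\Inj(M,K) = \#\Sur(K^*,M^*) = \#\Sur(K,M)$, the last equality since this count depends only on isomorphism classes and $K \simeq K^*$, $M \simeq M^*$. Hence $\#\Inj(M,K)\,\#\Sur(K,N) = \#\Sur(K,M)\,\#\Sur(K,N)$ enumerates pairs $(f,g)$ with $f \colon K \surj M$ and $g \colon K \surj N$. Packaging each pair as $(f,g) \colon K \to M \oplus N$ and grouping by image yields
\begin{equation*}
\#\Sur(K,M)\,\#\Sur(K,N) \;=\; \sum_{\substack{L \leq M \oplus N \\ \pi_M(L) = M,\; \pi_N(L) = N}} \#\Sur(K,L),
\end{equation*}
since a surjection $K \surj L$ composed with the two coordinate projections lands in $\Sur(K,M) \times \Sur(K,N)$ precisely when $L$ is subdirect in $M \oplus N$. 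Taking $\E_{\mathrm{CL}}$ on both sides and using the single-moment identity $\E_{\mathrm{CL}}[\#\Sur(K,L)] = 1$ --- which is immediate from \eqref{eq:HL_inj_surj} and the Cauchy identity \eqref{eq:infinite_cauchy} specialized at $\mu = \emptyset$, $\bx = (t,t^2,\ldots)$, $\by = (1,t,\ldots)$ --- reduces the claim to the purely combinatorial identity
\begin{equation*}
\#\{L \leq M \oplus N : \pi_M(L) = M,\; \pi_N(L) = N\} \;=\; \#\Hom(M,N).
\end{equation*}

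For this last step I would invoke Goursat's lemma: subdirect $L$ are in bijection with triples $(B \leq M,\, D \leq N,\, \phi \colon M/B \xrightarrow{\sim} N/D)$, giving the count $\sum_{B \leq M} \#\Aut(M/B) \cdot \#\{D \leq N : N/D \simeq M/B\}$. Parameterizing a homomorphism $M \to N$ by its kernel and the induced injection on the quotient gives $\#\Hom(M,N) = \sum_{B \leq M} \#\Inj(M/B, N) = \sum_{B \leq M} \#\Aut(M/B) \cdot \#\{I \leq N : I \simeq M/B\}$, and the two sums match because for any finite abelian group $N$ the number of subgroups of a given isomorphism type equals the number of quotients of that type, again by self-duality. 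The main obstacle is simply keeping this Goursat/duality bookkeeping straight; an alternative route more in the spirit of this section is to plug the formulas of Proposition~\ref{prop:HL_pgrp_translate} directly into the LHS, apply the Cauchy identity \eqref{eq:infinite_cauchy} to the sum over $\la$, and use the homogeneity relations $P_\la(t,t^2,\ldots) = t^{|\la|}P_\la(1,t,\ldots)$ and $Q_\la(t,t^2,\ldots) = t^{|\la|}Q_\la(1,t,\ldots)$ to recognize the result as the Hall--Littlewood expansion of $\#\Hom(M,N)$ obtained by combining Lemma~\ref{lemma:from_mac} with \eqref{eq:HL_inj_surj}.
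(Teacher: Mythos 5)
Your proof is correct, but it takes a genuinely different route from the paper's. The paper translates the LHS into Hall--Littlewood notation via Proposition~\ref{prop:HL_pgrp_translate}, applies the skew Cauchy identity to rewrite it as $\sum_\la \#\Sur(M,G_\la)\#\Inj(G_\la,N)/\#\Aut(G_\la)$, and then closes by observing that every $\varphi \in \Hom(M,N)$ factors as a surjection onto its image followed by an injection, with exactly $\#\Aut(\Im\varphi)$ such factorizations. You instead read the LHS as a Cohen--Lenstra joint moment $\E_{\mathrm{CL}}[\#\Sur(K,M)\#\Sur(K,N)]$ (after dualizing $\#\Inj(M,K) = \#\Sur(K,M)$), expand the product over subdirect subgroups $L \leq M\oplus N$, apply the standard first-moment identity $\E_{\mathrm{CL}}[\#\Sur(K,L)]=1$, and finish by counting subdirect subgroups via Goursat's lemma together with the self-duality fact that subgroups and quotients of a fixed type are equinumerous. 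Both arguments are Cauchy-identity arguments at heart (the $\E_{\mathrm{CL}}[\#\Sur]=1$ fact \emph{is} the Cauchy identity), so the difference is in the finishing combinatorics: the paper's factorization-through-image count is a one-liner and stays inside the tools already developed in Section~\ref{sec:HL_and_groups}, whereas your route is more transparently group-theoretic but imports Goursat's lemma, which the paper never uses. Interestingly, your subdirect-subgroup decomposition of $\#\Sur(K,M)\#\Sur(K,N)$ is exactly the identity the paper proves and exploits one lemma later, in the proof of Lemma~\ref{lemma:jointsur_iterate} --- so your argument anticipates that step. The alternative route you sketch at the very end (plug in HL formulas, apply Cauchy, match the Hall-polynomial expansion of $\#\Hom(M,N)$) is essentially the paper's actual proof.
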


\begin{proof}
Letting $\mu,\nu$ be the types of $M,N$ respectively, by Proposition~\ref{prop:HL_pgrp_translate} the LHS is
$$\frac{1}{\Pi_t(1,t,\ldots;t,t^2,\ldots)}\sum_{\kappa \in \BBY} \frac{\PP{\kappa/\mu}}{\PP{\kappa}\QQ{\mu}} \frac{\QQ{\kappa/\nu}}{\QQ{\kappa}\PP{\nu}} \QQ{\kappa}\PP{\kappa}.$$
By the skew Cauchy identity stated in Proposition~\ref{prop:cauchy}, this is 
\begin{equation}\label{eq:hom_sum}
\frac{1}{\QQ{\mu}\PP{\nu}}\sum_{\la \in \BBY} \PP{\nu/\la}\QQ{\mu/\la} = \sum_{\la \in \BBY} \frac{\#\Sur(M,G_\la)\#\Inj(G_\la,N)}{\#\Aut(G_\la)}.
\end{equation}
where we again used Proposition~\ref{prop:HL_pgrp_translate}. Every map $\varphi: M \to N$ factors as $\varphi = \phi \circ \psi$ where $\psi: M \surj \Im(\varphi)$ is surjective and $\phi: \Im(\varphi) \inj N$ is injective. There are $\#\Aut(\Im(\varphi))$ such factorizations, because for any $\sigma \in \Aut(\Im(\varphi))$, $(\phi \circ \sigma^{-1}) \circ (\sigma \circ \psi)$ also defines a pair of an injection and surjection, and $\Aut(\Im(\varphi))$ acts transitively on the set of such pairs with trivial stabilizer. Hence 
$$\text{RHS\eqref{eq:hom_sum}} = \#\Hom(M,N),$$
completing the proof.
\end{proof}

\begin{lemma}\label{lemma:n_k_HL}
For $\la \in \BBY, k \geq 1$ and $n_k$ as in Definition~\ref{def:n_k}, we have
\begin{equation}\label{eq:HL_sbgp_chain}
n_k(G_\la) = \frac{P_\la(t[k],t^2[k],\ldots)}{P_\la(t,t^2,\ldots)} = \frac{Q_\la(1[k],t[k],\ldots)}{Q_\la(1,t,\ldots)} .
\end{equation}
\end{lemma}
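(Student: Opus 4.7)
The plan is to apply the branching rule for Hall-Littlewood polynomials to the $k$-fold grouping of variables, identify each resulting term combinatorially via Lemma \ref{lemma:from_mac}, and observe that the expression telescopes into a count of subgroup chains.

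First, since Hall-Littlewood polynomials are symmetric, the specialization $Q_\la(1[k],t[k],t^2[k],\ldots)$ equals $Q_\la$ evaluated at the multiset consisting of $k$ copies of the sequence $(1,t,t^2,\ldots)$. Writing $\mathbf{x}_i = (1,t,t^2,\ldots)$ for $i=1,\ldots,k$, iterating the skew decomposition \eqref{eq:def_skewP} (the analog for $Q$) gives
\[
Q_\la(\mathbf{x}_1,\ldots,\mathbf{x}_k;t) \;=\; \sum_{\emptyset=\mu^{(0)}\subseteq \mu^{(1)} \subseteq \cdots \subseteq \mu^{(k)}=\la} \prod_{i=1}^{k} Q_{\mu^{(i)}/\mu^{(i-1)}}(\mathbf{x}_i;t).
\]
Dividing by $Q_\la(1,t,\ldots)$ and applying Lemma \ref{lemma:from_mac} termwise, I expect each summand to telescope:
\[
\frac{1}{Q_\la(1,t,\ldots)}\prod_{i=1}^k Q_{\mu^{(i)}/\mu^{(i-1)}}(1,t,\ldots) \;=\; \prod_{i=1}^{k} \frac{Q_{\mu^{(i)}/\mu^{(i-1)}}(1,t,\ldots)\,Q_{\mu^{(i-1)}}(1,t,\ldots)}{Q_{\mu^{(i)}}(1,t,\ldots)} \;=\; \prod_{i=1}^{k} |G_{\mu^{(i-1)},\mu^{(i)}}|,
\]
using $Q_{\mu^{(0)}}=Q_\emptyset = 1$ and $\mu^{(k)}=\la$, and regrouping the telescoping factors of $Q_{\mu^{(i)}}(1,t,\ldots)$.

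Next I would interpret the product: given types $\mu^{(0)},\ldots,\mu^{(k)}$, the number of flags $0=H_0 \leq H_1 \leq \cdots \leq H_k = G_\la$ with $H_i \simeq G_{\mu^{(i)}}$ equals $\prod_{i=1}^k |G_{\mu^{(i-1)},\mu^{(i)}}|$, because once $H_i$ is chosen inside $H_{i+1}\simeq G_{\mu^{(i+1)}}$ the remaining choices are independent. Summing over all chains of partitions recovers $n_k(G_\la)$ by Definition~\ref{def:n_k}, yielding the second equality of \eqref{eq:HL_sbgp_chain}.

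Finally, to pass to the $P$-form, I would use that $Q_\la = b_\la(t) P_\la$ for a scalar $b_\la(t)$ independent of the variables, which immediately gives
\[
\frac{Q_\la(1[k],t[k],\ldots)}{Q_\la(1,t,\ldots)} = \frac{P_\la(1[k],t[k],\ldots)}{P_\la(1,t,\ldots)},
\]
and then apply homogeneity of $P_\la$ of degree $|\la|$ to pull out a common factor $t^{|\la|}$ from numerator and denominator, showing this equals $P_\la(t[k],t^2[k],\ldots)/P_\la(t,t^2,\ldots)$. The only nontrivial step is the telescoping, which requires careful bookkeeping but no new input beyond Lemma~\ref{lemma:from_mac} and the skew Cauchy formalism already set up.
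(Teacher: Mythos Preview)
Your proof is correct and follows essentially the same approach as the paper: both arguments apply the Hall--Littlewood branching rule and the dictionary between skew $Q$-functions and subgroup counts. The paper organizes this as an induction on $k$, peeling off one copy of the specialization at a time via Proposition~\ref{prop:HL_pgrp_translate} (i.e.\ writing subgroup counts as $\#\Inj(G_\mu,G_\la)/\#\Aut(G_\mu)$), whereas you unfold the $k$-fold branching all at once and telescope directly using Lemma~\ref{lemma:from_mac}; these are equivalent rearrangements of the same computation.
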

\begin{proof}
We induct on $k$, the base case $k=1$ being trivial. It follows from \eqref{eq:HL_aut} and \eqref{eq:HL_inj_surj} that for fixed $G_\la$,
\begin{align*}
n_{k+1}(G_\la) &= \sum_{H \leq G_\la} n_k(H) \\
&= \sum_{\mu \in \BBY} \frac{\#\Inj(G_\mu,G_\la)}{\#\Aut(G_\mu)} n_k(G_\mu) \\
&= \sum_{\mu \in \BBY} \frac{\PP{\la/\mu}\PP{\mu}}{\PP{\la}} \frac{P_\mu(t[k],t^2[k],\ldots)}{P_\mu(t,t^2,\ldots)} \\
&= \frac{P_\la(t[k+1],\ldots)}{\PP{\la}},
\end{align*}
where the last equality is by the branching rule. Since $Q_\la$ is a constant multiple of $P_\la$ and is homogeneous, the second equality of \eqref{eq:HL_sbgp_chain} follows.
\end{proof}


\section{Moments and joint moments of the candidate limit distributions}\label{sect:Haar}

In this section we compute the moments and joint moments of the limiting distributions appearing in Theorems \ref{theorem:main:prod} and \ref{theorem:main:joint}, by relating them to the Hall-Littlewood framework of the last two sections. Below, for a finite set of primes $P$ we use the notation $\CA_P$ for the set of all abelian groups $G$ such that every prime factor of $|G|$ lies in $P$. We begin by defining notation for the probability measures appearing in Theorems \ref{theorem:main:prod} and \ref{theorem:main:joint}; we shortly show that these expressions do indeed define probability measures.

\begin{definition}\label{def:intro_measures} 
For $k \in \BBZ_{\geq 1}$ and $P$ a finite set of primes, given abelian groups $B,B_1,\dots, B_k \in \CA_P$, we let 
\begin{equation}\label{eq:intro_prod_measure}
\P_P^{\ast(k)}(B) := \left(\prod_{p\in P} (p^{-1};p^{-1})_\infty^k\right)  \frac{\#\{0 = G_0 \leq G_1 \leq \ldots \le G_k = B\}}{\#\Aut(B)}
\end{equation}
and 
\begin{equation}\label{eq:intro_joint_measure}
\P_P^{(k)}(B_1,\dots, B_k) := \left(\prod_{p\in P}(p^{-1};p^{-1})_\infty^k\right) \prod_{i=1}^k \frac{\#\Sur(B_i, B_{i-1})}{\#\Aut(B_i)},
\end{equation}
where we take $B_0$ to be the trivial group in the product.
\end{definition}

The notation is meant to be suggestive of the Hall-Littlewood measures $\P_{t;\infty}^{\ast(k)}$ and $\P_{t;\infty}^{(k)}$, and we relate the two measures in this section. In the remainder of the section we use $P$ and $k$ as in Definition \ref{def:intro_measures} without comment.

\begin{theorem}\label{thm:intro_dist_moment_prod}
The map $B \mapsto \P_P^{\ast(k)}(B)$ defines a probability measure on $\CA_P$, with moments 
$$\E_{B \sim \P_P^{\ast(k)}}[\#\Sur(B,G)] = n_k(G)$$
for any $G \in \CA_P$.
\end{theorem}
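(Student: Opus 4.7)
The plan is to reduce the result to the single-prime case and then identify the one-prime version with the Hall-Littlewood measure $\P^{\ast(k)}_{\infty;1/p}$ of Definition \ref{def:Past}, which is already a probability measure by construction. For $B\in\CA_P$ with $p$-Sylow decomposition $B=\bigoplus_{p\in P}B_p$, both quantities entering the definition of $\P_P^{\ast(k)}$ factor across primes: a chain $0=G_0\leq\cdots\leq G_k=B$ is equivalent to the data of one such chain in each $B_p$, so $n_k(B)=\prod_{p\in P}n_k(B_p)$, and likewise $\#\Aut(B)=\prod_{p\in P}\#\Aut(B_p)$. Hence
\[
\P_P^{\ast(k)}(B)=\prod_{p\in P}\Bigl[(p^{-1};p^{-1})_\infty^k\cdot\tfrac{n_k(B_p)}{\#\Aut(B_p)}\Bigr],
\]
and it suffices to verify that each single-prime factor defines a probability measure on finite abelian $p$-groups with the claimed moments.

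Fix $p\in P$, set $t=1/p$, and let $B_p=G_\la$. Combining Proposition \ref{prop:HL_pgrp_translate}, which gives $1/\#\Aut(G_\la)=P_\la(t,t^2,\ldots)Q_\la(1,t,\ldots)$, with Lemma \ref{lemma:n_k_HL}, which gives $n_k(G_\la)=Q_\la(1[k],t[k],\ldots)/Q_\la(1,t,\ldots)$, the single-prime factor equals
\[
(t;t)_\infty^k\cdot P_\la(t,t^2,\ldots)\cdot Q_\la(1[k],t[k],\ldots).
\]
A short telescoping calculation using \eqref{eq:def_cauchy_kernel} shows $\Pi_t(t,t^2,\ldots;\,1[k],t[k],\ldots)=(t;t)_\infty^{-k}$: for each fixed $i\geq 1$, $\prod_{j\geq 0}\frac{1-t^{i+j+1}}{1-t^{i+j}}=\frac{1}{1-t^i}$, and the product over $i$ raised to the $k$-th power gives the stated kernel. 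So the single-prime factor coincides with $\P^{\ast(k)}_{\infty;t}(\la)$ from Definition \ref{def:Past} under $B_p\leftrightarrow\la$, and is therefore a probability measure. Taking the product over $p\in P$ shows $\P_P^{\ast(k)}$ is a probability measure on $\CA_P$.

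For the moment formula, the multiplicativity $\#\Sur(B,G)=\prod_{p\in P}\#\Sur(B_p,G_p)$ again reduces to a one-prime computation. Fix $p$ and $G=G_\mu$. Substituting $\#\Sur(G_\la,G_\mu)=P_{\la/\mu}(t,t^2,\ldots)/[P_\la(t,t^2,\ldots)Q_\mu(1,t,\ldots)]$ from Proposition \ref{prop:HL_pgrp_translate} into the Hall-Littlewood form of the measure, the factor $P_\la(t,t^2,\ldots)$ cancels and one obtains
\[
\E_{B_p}[\#\Sur(B_p,G_\mu)]=\frac{(t;t)_\infty^k}{Q_\mu(1,t,\ldots)}\sum_{\la\in\BBY}P_{\la/\mu}(t,t^2,\ldots)\,Q_\la(1[k],t[k],\ldots).
\]
Applying the skew Cauchy identity of Proposition \ref{prop:cauchy} with its second skew index specialized to $\emptyset$ yields $\sum_\la P_{\la/\mu}(\bx)Q_\la(\by)=\Pi_t(\bx;\by)Q_\mu(\by)$. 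With $\bx=(t,t^2,\ldots)$, $\by=(1[k],t[k],\ldots)$, the factor $\Pi_t=(t;t)_\infty^{-k}$ cancels, and the expression collapses to $Q_\mu(1[k],t[k],\ldots)/Q_\mu(1,t,\ldots)=n_k(G_\mu)$ by Lemma \ref{lemma:n_k_HL}. Taking the product over $p\in P$ yields $\prod_{p\in P}n_k(G_p)=n_k(G)$.

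There is no substantial obstacle here: the only technical point is the specialization of the Cauchy kernel (the telescoping evaluation above) and the bookkeeping to ensure that the appropriate form of the skew Cauchy identity produces exactly that kernel, so that it cancels the $(t;t)_\infty^k$ prefactor and leaves the clean ratio recognized as $n_k(G_\mu)$ via Lemma \ref{lemma:n_k_HL}.
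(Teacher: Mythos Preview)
Your proof is correct and follows essentially the same approach as the paper: factor over primes (the paper's Lemma \ref{lem:n_m_factorize}), identify the single-prime factor with the Hall-Littlewood measure $\P^{\ast(k)}_{\infty;1/p}$ via Proposition \ref{prop:HL_pgrp_translate} and Lemma \ref{lemma:n_k_HL} (the paper's Proposition \ref{prop:prod_aut_hl}), and then compute the moments via the skew Cauchy identity with $\nu=\emptyset$ (the paper's Proposition \ref{prop:surmoment:k:Haar}). The only cosmetic difference is that you inline the telescoping evaluation of $\Pi_t(t,t^2,\ldots;1[k],t[k],\ldots)=(t;t)_\infty^{-k}$, which the paper records separately as \eqref{eq:qpoc_cauchy} in the $k=1$ case.
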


\begin{theorem}\label{thm:intro_dist_moment_joint}
The map $(B_1,\ldots,B_k) \mapsto \P_P^{(k)}(B_1,\ldots,B_k)$ defines a probability measure on $\CA_P^k$, with joint moments
$$\E_{(B_1,\ldots,B_k) \sim \P_P^{(k)}}[\#\Sur(B_1,G_1) \cdots \#\Sur(B_k,G_k)] = m_k(G_1,\ldots,G_k)$$
for any $G_1,\ldots,G_k \in \CA_P$.
\end{theorem}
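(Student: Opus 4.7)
The plan is to reduce to $P=\{p\}$ and then induct on $k$, proving the moment identity and obtaining the probability-measure claim as a special case. Since any $B \in \CA_P$ canonically decomposes as $\bigoplus_{p \in P} B_p$ and each of $\#\Aut$, $\#\Sur$, $\#\Hom$, together with the combinatorial quantity $m_k$, factors over this Sylow decomposition, both the measure $\P_P^{(k)}$ and the joint moments split as products over $p \in P$. It therefore suffices to treat $P = \{p\}$; I will set $t = 1/p$ throughout.

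For the base case $k = 1$, $\P_{\{p\}}^{(1)}(B) = (t;t)_\infty/\#\Aut(B)$ is the Cohen-Lenstra measure, and the identity $\E[\#\Sur(B,G)] = 1 = m_1(G)$ follows from Proposition \ref{prop:what_cauchy_means} with $M = G$, $N = 0$, after using $\#\Sur(B,G) = \#\Inj(G,B)$ (self-duality of finite abelian groups). For the inductive step, I would unpack the joint moment as
\begin{equation*}
\E_{\P_{\{p\}}^{(k)}}\!\!\left[\prod_{i=1}^k \#\Sur(B_i, G_i)\right] = (t;t)_\infty^k \sum_{(B_1,\ldots,B_k)} \prod_{i=1}^k \frac{\#\Sur(B_i, B_{i-1})\,\#\Sur(B_i, G_i)}{\#\Aut(B_i)},
\end{equation*}
and first sum out $B_k$: dualizing $\#\Sur(B_k, B_{k-1}) = \#\Inj(B_{k-1}, B_k)$, Proposition \ref{prop:what_cauchy_means} gives $\sum_{B_k} \#\Inj(B_{k-1}, B_k)\#\Sur(B_k, G_k)/\#\Aut(B_k) = (t;t)_\infty^{-1}\#\Hom(B_{k-1}, G_k)$. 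Expanding $\#\Hom(B_{k-1}, G_k) = \sum_{H \leq G_k} \#\Sur(B_{k-1}, H)$ by factoring homomorphisms through their images, and then combining the product $\#\Sur(B_{k-1}, G_{k-1})\cdot\#\Sur(B_{k-1}, H)$ via the elementary identity
\begin{equation*}
\#\Sur(B, A)\cdot \#\Sur(B, C) = \sum_{\substack{K \leq A \oplus C \\ \pi_A(K) = A,\ \pi_C(K) = C}} \#\Sur(B, K)
\end{equation*}
(a pair of surjections is the same as a surjection onto the image in $A \oplus C$), the two sums over $H \leq G_k$ and over $K \leq G_{k-1}\oplus H$ with $\pi_H(K) = H$ collapse into a single sum over $K \leq G_{k-1}\oplus G_k$ with $\pi_{G_{k-1}}(K) = G_{k-1}$. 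Invoking the inductive hypothesis at position $k - 1$ with the last ``target'' replaced by $K$ then yields
\begin{equation*}
\E_{\P_{\{p\}}^{(k)}}\!\!\left[\prod_{i=1}^k \#\Sur(B_i, G_i)\right] = \sum_{\substack{K \leq G_{k-1}\oplus G_k \\ \pi_{G_{k-1}}(K) = G_{k-1}}} m_{k-1}(G_1,\ldots,G_{k-2}, K).
\end{equation*}

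It remains to identify the right side with $m_k(G_1,\ldots,G_k)$, which amounts to a ``top-down'' counterpart of the bottom-up recurrence \eqref{eqn:kk-1}: given $(H_1,\ldots,H_k)$ as in Definition \ref{def:m_k}, I would set $K := H_{k-1}$ and observe that the nesting constraint $\pi_{\{i+1,\ldots,k\}}(H_i) \leq H_{i+1}$, applied inductively downward in $i$, forces $H_i \leq G_i \oplus \cdots \oplus G_{k-2} \oplus H_{k-1}$ for every $i \leq k-2$, placing $(H_1,\ldots,H_{k-2})$ in bijection with the sequences counted by $m_{k-1}(G_1,\ldots,G_{k-2}, H_{k-1})$. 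The probability-measure claim then drops out for free by specializing to $G_1 = \cdots = G_k = 0$: the joint moment becomes $\sum_{(B_i)}\P_P^{(k)}(B_1,\ldots,B_k) = m_k(0,\ldots,0) = 1$, since the only admissible sequence is $H_1 = \cdots = H_k = 0$. The main obstacle is the careful verification of this top-down recurrence for $m_k$, which is not recorded in the paper; an alternative, if one wishes to avoid the combinatorial check, is to identify $\P_{\{p\}}^{(k)}$ with the Hall-Littlewood measure $\P^{(k)}_{\infty;t}$ of Definition \ref{def:P(k)} using Proposition \ref{prop:HL_pgrp_translate} together with the computation $\Pi_t(1,t,\ldots;t[k],t^2[k],\ldots) = (t;t)_\infty^{-k}$, reducing the normalization to the Cauchy identity.
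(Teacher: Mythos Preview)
Your argument is correct and follows essentially the same route as the paper: reduce to a single prime, then peel off the top index using Proposition~\ref{prop:what_cauchy_means} together with the decomposition of a product of surjection counts as a sum of surjections onto subgroups of the direct sum, arriving at a nested-sum description of $m_k$. The paper packages the iterative step as Lemma~\ref{lemma:jointsur_iterate} and unrolls directly rather than inducting on $k$; the ``top-down'' recurrence for $m_k$ that you flag as the main point to check is exactly the equality asserted in the final line of the proof of Proposition~\ref{prop:jointsur:k:Haar}, and your outlined bijection verifies it. One mild difference: the paper obtains the normalization by identifying $\P_{\{p\}}^{(k)}$ with the Hall--Littlewood process $\P^{(k)}_{\infty;1/p}$ (Proposition~\ref{prop:joint_aut_hl}, your suggested alternative), whereas your specialization $G_1=\cdots=G_k=0$ extracts it directly from the moment identity and avoids that identification entirely.
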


We begin with factorization properties which reduce the theorem to the case of a single prime.

\begin{lemma}\label{lem:n_m_factorize}
With the notations above, the following factorizations hold:
\begin{enumerate}
  \item For any $G \in \CA_P$,
$$n_k(G) = \prod_{p \in P} n_k(G[p^\infty])$$
and 
$$\P_P^{\ast(k)}(G) = \prod_{p \in P} \P_{\{p\}}^{\ast(k)}(G[p^\infty]).$$
  \item For any $G^{(1)},\ldots,G^{(k)} \in \CA_P$,
$$m_k(G^{(1)},\ldots,G^{(k)}) = \prod_{p \in P} m_k(G^{(1)}[p^\infty],\ldots,G^{(k)}[p^\infty])$$
and 
$$\P_P^{(k)}(G^{(1)},\ldots,G^{(k)}) = \prod_{p \in P} \P_{\{p\}}^{(k)}(G^{(1)}[p^\infty],\ldots,G^{(k)}[p^\infty]).$$
\end{enumerate}
\end{lemma}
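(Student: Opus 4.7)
The plan rests on the structural fact that every finite abelian group $G$ decomposes canonically as the internal direct sum $G=\bigoplus_{p}G_p$ of its $p$-Sylow subgroups, and that this decomposition is preserved by every group-theoretic operation in sight. First I would record three factorization identities that will be used repeatedly: subgroups of $G$ correspond bijectively to tuples $(H_p)_{p}$ with $H_p \le G_p$ via $H \leftrightarrow (H \cap G_p)_p$; homomorphisms $\varphi: A \to B$ between finite abelian groups correspond bijectively to tuples $(\varphi_p: A_p \to B_p)_p$, and this bijection preserves the properties of being surjective, injective, and an automorphism; and $q$-Pochhammer prefactors are visibly multiplicative in $p$. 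In particular $\#\Aut(G) = \prod_p \#\Aut(G_p)$ and $\#\Sur(A,B) = \prod_p \#\Sur(A_p,B_p)$.

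For $n_k(G)$, I would argue that a nested chain $0=H_0\le H_1\le\cdots\le H_k=G$ is in bijection with a tuple of chains $(0=H_{0,p}\le H_{1,p}\le\cdots\le H_{k,p}=G_p)_{p\in P}$, where $H_{i,p}$ is the Sylow $p$-subgroup of $H_i$. The point is that $H_{i,p}\le H_{i+1,p}$ holds iff $H_i\le H_{i+1}$, since $H_i=\bigoplus_p H_{i,p}$, so the bijection respects the chain condition. This yields $n_k(G)=\prod_{p\in P} n_k(G_p)$. For $m_k(G^{(1)},\ldots,G^{(k)})$, the same principle applies but the objects being counted are subgroups $H_i \le G^{(i)}\oplus\cdots\oplus G^{(k)}$ satisfying the projection and nesting conditions of Definition~\ref{def:m_k}. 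Since the Sylow decomposition of a direct sum is the direct sum of Sylow decompositions, each such $H_i$ decomposes as $H_i=\bigoplus_{p\in P} H_{i,p}$ with $H_{i,p}\le G^{(i)}_p \oplus\cdots\oplus G^{(k)}_p$; the projection condition $\pi_i(H_i)=G^{(i)}$ and the nesting $\pi_{\{i+1,\dots,k\}}(H_i)\le H_{i+1}$ then hold iff the corresponding conditions hold prime by prime, giving the desired factorization.

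Finally, for the two measure factorizations I would simply combine the factorizations above with that of $\#\Aut$ (and $\#\Sur$) and with the product form of the prefactor. Explicitly, since $\prod_{p\in P}(p^{-1};p^{-1})_\infty^k$ is already a product over primes, and $n_k(G)/\#\Aut(G) = \prod_{p\in P} n_k(G_p)/\#\Aut(G_p)$, the identity $\P_P^{\ast(k)}(G)=\prod_{p\in P}\P_{\{p\}}^{\ast(k)}(G_p)$ follows by matching factors; the joint version works the same way using $\#\Sur(B_i,B_{i-1})=\prod_{p\in P}\#\Sur((B_i)_p,(B_{i-1})_p)$.

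The only genuine content is the bijective factorization of $m_k$, which requires verifying that the projection and nesting constraints really do decouple across primes; this is the main (and only real) step where one has to be careful, but it reduces directly to the fact that the Sylow decomposition commutes with direct sums and with taking images and intersections. I would not expect any obstacles beyond cleanly stating these bijections.
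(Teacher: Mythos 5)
Your proposal is correct and coincides with what the paper intends; the paper dismisses the proof as trivial, and the Sylow-decomposition argument you spell out — subgroups, chains, surjections, and automorphisms all factor prime-by-prime, so $n_k$, $m_k$, and the measures factor accordingly — is exactly the standard way to make that remark precise.
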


The proof is trivial. We next relate to the Hall-Littlewood measures $\P_{\infty;1/p}^{\ast(k)}, \P_{\infty;1/p}^{(k)}$ in Definitions \ref{def:Past} and \ref{def:P(k)}.
We will not actually need the joint version, Proposition \ref{prop:joint_aut_hl}, in the proofs, but it helps explain the origins of the measures we consider.

\begin{proposition}\label{prop:prod_aut_hl}
For any prime $p$ and $\la \in \BBY$,
$$\P_{\{p\}}^{\ast(k)}(G_\la) = \P_{\infty;1/p}^{\ast(k)}(\la).$$
\end{proposition}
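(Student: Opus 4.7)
The plan is to rewrite each side in terms of the Hall--Littlewood quantities $P_\la(t,t^2,\ldots)$ and $Q_\la(1[k],t[k],\ldots)$ with $t=1/p$, and then observe that the remaining prefactors agree by a short direct computation of the Cauchy kernel.

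First, I will unfold the left-hand side. By Lemma \ref{lemma:n_k_HL} one has
\[
n_k(G_\la) \;=\; \frac{Q_\la(1[k],t[k],\ldots)}{Q_\la(1,t,\ldots)},
\]
and by \eqref{eq:HL_aut} of Proposition \ref{prop:HL_pgrp_translate},
\[
\frac{1}{\#\Aut(G_\la)} \;=\; P_\la(t,t^2,\ldots)\,Q_\la(1,t,\ldots).
\]
Multiplying these and plugging into the definition of $\P_{\{p\}}^{\ast(k)}$ (Definition \ref{def:intro_measures}) gives
\[
\P_{\{p\}}^{\ast(k)}(G_\la) \;=\; (p^{-1};p^{-1})_\infty^k \cdot P_\la(t,t^2,\ldots)\,Q_\la(1[k],t[k],\ldots).
\]
Comparing with the right-hand side $\P_{\infty;1/p}^{\ast(k)}(\la)$ in Definition \ref{def:Past}, the statement reduces to the identity
\[
\Pi_t(1,t,\ldots;\,t[k],t^2[k],\ldots) \;=\; \frac{1}{(t;t)_\infty^k},
\]
where $t=1/p$.

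Second, I will verify this Cauchy-kernel identity by a direct product manipulation. Using the definition \eqref{eq:def_cauchy_kernel},
\[
\Pi_t(1,t,\ldots;\,t[k],t^2[k],\ldots) \;=\; \left(\prod_{i \ge 0,\, j \ge 1} \frac{1-t^{i+j+1}}{1-t^{i+j}}\right)^{\!k}.
\]
Grouping terms by $m = i+j$ (so each $m \ge 1$ appears with multiplicity $m$) yields
\[
\prod_{i \ge 0,\, j \ge 1} \frac{1-t^{i+j+1}}{1-t^{i+j}} \;=\; \prod_{m \ge 1} \frac{(1-t^{m+1})^m}{(1-t^m)^m},
\]
and reindexing $n=m+1$ in the numerator gives $\prod_{n \ge 2}(1-t^n)^{n-1}$, so the product telescopes to
\[
(1-t)^{-1}\prod_{n \ge 2}(1-t^n)^{-1} \;=\; \frac{1}{(t;t)_\infty}.
\]
Raising to the $k$th power gives the required identity and completes the proof.

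There is no real obstacle here: everything is a bookkeeping exercise once the dictionary of Section \ref{sec:HL_and_groups} is in place. The only mildly delicate step is the product-telescoping calculation of the Cauchy kernel at the principal specializations $(1,t,\ldots)$ and $(t[k],t^2[k],\ldots)$, but it is elementary.
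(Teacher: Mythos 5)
Your proof is correct and takes essentially the same route as the paper: translate $n_k(G_\la)$ via Lemma~\ref{lemma:n_k_HL} and $1/\#\Aut(G_\la)$ via \eqref{eq:HL_aut}, then match the normalizing constants through the specialized Cauchy kernel. Your explicit telescoping verification that $\Pi_t(1,t,\ldots;t[k],t^2[k],\ldots) = (t;t)_\infty^{-k}$ is a worthwhile addition; in fact, the paper's display \eqref{eq:qpoc_cauchy} appears to have a reciprocal typo (it reads $(p^{-1};p^{-1})_\infty$ where it should read $(p^{-1};p^{-1})_\infty^{-1}$), and your careful calculation settles the sign of the exponent correctly.
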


\begin{proposition}\label{prop:joint_aut_hl}
For any prime $p$ and $\la^{(1)},\ldots,\la^{(k)} \in \BBY$,
$$\P_{\{p\}}^{(k)}(G_{\la^{(1)}},\ldots,G_{\la^{(k)}}) = \P_{\infty;1/p}^{(k)}(\la^{(1)},\ldots,\la^{(k)}).$$
\end{proposition}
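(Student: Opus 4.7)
The plan is to show that each side reduces to the same product of automorphism counts and subgroup counts, with a normalization constant that matches. First, I would simplify the normalizing factor in $\P_{\infty;1/p}^{(k)}$: using the formula $\Pi_t(\balpha;\balpha') = \exp(\sum_\ell \tfrac{1-t^\ell}{\ell} p_\ell(\balpha) p_\ell(\balpha'))$ and the fact that $p_\ell(t[k],t^2[k],\ldots) = k \, p_\ell(t,t^2,\ldots)$, one gets
\[
\Pi_t(1,t,\ldots; t[k], t^2[k],\ldots) = \Pi_t(1,t,\ldots; t, t^2,\ldots)^k,
\]
and a short computation ($\log \Pi_t = \sum_\ell \tfrac{t^\ell}{\ell(1-t^\ell)} = -\log(t;t)_\infty$) identifies this with $(t;t)_\infty^{-k}$. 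This matches the factor $(p^{-1};p^{-1})_\infty^k$ on the LHS with $t = 1/p$.

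Second, the core of the proof is to verify the unnormalized identity
\[
\prod_{i=1}^k Q_{\la^{(i)}/\la^{(i-1)}}(1,t,\ldots) \cdot P_{\la^{(k)}}(t,t^2,\ldots) \;=\; \prod_{i=1}^k \frac{\#\Sur(G_{\la^{(i)}}, G_{\la^{(i-1)}})}{\#\Aut(G_{\la^{(i)}})},
\]
where $\la^{(0)} = \emptyset$. On the Hall-Littlewood side, I would apply Lemma~\ref{lemma:from_mac} in the form $Q_{\la^{(i)}/\la^{(i-1)}}(1,t,\ldots) = |G_{\la^{(i-1)}, \la^{(i)}}| \cdot Q_{\la^{(i)}}(1,t,\ldots) / Q_{\la^{(i-1)}}(1,t,\ldots)$, which telescopes upon taking the product to leave
\[
\prod_{i=1}^k Q_{\la^{(i)}/\la^{(i-1)}}(1,t,\ldots) = Q_{\la^{(k)}}(1,t,\ldots) \prod_{i=1}^k |G_{\la^{(i-1)}, \la^{(i)}}|,
\]
using $Q_\emptyset = 1$. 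Multiplying by $P_{\la^{(k)}}(t,t^2,\ldots)$ and applying \eqref{eq:HL_aut} collapses the leading factor to $1/\#\Aut(G_{\la^{(k)}})$.

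Third, on the group-theoretic side I would use the standard identity $\#\Sur(G_{\la^{(i)}}, G_{\la^{(i-1)}}) = \#\Inj(G_{\la^{(i-1)}}, G_{\la^{(i)}}) = \#\Aut(G_{\la^{(i-1)}}) \cdot |G_{\la^{(i-1)}, \la^{(i)}}|$ (using the duality $\#\Sur = \#\Inj$ for finite abelian groups, exactly as in Proposition~\ref{prop:HL_pgrp_translate}). Then the product over $i$ telescopes in $\#\Aut(G_{\la^{(i)}})$, leaving $\frac{1}{\#\Aut(G_{\la^{(k)}})} \prod_i |G_{\la^{(i-1)}, \la^{(i)}}|$, which matches the Hall-Littlewood side exactly. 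Combining the three steps proves the equality of measures.

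No step is difficult; the proof is essentially bookkeeping. The main subtlety to be careful about is keeping the two different specializations $(1,t,t^2,\ldots)$ and $(t,t^2,\ldots)$ straight when invoking Lemma~\ref{lemma:from_mac} versus Proposition~\ref{prop:HL_pgrp_translate}, since the statement mixes $Q$ and $P$ values at both. Choosing the subgroup-counting form of Lemma~\ref{lemma:from_mac} (rather than the surjection/injection forms in \eqref{eq:HL_inj_surj}) is what allows both sides to telescope to the same expression without having to juggle the identity $Q_\la = b_\la(t) P_\la$.
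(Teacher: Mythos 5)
Your proof is correct and takes essentially the same route as the paper: translate both sides into Hall--Littlewood data via Lemma~\ref{lemma:from_mac} and Proposition~\ref{prop:HL_pgrp_translate}, telescope over $i$, and compute the normalizing constant from the power-sum form of the Cauchy kernel. The small organizational difference is that you verify a single product identity by telescoping both sides down to $\frac{1}{\#\Aut(G_{\la^{(k)}})}\prod_i |G_{\la^{(i-1)},\la^{(i)}}|$ using the subgroup-count form of Lemma~\ref{lemma:from_mac}, whereas the paper quotes a per-factor identity and multiplies; these are equivalent.

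One thing worth flagging: your bookkeeping is actually more careful than the paper's, and it silently corrects two typos there. The paper's per-factor identity contains an extraneous factor of $\Pi_t(1,t,\ldots;t,t^2,\ldots)$ in the denominator; the clean identity (as your argument shows, after combining Lemma~\ref{lemma:from_mac}, $\#\Inj(G_\mu,G_\la)=\#\Aut(G_\mu)\cdot|G_{\mu,\la}|$, and \eqref{eq:HL_aut}) is
$$\frac{\#\Sur(G_\la,G_\mu)}{\#\Aut(G_\la)} = Q_{\la/\mu}(1,t,\ldots)\,\frac{P_\la(t,t^2,\ldots)}{P_\mu(t,t^2,\ldots)},$$
with no $\Pi_t$. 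Relatedly, \eqref{eq:qpoc_cauchy} should read $\Pi_{1/p}(1,p^{-1},\ldots;p^{-1},p^{-2},\ldots)=(p^{-1};p^{-1})_\infty^{-1}$, matching your computation $\log\Pi_t = \sum_{\ell\ge 1}\frac{t^\ell}{\ell(1-t^\ell)} = -\log(t;t)_\infty$, rather than $(p^{-1};p^{-1})_\infty$ as printed. The intended argument is clear and your writeup gets it right.
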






\begin{proof}[Proof of Proposition \ref{prop:prod_aut_hl}]
Apply Lemma \ref{lemma:n_k_HL} to the numerator and Proposition \ref{prop:HL_pgrp_translate} to the denominator of the formula for $\P_{\{p\}}^{\ast(k)}(G_\la)$, and for the normalizing constants note that 
\begin{equation}\label{eq:qpoc_cauchy}
\Pi_{1/p}(1,p^{-1},\ldots; p^{-1},p^{-2},\ldots)  = \frac{1}{(p^{-1};p^{-1})_\infty}.
\end{equation}
\end{proof}


\begin{proof}[Proof of Theorem \ref{thm:intro_dist_moment_prod}]
By the factorizations of Lemma~\ref{lem:n_m_factorize} and the factorization 
$$\#\Sur(B,G) = \prod_{p \in P} \#\Sur(B[p^\infty],G[p^\infty]),$$
it suffices to prove Theorem~\ref{thm:intro_dist_moment_prod} in the case $P=\{p\}$ for some prime $p$. The fact that $\P^{\ast(k)}_{\{p\}}$ is a probability measure follows by Proposition~\ref{prop:prod_aut_hl} since the Hall-Littlewood measure is a probability measure. For the computation of moments in Theorem \ref{thm:intro_dist_moment_prod}, by Proposition~\ref{prop:prod_aut_hl} and the above reduction to a single prime it suffices to show that 
\begin{equation}\label{eq:used_to_be_prop}
\sum_{\la} \P^{\ast(k)}_{\infty; 1/p}(\la) \#\Sur(G_\la, G) = n_k(G)
\end{equation}
for any abelian $p$-group $G$. To prove this, let $\mu$ be the type of $G$. By Proposition~\ref{prop:HL_pgrp_translate},
\begin{align*}
\sum_{\la \in \BBY} \P^{\ast(k)}_{\infty; 1/p}(\la) \#\Sur(G_{\la}, G) =  \sum_{\la \in \BBY} \frac{Q_\la(1[k],\ldots)P_\la(t,\ldots)}{\Pi_t(t,\ldots;1[k],t[k],\ldots)} \frac{\PP{\la/\mu}}{\PP{\la}\QQ{\mu}}.
\end{align*}
By the Cauchy identity (Proposition~\ref{prop:cauchy}) this is 
$$\frac{Q_\mu(1[k],\ldots)}{\QQ{\mu}},$$
which is $n_k(G)$ by Lemma~\ref{lemma:n_k_HL}.
\end{proof}

We now turn to Theorem \ref{thm:intro_dist_moment_joint}. The intermediate steps are, as one might expect, slightly more involved.

\begin{proof}[Proof of Proposition \ref{prop:joint_aut_hl}]
By Proposition \ref{prop:HL_pgrp_translate}, 
$$\frac{\#\Sur(G_{\la^{(i)}},G_{\la^{(i-1)}})}{\#\Aut(G_{\la^{(i)}})} = Q_{\la^{(i)}/\la^{(i-1)}}(1,t,\ldots) \frac{P_{\la^{(i)}}(t,t^2,\ldots)}{P_{\la^{(i-1)}}(t,t^2,\ldots)},$$
and the proof follows from this and \eqref{eq:qpoc_cauchy}.
\end{proof}

Finally, the proof of Theorem~\ref{thm:intro_dist_moment_joint} goes by iterating the following key lemma.

\begin{lemma}\label{lemma:jointsur_iterate}
Let $L,M, N$ be finite abelian $p$-groups. Then 
\begin{equation}\label{eq:LKMN}
\E_{\kappa \sim \P_{\infty;1/p}^{(1)}} \left[\#\Inj(L, G_\kappa) \#\Sur(G_\kappa,M) \#\Hom(G_\kappa,N)   \right] = \sum_{\substack{H \leq M \op N: \\ \pi_1(H) = M}} \#\Hom(L,H).
\end{equation}
\end{lemma}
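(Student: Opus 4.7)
The plan is to decompose the three-factor expectation on the left so that Proposition \ref{prop:what_cauchy_means}, which handles one injection and one surjection of $G_\kappa$, can be applied term-by-term. The key maneuver is to fuse the factors $\#\Sur(G_\kappa, M)$ and $\#\Hom(G_\kappa, N)$ into a single surjection count from $G_\kappa$ onto a varying subgroup of $M \oplus N$, which matches exactly the combinatorial object on the right-hand side.

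\textbf{Main steps.} First, expand the homomorphism count by its image: $\#\Hom(G_\kappa, N) = \sum_{N' \leq N} \#\Sur(G_\kappa, N')$. Next, observe the elementary bijection
\begin{equation*}
\{(f_1, f_2) : f_1 \in \Sur(G_\kappa, M),\ f_2 \in \Sur(G_\kappa, N')\}
\ \longleftrightarrow\
\bigsqcup_{\substack{H' \leq M \oplus N' \\ \pi_1(H') = M,\ \pi_2(H') = N'}} \Sur(G_\kappa, H'),
\end{equation*}
given by $(f_1, f_2) \mapsto (g \mapsto (f_1(g), f_2(g)))$ with $H'$ the image of this map; the conditions $\pi_i(H') = M, N'$ just say that the component maps remain surjective. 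This yields
\begin{equation*}
\#\Sur(G_\kappa, M)\cdot \#\Sur(G_\kappa, N') \;=\; \sum_{\substack{H' \leq M \oplus N' \\ \pi_1(H') = M,\ \pi_2(H') = N'}} \#\Sur(G_\kappa, H').
\end{equation*}

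\textbf{Applying Cauchy and reindexing.} For each fixed $H'$, Proposition \ref{prop:what_cauchy_means} (which says exactly $\E_\kappa[\#\Inj(L, G_\kappa)\#\Sur(G_\kappa, H')] = \#\Hom(L, H')$) takes care of the $\kappa$-sum and produces $\#\Hom(L, H')$. Combining with the two previous decompositions, the left-hand side of \eqref{eq:LKMN} becomes
\begin{equation*}
\sum_{N' \leq N}\ \sum_{\substack{H' \leq M \oplus N' \\ \pi_1(H') = M,\ \pi_2(H') = N'}} \#\Hom(L, H').
\end{equation*}
Finally, reindex the double sum: every subgroup $H \leq M \oplus N$ with $\pi_1(H) = M$ appears exactly once — in the term with $N' = \pi_2(H)$ and $H' = H$ — because the constraint $\pi_2(H') = N'$ uniquely pins down the outer index. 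This produces the right-hand side.

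\textbf{Expected difficulty.} There is no real obstacle: the argument is essentially a bookkeeping exercise feeding into the already-established Cauchy-type identity (Proposition \ref{prop:what_cauchy_means}). The only point meriting care is the first bijection, where one must check that a pair of surjections assembles into a single homomorphism $G_\kappa \to M \oplus N'$ whose image $H'$ automatically satisfies $\pi_1(H') = M$ and $\pi_2(H') = N'$, and conversely that any surjection $G_\kappa \twoheadrightarrow H'$ for such $H'$ recovers a compatible pair via post-composition with coordinate projections.
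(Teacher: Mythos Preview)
Your proposal is correct and essentially follows the paper's approach: both arguments combine the two factors $\#\Sur(G_\kappa,M)$ and $\#\Hom(G_\kappa,N)$ into a single count $\sum_{H \leq M \oplus N,\ \pi_1(H)=M} \#\Sur(G_\kappa,H)$ via the universal property of the direct product, and then apply Proposition~\ref{prop:what_cauchy_means} termwise. The only cosmetic difference is that the paper does this combination in one step (pairing $\Sur(K,M)\times\Hom(K,N)$ directly with $\{\varphi\in\Hom(K,M\oplus N):\pi_1\circ\varphi\text{ surjective}\}$), whereas you first split $\Hom$ into $\Sur$'s and then reassemble; the substance is identical.
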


We note that when $N$ is trivial, Lemma~\ref{lemma:jointsur_iterate} reduces to Proposition~\ref{prop:what_cauchy_means}.

\begin{proof}[Proof of Lemma \ref{lemma:jointsur_iterate}]
For any $K$, composing $\varphi \in \Hom(K, M \op N)$ with projections onto the two factors yields a natural map
$$\{\varphi \in \Hom(K, M \op N): \pi_1 \circ \varphi \text{ surjective}\} \to \Sur(K,M) \times \Hom(K,N),$$
which is a bijection by the universal property of direct products\footnote{Since we only have a direct sum of a finite number of factors, $M \op N \simeq M \times N$.}. Hence summing over the possible images of $\varphi$ yields
$$\#\Sur(K,M) \#\Hom(K,N) = \sum_{\substack{H \leq M \op N \\ \pi_1(H) = M}} \#\Sur(K,H).$$

By the definition of $\P_{\infty;1/p}^{(1)}$ together with the above discussion, the LHS of \eqref{eq:LKMN} is equal to
\begin{equation}\label{eq:jointsur_almost_done}
(p^{-1};p^{-1})_\infty \sum_K \frac{\#\Inj(L,K) \#\Sur(K,M) \#\Hom(K,N)}{\#\Aut(K)} =  (p^{-1};p^{-1})_\infty \sum_K \frac{\#\Inj(L,K)}{\#\Aut(K)} \sum_{\substack{H \leq M \op N \\ \pi_1(H) = M}} \#\Sur(K,H)
\end{equation}
where again the sum is over all isomorphism classes of finite abelian $p$-groups and $K$ is a representative from the class. Interchanging the sums and applying Proposition~\ref{prop:what_cauchy_means} to \eqref{eq:jointsur_almost_done} completes the proof.
\end{proof}

\begin{proof}[Proof of Theorem~\ref{thm:intro_dist_moment_joint}]
By the factorizations Lemma~\ref{lem:n_m_factorize} and factorization of the number of surjections, it suffices to prove Theorem~\ref{thm:intro_dist_moment_joint} in the case $P=\{p\}$. The fact that $\P_{\{p\}}^{(k)}$ is a probability measure follows from Proposition \ref{prop:joint_aut_hl}, since Hall-Littlewood processes are probability measures. 

It remains to compute moments. In the proof below we use $\la^{(1)},\ldots,\la^{(k)}$ for the types of $B_1,\ldots,B_k$ and $\mu^{(1)},\ldots,\mu^{(k)}$ for the types of $G_1,\ldots,G_k$, and to manage subscripts we will abuse notation and write $\la$ for the group $G_\la$. By duality
\begin{equation}\label{eq:interpret_hl_process}
\P_{\{p\}}^{(k)}(\la^{(1)},\ldots,\la^{(k)}) = (p^{-1};p^{-1})_\infty^k \frac{\#\Inj(\la^{(k-1)}, \la^{(k)}) \cdots \#\Inj(\la^{(1)},\la^{(2)})}{\#\Aut(\la^{(k)}) \cdots \#\Aut(\la^{(1)})}.
\end{equation}

Hence we wish to show 
\begin{align}\label{eq:jsthm_wts}
\begin{split}
& (p^{-1};p^{-1})_\infty^k\sum_{\la^{(1)},\ldots,\la^{(k)} \in \BBY} \frac{\#\Inj(\la^{(k-1)}, \la^{(k)}) \cdots \#\Inj(\la^{(1)},\la^{(2)})}{\#\Aut(\la^{(k)}) \cdots \#\Aut(\la^{(1)})} \#\Sur(\la^{(k)},\mu^{(k)}) \cdots \#\Sur(\la^{(1)},\mu^{(1)}) \\
& \quad \quad \quad = m_k(\mu^{(1)},\ldots,\mu^{(k)})
\end{split}
\end{align}
where again $\mu^{(i)}$ is the type of $G_i$. First apply Proposition~\ref{prop:what_cauchy_means} with $N=\mu^{(k)},M=\la^{(k-1)},K=\la^{(k)}$ to the sum over $\la^{(k)}$ in the LHS to obtain
\begin{align*}
(p^{-1};p^{-1})_\infty^{k-1} \sum_{\la^{(1)},\ldots,\la^{(k-1)} \in \BBY} \#\Hom(\la^{(k-1)},\mu^{(k)}) \frac{\#\Inj(\la^{(k-2)}, \la^{(k-1)}) \cdots \#\Inj(\la^{(1)},\la^{(2)})}{\#\Aut(\la^{k-1}) \cdots \#\Aut(\la^{(1)})} \\ 
\times  \#\Sur(\la^{(k-1)},\mu^{(k-1)}) \cdots \#\Sur(\la^{(1)},\mu^{(1)}).
\end{align*}

Then apply Lemma~\ref{lemma:jointsur_iterate} with $L=\la^{(k-2)},\kappa=\la^{(k-1)},M=\mu^{(k-1)},N=\mu^{(k)}$ to obtain
\begin{multline}
(p^{-1};p^{-1})_\infty^{k-2} \sum_{\la^{(1)},\ldots,\la^{(k-2)} \in \BBY} \left(\sum_{\substack{H_{k-1} \leq \mu^{(k-1)} \op \mu^{(k)}: \\ \pi_1(H_{k-1}) = \mu^{(k-1)}}} \Hom(\la^{(k-2)},H_{k-1})\right) \\
\times \frac{\#\Inj(\la^{(k-3)}, \la^{(k-2)}) \cdots \#\Inj(\la^{(1)},\la^{(2)})}{\#\Aut(\la^{k-2}) \cdots \#\Aut(\la^{(1)})} \#\Sur(\la^{(k-2)},\mu^{(k-2)}) \cdots \#\Sur(\la^{(1)},\mu^{(1)}).
\end{multline}
Continuing to apply Lemma~\ref{lemma:jointsur_iterate} with $L=\la^{(k-i-1)},\kappa=\la^{(k-i)},M=\mu^{(k-i)},N=\mu^{(k-i+1)}$ with $i=2,\ldots,k-1$ (recall $\la^{(0)} := \emptyset$ by convention), we obtain 
$$\text{LHS\eqref{eq:jsthm_wts}} = \sum_{\substack{H_{k-1} \leq \mu^{(k-1)} \op \mu^{(k)}: \\ \pi_1(H_{k-1}) = \mu^{(k-1)}}} \sum_{\substack{H_{k-2} \leq \mu^{(k-2)} \op H_{k-1}: \\ \pi_1(H_{k-2}) = \mu^{(k-2)}}} \cdots \sum_{\substack{H_{1} \leq \mu^{(1)} \op H_2: \\ \pi_1(H_1) = \mu^{(1)}}} 1 = m_k(\mu^{(1)},\ldots,\mu^{(k)}),$$
completing the proof.
\end{proof}


\section{Moment comparison and the proof of Theorem~\ref{theorem:main:prod}}\label{sect:comparison}

Fix a finite set of primes $P$, and let $Y \sim \P_P^{\ast(k)}$ as defined in Definition \ref{def:intro_measures}. For any $a$ divisible only by primes in $P$, Theorem \ref{theorem:surmoment:k} and Theorem \ref{thm:intro_dist_moment_prod} imply that $Y$ and $\cok(M_1\cdots M_k )$ (in the setting of Theorem \ref{theorem:main:prod}) have asymptotic matching moments with respect to all groups $G$ of exponent dividing $a$. To pass this information back to distribution, we then use the following result on the moment problem for finite abelian groups, a direct analogue of \cite[Theorem 8.3]{W0}, which suffices to prove Theorem~\ref{theorem:main:prod}. 

\begin{theorem}\label{theorem:t:distribution}Let $X_n$ and $Y_n$ be sequences of random finitely generated abelian groups. Let $a$ be a positive integer and $\CA_a$ be the set of isomorphism classes of abelian groups with exponent dividing $a$. Suppose that for every $G\in \CA_a$ we have 
$$\lim_{n \to \infty} \E[ \#\Sur(X_n,G)) = \lim_{n \to \infty}  \E[\#\Sur(Y_n,G)] = n_k(G).$$ Then we have that for every $H \in \CA_a$, 
 $\lim_{n \to \infty} \P\left(X_n \otimes (\Z/a\Z) \isom H\right)$ exists and
 $$\sum_{H\in \CA_a}\lim_{n \to \infty} \P\left(X_n \otimes (\Z/a\Z) \isom H\right) \# \Sur(H,G) =n_k(G).$$
Furthermore,
$$\lim_{n \to \infty} \P\left(X_n \otimes (\Z/a\Z) \isom H\right) =\lim_{n \to \infty} \P(Y_n \otimes (\Z/a\Z) \isom H).$$
\end{theorem}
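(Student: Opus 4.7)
The proof follows the approach of \cite[Theorem 8.3]{W0}, which establishes a moment-determinacy principle for random finitely generated abelian groups: under a suitable growth bound on the asymptotic surjection moments $M(G) := \lim_n \E[\#\Sur(X_n, G)]$, the distribution of $X_n \otimes \Z/a\Z$ converges as $n \to \infty$, and the limit is uniquely determined by $\{M(G)\}_{G \in \CA_a}$ via a M\"obius-type inversion on the lattice of isomorphism classes in $\CA_a$. Since the hypothesis of our theorem supplies identical asymptotic moments $M(G) = n_k(G)$ for both $X_n$ and $Y_n$, the same argument applies to each sequence separately, and the resulting limits must coincide by uniqueness.

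Concretely, the plan is as follows. First, one must verify the growth bound on $M(G) = n_k(G)$ for $G \in \CA_a$ required by the moment-determinacy argument. Using the multiplicativity $n_k(G) = \prod_{p \mid a} n_k(G_p)$ from Lemma~\ref{lem:n_m_factorize}, it suffices to bound each $p$-local factor. For a $p$-group $G_\la$ with $\la_1 \leq v_p(a)$, the Hall--Littlewood formula $n_k(G_\la) = P_\la(t[k], t^2[k], \ldots)/P_\la(t, t^2, \ldots)$ with $t = 1/p$ from Lemma~\ref{lemma:n_k_HL} furnishes an explicit handle, and the elementary combinatorial estimate $n_k(G_\la) \leq \#\{\text{subgroups of } G_\la\}^{k-1}$ provides an alternative. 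Either route yields a bound on $n_k(G)$ over $G \in \CA_a$ of a shape comfortably within the regime handled by the moment-determinacy machinery.

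With the growth bound in hand, the inversion argument used in the proof of \cite[Theorem 8.3]{W0} applies directly. It expresses the indicator $\indicator{X_n \otimes \Z/a\Z \simeq H}$ as an absolutely convergent linear combination of surjection counts $\#\Sur(X_n, G)$ over $G \in \CA_a$, with explicit combinatorial coefficients $d_{H,G}$. Taking expectations, letting $n \to \infty$, and interchanging limit with summation (justified by the growth bound on $n_k(G)$ together with the finiteness of $\{G \in \CA_a : |G| \leq N\}$ for each $N$), one obtains the existence of $\lim_n \P(X_n \otimes \Z/a\Z \simeq H)$, the dual identity
\[
\sum_{H \in \CA_a} \lim_n \P(X_n \otimes \Z/a\Z \simeq H)\, \#\Sur(H, G) \;=\; n_k(G) \quad \text{for every } G \in \CA_a,
\]
and uniqueness of the nonnegative sequence satisfying this linear system. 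Applying the same reasoning to $Y_n$ and invoking uniqueness gives the asserted equality of the two limiting distributions.

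The only real technical obstacle is the growth verification in the first step; in this single-moment setting it is routine given the Hall--Littlewood identities developed in Section~\ref{sec:HL_and_groups}. As the authors flag in the introduction, however, the analogous joint-moment statements (Theorem~\ref{theorem:t:distribution:joint} and Theorem~\ref{t:distribution:joint'}) require genuinely new and more delicate growth bounds, and are treated separately.
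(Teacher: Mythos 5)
Your overall outline is right — the paper does indeed follow the template of \cite[Theorem 8.3]{W0}, and the only new input needed is a suitable growth bound on the (Hom-)moments. But the growth bound is precisely where your proposal has a genuine gap, and the gap is not small: the step you flag as ``routine'' is the whole point of the argument.

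Two issues. First, a minor one of bookkeeping: the quantity that Proposition~\ref{prop:w8.2_adapted} needs to control is not $n_k(G)$ but the Hom-moment $\sum_{H \leq G} \E[\#\Sur(X_n,H)] = \sum_{H \leq G} n_k(H) = n_{k+1}(G)$, so one extra level of the chain count appears. Second, and more importantly, the ``elementary combinatorial estimate'' you propose as an alternative — $n_j(G_\la) \leq \#\{\text{subgroups of }G_\la\}^{j-1} = n_2(G_\la)^{j-1}$ — is simply too weak. Using Lemma~\ref{lemma:sbgp_num_bound}, one shows $n_2(G_\la) \leq F^{\la_1} p^{\frac{1}{4}\sum_i (\la_i')^2}$, so the naive bound gives $n_{k+1}(G_\la) \leq F^{k\la_1}p^{\frac{k}{4}\sum_i(\la_i')^2}$. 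For convergence of the geometric-Gaussian sum in \eqref{eq:w_better_bound}, the coefficient of $\sum(\la_i')^2$ in the exponent must be strictly less than $\tfrac{1}{2}$; your bound gives $\tfrac{k}{4}$, which already fails at $k=2$. Gesturing at the Hall--Littlewood formula from Lemma~\ref{lemma:n_k_HL} does not by itself resolve this either, as the formula is an identity rather than an estimate, and extracting a quantitative bound from it would require the same kind of work.

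The paper fills exactly this gap with Lemma~\ref{lemma:growth:n_k}, which proves
\[
n_k(G_\la) \leq F_k^{\la_1}\, p^{\frac{1-c_k}{2}\sum_i (\la_i')^2}, \qquad c_k = \tfrac{1}{k} > 0,
\]
by a careful induction: rather than bounding the chain count crudely, one writes $n_j(G) = \sum_\mu |G_{\mu,\la}| \, n_{j-1}(G_\mu)$, applies Lemma~\ref{lemma:sbgp_num_bound}, completes the square in the exponents of $p$, and shows that a Gaussian sum absorbs the cross terms, producing the recursion $c_j = 1 - \frac{1}{1+c_{j-1}}$. The constant $c_k$ degrades as $k$ grows but remains strictly positive, which is exactly what the convergence criterion in Proposition~\ref{prop:w8.2_adapted} requires (the summand there becomes $p^{-\frac{c_{k+1}}{2}d_1^2 + O(d_1)}$). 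Without this refinement, the moment-determinacy machinery cannot be applied, so your proof as written does not go through for $k \geq 2$.
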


\begin{proof}[Proof of Theorem~\ref{theorem:main:prod}, assuming Theorem~\ref{theorem:t:distribution}] Assume that the exponent of the group $B$ under consideration has prime factorization $\prod_{p\in P} p^{e_p}$. Theorem \ref{theorem:t:distribution}, applied to the sequence $X_n=\cok(M_1\cdots M_k )$ and $Y_n=Y\sim \P_P^{\ast(k)}$ with $a= \prod_{p\in P} p^{e_p+1}$, implies that  
$$\lim_{n\to \infty} \P\left(\cok(M_1\cdots M_k ) \otimes (\Z/a\Z) \isom B\right) = \P(Y \otimes (\Z/a\Z) \isom B).$$
The proof is then complete because $\cok(M_1\cdots M_k ) \otimes (\Z/a\Z) \isom B$ if and only if $\cok(M_1\cdots M_k )[P] \isom B$.
\end{proof}

We note also that the analogue of Theorem~\ref{theorem:main:prod} over $\Z_p$ holds by the exact same proof, with $P=\{p\}$.

\begin{theorem}\label{theorem:main:prod_Zp} Let $\xi$ be a $\Z_p$-valued random variable which is not constant modulo $p$, and for each $n$ let $M_1,\dots, M_k$ be $k$ independent random matrices with iid $\xi$-distributed entries. Then for any finite abelian $p$-group $B$, 
$$\lim_{n\to \infty}\P\left(\cok(M_1\cdots M_k ) \simeq  B\right) =(p^{-1};p^{-1})_\infty^k  \frac{\#\{0 = G_0 \leq G_1 \leq \ldots \le G_k = B\}}{\#\Aut(B)}.$$
\end{theorem}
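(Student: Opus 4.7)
The plan is to mirror the proof of Theorem \ref{theorem:main:prod} given just above, substituting the $\Z_p$-adic moment estimate Corollary \ref{cor:sur:uniform:asym} in place of Theorem \ref{theorem:surmoment:k}. Set $P = \{p\}$, let $p^L$ be the exponent of $B$, and choose $a = p^{L+1}$. I would apply the moment-determines-distribution statement Theorem \ref{theorem:t:distribution} with $X_n = \cok(M_1 \cdots M_k)$ (a finitely generated $\Z_p$-module built from our $\Z_p$-valued matrices) and the constant sequence $Y_n = Y \sim \P_{\{p\}}^{\ast(k)}$.

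The required moment hypothesis holds immediately: Corollary \ref{cor:sur:uniform:asym} gives $\lim_{n \to \infty} \E[\#\Sur(X_n, G)] = n_k(G)$ for every finite abelian $p$-group $G$ with exponent dividing $a$, while Proposition \ref{prop:surmoment:k:Haar} supplies the matching value $\E[\#\Sur(Y, G)] = n_k(G)$ for the candidate limit. Theorem \ref{theorem:t:distribution} then yields
$$\lim_{n \to \infty} \P\bigl(X_n \otimes \Z/a\Z \simeq B\bigr) = \P\bigl(Y \otimes \Z/a\Z \simeq B\bigr).$$

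The last bookkeeping step is to check that tensoring with $\Z/p^{L+1}\Z$ preserves the relevant information on both sides. For any finitely generated $\Z_p$-module $H \simeq \Z_p^r \oplus \bigoplus_i \Z/p^{a_i}\Z$, the reduction $H / p^{L+1} H$ is isomorphic to $B$ if and only if $r = 0$ and the multiset $\{a_i\}$ matches the partition associated to $B$; this is because $B$ has exponent exactly $p^L < p^{L+1}$, so any free $\Z_p$-summand or part with $a_i \geq L+1$ in $H$ would produce a $\Z/p^{L+1}\Z$ factor in $H/p^{L+1}H$, which $B$ does not have. Consequently the events $\{X_n \otimes \Z/a\Z \simeq B\}$ and $\{\cok(M_1 \cdots M_k) \simeq B\}$ coincide, and similarly $\P(Y \otimes \Z/a\Z \simeq B) = \P_{\{p\}}^{\ast(k)}(B)$. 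Substituting the explicit formula for $\P_{\{p\}}^{\ast(k)}(B)$ from Definition \ref{def:intro_measures} yields the desired limit.

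I do not expect a serious obstacle here, since the two genuinely hard ingredients---the uniform $\Z_p$ moment asymptotic and the moment-to-distribution passage---are already packaged as Corollary \ref{cor:sur:uniform:asym} and Theorem \ref{theorem:t:distribution}. The only conceptual content beyond Theorem \ref{theorem:main:prod} is the trivial verification that $p^{L+1}$-torsion information suffices to recover a $p$-group of exponent $p^L$ on the nose, rather than merely up to its $(\Z/a\Z)$-reduction as was needed in the integral case.
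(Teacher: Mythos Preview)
Your proposal is correct and follows exactly the route the paper itself indicates: the paper simply states that Theorem~\ref{theorem:main:prod_Zp} ``holds by the exact same proof [as Theorem~\ref{theorem:main:prod}], with $P=\{p\}$,'' and your write-up is precisely that proof spelled out, with Corollary~\ref{cor:sur:uniform:asym} supplying the $\Z_p$-moment input and Theorem~\ref{theorem:t:distribution} handling the moment-to-distribution step. Your final bookkeeping paragraph is in fact more explicit than the paper's, which is helpful.
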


It remains to prove Theorem~\ref{theorem:t:distribution}. For this one follows the treatment of {\cite[Theorem 8.3]{W0}}, which roughly speaking can be summarized as follows: 
\begin{enumerate}[(i)] 
\item From $\lim_{n \to \infty} \E[ \#\Sur(X_n,G)) = \lim_{n \to \infty}  \E[\#\Sur(Y_n,G)] = S_G$ for all $G\in \CA_a$, under some appropriate condition on the growth of $S_G$ and assuming that $\lim_{n \to \infty} \P(X_n \otimes (\Z/a\Z) \isom H)$ exists, one can show that $\sum_{H\in \CA_a} \lim_{n \to \infty} \P(X_n \otimes (\Z/a\Z) \isom H)   \#\Hom(H,G) = \sum_{H \le G} S_H$ for all $G\in \CA_a$. 
\vskip .1in
\item The latter can be written as a system of equalities involving 
$$\lim_{n \to \infty}   \P\left(X_n \otimes (\Z/a\Z) \isom H\right),  \lim_{n \to \infty}  \P\left(Y_n \otimes (\Z/a\Z) \isom H\right),$$
from which, under appropriate growth condition on $S_G$, one can deduce that these two limits are actually the same as desired (see \cite[Theorem 8.2, 8.3]{W0}, and also Theorem \ref{t:distribution:joint'} below). 
\vskip .1in
\item Lastly, one can show the limits $\lim_{n \to \infty} \P(X_n \otimes (\Z/a\Z) \isom H)$ exist for all $H\in \CA_a$ by contradiction, passing to subsequences where the limits exist for all $H$ and use (i) and (ii) above. 
\end{enumerate}
To our current situation, we just need to guarantee that the growth of $S_G=n_k(G)$ for each $p$-group $G$ of type $\la$ is appropriate so that we can apply (i)-(iii) outlined above. It is worth noting for the reader that in this section we refer more details to \cite{W0}, while in the next section we give a more self-contained argument because the extensions of \cite{W0} to the setting of joint moments are more nontrivial. We require the following strengthened (but more cumbersome to state) version of \cite[Theorem 8.2]{W0}, and recall that $\BBY_n$ denotes the set of partitions with at most $n$ parts.

\begin{proposition}\label{prop:w8.2_adapted}
Let $p_1,\ldots,p_s$ be distinct primes. Let $m_1,\ldots,m_s \geq 1$ be integers, let $M = \BBY_{m_1} \times \cdots \times \BBY_{m_s}$, and write elements of $M$ as $(\mu^1,\ldots,\mu^s)$, where $\mu^j$ has parts $\mu_1^j \geq \cdots \geq \mu_{m_j}^j$. Let $x_\mu,y_\mu$ be nonnegative reals for every $\mu \in M$, such that for every $\la \in M$,
\begin{equation}\label{eq:w7}
\sum_{\mu \in M} x_\mu \prod_{j=1}^s p_j^{\sum_i \la_i^j \mu_i^j} = \sum_{\mu \in M} y_\mu \prod_{j=1}^s p_j^{\sum_i \la_i^j \mu_i^j} =: C_\la
\end{equation}
for some nonnegative reals $C_\la$. Suppose these satisfy a bound of the form
\begin{equation}\label{eq:C_bound}
C_\la \leq \prod_{j=1}^s f_{p_j,m_j}(\la^j),
\end{equation}
for any collection of functions $f_{p,m}: \BBY_{m} \to \R_{\geq 0}$ with the property that for every $b \geq 0$ and $d_2,\ldots,d_m$ with $d_2+\ldots+d_m \leq b$, the sum
\begin{equation}\label{eq:w_better_bound}
\sum_{d_1 \geq 0} p^{-b d_1 - \frac{d_1(d_1+1)}{2}} f_{p,m}(d_1+d_2+\ldots+d_{m},d_2+\ldots+d_m,\ldots,d_m)
\end{equation}
converges. Then $x_\mu = y_\mu$ for all $\mu \in M$.
\end{proposition}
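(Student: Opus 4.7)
The plan is to adapt Wood's proof of \cite[Theorem 8.2]{W0} to the tensor-product setting by induction on the number of primes $s$, taking as base case ($s=1$) precisely Wood's result (whose hypothesis, modulo notation, is exactly our \eqref{eq:w_better_bound}). First I would pass to the difference $z_\mu := x_\mu - y_\mu$, so that subtracting the two identities in \eqref{eq:w7} gives
$$\sum_{\mu \in M} z_\mu \prod_{j=1}^s p_j^{\langle \lambda^j, \mu^j\rangle} = 0 \qquad \text{for all } \lambda \in M,$$
where we write $\langle \lambda^j, \mu^j\rangle := \sum_i \lambda_i^j \mu_i^j$. Splitting $z_\mu = z_\mu^+ - z_\mu^-$ with $z_\mu^\pm \geq 0$ and $z_\mu^\pm \leq x_\mu + y_\mu$, the hypothesis \eqref{eq:C_bound} gives the auxiliary bound
$$\sum_\mu z_\mu^\pm \prod_j p_j^{\langle \lambda^j, \mu^j\rangle} \leq 2\prod_{j=1}^s f_{p_j,m_j}(\lambda^j),$$
which will be the tool that transports summability through the induction.

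For the inductive step, assume the proposition for $s-1$ primes. Fix an arbitrary $(\lambda^2,\ldots,\lambda^s) \in \prod_{j\geq 2}\BBY_{m_j}$ and define, for each $\mu^1 \in \BBY_{m_1}$,
$$w^\pm_{\mu^1}\ :=\ \sum_{(\mu^2,\ldots,\mu^s)} z^\pm_{(\mu^1,\mu^2,\ldots,\mu^s)} \prod_{j=2}^s p_j^{\langle \lambda^j,\mu^j\rangle}.$$
Our identity reads $\sum_{\mu^1}(w^+_{\mu^1}-w^-_{\mu^1})\, p_1^{\langle \lambda^1,\mu^1\rangle}=0$ for all $\lambda^1 \in \BBY_{m_1}$, while the auxiliary bound yields
$$\sum_{\mu^1} w^\pm_{\mu^1}\, p_1^{\langle \lambda^1,\mu^1\rangle}\ \leq\ 2 f_{p_1,m_1}(\lambda^1)\prod_{j=2}^s f_{p_j,m_j}(\lambda^j),$$
the second factor being a $\lambda^1$-independent constant. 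Absorbing this constant into $f_{p_1,m_1}$ (which preserves the summability condition \eqref{eq:w_better_bound}, since multiplying $f_{p,m}$ by a constant does not affect convergence of \eqref{eq:w_better_bound}), the base case ($s=1$) applies separately to the nonnegative sequences $(w^+_{\mu^1})$ and $(w^-_{\mu^1})$, forcing $w^+_{\mu^1}=w^-_{\mu^1}$ for every $\mu^1$. Thus for each fixed $\mu^1$,
$$\sum_{(\mu^2,\ldots,\mu^s)} z_{(\mu^1,\mu^2,\ldots,\mu^s)} \prod_{j=2}^s p_j^{\langle \lambda^j,\mu^j\rangle}\ =\ 0 \qquad \text{for all }(\lambda^2,\ldots,\lambda^s),$$
which is a moment identity of the same shape with $s-1$ primes and coefficients indexed by $(\mu^2,\ldots,\mu^s)$, still bounded by the corresponding product of $f_{p_j,m_j}$. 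The induction hypothesis then gives $z_\mu = 0$.

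The main obstacle will be verifying cleanly that the hypotheses of the base case \cite[Theorem 8.2]{W0} transfer to the sequences $(w^\pm_{\mu^1})$: one needs not only the moment bound above but also the summability condition \eqref{eq:w_better_bound} for the rescaled function $f_{p_1,m_1}(\cdot)\cdot\text{const}$, which is immediate. The other subtlety is that one must keep track of $s, s-1, \ldots$ each as independent induction steps, with the factorized structure of \eqref{eq:C_bound} guaranteeing that at each step the reduced problem still has a separable bound; the proof then terminates after $s$ applications of Wood's single-prime theorem.
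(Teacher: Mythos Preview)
Your induction on $s$ is correct, but it is more elaborate than necessary and somewhat obscures where the real work lies. The paper's proof is a single observation: Wood's \cite[Theorem 8.2]{W0} is \emph{already} stated and proved for an arbitrary finite set of primes $p_1,\ldots,p_s$ simultaneously, with the specific choice $f_{p,m}(\la) = F^m p^{\sum_i \la_i(\la_i-1)/2}$; inspecting that proof shows that the only property of this particular $f$ ever used is the convergence of the series \eqref{eq:w_better_bound}. Hence the proposition follows immediately, with no induction on $s$ required.

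Your reduction to $s=1$ is valid, but note that your base case is not literally ``Wood's result'': Wood's stated hypothesis is the specific quadratic-exponent bound, not the abstract convergence condition \eqref{eq:w_better_bound}. So even for $s=1$ you must make the same observation the paper makes---that Wood's argument only uses convergence of \eqref{eq:w_better_bound}. Once you grant that, the same observation already gives the result for all $s$ at once, and the induction buys nothing. One small point you glossed over: when you pass to the $(s-1)$-prime problem with $\mu^1$ fixed, the claim that the remaining moments are ``still bounded by the corresponding product of $f_{p_j,m_j}$'' needs a line of justification---bound the partial sum over $(\mu^2,\ldots,\mu^s)$ by the full sum with $\la^1=\emptyset$, picking up a harmless constant $2f_{p_1,m_1}(\emptyset)$.
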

\begin{proof}
This is proven in \cite[Theorem 8.2]{W0} in the special case $f_{p,m}(\la) = F^m p^{\sum_{i=1}^m \frac{\la_i(\la_i-1)}{2}}$, but the only property of this bound which is needed in the proof is the convergence of the sum \eqref{eq:w_better_bound}, hence the result holds in our more general setup.
\end{proof}

By viewing the conjugate partitions $(\la^j)'$ of each partition $(\la^1,\ldots,\la^{s}) \in M$ as specifying an abelian $p_j$-group of exponent dividing $p_j^{m_j}$, we see that $M$ is in bijection with $\CA_a$ where $a=\prod_{j=1}^s p_j^{m_j}$. In applications of the above proposition, $x_\mu,y_\mu$ will be the limiting probabilities that certain random elements of $\CA_a$ has isomorphism type specified by $\mu$ in this manner, and $C_\la$ are the so-called Hom-moments $\E_K[\#\Hom(K,G_\la)]$. In order to get good bounds on the Hom-moments in our setup, we first consider the usual (Sur-)moments. We require the following estimate, which is \cite[Lemma 7.4]{W0}. 

\begin{lemma}\label{lemma:sbgp_num_bound}
For $G_{\mu,\la}$ as in Definition \ref{def:sbgp_number}, 
$$|G_{\mu,\la}| \leq \frac{1}{\left(\prod_{i \geq 1} (1-2^{-i})\right)^{\la_1}} p^{\sum_{i=1}^{\la_1} \mu_i' \la_i' - (\mu_i')^2}.$$
\end{lemma}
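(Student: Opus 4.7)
The plan is to derive the bound from a classical closed-form formula for $|G_{\mu,\la}|$ in terms of Gaussian binomial coefficients, and then apply a standard upper bound on each such coefficient. Concretely, I would first establish the identity
$$|G_{\mu,\la}| = \prod_{i \geq 1} p^{\mu_{i+1}'(\la_i' - \mu_i')} \binom{\la_i' - \mu_{i+1}'}{\mu_i' - \mu_{i+1}'}_p,$$
where $\binom{n}{k}_p := \prod_{j=0}^{k-1} (p^{n-j}-1)/(p^{k-j}-1)$ is the Gaussian binomial. One way to obtain this is by induction on $\la_1$: the subgroup $H \cap pG_\la$ has a type prescribed by removing the first column of $\mu$, and for each admissible choice one counts the lifts of $H/(H \cap pG_\la)$ inside the $\F_p$-vector space $G_\la/pG_\la \simeq \F_p^{\la_1'}$ via the ordinary Gaussian subspace count. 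Alternatively, one may substitute into Lemma~\ref{lemma:from_mac} the principal specialization formula for Hall--Littlewood polynomials and simplify; this is how one would derive it purely from the Hall--Littlewood framework already set up in Section~\ref{sec:HL_and_groups}.

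Next, I would apply the elementary estimate
$$\binom{n}{k}_p = p^{k(n-k)} \prod_{j=0}^{k-1} \frac{1 - p^{-(n-j)}}{1-p^{-(k-j)}} \leq \frac{p^{k(n-k)}}{\prod_{j=1}^{\infty}(1-p^{-j})} \leq \frac{p^{k(n-k)}}{\prod_{j=1}^{\infty}(1-2^{-j})},$$
where the last inequality uses $p \geq 2$. Taking $n = \la_i' - \mu_{i+1}'$ and $k = \mu_i' - \mu_{i+1}'$, so that $n - k = \la_i' - \mu_i'$, and observing that only indices $1 \leq i \leq \la_1$ contribute (since $\la_i' = 0$, hence $\mu_i' = 0$, for $i > \la_1$), I obtain
$$|G_{\mu,\la}| \leq \frac{1}{\bigl(\prod_{j\geq 1}(1-2^{-j})\bigr)^{\la_1}}\, p^{\sum_{i=1}^{\la_1}\left[\mu_{i+1}'(\la_i'-\mu_i') + (\mu_i'-\mu_{i+1}')(\la_i'-\mu_i')\right]}.$$

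Finally, the exponent collapses: the bracketed sum at each $i$ telescopes to $\mu_i'(\la_i'-\mu_i') = \mu_i'\la_i' - (\mu_i')^2$, and summing over $i$ gives precisely the exponent in the statement. The only non-routine step is establishing the Gaussian-binomial product formula for $|G_{\mu,\la}|$; once that is in hand, the rest is bookkeeping. Since the statement is exactly \cite[Lemma 7.4]{W0}, one could also simply cite the result, but the above derivation fits naturally with the Hall--Littlewood toolkit developed in Section~\ref{sec:HL_and_groups}.
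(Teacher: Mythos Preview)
The paper does not actually prove this lemma: it simply quotes it verbatim as \cite[Lemma 7.4]{W0} and moves on. Your proposal supplies a genuine proof, and it is correct---the product formula for $|G_{\mu,\la}|$ in terms of Gaussian binomials is classical (it is the formula recorded in \cite[Chapter II, (4.13)]{Macd}, and is exactly what Wood uses in \cite{W0}), and your subsequent bound on each $\binom{n}{k}_p$ and the telescoping of the exponent are both routine and accurate. So your argument is essentially the same as the one in \cite{W0} that the paper is citing, just spelled out; you correctly note at the end that a bare citation would suffice here.
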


\begin{lemma}\label{lemma:growth:n_k} There exist positive constants $F_k$ and $0<c_k<1$ such that for any $p$,
$$n_k(G_\la) \le F_k^{\la_1} p^{\frac{1-c_k}{2} \sum_i (\la_i')^2}.$$
\end{lemma}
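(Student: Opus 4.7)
The plan is to induct on $k$, with Lemma \ref{lemma:sbgp_num_bound} as the main input and a uniform Gaussian-type sum estimate as the main technical tool. The base case $k=1$ is immediate: by Definition \ref{def:n_k} we have $n_1(G_\la) = 1$ (the only chain is $0 \le G_\la$), so any $F_1 \ge 1$ and $c_1 = 1/2$ trivially satisfy the bound. For the inductive step ($k \ge 2$), the main identity is the recursion
\[
n_k(G_\la) \;=\; \sum_{\mu} |G_{\mu,\la}|\, n_{k-1}(G_\mu),
\]
obtained by conditioning on $H_{k-1}$ in the defining chain $0 = H_0 \le H_1 \le \cdots \le H_k = G_\la$ and summing over its isomorphism type $\mu$.

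Assuming the inductive hypothesis $n_{k-1}(G_\mu) \le F_{k-1}^{\mu_1} p^{(1-c_{k-1})/2 \sum_i (\mu_i')^2}$ with $F_{k-1} \ge 1$, I would combine it with Lemma \ref{lemma:sbgp_num_bound} and use $\mu_1 \le \la_1$ to pull $F_{k-1}^{\la_1}$ out of the sum, obtaining
\[
n_k(G_\la) \;\le\; (F_{k-1}/C)^{\la_1} \sum_{\mu \subseteq \la} p^{\sum_i \left(\mu_i'\la_i' - \alpha_{k-1}(\mu_i')^2\right)},
\]
where $C := \prod_{i \ge 1}(1-2^{-i})$ and $\alpha_{k-1} := (1+c_{k-1})/2$. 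Dropping the monotonicity constraint on $(\mu_i')_{i \ge 1}$ (which only enlarges the sum) lets the sum factor as $\prod_{i=1}^{\la_1} \sum_{e \ge 0} p^{e\la_i' - \alpha_{k-1} e^2}$. Completing the square $ef - \alpha e^2 = f^2/(4\alpha) - \alpha(e - f/(2\alpha))^2$, each factor equals $p^{(\la_i')^2/(4\alpha_{k-1})} \sum_{e \ge 0} p^{-\alpha_{k-1}(e - \la_i'/(2\alpha_{k-1}))^2}$, and the Gaussian tail is bounded uniformly in $p \ge 2$ and $\la_i' \ge 0$ by a constant $C'_{k-1}$ depending only on $\alpha_{k-1}$. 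Taking the product over $i = 1, \dots, \la_1$ gives the desired bound with $(1-c_k)/2 = 1/(4\alpha_{k-1})$, i.e., the recursion $c_k = c_{k-1}/(1+c_{k-1})$; writing $d_k := 1/c_k$ yields $d_k = d_{k-1}+1$, so starting from $c_1 = 1/2$ one gets $c_k = 1/(k+1) \in (0,1)$ for all $k \ge 1$, with $F_k := \max(1, F_{k-1}C'_{k-1}/C)$ finite.

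The only delicate point, and therefore the main obstacle, is the uniform Gaussian estimate: for any fixed $\alpha > 0$, $\sum_{e \ge 0} p^{-\alpha(e - x)^2}$ is bounded by an absolute constant independent of $p \ge 2$ and $x \in \R$. This is elementary---since $p^{-\alpha k^2} \le 2^{-\alpha k^2}$, the sum is dominated by $\sum_{k \in \Z} 2^{-\alpha k^2} < \infty$---but it is essential to the strategy: the number of partitions $\mu \subseteq \la$ can grow polynomially in both $\la_1$ and $\len(\la) = \la_1'$, whereas the target bound $F_k^{\la_1}$ only allows exponential growth in $\la_1$ (with $\len(\la)$ a priori unbounded). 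Gaussian concentration of $p^{\mu_i'(\la_i' - \mu_i')}$ around its peak at $\mu_i' = \la_i'/2$ is precisely what absorbs the enumerative loss into the exponent of $p$, and its uniformity in $p$ is what makes the final constants $F_k, c_k$ independent of $p$.
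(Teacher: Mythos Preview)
Your proposal is correct and follows essentially the same route as the paper: induction on $k$ using $n_k(G_\la)=\sum_\mu |G_{\mu,\la}|\,n_{k-1}(G_\mu)$, the subgroup-count bound of Lemma~\ref{lemma:sbgp_num_bound}, completing the square in each conjugate-part variable, and a uniform Gaussian-sum estimate to absorb the $\la_1$-fold product into $F_k^{\la_1}$, yielding the same recursion $c_k=c_{k-1}/(1+c_{k-1})$. The only cosmetic difference is that the paper starts the induction at $k=2$ by computing $n_2(G_\la)\le F_2^{\la_1}p^{\sum_i(\la_i')^2/4}$ directly (so $c_2=1/2$), whereas you start at the trivial case $k=1$ with $c_1=1/2$; this yields a slightly weaker but perfectly acceptable sequence $c_k=1/(k+1)$.
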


\begin{proof} We will induct on $k$. We first show
\begin{equation}
n_2(G) \le F^{\la_1} p^{\sum_i (\la_i')^2/4}.
\end{equation}
Indeed, using the notations from \cite[Section 7]{W0}, with $C= \prod_{i\ge 1} (1-2^{-i})$,
\begin{align*}
n_2(G) &= \sum_{\mu} |G_{\mu,\la}| \\
&\le \frac{1}{C^{\la_1}} \sum_{\mu, \mu_1 \le \la_1} p^{\sum_i \mu_i' \la_i' - (\mu_i')^2}\\
&\le  \frac{1}{C^{\la_1}} \sum_{d_1,\ldots,d_{\la_1} \geq 0} p^{\sum_{i=1}^{\la_1}  d_i \la_i' - d_i^2} \\ 
&=\frac{p^{\sum_i (\la_i')^2/4}}{C^{\la_1}}  \prod_{i=1}^{\la_1} \sum_{d_i \geq 0} p^{-(\la_i'/2 - d_i)^2} \\ 
&\le F_2^{\la_1} p^{\sum_i (\la_i')^2/4},
\end{align*}

where the first inequality is Lemma \ref{lemma:sbgp_num_bound}. In the last inequality, we are using the fact that for any $\la_i$,
$$\sum_{d \geq 0} p^{-(\la_i'/2 - d)^2} \leq \max\left(\sum_{d \in \Z} p^{-d^2},\sum_{d \in \Z+\frac{1}{2}} p^{-d^2}\right) < \infty,$$
and absorbing this bound into the constant $F_2$.

For induction, assume that for some $c_{j-1}>0$
$$n_{j-1}(G) \le F_{j-1}^{\la_1} p^{\sum_i (1-c_{j-1})(\la_i')^2/2}.$$
We will show that then 
$$n_j(G) \le F_j^{\la_1} p^{\sum_i (1-c_j)(\la_i')^2/2}$$
where 
$$c_j = 1 -\frac{1}{(1+c_{j-1})}>0.$$
To see this, we proceed exactly as before:
\begin{align*}
n_j(G) &= \sum_{\mu} |G_{\mu,\la}| n_{j-1}(\mu) \\
&\le \frac{1}{C^{\la_1}} \sum_{\mu, \mu_1 \le \la_1} p^{\sum_i \mu_i' \la_i' - (\mu_i')^2} F_{j-1}^{\la_1} p^{(1-c_{j-1})\sum_i (\mu_i')^2/2} \\
&=  \frac{F_{j-1}^{\la_1}}{C^{\la_1}} \sum_{d_1,\dots, d_{\la_1}} p^{\sum_{i=1}^{\la_1}  d_i \la_i' - (1+c_{j-1})d_i^2/2} \\
&=\frac{F_{j-1}^{\la_1}p^{\sum_i \frac{1}{2(1+c_{j-1})}(\la_i')^2}}{C^{\la_1}}  \sum_{d_1,\dots, d_{\la_1} \ge 0} p^{\sum_{i=1}^{\la_1}  -(\frac{1}{\sqrt{1+c_{j-1}}}\la_i' - \sqrt{1+c_{j-1}}d_i)^2/2 } \\
&\le F_j^{\la_1} p^{\sum_i \frac{1}{2(1+c_{j-1})}(\la_i')^2}.
\end{align*}

Here again we use Lemma \ref{lemma:sbgp_num_bound} in the first inequality and essentially the same bound in the last. So we can take 
$$c_j = 1 -\frac{1}{(1+c_{j-1})},$$
completing the proof.
\end{proof}

We now explain in more detail how to adapt the proof in \cite{W0} to our setting.

\begin{proof}[Proof of Theorem \ref{theorem:t:distribution}]
The proof uses \cite[Theorem 8.2]{W0}, and is exactly the same as the proof of \cite[Theorem 8.3]{W0} after substituting $|\wedge^2 G|$ in \cite{W0} for $n_k(G)$. The role of the convergence-of-moments statement \cite[Theorem 1.2]{W0} in that proof is played in our argument by Theorem \ref{theorem:surmoment:k}. The only ingredient of that proof which we are still missing is a bound \eqref{eq:C_bound} on the Hom-moments which satisfies the hypothesis \eqref{eq:w_better_bound} in Proposition \ref{prop:w8.2_adapted}; in \cite{W0} this role is played by \cite[Lemma 7.5]{W0}. In our setting we must show that for any $p$ and $m \geq 1$, one has a bound on Hom-moments
$$\sum_{H \leq G_{\la'}} n_k(H) \leq f_{p,m}(\la)$$
for all $\la \in \BBY_m$, where $f_{p,m}$ satisfies \eqref{eq:w_better_bound}. The LHS is $n_{k+1}(G_{\la'})$, which by Lemma \ref{lemma:growth:n_k} is bounded above by 
$$f_{p,m}(\la) = F_{k+1}^{\la_1'} p^{\frac{1-c_{k+1}}{2} \sum_i (\la_i)^2}.$$
Hence the summand in \eqref{eq:w_better_bound} is of the form 
$$p^{-\frac{c_{k+1}}{2}d_1^2 + \text{const} \cdot d_1},$$
so the sum converges. The remainder of the proof is identical to that of \cite[Theorem 8.3]{W0}.
\end{proof}

 \section{Joint moments comparison and the proof of Theorem~\ref{theorem:main:joint}}\label{sect:comparison:2} 

 The main goal of this section is the following analog of Theorem~\ref{theorem:t:distribution}, which informally says that if the limits of the joint moments are the same and not too large, then the joint distributions must be asymptotically the same.

 \begin{theorem}\label{theorem:t:distribution:joint}For each $n \geq 1$ let $(X_n^{(1)},\dots, X_n^{(k)})$ and $(Y_n^{(1)},\dots, Y_n^{(k)})$ be two sequences of random finitely generated abelian groups. Let $a$ be a positive integer and $\CA_a$ be the set of isomorphism classes of abelian groups with exponent dividing $a$. Suppose that for every $G_1,\dots, G_k\in \CA_a$ we have 
 \begin{equation}\label{eq:limit_joint_moment_hyp}
\lim_{n \to \infty} \E\left[ \prod_{i=1}^k \#\Sur(X_n^{(i)},G_i)\right] = \lim_{n \to \infty}  \E\left[\prod_{i=1}^k\#\Sur(Y_n^{(i)},G_i)\right] = m_k(G_1,\ldots,G_k).
\end{equation}
 Then we have that for every $H_1,\dots, H_k \in \CA_a$, $\lim_{n \to \infty} \P(X_n^{(1)} \otimes (\Z/a\Z) \isom H_1 \wedge  \dots \wedge X_n^{(k)} \otimes (\Z/a\Z) \isom H_k)$ exists and 
$$\sum_{H_i \in \CA_a} \lim_{n \to \infty} \P(X_n^{(1)} \otimes (\Z/a\Z) \isom H_1 \wedge \dots \wedge X_n^{(k)} \otimes (\Z/a\Z) \isom H_k) \prod_{i=1}^k \#\Sur(X_n^{(i)},G_i) =  m_k(G_1,\ldots,G_k).$$ 
Furthermore, 
\begin{align}\label{eq:joint_limit_determined}
\begin{split}
&\lim_{n \to \infty} \P\left(X_n^{(1)} \otimes (\Z/a\Z) \isom H_1 \wedge \dots \wedge X_n^{(k)} \otimes (\Z/a\Z) \isom H_k\right) \\
&=\lim_{n \to \infty} \P(Y_n^{(1)} \otimes (\Z/a\Z) \isom H_1 \wedge \dots \wedge  Y_n^{(k)} \otimes (\Z/a\Z) \isom H_k).
\end{split}
\end{align}
\end{theorem}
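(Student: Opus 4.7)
The plan follows the strategy of Theorem \ref{theorem:t:distribution} and \cite[Theorem 8.3]{W0}, adapted to the joint setting. The central tool is a joint analog of Proposition \ref{prop:w8.2_adapted}, which can be obtained either by iterating the single-coordinate version (fixing all but one tuple entry at a time) or, more directly, by extending its proof to the enlarged index set $M^k$ with the same argument.

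First, I would convert joint Sur-moments to joint Hom-moments via the identity $\#\Hom(A,G) = \sum_{H \le G} \#\Sur(A,H)$ applied in each factor. Since each $G_i$ has finitely many subgroups, the resulting finite sum commutes with the $n \to \infty$ limit, yielding
\begin{equation*}
\lim_{n \to \infty} \E\left[\prod_{i=1}^k \#\Hom(X_n^{(i)}, G_i)\right] = \sum_{H_i \le G_i,\, 1 \le i \le k} m_k(H_1, \dots, H_k) =: C_{G_1, \dots, G_k}
\end{equation*}
from \eqref{eq:limit_joint_moment_hyp}, with the identical limit for the $Y_n^{(i)}$ sequence. Next, by a diagonal argument I would pass to a subsequence $\{n_j\}$ along which $\P(\bigwedge_i X_{n_j}^{(i)} \otimes (\Z/a\Z) \cong H_i) \to x_{H_1, \dots, H_k}$ converges for every tuple in $\CA_a^k$, and analogously extract $y_{H_1, \dots, H_k}$ from the $Y$-sequence. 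Using a uniform tail estimate on the Hom-moment sums (derived from the growth bound below), one verifies $\sum_{(H_1,\dots,H_k)} x_{H_1,\dots,H_k} = 1$ and that both coefficient families satisfy
\begin{equation*}
\sum_{(H_1, \dots, H_k) \in \CA_a^k} x_{H_1, \dots, H_k} \prod_{i=1}^k \#\Hom(H_i, G_i) = C_{G_1, \dots, G_k}
\end{equation*}
for every tuple $(G_1, \dots, G_k) \in \CA_a^k$, and analogously for $y$.

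The main obstacle is verifying the factorized growth bound on $C_{G_1, \dots, G_k}$ required to invoke Proposition \ref{prop:w8.2_adapted} in its joint form. Using \eqref{eq:m_n_bound} together with the observation that the direct sums $H_1 \oplus \cdots \oplus H_k$ with $H_i \le G_i$ form a subcollection of all subgroups of $\bigoplus_i G_i$, one obtains
\begin{equation*}
C_{G_1, \dots, G_k} \le \sum_{H \le \bigoplus_i G_i} n_k(H) = n_{k+1}\Bigl(\bigoplus_i G_i\Bigr).
\end{equation*}
Factorizing $n_{k+1}$ over primes via Lemma \ref{lem:n_m_factorize} and invoking Lemma \ref{lemma:growth:n_k} with the strictly positive constant $c_{k+1}$ produces a product bound $\prod_j f_{p_j, k m_j}$ of the required form. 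The crucial point is that the quadratic decay $p^{-c_{k+1} d_1^2/2}$ dominates the linear $p^{-b d_1}$ factor in the summand of \eqref{eq:w_better_bound}, so the required series converges uniformly in $b$; this is the ``non-trivial'' growth check alluded to in the introduction of Section \ref{sect:comparison:2}. Once this bound is in hand, applying the (iterated or joint) Proposition \ref{prop:w8.2_adapted} forces $x_{H_1, \dots, H_k} = y_{H_1, \dots, H_k}$ termwise. Since every subsequential limit therefore coincides, the actual limits exist and agree between the two sequences, yielding \eqref{eq:joint_limit_determined} along with the asserted moment identity.
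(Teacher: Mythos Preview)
Your proposal is correct and takes essentially the same approach as the paper: convert to Hom-moments, bound $C_{G_1,\dots,G_k}$ by $n_{k+1}(\bigoplus_i G_i)$ via \eqref{eq:m_n_bound} and Lemma \ref{lemma:growth:n_k}, and then apply a joint version of Proposition \ref{prop:w8.2_adapted} (which the paper states separately as Theorem \ref{t:distribution:joint'} and proves exactly by your ``iterating'' suggestion, i.e.\ induction on $k$), with the existence of limits handled by the same subsequence argument. The only extraneous step is your claim $\sum x_{H_1,\dots,H_k}=1$: this is not required for the uniqueness theorem (which only needs nonnegative reals satisfying identical Hom-moment sums), and the paper does not verify it.
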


To complete the proof of Theorem~\ref{theorem:main:joint} one just needs to combine the above result together with Theorem \ref{theorem:jointsur:k} and Theorem \ref{thm:intro_dist_moment_joint}.

\begin{proof}[Proof of Theorem \ref{theorem:main:joint}, assuming Theorem \ref{theorem:t:distribution:joint}]
To show that for any $P$ the random groups $\cok(M_1 \cdots M_j)[P], 1 \leq j \leq k$ converge in distribution, it suffices to show for each positive integer $a$ that $\cok(M_1 \cdots M_j) \otimes \BBZ/a\BBZ, 1 \leq j \leq k$ converge in distribution. Theorem \ref{theorem:jointsur:k} shows that under the assumptions of Theorem \ref{theorem:main:joint}, the joint moments of $\cok(M_1 \cdots M_j) \otimes \BBZ/a\BBZ, 1 \leq j \leq k$ converge to $m_k(G_1,\ldots,G_k)$ for any groups $G_1,\ldots,G_k$ with exponent dividing $a$. Theorem \ref{thm:intro_dist_moment_joint} shows similarly that the joint moments of $(Y^{(1)},\ldots,Y^{(k)}) \sim \P_P^{(k)}$ are $m_k(G_1,\ldots,G_k)$ for all such $G_1,\ldots,G_k$. Theorem \ref{theorem:t:distribution:joint} with $X_n^{(j)} = \cok(M_1 \cdots M_j)[P]$ and $(Y_n^{(1)},\dots, Y_n^{(k)}) \sim \P_P^{(k)}$ then shows that the matching of these moments implies convergence of the joint distribution of $\cok(M_1 \cdots M_j) \otimes \BBZ/a\BBZ, 1 \leq j \leq k$ to $\P_P^{(k)}$, completing the proof.

\end{proof}

As before, the exact same proof above with $a$ a power of $p$ shows the following $p$-adic analogue: 

\begin{theorem}\label{theorem:main_joint_Zp}
For matrices $M_i \in \Mat_n(\Z_p)$ under the same assumptions as in Theorem \ref{theorem:main:prod_Zp} and finite abelian $p$-groups $B_1, \ldots, B_k$, one has
\begin{equation}\label{eq:limit_joint_Zp}
\lim_{n\to \infty}\P\left(\cok(M_1 \dots M_j)\simeq  B_j, 1\le j\le k\right)=(p^{-1};p^{-1})_\infty^k \prod_{i=1}^k \frac{\#\Sur(B_i, B_{i-1})}{\#\Aut(B_i)},
\end{equation}
where we take $B_0 = 0$.
\end{theorem}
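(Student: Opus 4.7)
The plan is to deduce Theorem~\ref{theorem:main_joint_Zp} from the already-established Theorem~\ref{theorem:main:joint} via the standard modular-reduction argument, exactly as Theorem~\ref{theorem:main:prod_Zp} was deduced from Theorem~\ref{theorem:main:prod} and Corollary~\ref{cor:jointsur:uniform:asym} was deduced from Theorem~\ref{theorem:jointsur:k}. All of the genuine content --- the moment computations of Section~\ref{section:joint:moments} and the moment-comparison argument of Section~\ref{sect:comparison:2} --- has already been carried out over the finite rings $\Z/a\Z$ in sufficient generality that the $\Z_p$ case will follow essentially for free.

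Concretely, I would first fix $L \geq 1$ so that $p^L$ strictly exceeds the exponent of every $B_j$, and set $\widetilde M_i := M_i \bmod p^L \in \Mat_n(\Z/p^L\Z)$. Smith normal form over $\Z_p$ yields a decomposition
$$\cok(M_1 \cdots M_j) \cong \bigoplus_i \Z_p/p^{\la_i^{(j)}}\Z_p \,\oplus\, \Z_p^{r_j}$$
for some partition $\la^{(j)}$ and some $r_j \geq 0$, and tensoring with $\Z/p^L\Z$ gives
$$\cok(\widetilde M_1 \cdots \widetilde M_j) \cong \bigoplus_i \Z/p^{\min(\la_i^{(j)}, L)}\Z \,\oplus\, (\Z/p^L\Z)^{r_j}.$$
The crucial step is then the event equivalence
$$\{\cok(M_1 \cdots M_j) \cong B_j\} = \{\cok(\widetilde M_1 \cdots \widetilde M_j) \cong B_j\}$$
for every $j$: the forward direction is immediate because $p^L B_j = 0$, while for the reverse, any $\Z_p$-summand (i.e.\ $r_j > 0$) or any part with $\la_i^{(j)} \geq L$ would contribute a $\Z/p^L\Z$ summand on the right-hand side, forcing its exponent to equal $p^L$ and contradicting the choice of $L$.

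Having replaced the $\Z_p$ event by the corresponding event for matrices over $\Z/p^L\Z$, I would apply Theorem~\ref{theorem:main:joint} with $P=\{p\}$ and $a = p^L$ to the sequence $\widetilde M_1, \ldots, \widetilde M_k$. Since $\xi$ is nonconstant modulo $p$, its reduction modulo $p^L$ is $\alpha$-balanced for some $\alpha \in (0, 1/2]$, so the hypotheses of Theorem~\ref{theorem:main:joint} are satisfied. The conclusion of that theorem, combined with the event equivalence above, immediately yields \eqref{eq:limit_joint_Zp}. I do not anticipate any genuine obstacle: the only point requiring care is the event-equivalence verification, which is a routine consequence of Smith normal form; everything else is a direct invocation of prior results.
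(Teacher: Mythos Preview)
Your approach is correct and essentially matches the paper's: the paper simply says ``the exact same proof above with $a$ a power of $p$,'' meaning one reruns the moment machinery (Theorem~\ref{theorem:jointsur:k}, Theorem~\ref{thm:intro_dist_moment_joint}, Theorem~\ref{theorem:t:distribution:joint}) over $\Z/p^L\Z$ rather than invoking Theorem~\ref{theorem:main:joint} as a black box. Your event-equivalence argument via Smith normal form is exactly the content of the reduction step, and is in fact more explicit than the paper's one-line remark.

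One small imprecision: Theorem~\ref{theorem:main:joint} as stated applies to matrices with entries in $\Z$, not $\Z/p^L\Z$, and has no parameter ``$a$'' in its statement, so you cannot literally ``apply Theorem~\ref{theorem:main:joint} with $a=p^L$ to $\widetilde M_i$.'' What you are actually invoking is the \emph{proof} of Theorem~\ref{theorem:main:joint}, specifically Theorem~\ref{theorem:jointsur:k} (which is already stated over $R=\Z/a\Z$ and applies directly to the $\widetilde M_i$) together with Theorem~\ref{theorem:t:distribution:joint}. This is exactly what the paper means by ``same proof with $a$ a power of $p$,'' and is also the route taken in Corollary~\ref{cor:jointsur:uniform:asym}. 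So the substance is right; just cite the underlying ingredients rather than Theorem~\ref{theorem:main:joint} itself.
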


It remains to justify Theorem \ref{theorem:t:distribution:joint}. For this one follows the three steps (i)-(iii) outlined above in the proof of \cite[Theorem 8.3]{W0}, which we do now.

\begin{theorem}\label{t:distribution:joint'}  Let $p_1,\dots,p_s$ be distinct primes. Let $m_1,\dots,m_s \ge 1$ be integers. Let $M_j$ be the set of partitions $\la$ with at most $m_j$ parts. Let $M = M_1 \times \cdots \times M_s$. For $\mu \in M$, we write $\mu^j$ for its $j$th entry, which is a partition consisting of non-negative integers $\mu^j_i$ with $\mu^j_1 \ge \dots \ge \mu^j_{m_j}$. Suppose we have non-negative reals $x_{\mu(1),\dots,\mu(k)} , y_{\mu(1),\dots, \mu(k)}$, for each $k$-tuple of sequences of partitions $\mu(i) \in M$. Further suppose that for all $\la(1),\dots, \la(k) \in M$,
\begin{align}\label{eq:joint_hom_mmts_equal}
\begin{split}
\sum_{\mu(1),\dots,\mu(k) \in M} x_{\mu(1),\dots, \mu(k)} \prod_{j=1}^s\prod_{\ell=1}^k p_j^{\sum_i \la(\ell)_i^j \mu(\ell)_i^j}  &= \sum_{\mu(1),\dots,\mu(k) \in M} y_{\mu(1),\dots, \mu(k)} \prod_{j=1}^s\prod_{\ell=1}^k p_j^{\sum_i \la(\ell)_i^j \mu(\ell)_i^j} \\
&=C_{\la(1),\dots,\la(k)},
\end{split}
\end{align}
satisfies the growth condition
\begin{equation}\label{eq:c_growth}
C_{\la(1), \dots,\la(k)} \le  \prod_{j=1}^s F_k^{m_j} p_j^{\frac{1-c_k}{2}\sum_i (\sum_{\ell=1}^k {\la(\ell)^{j}_i})^2},
\end{equation}
where $F_k, 0<c_k<1$ are constants depending on $k$.
Then for all $\mu(1),\dots, \mu(k) \in M$ we have 
$$x_{\mu(1),\dots,\mu(k)} = y_{\mu(1),\dots, \mu(k)}.$$
\end{theorem}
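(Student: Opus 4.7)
The plan is to prove Theorem~\ref{t:distribution:joint'} by induction on $k$, at each stage peeling off one variable by invoking the single-variable result Proposition~\ref{prop:w8.2_adapted}. For the base case $k=1$, I would take $f_{p,m}(\la) = F_1^m\, p^{\frac{1-c_1}{2}\sum_l \la_l^2}$ as the dominating function in Proposition~\ref{prop:w8.2_adapted}. To verify the convergence hypothesis \eqref{eq:w_better_bound}, I would substitute $\la_l = d_l + \cdots + d_m$; the $d_1$-dependent part of $\sum_l \la_l^2$ is then $d_1^2 + 2 d_1 (d_2 + \cdots + d_m)$, so combining with the factor $-d_1(d_1+1)/2$ in the exponent yields a net $d_1$-dependence of the form $-\tfrac{c_1}{2} d_1^2 + O(d_1)$. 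Since $c_1 > 0$, the resulting Gaussian-type series converges, and Proposition~\ref{prop:w8.2_adapted} delivers $x = y$.

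For the inductive step, assume the result for $k-1$ variables. Fix $\lambda(2),\ldots,\lambda(k)$ and write
$$C_{\lambda(1),\lambda(2),\ldots,\lambda(k)} = \sum_{\mu(1)} A_{\mu(1)}(\lambda(2),\ldots,\lambda(k)) \prod_j p_j^{\sum_i \lambda(1)_i^j \mu(1)_i^j},$$
where $A_{\mu(1)}(\lambda(2),\ldots,\lambda(k)) := \sum_{\mu(2),\ldots,\mu(k)} x_{\mu(1),\ldots,\mu(k)} \prod_j \prod_{\ell \ge 2} p_j^{\sum_i \lambda(\ell)_i^j \mu(\ell)_i^j}$ and $B_{\mu(1)}$ is the analogous sum built from $y$. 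Expanding $\bigl(\sum_i \lambda(i)_l^j\bigr)^2$ and plugging into \eqref{eq:c_growth} bounds $C_{\lambda(1),\ldots,\lambda(k)}$, viewed as a function of $\lambda(1)$ alone, by a constant (depending on $\lambda(2),\ldots,\lambda(k)$) times $\prod_j p_j^{\frac{1-c_k}{2}\sum_l (\lambda(1)_l^j)^2 + (\text{linear in }\lambda(1)^j)}$. The linear tail does not affect convergence in \eqref{eq:w_better_bound} for the same reason as in the base case, so Proposition~\ref{prop:w8.2_adapted} applies in the variable $\mu(1)$ and yields $A_{\mu(1)} = B_{\mu(1)}$ for every $\mu(1)$ and every $(\lambda(2),\ldots,\lambda(k))$. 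Now fixing $\mu(1)$ and setting $\tilde x_{\mu(2),\ldots,\mu(k)} := x_{\mu(1),\mu(2),\ldots,\mu(k)}$, $\tilde y$ analogously, this says that $\tilde x$ and $\tilde y$ have matching $(k-1)$-variable joint moments. Specializing $\lambda(1) = \emptyset$ in \eqref{eq:c_growth} yields
$$A_{\mu(1)}(\lambda(2),\ldots,\lambda(k)) \le C_{\emptyset,\lambda(2),\ldots,\lambda(k)} \le \prod_j F_k^{m_j} p_j^{\frac{1-c_k}{2} \sum_l \left(\sum_{i\ge 2} \lambda(i)_l^j\right)^2},$$
which is exactly the growth hypothesis \eqref{eq:c_growth} in $k-1$ variables. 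The inductive hypothesis thus forces $\tilde x = \tilde y$, completing the induction.

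The main obstacle I anticipate is the bookkeeping of growth constants: one must verify both that the single-variable restriction produces an absorbable linear tail in the convergence condition, and that the $\lambda(1)=\emptyset$ reduction preserves the exact form of \eqref{eq:c_growth} suitable for the inductive hypothesis. Both are technical but not substantively difficult, since the factor $\frac{1-c_k}{2} < \frac{1}{2}$ in \eqref{eq:c_growth} produces the Gaussian decay required to dominate the $-d_1(d_1+1)/2$ term in the convergence condition \eqref{eq:w_better_bound}.
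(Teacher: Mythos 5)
Your proof is correct and takes essentially the same route as the paper: induction on $k$, using Proposition~\ref{prop:w8.2_adapted} to identify the marginal moments in one $\mu$-variable (you peel off $\mu(1)$, the paper peels off $\mu(k)$) and then invoking the $(k-1)$-variable case on the rest. The two checks you flag---that the single-variable restriction of \eqref{eq:c_growth} only acquires a linear tail, dominated by the Gaussian decay supplied by $c_k>0$, and that specializing $\lambda(1)=\emptyset$ together with nonnegativity of the $x,y$ reproduces the $(k-1)$-variable growth hypothesis---are precisely the bookkeeping the paper also relies on.
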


\begin{proof}[Proof of Theorem \ref{t:distribution:joint'}]

We proceed by induction. The base case $k=1$ follows from Proposition \ref{prop:w8.2_adapted} since the function 
$$f_{p_j,m_j}(\la(1)) = F_1^{m_j} p_j^{\frac{1-c_1}{2}\sum_i ( {\la(1)^{j}_i})^2}$$
appearing in the bound \eqref{eq:c_growth} satisfies the hypothesis \eqref{eq:w_better_bound} of that result. Hence we suppose Theorem \ref{t:distribution:joint'} holds for $k-1$ and verify it for $k$. By hypothesis, \eqref{eq:joint_hom_mmts_equal} holds, so 
\begin{align}\label{eq:telescope_xy}
\begin{split}
&\sum_{\mu(k) \in M} \prod_{j=1}^s p_j^{\sum_i \la(k)_i^j \mu(k)_i^j}\left(\sum_{\mu(1),\ldots,\mu(k-1) \in M} \prod_{j=1}^s\prod_{\ell=1}^{k-1} p_j^{\sum_i \la(\ell)_i^j \mu(\ell)_i^j}  x_{\mu(1),\dots, \mu(k)}\right)  \\
&=\sum_{\mu(k) \in M} \prod_{j=1}^s p_j^{\sum_i \la(k)_i^j \mu(k)_i^j}\left(\sum_{\mu(1),\ldots,\mu(k-1) \in M} \prod_{j=1}^s\prod_{\ell=1}^{k-1} p_j^{\sum_i \la(\ell)_i^j \mu(\ell)_i^j}  y_{\mu(1),\dots, \mu(k)}\right)  \\
&= C_{\la(1),\ldots,\la(k)}.
\end{split}
\end{align}
We now apply Proposition \ref{prop:w8.2_adapted} to 
\begin{align*}
\tilde{x}_{\mu(k)} &:= \sum_{\mu(1),\ldots,\mu(k-1) \in M} \prod_{j=1}^s\prod_{\ell=1}^{k-1} p_j^{\sum_i \la(\ell)_i^j \mu(\ell)_i^j}  x_{\mu(1),\dots, \mu(k)} \\
\tilde{y}_{\mu(k)} &:= \sum_{\mu(1),\ldots,\mu(k-1) \in M} \prod_{j=1}^s\prod_{\ell=1}^{k-1} p_j^{\sum_i \la(\ell)_i^j \mu(\ell)_i^j}  y_{\mu(1),\dots, \mu(k)} \\
\tilde{C}_{\la(k)} &:= C_{\la(1),\ldots,\la(k)}.
\end{align*}
By using the hypothesis \eqref{eq:c_growth} to bound $\tilde{C}_{\la(k)}$ as a function of $\la(k)$, it is easy to check that \eqref{eq:w_better_bound} is satisfied because of the factor $(1-c_k)/2$, as in the proof of Proposition \ref{prop:w8.2_adapted}. This yields that 
$$\tilde{x}_{\mu(k)} = \tilde{y}_{\mu(k)},$$
and applying the $k-1$ case of Theorem \ref{t:distribution:joint'} to the sums defining $\tilde{x}_{\mu(k)}$ and $\tilde{y}_{\mu(k)}$ completes the proof.
\end{proof}

To apply Theorem \ref{t:distribution:joint'} to our situation, we need the following moment bound.

\begin{claim} \label{claim:mk_bound} Assume that $G_1,\dots, G_k$ are $p$-groups corresponding to $\la(1), \dots, \la(k) \in \BBY$. Then there exist absolute constants $F_k>0$ and $0<c_k<1$ such that 
$$n_k(G_1 \oplus \dots \oplus G_k) \le F_k^{\sum_{\ell=1}^k \la(\ell)_1} p^{\frac{1-c_k}{2}\sum_i (\sum_{\ell=1}^k \la(\ell)'_i)^2},$$
where $\la(\ell)'$ is the conjugate partition of $\la(\ell)$.
\end{claim}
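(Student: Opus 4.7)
The plan is to apply Lemma \ref{lemma:growth:n_k} directly, after observing that a direct sum of $p$-groups corresponds, at the level of partitions, to a union of multisets whose conjugate is the pointwise sum.

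More precisely, since each $G_i = G_{\la(i)}$, the direct sum $G_1 \oplus \cdots \oplus G_k$ is isomorphic to $G_\mu$, where $\mu$ is the partition obtained by concatenating the parts of $\la(1), \ldots, \la(k)$ and sorting in decreasing order. The key elementary identity is that the conjugate partition $\mu'$ satisfies
$$\mu'_l = \#\{(i,j) : \la(i)_j \geq l\} = \sum_{i=1}^k \la(i)'_l,$$
since $\la(i)'_l$ counts the parts of $\la(i)$ that are at least $l$. Moreover, $\mu_1 = \max_i \la(i)_1 \leq \sum_{i=1}^k \la(i)_1$.

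Applying Lemma \ref{lemma:growth:n_k} to $G_\mu$ with the constants $F_k$ and $c_k$ already produced there, we obtain
$$n_k(G_1 \oplus \cdots \oplus G_k) = n_k(G_\mu) \leq F_k^{\mu_1} p^{\frac{1-c_k}{2} \sum_l (\mu'_l)^2} \leq F_k^{\sum_{i=1}^k \la(i)_1} p^{\frac{1-c_k}{2} \sum_l \left(\sum_{i=1}^k \la(i)'_l\right)^2},$$
which is the desired bound with the same constants $F_k$ and $c_k$. There is no serious obstacle here; the only point to verify is the passage from the structure of the direct sum to the conjugate-sum identity, which is immediate from the definition of the conjugate partition. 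The proof is essentially a one-line reduction to the single-partition estimate already proved in Lemma \ref{lemma:growth:n_k}.
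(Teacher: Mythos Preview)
Your proof is correct and follows essentially the same route as the paper: identify the direct sum with $G_\mu$ for $\mu$ the sorted union of the $\la(i)$, note $\mu'_l=\sum_i \la(i)'_l$ and $\mu_1=\max_i \la(i)_1$, and apply Lemma~\ref{lemma:growth:n_k}. One minor point: the step $F_k^{\mu_1}\le F_k^{\sum_i \la(i)_1}$ requires $F_k\ge 1$, which Lemma~\ref{lemma:growth:n_k} does not explicitly guarantee; the paper handles this by renaming the lemma's constant $\widetilde{F}_k$ and setting $F_k=\max(\widetilde{F}_k,1)$, and you should do the same rather than claim ``the same constants''.
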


\begin{proof} Note that the group $G_1 \oplus \dots \oplus G_k$ corresponds to the union of $\{\la(1), \dots, \la(k)\}$. The conjugate parts of this partition are given by $\{\sum_\ell {\la(\ell)_i}': i \geq 1\}$. We then use Lemma \ref{lemma:growth:n_k} to obtain
$$n_k(G_1 \oplus \dots \oplus G_k) \le \widetilde{F}_k^{\max_{1 \leq \ell \leq k} \la(\ell)_1} p^{\frac{1-c_k}{2}\sum_i (\sum_{\ell=1}^k \la(\ell)'_i)^2},$$
and the claim follows by bounding the maximum above by the sum and letting $F_k = \max(\widetilde{F}_k,1)$.
\end{proof}

\begin{proof}[Proof of Theorem \ref{theorem:t:distribution:joint}]
We closely follow the proof of \cite[Theorem 8.3]{W0}. Let us first suppose that the limits 
$$\lim_{n \to \infty} \P(X_n^{(1)} \ot \Z/a\Z \simeq H_1 \wedge \cdots \wedge X_n^{(k)} \ot \Z/a\Z \simeq H_k)$$
exist, and show that 
\begin{multline}\label{eq:value_limit_sur}
\sum_{H_1,\ldots,H_k \in \CA_a} \lim_{n \to \infty}\P(X_n^{(1)} \ot \Z/a\Z \simeq H_1 \wedge \cdots \wedge X_n^{(k)} \ot \Z/a\Z \simeq H_k) \#\Sur(H_1,G_1) \times \dots \times \#\Sur(H_k,G_k) \\ = m_k(G_1,\ldots,G_k).
\end{multline}
This would follow from the hypotheses of the theorem if we could interchange the sum and limit, which we now argue. First note that since $\#\Hom(H,G) = \sum_{K \leq G} \#\Sur(H,K)$, by taking finite linear combinations it suffices to show the statement with surjections replaced by homomorphisms,
\begin{align}\label{eq:value_limit_hom}
\begin{split}
&\sum_{H_1,\ldots,H_k \in \CA_a} \lim_{n \to \infty}\P(X_n^{(1)} \ot \Z/a\Z \simeq H_1 \wedge \cdots \wedge X_n^{(k)} \ot \Z/a\Z \simeq H_k) \#\Hom(H_1,G_1) \times \dots \times \#\Hom(H_k,G_k) \\ 
&=\lim_{n \to \infty} \sum_{H_1,\ldots,H_k \in \CA_a} \P(X_n^{(1)} \ot \Z/a\Z \simeq H_1 \wedge \cdots \wedge X_n^{(k)} \ot \Z/a\Z \simeq H_k) \#\Hom(H_1,G_1) \times \dots \times \#\Hom(H_k,G_k).
\end{split}
\end{align}
For each $(G_1,\ldots,G_k) \in \CA_a^k$, we claim there exists $(G_1',\ldots,G_k') \in \CA_a^k$ such that 
\begin{equation}\label{eq:hom_quot_conv}
\sum_{(H_1,\ldots,H_k) \in \CA_a^k} \frac{\prod_{i=1}^k \#\Hom(H_i,G_i)}{\prod_{i=1}^k \#\Hom(H_i,G_i')}
\end{equation}
converges. By factoring the sum into a product of $k$ sums and factoring each one over the primes $p_j$ dividing $a$, it suffices to show when $a=p^e$ is a prime power that for any $G \in \CA_a$ there exists $G' \in \CA_a$ such that 
$$\sum_{H \in \CA_a} \frac{\#\Hom(H,G)}{\#\Hom(H,G')}$$
converges. This follows as in \cite{W0} by letting $\la$ be the type of $G$ and taking $G'$ to have type $\pi$ with $\pi_i' = 2\la_i'+1$ for $1 \leq i \leq e$. By \cite[Lemma 7.1]{W0}, 
\begin{equation}\label{eq:mel_hom_count}
\#\Hom(G_\mu,G_\la) = p^{\sum_i \mu_i' \la_i'},
\end{equation}
and the above convergence (and hence convergence of \eqref{eq:hom_quot_conv}) follows by a simple computation. 

Since 
\begin{align}\label{eq:E_hom_sur}
\begin{split}
&\E[\#\Hom(X_n^{(1)},G'_1) \times \dots \times \#\Hom(X_n^{(k)},G'_k)] \\
&= \sum_{K_1 \leq G'_1,\ldots,K_k \leq G'_k} \E[\#\Sur(X_n^{(1)},K_1) \times \dots \times \#\Sur(X_n^{(k)},K_k)]
\end{split}
\end{align}
converges by hypothesis, the LHS is bounded above in $n$. Hence there exists a constant $D_{G_1,\ldots,G_k}$ such that 
\begin{equation}\label{eq:D_bound}
\P(X_n^{(1)} \ot \Z/a\Z \simeq H_1 \wedge \cdots \wedge X_n^{(k)} \ot \Z/a\Z \simeq H_k) \#\Hom(H_1,G'_1) \times \dots \times \#\Hom(H_k,G'_k) \leq D_{G_1,\ldots,G_k}
\end{equation}
for all $n$, since the LHS of \eqref{eq:D_bound} is clearly bounded above by \eqref{eq:E_hom_sur}. Therefore the function $f_n: \CA_a^k \to \BBR_{\geq 0}$ given by 
$$f_n(H_1,\ldots,H_k) = \P(X_n^{(1)} \ot \Z/a\Z \simeq H_1 \wedge \cdots \wedge X_n^{(k)} \ot \Z/a\Z \simeq H_k) \#\Hom(H_1,G_1) \times \dots \times \#\Hom(H_k,G_k)$$
is bounded above by the function 
$$g(H_1,\ldots,H_k) = D_{G_1,\ldots,G_k}  \frac{\#\Hom(H_1,G_1) \times \dots \times \#\Hom(H_k,G_k)}{\#\Hom(H_1,G'_1) \times \dots \times \#\Hom(H_k,G'_k)}.$$
Since
$$\sum_{(H_1,\ldots,H_k) \in \CA_a^k} g(H_1,\ldots,H_k) < \infty$$ 
by the convergence of \eqref{eq:hom_quot_conv}, and $g$ dominates $f_n$ by \eqref{eq:D_bound}, \eqref{eq:value_limit_hom} follows by the Lebesgue dominated convergence theorem. This in turn shows \eqref{eq:value_limit_sur} as mentioned above.

We now show \eqref{eq:joint_limit_determined}, still assuming without proof that the limits in that equation exist. Let $a = \prod_{j=1}^s p_j^{m_j}$ be the prime factorization of $a$, and let $M$ be as in Theorem \ref{t:distribution:joint'} so that $M$ is in bijection with $\CA_a$ as in the previous section. Write $H_\mu \in \CA_a$ to be the element corresponding to $\mu \in M$. Note this differs from the notation of $G_\mu$ in that $H_\mu$ is not a $p$-group, and that even when $s=1$ so $H_\mu$ is a $p$-group we still have $H_\mu = G_{\mu'}$ with the conjugate partition. Then \eqref{eq:joint_limit_determined} is the statement that the limiting probabilities 
\begin{align*}
x_{\mu(1),\ldots,\mu(k)} &:= \lim_{n \to \infty} \P\left(X_n^{(1)} \otimes (\Z/a\Z) \isom H_{\mu(1)} \wedge \dots \wedge X_n^{(k)} \otimes (\Z/a\Z) \isom H_{\mu(k)}\right) \\
y_{\mu(1),\ldots,\mu(k)} &:= \lim_{n \to \infty} \P(Y_n^{(1)} \otimes (\Z/a\Z) \isom H_{\mu(1)} \wedge \dots \wedge  Y_n^{(k)} \otimes (\Z/a\Z) \isom H_{\mu(k)})
\end{align*}
 are equal. For each $1 \leq \ell \leq k$, let $G_\ell = H_{\la(i)}$ where $\la(\ell) = (\la(\ell)^1,\ldots,\la(\ell)^s) \in M$. By \eqref{eq:value_limit_hom} and \eqref{eq:mel_hom_count},
\begin{align*}
\sum_{(\mu(1),\ldots,\mu(k)) \in M^k} x_{\mu(1),\ldots,\mu(k)} \prod_{i=1}^k \prod_{j=1}^s p_j^{\sum_i \mu(\ell)^j_i \la(\ell)^j_i} 
&= \sum_{K_1 \leq G_1,\ldots,K_k \leq G_k} m_k(K_1,\ldots,K_k)\\
&=\sum_{(\mu(1),\ldots,\mu(k)) \in M^k} y_{\mu(1),\ldots,\mu(k)}  \prod_{\ell=1}^k \prod_{j=1}^s p_j^{\sum_i \mu(\ell)^j_i \la(\ell)^j_i}.
\end{align*}
We bound the middle term above by
\begin{align}\label{eq:hom_mmt_bd_inproof}
\begin{split}
\sum_{K_1 \leq G_1,\ldots,K_k \leq G_k} m_k(K_1,\ldots,K_k) &\leq \sum_{K_1 \leq G_1,\ldots,K_k \leq G_k} n_k(K_1 \oplus \cdots \oplus K_k) \\
&\leq  n_{k+1}(G_1 \oplus \cdots \oplus G_k) \\
& = \prod_{j=1}^s n_{k+1}((G_1)[p_j^\infty] \oplus \cdots \oplus (G_k)[p_j^\infty]) \\
& \leq \prod_{j=1}^s F_{k+1}^{\sum_{\ell=1}^{k+1} \la(\ell)^j_1} p_j^{\frac{1-c_{k+1}}{2}\sum_i (\sum_{\ell=1}^{k+1} {\la(\ell)^j_i}')^2},
\end{split}
\end{align}
 and we used \eqref{eq:m_n_bound} and Claim \ref{claim:mk_bound} in the first and last line respectively. Letting 
$$C_{\la(1),\ldots,\la(k)} := \sum_{K_1 \leq G_1,\ldots,K_k \leq G_k} m_k(K_1,\ldots,K_k),$$
Theorem \ref{t:distribution:joint'} applies and hence 
$$x_{\mu(1),\ldots,\mu(k)} = y_{\mu(1),\ldots,\mu(k)}$$
for every $\mu(1),\ldots,\mu(k) \in M$, showing \eqref{eq:joint_limit_determined}.

The last thing to show is our initial supposition that the limits 
\begin{equation}\label{eq:limits_exist}
\lim_{n \to \infty} \P(X_n^{(1)} \ot \Z/a\Z \simeq H_1 \wedge \cdots \wedge X_n^{(k)} \ot \Z/a\Z \simeq H_k)
\end{equation}
(and similarly for $Y_n^{(i)}$) exist. Suppose for the sake of contradiction that there exist $G_1,\ldots,G_k \in \CA_a$ for which this is not true. Then by a diagonalization argument we can find two different subsequences of $(i_n)_{n \geq 1},(j_n)_{n \geq 1}$ such that the limits $\P(X_{i_n}^{(1)} \ot \Z/a\Z \simeq H_1 \wedge \cdots \wedge X_{i_n}^{(k)} \ot \Z/a\Z \simeq H_k)$ (and similarly for $j_n$) exist but are different. The above argument shows that these limiting probabilities are uniquely determined by their joint moments, which are the same for both subsequences by \eqref{eq:limit_joint_moment_hyp}. This is a contradiction, hence the limits \eqref{eq:limits_exist} exist, completing the proof.
\end{proof}

\section{An automorphism interpretation of the joint cokernel distribution} \label{sec:automorphisms}

The goal of this section, encapsulated in Theorem \ref{theorem:seq_aut_intro}, is to give an interpretation of the universal limiting distribution of Theorem \ref{theorem:main:joint} in terms of automorphisms of appropriate objects. For simplicity we consider a single prime for most of this section, so let us fix a prime $p$.

As motivation for the measure introduced below, note that for matrices $M_1,\ldots,M_k \in \Mat_n(\BBZ_p)$, the sequence of groups $\cok(M_1),\cok(M_1M_2),\ldots,\cok(M_1 \cdots M_k)$ comes with additional structure. Namely, for each $1 \leq i \leq k-1$, there is a surjection
$$\cok(M_1 \cdots M_{i+1} ) = \BBZ_p^n/\Im(  M_1 \cdots M_{i+1}) \surj \BBZ_p^n/\Im( M_1 \cdots M_{i} ) = \cok(M_1 \cdots M_i)$$
induced by the natural inclusion $\Im( M_1 \cdots M_{i+1}) \subset \Im(M_1 \cdots M_{i})$. Our tools do not currently allow us to prove universality of this random sequence 
$$\cok(M_1 \cdots M_k) \surj \ldots \surj \cok(M_1)$$
of abelian $p$-groups with maps between them, but the perspective of this extra data is nonetheless useful for interpreting the limit distribution on the isomorphism types of this sequence of groups.

\begin{definition}\label{def:k-sequence}
We refer to a collection $G_k \xsurj{\phi_{k-1}} \ldots \xsurj{\phi_1} G_1$ of $k$ groups together with surjections $\phi_i: G_{i+1} \surj G_{i}$ between them as a \emph{$k$-sequence of groups} or simply $k$-sequence. For a fixed sequence of groups\footnote{We must be careful to specify \emph{groups} (as in, a set with a group operation on it), not just groups up to isomorphism, in order to define the set $\KK$.} $G_1,\ldots,G_k$, let $\KK(G_1,\ldots,G_k)$ denote the set of such $k$-sequences.
\end{definition}

\begin{definition}\label{def:k-seq_equivalence}
An isomorphism between two $k$-sequences $(G_k \xsurj{\phi_{k-1}} \ldots \xsurj{\phi_1} G_1)$, $ (H_k \xsurj{\psi_{k-1}} \ldots \xsurj{\psi_1} H_1)$ is a sequence of group isomorphisms $\varphi_1,\ldots,\varphi_k$ such that the diagram 
\begin{center}
\begin{tikzcd}
G_k \arrow[r, twoheadrightarrow,"\phi_{k-1}"] \arrow[d, "\varphi_k"]
& \cdots \arrow[r, twoheadrightarrow,"\phi_{1}"] & G_1 \arrow[d,"\varphi_1"] \\
H_k \arrow[r, twoheadrightarrow, "\psi_{k-1}" ] & \cdots \arrow[r, twoheadrightarrow,"\psi_{1}"] & H_1
\end{tikzcd}
\end{center}
commutes. 
We further write
$$(G_k \xsurj{\phi_{k-1}} \ldots \xsurj{\phi_1} G_1) \simeq (H_k \xsurj{\psi_{k-1}} \ldots \xsurj{\psi_1} H_1)$$
if such an isomorphism exists, and write
 $$\Aut\left(G_k \xsurj{\phi_{k-1}} \ldots \xsurj{\phi_1} G_1\right)$$
for the group of isomorphisms from a given $k$-sequence to itself.
\end{definition}

\begin{remark}\label{rmk:grp_extn_data}
Two $k$-sequences with the same $G_1,\ldots,G_k$ may still not be isomorphic as $k$-sequences. In fact, $G \xsurj{\phi} H$ and $G' \xsurj{\phi'} H'$ may not be isomorphic as $2$-sequences even if $G \simeq G',H \simeq H',$ and $\ker \phi \simeq \ker \phi'$ as groups. An example due to \cite{S} is the following: let $G =G'= \Z/p^3\Z \oplus \Z/p^2\Z \oplus \Z/p\Z$ and $H =H'= \Z/p^2\Z \oplus \Z/p\Z$, and let $\phi$ be the direct sum of maps 
\begin{align*}
\Z/p^3\Z &\to \Z/p^2\Z \\
\Z/p^2\Z &\to 0 \\
\Z/p\Z &\to \Z/p\Z 
\end{align*}
and $\phi'$ be the direct sum of maps 
\begin{align*}
\Z/p^3\Z & \to \Z/p\Z \\ 
\Z/p^2\Z & \to \Z/p^2 \Z \\ 
\Z/p\Z & \to 0
\end{align*}
where all nontrivial maps are quotient or identity maps. Then $\phi$ and $\phi'$ are not equivalent, and to check this it suffices to observe that $p \ker \phi \cap p^2G = \{0\}$ while $p \ker \phi' = p^2 G$, as these two facts are unchanged by any $2$-sequence automorphism.
\end{remark}

Clearly, $\simeq$ is an equivalence relation on $k$-sequences. 

\begin{definition}
Given fixed groups $G_1,\ldots,G_k$, we write $\cC(G_1,\ldots,G_k)$ for the set of equivalence classes of $k$-sequences $G_k \xsurj{\phi_{k-1}} \ldots \xsurj{\phi_1} G_1$, and for a given $k$-sequence we denote by 
$$[G_k \xsurj{\phi_{k-1}} \ldots \xsurj{\phi_1} G_1] \in \cC(G_1,\ldots,G_k)$$
its equivalence class. Finally, let 
$$\cC_k = \bigcup_{\la^{(1)},\ldots,\la^{(k)} \in \BBY} \cC(G_{\la^{(1)}},\ldots,G_{\la^{(k)}})$$
denote the set of all equivalence classes of $k$-sequences of finite abelian $p$-groups. 
\end{definition}

For fixed finite abelian $p$-groups $G_1,\ldots,G_k$, the equivalence classes are finite and may be counted. 

\begin{example}
We find the equivalence classes of surjections ($2$-sequences) $G=\Z/p^2\Z \oplus \Z/p\Z \surj \Z/p\Z=H$. Such a map is determined by the image of $(1,0)$ and $(0,1)$, and is surjective if it is nonzero, so there are $p^2-1$ surjections. If $f: G \surj H$ is a surjection and $f(1,0) =w \neq 0, f(0,1) = v$, then it is equivalent to the map 
\begin{align*}
f':G& \surj H \\
(1,0) & \mapsto 1 \\
(0,1) & \mapsto 0
\end{align*}
by precomposing with the linear map
$$\begin{pmatrix} \tilde{w}^{-1} & 0 \\ -vw^{-1} & 1 \end{pmatrix} \in \Aut(G),$$
where $\tilde{w}$ is a lift of $w$ to $\Z/p^2\Z$. For a surjection $g$ with $g(1,0) = 0$ (so $g(0,1) = v \neq 0$), $g$ is equivalent to the map
\begin{align*}
g': G & \surj H \\ 
(1,0) & \mapsto 0 \\
(0,1) & \mapsto 1
\end{align*}
by composing with the map $v \mapsto 1$ in $\Aut(\Z/p\Z)$. Hence there are two equivalence classes, and 
\begin{align*}
\#[f'] &= p^2-p \\ 
\#[g'] &= p-1.
\end{align*}
Note that in this simple case the equivalence classes are characterized by the isomorphism types of their kernels ($\Z/p^2\Z$ versus $\Z/p\Z \oplus \Z/p\Z$), but this is not true in general as the example in Remark \ref{rmk:grp_extn_data} showed.
\end{example}

The central object of this section is the following probability measure on $\cC_k$. For now we give an explicit formula for its density, and later in Theorem \ref{thm:seq_dist_iso_types} we verify that it is in fact a probability measure.

\begin{definition}\label{def:aut_seq_dist}
Let $\tP(\cdot): \cC_k \to \BBR$ be defined by
$$\tP([G_k \xsurj{\phi_{k-1}} \ldots \xsurj{\phi_1} G_1]) = (p^{-1};p^{-1})_\infty^k \frac{1}{\#\Aut(G_k \surj \ldots \surj G_1)}.$$
\end{definition}

While it is clear that the RHS is positive, it is not obvious that the value of the normalizing constant above is correct, or even that the sum over $\cC_k$ of the RHS converges. However, this follows from the main result of the section below. 

\begin{theorem}\label{thm:seq_dist_iso_types}
For any prime $p$, $\tP$ defines a probability measure on the discrete set $\cC_k$. Furthermore, the marginal distribution of $G_1,\ldots,G_k$ under $\tP$ is given by $\P^{(k)}_{\{p\}}$, i.e. for any sequence of partitions $\la^{(1)},\ldots,\la^{(k)}$ one has
\begin{equation}\label{eq:iso_type_marginals}
\sum_{\S \in \cC(G_{\la^{(1)}},\ldots,G_{\la^{(k)}})} \tP(\S) = \P^{(k)}_{\{p\}}(G_{\la^{(1)}},\ldots,G_{\la^{(k)}}).
\end{equation}
\end{theorem}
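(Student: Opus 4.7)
The plan is to recover the marginal formula \eqref{eq:iso_type_marginals} directly from the orbit--stabilizer theorem, after which the probability-measure assertion follows immediately from Theorem \ref{thm:intro_dist_moment_joint}.

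First, for fixed finite abelian $p$-groups $G_1,\ldots,G_k$ the set $\KK(G_1,\ldots,G_k)$ is canonically identified with $\prod_{i=1}^{k-1}\Sur(G_{i+1}, G_i)$, and is therefore finite. The direct product $\prod_{i=1}^k \Aut(G_i)$ acts on this set by
$$(\varphi_1,\ldots,\varphi_k) \cdot (\phi_1,\ldots,\phi_{k-1}) = \bigl(\varphi_1 \phi_1 \varphi_2^{-1},\, \varphi_2 \phi_2 \varphi_3^{-1},\, \ldots,\, \varphi_{k-1} \phi_{k-1}\varphi_k^{-1}\bigr).$$
Unwinding Definition \ref{def:k-seq_equivalence}, two elements of $\KK(G_1,\ldots,G_k)$ are equivalent as $k$-sequences precisely when they lie in the same orbit, and the stabilizer of $\S = (G_k \xsurj{\phi_{k-1}} \ldots \xsurj{\phi_1} G_1)$ is exactly $\Aut(\S)$.

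Second, the orbit--stabilizer theorem then gives
$$\prod_{i=1}^{k-1} \#\Sur(G_{i+1}, G_i) \;=\; |\KK(G_1,\ldots,G_k)| \;=\; \sum_{\S \in \cC(G_1,\ldots,G_k)} \frac{\prod_{j=1}^k \#\Aut(G_j)}{\#\Aut(\S)}.$$
Dividing both sides by $\prod_j \#\Aut(G_j)$, shifting the index in the numerator by one, and using the convention $G_0 = 0$ (so $\#\Sur(G_1,0) = 1$), we obtain
$$\sum_{\S \in \cC(G_1,\ldots,G_k)} \frac{1}{\#\Aut(\S)} \;=\; \prod_{i=1}^k \frac{\#\Sur(G_i, G_{i-1})}{\#\Aut(G_i)}.$$
Multiplying both sides by $(p^{-1};p^{-1})_\infty^k$ and comparing with Definition \ref{def:intro_measures} yields \eqref{eq:iso_type_marginals}.

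Finally, to see that $\tP$ is a probability measure on the countable set $\cC_k$, note that all values are nonnegative, and summing \eqref{eq:iso_type_marginals} over all partition tuples $(\la^{(1)},\ldots,\la^{(k)}) \in \BBY^k$ reduces the total mass of $\tP$ to the total mass of $\P^{(k)}_{\{p\}}$, which equals $1$ by Theorem \ref{thm:intro_dist_moment_joint}. There is no substantive obstacle: the only conceptual content is identifying the action of $\prod_i \Aut(G_i)$ whose orbits and stabilizers package Definition \ref{def:k-seq_equivalence}, after which the computation is routine bookkeeping.
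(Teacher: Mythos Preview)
Your proof is correct and follows essentially the same route as the paper: both arguments identify $\cC(G_1,\ldots,G_k)$ with the orbits of the $\prod_i \Aut(G_i)$-action on $\KK(G_1,\ldots,G_k)$, apply orbit--stabilizer to relate $\sum_{\S} 1/\#\Aut(\S)$ to $\#\KK/\prod_i \#\Aut(G_i)$, and then read off \eqref{eq:iso_type_marginals} from the definitions. The only cosmetic difference is that the paper passes through the intermediate identity \eqref{eq:class_to_seq} before invoking orbit--stabilizer, whereas you go directly to the summed form; the content is identical.
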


\begin{proof}
The fact that $\tP$ defines a probability measure follows from \eqref{eq:iso_type_marginals}, since $\P^{(k)}_{\{p\}}$ is a probability measure and $\tP$ is manifestly nonnegative, so we prove \eqref{eq:iso_type_marginals}. First, rewrite
\begin{equation}\label{eq:class_to_seq}
\sum_{\S \in \cC(G_{\la^{(1)}},\ldots,G_{\la^{(k)}})} \tP(\S) = \sum_{\substack{(G_{\la^{(k)}} \xsurj{\phi_{k-1}} \ldots \xsurj{\phi_1} G_{\la^{(1)}}) \\ \in \KK(G_{\la^{(1)}},\ldots,G_{\la^{(k)}})}} \frac{\tP([G_{\la^{(k)}} \xsurj{\phi_{k-1}} \ldots \xsurj{\phi_1} G_{\la^{(1)}}])}{\#[G_{\la^{(k)}} \xsurj{\phi_{k-1}} \ldots \xsurj{\phi_1} G_{\la^{(1)}}]}.
\end{equation}
We consider the group action of $\Aut(G_k) \times \cdots \times \Aut(G_1)$ on $\KK(G_1,\ldots,G_k)$ induced by the definition of isomorphism of $k$-sequences, explicitly 
$$(\varphi_k,\ldots,\varphi_1) \cdot (G_k \xsurj{\phi_{k-1}} \ldots \xsurj{\phi_1} G_1) = (G_k \xsurj{\varphi_{k-1} \circ \phi_{k-1} \circ \varphi_k^{-1}} G_{k-1} \xsurj{\varphi_{k-2} \circ \phi_{k-2} \circ \varphi_{k-1}^{-1}} \ldots \xsurj{\varphi_1 \circ \phi_1 \circ \varphi_2^{-1}} G_1).$$
Clearly the induced action on the set $\cC(G_{\la^{(1)}},\ldots,G_{\la^{(k)}})$ of equivalence classes is transitive. Hence by the orbit-stabilizer theorem, 
\begin{equation}\label{eq:use_orb_stab}
\frac{\tP([G_{\la^{(k)}} \xsurj{\phi_{k-1}} \ldots \xsurj{\phi_1} G_{\la^{(1)}}])}{\#[G_{\la^{(k)}} \xsurj{\phi_{k-1}} \ldots \xsurj{\phi_1} G_{\la^{(1)}}]} = (p^{-1};p^{-1})_\infty^k \frac{1}{\#\left(\Aut(G_k) \times \cdots \times \Aut(G_1)\right)}.
\end{equation}
Combining \eqref{eq:class_to_seq} and \eqref{eq:use_orb_stab} yields 
\begin{equation}
\text{LHS\eqref{eq:iso_type_marginals}} = (p^{-1};p^{-1})_\infty^k \frac{\#\KK(G_{\la^{(1)}},\ldots,G_{\la^{(k)}})}{\#\left(\Aut(G_k) \times \cdots \times \Aut(G_1)\right)}.
\end{equation}
Recalling the definition of $\KK$ and Definition \ref{def:intro_measures}, the RHS above is equal to the RHS of \eqref{eq:iso_type_marginals}, completing the proof.
\end{proof}

\begin{proof}[Proof of Theorem \ref{theorem:seq_aut_intro}]
It suffices to note that the measure $\tPP$ of Theorem \ref{theorem:seq_aut_intro} is a product over $p \in P$ of the measure $\tP$ considered above, and apply Theorem \ref{thm:seq_dist_iso_types}.
\end{proof}

Using results on the \emph{Hall algebra} of \cite[Chapter III]{Macd}, one may similarly check that the limiting joint distribution of 
$$\cok(M_1),\ldots,\cok(M_1 \cdots M_k),\cok(M_2),\cok(M_3),\ldots,\cok(M_k),$$
for $M_i$ iid additive Haar matrices, agrees with the joint distribution of 
$$G_1,\ldots,G_k, \ker \phi_1,\ldots,\ker \phi_{k-1}$$
under $\tP$ (note that while the maps $\phi_i$ depend on which element of the isomorphism class $\S$ is chosen, the isomorphism types of their kernels do not, hence we may speak of $\ker \phi_i$ for a random isomorphism class $\S$ distributed under $\tP$). However, as Remark \ref{rmk:grp_extn_data} shows, this is not enough to say that the limiting distribution of 
$$\cok(M_1 \cdots M_k) \surj \ldots \surj \cok(M_1)$$
is given by $\tP$. We conjecture that this is true in the case for Haar matrices, and in fact true universally in the setting of Theorem \ref{theorem:main:joint}. However, since the focus of this paper is on isomorphism types, we do not pursue this here. %

\section{Universality of product coranks over $\F_p$}\label{sec:fp}

In this section we prove the universality of the limiting joint distribution of coranks of matrix products over $\F_p$, Theorem \ref{thm:prod:rank}.

\begin{proof}
Because $\xi$ is nonconstant, the $\Z$-valued random variable $\tilde{\xi}$ with
$$\P(\tilde{\xi} = r) = \P(\xi \equiv r \pmod{p})$$
for $r=0,\ldots,p-1$ is $\alpha$-balanced for some $\alpha$. Theorem \ref{theorem:main:joint} applied to $\tilde{\xi}$ implies that the limiting joint distribution of the $p$-parts of cokernels of matrix products with iid $\tilde{\xi}$-distributed entries exists and is independent of $\tilde{\xi}$. Since the cokernel determines the rank, this means that the limit in Theorem \ref{thm:prod:rank} exists and is independent of $\xi$.

This \emph{a priori} gives an expression for the limit in Theorem \ref{thm:prod:rank} as a sum over all cokernels corresponding to a given corank. However, to obtain a simpler expression, we note that it implies that the limit for any nonconstant $\xi$ is the same as when $\xi$ is uniform. Hence it suffices to compute the uniform case explicitly.

We claim that for this computation it suffices to show that for any $B \in \Mat_n(\F_q)$ with $\rank(B) = n-k$, and $A \in \Mat_n(\F_p)$ uniformly random,
\begin{equation}\label{eq:rank_step}
 \P(\rank(BA) = n-k-d) = \frac{(p^{-1};p^{-1})_{n-k}(p^{-1};p^{-1})_n}{(p^{-1};p^{-1})_d(p^{-1};p^{-1})_{k+d}(p^{-1};p^{-1})_{n-k-d}} p^{-d(k+d)}.
\end{equation}
Taking the $n \to \infty$ limit yields
 $$\lim_{n \to \infty} \P(\rank(BA) = n-k-d) = \frac{(p^{-1};p^{-1})_\infty p^{-(d(k+d))}}{(p^{-1};p^{-1})_d(p^{-1};p^{-1})_{k+d}}, $$
which is recognizable as the factor in the product in Theorem \ref{thm:prod:rank} (together with a normalization constant which was outside the product in the theorem statement). Hence \eqref{eq:rank_step} suffices because iterating it with $B=I,A=M_1$, then $B=M_1, A=M_2$ (conditioned on fixed $M_1$), then $B=M_1M_2, A=M_3$ (conditioned on fixed $M_1M_2$, etc. yields the desired formula. So let us prove \eqref{eq:rank_step}.

By Smith normal form there exist $U,V \in \GL_n(\F_p)$ so that $UBV = \tI_{n-k}$, where
$$\tI_{n-k} = \begin{pmatrix}
I_{n-k} & 0_{(n-k) \times k} \\ 
0_{k \times (n-k)} & 0_{k \times k}
\end{pmatrix},$$
Because the uniform measure on $\Mat_n(\F_p)$ is invariant under multiplication by any element of $\GL_n(\F_p)$, we therefore have
$$\P(\rank(BA) = n-k-d) = \P(\rank(UBVA)=n-k-d) = \P(\rank(\tI_{n-k}A = n-k-d)).$$
But $\tI_{n-k}A$ is a uniform $(n-k) \times n$ matrix padded with zeroes, so denoting this $(n-k) \times n$ matrix by $A'$, the above is equal to $\P(\rank(A') = n-k-d )$. An explicit formula for this probability is given for example in \cite[(1.2)]{Ge} (see also \cite{Bes, FG, W1}), and translating their formula slightly yields \eqref{eq:rank_step}, completing the proof.
\end{proof}

\end{document}